\def\eqref#1{equation~\ref{#1}}
\def\1{\bm{1}}
\DeclareMathAlphabet{\mathsfit}{\encodingdefault}{\sfdefault}{m}{sl}
\SetMathAlphabet{\mathsfit}{bold}{\encodingdefault}{\sfdefault}{bx}{n}
\renewcommand{\d}{\,\mathrm{d}} 
\newcommand{\Loja}{\L{}ojasiewicz}
\newcommand{\step}[1]{(\mathrm{S}#1)}
\newcommand{\basemod}{\mathbb{B}}
\newcommand{\liftmod}[1]{\mathrm{#1}}
\newcommand{\VG}{ {\mathrm{V}} }
\newcommand{\EG}{ {\mathrm{E}} }
\newcommand{\VGG}{ {\mathrm{V}_G} }
\newcommand{\EGG}{ {\mathrm{E}_G} }
\newcommand{\IG}{ {\mathrm{I}_G} }
\newcommand{\TG}{ {\mathrm{T}_G} }
\newcommand{\VM}{ {\mathrm{V}^\star} }
\newcommand{\EM}{ {\mathrm{E}^\star} }
\newcommand{\VS}{ {\mathrm{V}_S} }
\newcommand{\ES}{ {\mathrm{E}_S} }
\newcommand{\VB}{ {\mathrm{V}_B} }
\newcommand{\EB}{ {\mathrm{E}_B} }
\newcommand{\IB}{ {\mathrm{I}_B} }
\newcommand{\TB}{ {\mathrm{T}_B} }
\newcommand{\VC}{ {\mathrm{V}_C} }
\newcommand{\EC}{ {\mathrm{E}_C} }
\newcommand{\VH}{ {\mathrm{V}_H} }
\newcommand{\EH}{ {\mathrm{E}_H} }
\newcommand{\IH}{ {\mathrm{I}_H} }
\newcommand{\W}{ {\mathcal{W}} }
\newcommand{\Y}{ {\mathcal{Y}} }
\newcommand{\Z}{ {\mathcal{Z}} }
\newcommand{\CC}{ {\mathcal{C}} }
\newcommand{\FF}[1]{ {\mathrm{F}[#1]} }
\newcommand{\BernD}[2]{ {\mathcal{D}\left(\, #1 \,\middle\Vert\, #2 \, \right)} }
\newcommand{\PMod}[1]{ {\mathrm{PMod}(#1)} }
\newcommand{\LiftPMod}[1]{ {\mathrm{LiftPMod}\ifthenelse{\equal{#1}{}}{}{(#1)}} }
\newcommand{\LinReadout}[1]{ {\mathrm{LinReadout}\ifthenelse{\equal{#1}{}}{}{(#1)}} }
\newcommand{\Param}[1]{ {\mathrm{Param}\ifthenelse{\equal{#1}{}}{}{(#1)}} }
\newcommand{\scaleset}{ {\mathcal{S}} }
\newcommand{\subscaleset}{ {\mathcal{S}_0} }
\newcommand{\ind}[1]{ { \mathds{1}_{#1} } }
\newcommand{\Econd}[2]{ {\mathbb{E}\left[ #1 \,\middle\vert\, #2 \right]} }
\newcommand{\Pcond}[2]{ {\mathbb{P}\left( #1 \,\middle\vert\, #2 \right)} }
\pgfplotsset{compat=1.18}
\theoremstyle{plain}
\newtheorem{theorem}{Theorem}[section]
\newtheorem{proposition}[theorem]{Proposition}
\newtheorem{assumption}{\RevDiff{Condition}}
\newtheorem{cvcriterion}{Convergence Criterion}
\newtheorem{lemma}[theorem]{Lemma}
\newtheorem{definition}[theorem]{Definition}
\newcommand{\Sec}[1]{Section~\ref{sec:#1}}
\newcommand{\Appendix}[1]{Appendix~\ref{appendix:#1}}
\newcommand{\Eq}[1]{Eq.~(\ref{eq:#1})}
\newcommand{\Fig}[1]{Fig.~\ref{fig:#1}}
\newcommand{\Def}[1]{Def.~\ref{def:#1}}
\newcommand{\Theorem}[1]{Theorem~\ref{thm:#1}}
\newcommand{\Lemma}[1]{Lemma~\ref{lem:#1}}
\newcommand{\Proposition}[1]{Proposition~\ref{prop:#1}}
\newcommand{\Definition}[1]{Definition~\ref{def:#1}}
\newcommand{\Assumption}[1]{\RevDiff{Condition~\ref{ass:#1}}}
\newcommand{\Figure}[1]{Figure~\ref{fig:#1}}
\newcommand{\RevDiff}[1]{#1}
\newcommand{\FinDiff}[1]{#1}
\title{Random sparse lifts: construction, analysis and convergence of finite sparse networks}
\author{%
  David~A.~R.~Robin \\
  INRIA - ENS Paris \\
  PSL Research University \\
  \And{}
  Kevin~Scaman \\
  INRIA - ENS Paris \\
  PSL Research University \\
  \And{}
  Marc~Lelarge \\
  INRIA - ENS Paris \\
  PSL Research University \\
}
\begin{document}

\maketitle

\begin{abstract}
  We present a framework to define a large class of
  neural networks for which, by construction, training
  by gradient flow provably reaches arbitrarily low loss
  when the number of parameters grows.
  Distinct from the fixed-space
  global optimality of non-convex optimization,
  this new form of convergence,
  and the techniques introduced
  to prove such convergence,
  pave the way for a usable deep learning convergence theory
  in the near future,
  without overparameterization assumptions relating
  the number of parameters and training samples.
  We define these architectures
  from a simple computation graph
  and a mechanism to lift it, thus increasing the number of parameters,
  generalizing the idea of increasing the widths of multi-layer perceptrons.
  We show that architectures similar to most common
  deep learning models are present in this class,
  obtained by sparsifying the weight tensors of usual architectures at initialization.
  Leveraging tools of algebraic topology
  and random graph theory,
  we use the computation graph's geometry
  to propagate properties guaranteeing
  convergence to any precision for these large sparse models.
\end{abstract}

\section{Introduction}

\RevDiff{We consider the supervised learning task, of infering}
a function $f^\star : \mathcal{X} \to \mathcal{Y}$
from pairs \RevDiff{of samples} $(x, f^\star(x)) \in \mathcal{X} \times \mathcal{Y}$,
with $x$ following a distribution $\mathcal{D}$. 
We restrict our attention to the case $\mathcal{X} = \mathbb{R}^d$
and $\mathcal{Y} = \mathbb{R}^k$, with a parametric model
$F : \mathbb{R}^m \times \mathbb{R}^d \to \mathbb{R}^k$,
approximating the target function as $F(\RevDiff{\theta^\star}, \cdot) : \mathbb{R}^d \to \mathbb{R}^k$
for \RevDiff{a fixed} parameter $\RevDiff{\theta^\star} \in \mathbb{R}^m$.
The loss
$\mathcal{L} : \theta \mapsto \mathbb{E}_{(x,y)} \left[ \lVert F(\theta, x) - y \rVert_2^2 \right]$
is used to ``learn'' the parameter
$\theta : \mathbb{R}_+ \to \mathbb{R}^m$ by a gradient flow
$\partial_t \theta = - \nabla \mathcal{L}(\theta)$
from $\theta_0 \in \mathbb{R}^m$.

\RevDiff{In the context of deep learning, contrary to more generic non-convex optimization,}
we are not \textit{intrinsically} interested in global minima of
\RevDiff{the fixed-dimension parametric loss} $\mathcal{L}$,
or in this model $F$ specifically.
This optimization is only a means to the end
of finding, among a family of models ${F_m : \mathbb{R}^m \times \mathbb{R}^d \to \mathbb{R}^k}$
of varying sizes $m \in \mathbb{N}$,
one model-parameter pair $(F_m, \RevDiff{\theta^\star} \in \mathbb{R}^m)$
such that
$F_m(\RevDiff{\theta^\star}, \cdot) \approx f^\star$.
The ability of families of models, given as neural networks,
to ``scale up'' to consistently avoid optimization issues
is what the present theory aims to explain and predict.

\textbf{Challenges.}
The class of models empirically
known to reach near-zero loss when scaled up
is \RevDiff{large}, and it remains unclear whether it is \RevDiff{possible}
to \RevDiff{construct} a formalism
both sufficiently flexible to cover all such neural networks
and sufficiently rigid to allow non-trivial claims.
For two-layer networks of width $h$ with parameters $\theta \in \mathbb{R}^{d \times h} \times \mathbb{R}^{h \times k}$,
\citet{cybenko1989approximation} and \citet{barron93universal}
have shown for various non-linearities $\sigma$
that for any continuous
$f^\star : \mathcal{X} \to \mathbb{R}^k$ with
compact domain $\mathcal{X} \subseteq \mathbb{R}^d$,
and any threshold $\varepsilon \in \mathbb{R}_+^*$,
as $h \to +\infty$, there exists $\theta$ such
that $\sup_x \lVert f_\theta(x) - f^\star(x) \rVert_2 \leq \varepsilon$.
We call the minimal
such width $h$ the \textit{approximation number} $h_0(f^\star, \varepsilon) \in \mathbb{N}$.
This existence result does not imply that a gradient flow will converge to such a parameter.
Nonetheless, the question of whether a given architecture enjoys similar
approximation properties has sparked much interest over the following decades,
with extensions to more layers \citet{leshno1993multilayer},
more non-linearities
\citep{pinkus1999Approximation},
convolutional architectures \citep{cohen2016convolutional},
graph-processing \citep{xu2019how,bruelgabrielsson2020universal}
and outside \RevDiff{Euclidean} geometry \citep{kratsios2020noneuclidean},
\RevDiff{and with investigations of the link between width and depth} \citep{poggio2017why,johnson2019deep},
and handling of symmetries \citep{lawrence2022barrons}.
\RevDiff{For the harder question of convergence, with two layers,}
\citet[Proposition 4.6]{robin2022convergence}
shows that
for any $\varepsilon \in \mathbb{R}^*_+$ and $\delta \in \interval[open]{0}{1}$, there exists
a \textit{learning number} $h_1(f^\star, \varepsilon, \delta) \in \mathbb{N}$ such that if $h \geq h_1$,
then under standard initialization schemes, a gradient flow will
converge to a loss value below $\varepsilon$ with probability greater than $(1- \delta)$.
Note that earlier infinite-width results hinted at the
existence of this number (without assumptions on the sample count),
e.g.\ \citet{chizat2018global}.
The problem of focusing on \RevDiff{only} approximation is
that the corresponding learning number could be orders of magnitude larger
than the approximation number, since it corresponds to a much
stronger property: the target function is not only \textit{representable} with a model,
it is \textit{learnable} with this model.
Universal approximation
is insufficient to predict which models will perform better
in practice, because it fails to quantify the learning number $h_1$,
which could be intractably larger than the approximation number $h_0$;
in which case at all tractable sizes $h \in \mathbb{N}$ with $h_0 \leq h < h_1$,
we would
observe models not reaching low loss despite the
existence of parameters in dimension $h$ with lower loss. 
We seek to upper-bound learning numbers when approximation has been established.
This size-varying gradient flow point of view is meant to model practical
deployment of neural networks, 
in contrast with works
that aim
to find a global minimum of fixed size
by 
e.g.\ convex reformulations of two and three layers ReLU networks
\citep{ergen2021global,wang2021hidden}.
\newline
\RevDiff{To curb the difficulty of constructing bounds for}
learning numbers with more than two layers,
many attempts have focused on the simpler setting
of a fixed dataset $X \in \mathcal{X}^n$ of $n \in \mathbb{N}$ training samples and growing model size,
and provided (typically asymptotic) bounds for
this $X$, independently of $f^\star$,
by leveraging positive-definiteness of the neural tangent kernel (NTK)
matrix under sufficient overparameterization
\citep{jacot2018neural,du2018gradient,ji2020polylogarithmic,chen2021how}.
This allows viewing functions $\mathcal{X} \to \mathbb{R}$
as finite-dimensional vectors in $\mathbb{R}^n$, by restriction to $X$.
In contrast, our objective is to derive bounds for fixed $f^\star$ independently of $n$
(thus infinite dimension),
to avoid overparameterization assumptions relating number of parameters and samples,
so our bounds depend on a
(model-specific) ``complexity'' of $f^\star$, but do not diverge
with infinitely many samples.

\textbf{Related formalizing works.}
We obtain convergence guarantees
for broad classes of ``neural networks'' when ``lifted'',
by formalizing a precise meaning for these terms, usable in rigorous proofs.
The class of neural networks that can be described with this theory
is more expressive than the \textit{Tensor Programs} of
\citet{YangL21a}, in particular because it allows non-linear weight
manipulations, and distinct ``scaling dimensions'' (e.g.\ embedding
dimension versus hidden layer width in transformers, which need not be related).
In that sense, the core idea is much closer to the description with \textit{Named Tensors}
of \citet{chiang2023named}.
Thus the focus is on large but finite-size networks,
contrary to infinite-width limits.
However, the convergence shown applies only to sparse networks
(in the sense of Erdös-Renyi, different from
the \textit{Lottery Ticket} \citep{frankle2018lottery}
use of the term ``sparse'').
\RevDiff{Several ideas used in our proofs are inspired by \citet{pham2021global},
which --- in the context of non-quantitative mean-field convergence --- introduce tools
to show that throughout training, the distribution of weights, albeit finitely supported,
has positive density in a neighbourhood of any point.}

\textbf{Contributions.}
We discuss the tangent approximation property in
\Sec{tangent-approximation},
and show how it implies
probably approximately correct convergence when the model size grows.
We introduce in
\Sec{definitions} a formalism to build
architectures
having by construction a way to lift,
and recover usual architectures in this form.
We show in \Sec{convergence} that when such systems are sparsified
at initialization in a specific way, then tangent approximation is satisfied,
thus PAC-convergence of gradient flows when lifting.

\section{Convergence by tangent approximation}\label{sec:tangent-approximation}

Let $\mathcal{X} \subseteq \mathbb{R}^d$ be a compact set,
and let $\mathcal{Y} = \mathbb{R}^k$.
Let $\mathcal{F}$ be the set of continuous functions
from $\mathcal{X}$ to $\mathcal{Y}$.
Let $\mathcal{D}$ be a distribution on $\mathcal{X}$.
Let $f^\star : \mathcal{X} \to \mathcal{Y}$ be a continuous function.
Let $\mathcal{L} : \mathcal{F} \to \mathbb{R}_+$
be the function
$\mathcal{L} : f \mapsto \mathbb{E}_{x \sim \mathcal{D}} \left[ \lVert f(x) - f^\star(x) \rVert_2^2 \right]$.
We write for shortness $\lVert \cdot \rVert_{\mathcal{D}}^2 : f \mapsto \mathbb{E}_{x \sim \mathcal{D}} \left[ \lVert f(x) \rVert_2^2 \right]$.

  Let $(\scaleset, \preceq)$ be an infinite partially ordered set.
  For $s \in \scaleset$,
  let $G_s$ be a finite set representing ``size-$s$ architectures'',
  and let $\Theta_s$ be a finite-dimensional
  vector space with an inner product, of corresponding ``weights''.
  For every $s \in \scaleset$,
  let $\mathcal{N}_s$ be a distribution on $G_s \times \Theta_s$.
  For every $s \in \scaleset$ and $g \in G_s$,
  let $F_{(s,g)} : \Theta_s \to \mathcal{F}$
  be a differentiable function.
  Let $\subscaleset \subseteq \scaleset$ be an infinite set,
  called ``admissible set''.

We will illustrate constructions
with two examples, of (possibly sparse) two-and three-layer networks
without biases to learn a function $\mathbb{R}^d \to \mathbb{R}$.
For two layers, $\mathcal{S} = \mathbb{N}$ is the number of hidden
nodes and $G_s = \{0,1\}^{d \times s} \times \{0,1\}^{s \times 1}$
the sparsity pattern of the layers. For three layers,
$\mathcal{S} = \mathbb{N}^2$ is the number of hidden nodes of each layer,
and $G_{s_0, s_1} = \{ 0, 1 \}^{d \times s_0} \times \{0,1\}^{s_0 \times s_1} \times \{0,1\}^{s_1 \times 1}$
the corresponding pattern.
A choice of admissible subset
---
e.g. $\mathcal{S}_0 = \{ (s_0, s_1) \in \RevDiff{\mathbb{N}^2} \mid s_0 \leq s_1 \}$
of networks whose width increases with depth
---
rules out potentially ill-behaved sizes,
to formulate statements like "all networks (not ill-behaved)
above a given size
(w.r.t.\ $\preceq$ on $\mathcal{S}$) have property $P$".

For $s \in \scaleset$, $g \in G_s$,
and any $(\kappa, \varepsilon) \in \mathbb{R}_+ \times \mathbb{R}_+^*$,
let $\mathcal{A}_{s,g}(\kappa, \varepsilon) \subseteq \Theta_s$
be the set defined as
\[
  \theta \in \mathcal{A}_{s,g}(\kappa, \varepsilon)
  \Leftrightarrow
    \inf_{\substack{u \in \Theta_s, \>  \lVert u \rVert \leq {\kappa}}}
    \>
    \left\lVert F_{(s,g)}(\theta) + \d F_{(s,g)}(\theta) \cdot u - f^\star \right\rVert_{\mathcal{D}}^2 < \varepsilon
\]
\vspace{-.15in}

%

\begin{wrapfigure}{r}{0.4\textwidth}
  \centering
  {
    \begin{tikzpicture}
    \clip (-0.3,-0.15) rectangle (5.2, 3.25);
    \def\xa{-1}
    \def\ya{+1.1}
    \def\xb{+4}
    \def\yb{+1.5}
    \def\t{0.5}
    \def\tstar{0.85}
    \draw[thick, blue] (0,0) .. controls (\xa,\ya) and (\xb,\yb) .. (3,3);
    \def\u{(1-\t)^3 * 0 + 3 * (1 - \t)^2 * \t * \xa + 3 * (1 - \t) * \t ^ 2 * \xb + \t ^ 3 * 3}
    \def\v{(1-\t)^3 * 0 + 3 * (1 - \t)^2 * \t * \ya + 3 * (1 - \t) * \t ^ 2 * \yb + \t ^ 3 * 3}
    \def\ustar{(1-\tstar)^3 * 0 + 3 * (1 - \tstar)^2 * \tstar * \xa + 3 * (1 - \tstar) * \tstar ^ 2 * \xb + \tstar ^ 3 * 3}
    \def\vstar{(1-\tstar)^3 * 0 + 3 * (1 - \tstar)^2 * \tstar * \ya + 3 * (1 - \tstar) * \tstar ^ 2 * \yb + \tstar ^ 3 * 3}
    \def\unoise{0.4}
    \def\vnoise{0.0}
    \def\mx{(3*(1-\t)^2*(\xa-0)+6*(1-\t)*\t*(\xb-\xa)+3*\t^2*(3-\xb))}
    \def\my{(3*(1-\t)^2*(\ya-0)+6*(1-\t)*\t*(\yb-\ya)+3*\t^2*(3-\yb))}
    \def\s{(2.0/((\mx^2+\my^2)^0.5))}
    \draw[->, thick,draw=black] ({\u},{\v}) -- ({\u+\mx*\s}, {\v+\my*\s});
    \node[circle,fill=black,minimum size=4pt,inner sep=0,label={[text=black]left:$f_{\theta^\star}$}] (SP) at ({\ustar},{\vstar}) {};
    \node[circle,fill=black,minimum size=4pt,inner sep=0,label={[text=black,label distance=-5pt]+87.5:$f^\star$}] (SR) at ({\ustar+\unoise},{\vstar+\vnoise}) {};
    \node[circle,densely dashed,draw=red,minimum size=36pt,inner sep=0,
    ] (SB) at ({\ustar+\unoise},{\vstar+\vnoise}) {};
    \node[circle,fill=black,minimum size=4pt,inner sep=0,label={[text=black]above:$f_\theta$}] (T) at ({\u},{\v}) {};
    \node[circle,minimum size=4pt,inner sep=0,label={[text=black, label distance=-7pt]-3:$f_\theta + \mathrm{d} f_\theta \cdot u$}] (T) at ({\u+\mx*\s},{\v+\my*\s}) {};
    \node[label={[text=blue]right:$\{ f_\theta \mid \theta \in \Theta \}$}] at (0,0) {};
    \end{tikzpicture}
  }%
  \caption{Tangent approximation to $\varepsilon$ error ($\varepsilon$-ball depicted by a dashed line) for fixed $s \in \mathcal{S}$, $g \in G_s$ with $f = F_{(s,g)}$.
  \newline
  If $\lVert f_{\theta^\star} - f^\star \rVert^2_\mathcal{D} < \varepsilon$,
  then ${\theta^\star} \in \mathcal{A}(0, \varepsilon)$.
  \newline
  In contrast, $\theta \in \mathcal{A}(\lVert u \rVert, \varepsilon)
  \setminus \mathcal{A}(0, \varepsilon)$.
  }%
  \label{fig:manifold-picture}
\end{wrapfigure}
The function $\mathcal{A}_{s,g}(\,\cdot\,, \varepsilon)$ is easily seen to be increasing.
Let us flesh out some intuition about it.
If $\mathcal{A}_{s,g}(0, \varepsilon) \neq \varnothing$, then there
exists a choice of weights $\theta$ at size $s$ such that $F_{(s,g)}(\theta)$
approximates $f^\star$ to error $\varepsilon$.
This does not guarantee however that such weights are ``learnable''.
The construction $\mathcal{A}_{s,g}(\kappa, \varepsilon)$ with a tiny
non-zero $\kappa$ allows for some flexibility: the target is
approximable by a linearization of $F_{(s,g)}$ with a tiny ($\lVert u \rVert \leq \kappa$)
variation of weights.
Hence, $\mathcal{A}_{s,g}(\kappa, \varepsilon)$ is much larger
than $\mathcal{A}_{s,g}(0, \varepsilon)$, but under some regularity
conditions on $F_{s,g}$ there should always be a small ball around
a point of $\mathcal{A}_{s,g}(0, \varepsilon)$ that is included
in $\mathcal{A}_{s,g}(\kappa, \varepsilon)$
(see \Figure{manifold-picture} for instance).
This regularity is irrelevant
for universal approximation theorems, but crucial for learning theorems.
The following property is sufficient to get
convergence to low loss with high probability.
%
Intuitively, we need
tangent approximation
with tangent radius $\kappa$,
not for a point $\theta_0$,
but an entire ball around $\theta_0$,
with a
tangent radius allowed to grow affinely with the ball radius.
Write
$B_s(\theta_0,R) = \{ \theta \in \Theta_s \mid \lVert \theta - \theta_0 \rVert \leq R \}$
the $R$-ball in $\Theta_s$ around $\theta_0$.

\begin{assumption}[Tangent approximation with high probability]\label{ass:tangent-approx}
Let $(\varepsilon, \delta) \in \mathbb{R}_+^* \times \interval[open left]{0}{1}$.
\hfill{}\break{}
There exists $c \in \mathbb{R}_+$ (a constant intercept) such that
for any $R \in \mathbb{R}_+$ (a maximal radius),
there exists $s_1 \in \scaleset$ (a threshold size)
such that for all $s \in \{ s \in \subscaleset \mid s \succeq s_1 \}$
(admissible sizes above threshold),
\[ \mathbb{P}_{(g,\theta) \sim \mathcal{N}_s} \Bigl( \forall r \leq R,\> B(\theta, r) \subseteq \mathcal{A}_{s,g}\left(c + r, \varepsilon \right) \Bigr) \geq (1 - \delta) \]
\end{assumption}
\vspace{-.1in}

In words, with a three-layer network,
this is satisfied
if for an arbitrarily large radius $R$, there exists a threshold
size $s_1 = (H_0, H_1) \in \mathbb{N}^2$ such that if
$s_0 = (h_0, h_1)$ is any network size satisfying $h_0 \leq h_1$ (i.e. $s_0 \in \mathcal{S}_0$)
and $h_0 \geq H_0$ and $h_1 \geq H_1$ (i.e. \textit{both} hidden layers are
above the threshold size),
then with high probability over a random weight $\theta$, an entire
ball $B_s(\theta, R)$ around this initialization has tangent approximation.
We expect in this setting that we can't say anything for networks whose width
is not increasing with depth (they could be ill-behaved), but other than that,
any configuration above the threshold size has (with high probability) the desirable property
of tangent approximation \textit{at initialization} and \textit{on a large ball}
around the randomly drawn initialization.
Therefore if this holds, we can expect that a gradient flow will have
some desirable properties (granted by tangent approximation)
for a long time after initialization, until it eventually exits the
ball. The desirable property in question is a ``sufficient decrease''
property of the loss, materialized by a local separable Kurdyka-\Loja{} inequality.
Indeed,
by a parallelogram identity,
$-2 \langle \d F(\theta) \cdot u, F(\theta) - f^\star \rangle = \lVert \d F(\theta) \cdot u \rVert^2 + \lVert F(\theta) - f^\star \rVert^2 - \lVert F(\theta) + \d F(\theta) \cdot u - f^\star \rVert^2$.
Moreover, for a gradient flow $\partial_t \theta = - \nabla \ell(\theta)$,
we have $\partial_t (\ell \circ \theta) = - \lVert \nabla \ell(\theta_t) \rVert_2^2$.
Leveraging the variational form of the $\ell_2$-norm, we get
$- \partial_t (\mathcal{L} \circ F)
= - \sup_u 4 \langle \d F(\theta) \cdot u, F(\theta) - f^\star \rangle^2 / \lVert u \rVert_2^2
\geq - (\mathcal{L}\circ F - \varepsilon)_+^2 / (c + \lVert \theta - \theta_0 \rVert)^2 $
for as long as $\theta \in B_s(\theta_0, R)$.
The separable structure of this bound with respect to $\mathcal{L}\circ F$ and $\lVert \theta - \theta_0 \rVert$,
together with the ability to choose $R$ arbitrarily large, is sufficient to conclude.

To get a feeling of why this assumption might be satisfied, picture
a large sparse network with a ``diverse enough'' sparsity pattern
(the clarification of what this means is the subject of this paper).
If there exists a network, sparse or not, of small size that approximates
the target function, then at initialization, there should be a myriad
of small subnetworks identical to that approximator inside the large network.
If the last layer is linear, then ``selecting''
a convex combination of these outputs
is sufficient to reach low loss. Although the output of the large
network may differ from the target, there is a modification
of its last-layer weights 
that approximates the target function. Lastly, if there are many such
subnetworks, then a small modification of parameters
cannot substantially modify them all if the large network
is large enough.
The following theorem ensures that tangent approximation is indeed
sufficient, the rest of the paper gives a precise meaning to
the intuitions of this first section.

\begin{cvcriterion}\label{def:cvcriterion}
  Let $\Theta$ be a vector space.
  The pair
  $(\mathcal{L} : \Theta \to \mathbb{R}_+, \theta_0 \in \Theta)$
  satisfies
  this criterion with limit error $\varepsilon \in \mathbb{R}_+$ and constant $\kappa \in \mathbb{R}_+^*$
  if for all
  $\theta : \mathbb{R}_+ \to \Theta$ such that $\theta(0) = \theta_0$
  satisfying $\partial_t \theta = - \nabla \mathcal{L}(\theta)$,
  it holds
  $
    \forall t \in \mathbb{R}_+^*, \,
    \mathcal{L}\left(\theta_t\right)
    \leq \varepsilon
    + 1 / \sqrt[3]{ \kappa \, t + 1/c}
  $
  where $c = \mathcal{L}(\theta_0)^3 \in \mathbb{R}_+$.
\end{cvcriterion}

\begin{samepage}
\begin{theorem}[Probably approximately correct convergence in loss]\label{thm:pao-by-tangent-approximation}
  Let $(\varepsilon_0, \delta_0) \in \mathbb{R}_+^* \times \interval[open left]{0}{1}$.
  \newline
  If for $(g, \theta_0) \sim \mathcal{N}_s$ the variable $\mathcal{L} \circ F_{(s,g)}(\theta_0) \in \mathbb{R}_+$
  is bounded uniformly in $s$ with high probability,
  \linebreak[1]
  and if \Assumption{tangent-approx} is satisfied for parameters $(\varepsilon_0, \delta_0)$,
  then
  for any $\varepsilon > \varepsilon_0$ and $\delta > \delta_0$,
  there exists $\kappa \in \mathbb{R}_+^*$ and $s_1 \in \scaleset$
  such that for all $s \in \subscaleset$ satisfying $s \succeq s_1$
  and $(g, \theta_0) \sim \mathcal{N}_s$,
  with probability at least $(1-\delta)$,
  the pair $(\mathcal{L} \circ F_{(s,g)}, \theta_0)$ satisfies
  Convergence Criterion~\ref{def:cvcriterion}
  with error $\varepsilon$ and constant $\kappa$.
\end{theorem}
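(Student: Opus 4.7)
The plan is to convert \Assumption{tangent-approx} into a local Kurdyka--\Loja{} inequality, establish a conservation law coupling the excess loss to the trajectory displacement, integrate this to obtain the cube-root rate, and handle potential escape from the ball by monotonicity of the loss. To derive the local Kurdyka--\Loja{} inequality, I would fix $(g,\theta_0)\sim\mathcal{N}_s$ in the good event of the assumption and, for any $\theta\in B_s(\theta_0,r)$ with $r\leq R$, extract (by compactness of the closed $(c+r)$-ball in $\Theta_s$) some $u$ with $\|u\|\leq c+r$ and $\|F_{(s,g)}(\theta)+\d F_{(s,g)}(\theta)\cdot u-f^\star\|_\mathcal{D}^2<\varepsilon_0$. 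Expanding this norm and rearranging, as sketched in the paragraph before the theorem, yields $-2\langle F(\theta)-f^\star,\,\d F(\theta)\cdot u\rangle_\mathcal{D}\geq\mathcal{L}(F(\theta))-\varepsilon_0$; combined with the variational form $\|\nabla(\mathcal{L}\circ F_{(s,g)})(\theta)\|^2=\sup_{v\neq 0}4\langle\d F(\theta)\cdot v,\,F(\theta)-f^\star\rangle_\mathcal{D}^2/\|v\|^2$ of the gradient norm, this delivers the local Kurdyka--\Loja{} bound $\|\nabla(\mathcal{L}\circ F_{(s,g)})(\theta)\|^2\geq(\mathcal{L}(F_{(s,g)}(\theta))-\varepsilon_0)_+^2/(c+r)^2$.

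Next I would follow the gradient flow. Setting $V(t)=\mathcal{L}(F_{(s,g)}(\theta_t))$ and $R(t)=\|\theta_t-\theta_0\|$, we have $-V'(t)=\|\nabla(\mathcal{L}\circ F_{(s,g)})(\theta_t)\|^2$ and $R'(t)\leq\sqrt{-V'(t)}$. While $R(t)\leq R$ and $V(t)>\varepsilon_0$, applying the Kurdyka--\Loja{} bound with $r=R(t)$ gives $\sqrt{-V'(t)}\geq(V(t)-\varepsilon_0)/(c+R(t))$, and hence $R'(t)/(c+R(t))\leq-V'(t)/(V(t)-\varepsilon_0)$. Integrating from $0$ to $t$ yields the conservation inequality
\[ (c+R(t))\,(V(t)-\varepsilon_0)\leq c\,(V(0)-\varepsilon_0). \]
Substituting this bound on $c+R(t)$ back into the Kurdyka--\Loja{} inequality produces the autonomous differential inequality $-V'(t)\geq(V(t)-\varepsilon_0)^4/(c(V(0)-\varepsilon_0))^2$, whose integration gives $V(t)-\varepsilon_0\leq(\kappa\,t+(V(0)-\varepsilon_0)^{-3})^{-1/3}$ with $\kappa=3/(c(V(0)-\varepsilon_0))^2$; since $V(0)-\varepsilon_0\leq\mathcal{L}(\theta_0)$, this already implies $V(t)\leq\varepsilon_0+1/\sqrt[3]{\kappa\,t+1/\mathcal{L}(\theta_0)^3}$ as long as the trajectory stays in $B_s(\theta_0,R)$.

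To pin $\kappa$ down and handle escape from the ball, I would use the uniform initial-loss hypothesis to pick $B$ with $\mathbb{P}\bigl(\mathcal{L}(F_{(s,g)}(\theta_0))\leq B\bigr)\geq 1-(\delta-\delta_0)$ uniformly in $s$ (possible because $\delta>\delta_0$), set $W=B-\varepsilon_0$ and $\kappa=3/(cW)^2$, and choose the maximal radius $R=cW/(\varepsilon-\varepsilon_0)-c$ (finite because $\varepsilon>\varepsilon_0$); applying \Assumption{tangent-approx} with this $R$ gives $s_1$. For $s\succeq s_1$, a union bound gives probability at least $1-\delta$ that both the tangent approximation and $\mathcal{L}(\theta_0)\leq B$ hold, in which case $V(0)-\varepsilon_0\leq W$ and the rate constant coming out of the previous paragraph is at least the chosen $\kappa$. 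Letting $T^\star=\inf\{t:R(t)>R\}$: if $T^\star=\infty$ the previous paragraph's bound holds for all $t$; if $T^\star<\infty$, continuity gives $R(T^\star)=R$, the conservation inequality forces $V(T^\star)-\varepsilon_0\leq cW/(c+R)=\varepsilon-\varepsilon_0$, and monotonicity of $V$ along the flow (which holds globally, since $-V'\geq 0$) gives $V(t)\leq\varepsilon$ for all $t\geq T^\star$, so the criterion bound is trivially satisfied on $[T^\star,\infty)$ while the previous paragraph covers $[0,T^\star]$. The main obstacle is precisely the conservation inequality of the second paragraph: without this coupling between displacement and excess loss, a naive approach would require $R$ (and hence $s_1$) to depend on the time horizon, breaking the for-all-$t$ statement in Convergence Criterion~\ref{def:cvcriterion}.
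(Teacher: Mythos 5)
Your proposal is correct and follows essentially the same route as the paper's proof, which goes through the separable Kurdyka--\Loja{} Lemma~\ref{lem:KL-integration} and its quantitative wrapper Theorem~\ref{thm:quantitative-pao}. The only genuine (but minor) deviation is that you work directly with the Euclidean displacement $R(t)=\lVert\theta_t-\theta_0\rVert$, whereas the paper integrates against the path length $r_t=\int_0^t\lVert\partial_u\theta_u\rVert\,\mathrm{d}u\ge R(t)$; both yield the same conservation inequality $(c+R(t))\,(V(t)-\varepsilon_0)\le c\,(V(0)-\varepsilon_0)$ after integrating $R'/(c+R)\le -V'/(V-\varepsilon_0)$, and since $R(t)\le r_t$ the KL bound is even applied at its sharpest with your choice (the small price is that $R(\cdot)$ is only a.e.\ differentiable, but this is harmless as it is Lipschitz). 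Your radius $R=cW/(\varepsilon-\varepsilon_0)-c$ is the one actually compatible with the paper's identity $\tau=c/(R+c)=(\varepsilon-\varepsilon_0)/(C-\varepsilon_0)$; the theorem being existential, the $-c$ correction is immaterial. The remaining steps --- the parallelogram-identity derivation of the KL bound, reinjection to get the autonomous ODE $-V'\ge(V-\varepsilon_0)^4/(cW)^2$, the cube-root integration, exit handling via monotonicity of $V$ once it hits $\varepsilon$, and the union bound over the tangent-approximation event and the bounded-initial-loss event --- all coincide with the paper's argument.
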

\end{samepage}
\vspace{-.1in}

This statement
is a slight extension of
\citet[Proposition 4.6]{robin2022convergence}
with nearly identical proof.
The main contribution of this paper is the construction
of the sparsification in the distribution $\mathcal{N}_s$
satisfying by construction the tangent approximation condition
when lifting. 
This includes architectures such as multi-layer perceptrons,
convolutional networks, and attention-like mechanisms.

\section{Perceptron modules}\label{sec:definitions}

For consistency and shorter notations, we use the \textit{co-product}
symbol $\coprod$ for disjoint unions.
Formally, for a family of sets $(A_c)_c$ indexed by $c \in C$,
define $\coprod_{c \in C} A_c = \bigcup_{c \in C} \,\{ (c, a) \mid a \in A_c \}$.
A surjective function ${p : Y \to V}$ from a set $Y$ to a finite set $V$ induces a partition of $Y$
into $Y_v = p^{-1}(v)$ indexed by $v \in V$, i.e.\ there is a bijection
(a.k.a\ an isomorphism of sets) $Y \approx \coprod_{v \in V} Y_v$.
It is equivalent to specify $p$ or the partition of $Y$.
Given $Y$, a choice of one element in each part (called a \textit{section}%
\footnote{The nomenclature is that of algebraic topology,
where these objects are common, but no prior knowledge is assumed.
The names \textit{bundle} and \textit{section} serve only to distinguish
products / co-products with words.}) is a function
$s : V \to Y$ such that $\forall v, s(v) \in Y_v$ (i.e.\ $p \circ s = \operatorname{Id}_V$).
This is also written $s \in \prod_{v \in V} Y_v$.

\begin{definition}[Euclidean bundle over a finite set]
  Let $V$ be a finite set.
  A Euclidean bundle over $V$ is a set $Y$ and a partition $\coprod_{v \in V} Y_v$,
  such that for any $v \in V$, the set $Y_v$ has the structure of a finite-dimensional
  $\mathbb{R}$-vector-space equipped with an inner product (i.e.\ $Y_v$ is a Euclidean space).
\end{definition}
\vspace{-.05in}

The simplest example of a \RevDiff{Euclidean} bundle is $Y = V \times \mathbb{R}$, which ``attaches''
to each $v \in V$ the \RevDiff{Euclidean} space $Y_v = \mathbb{R}$.
The interesting property of these objects is that all attached vector spaces need not have
the same dimension, e.g.\ $\mathbb{R}^n \coprod \mathbb{R}^{k \times m}$
with $V = \{0,1\}$ is
a \RevDiff{Euclidean} bundle, whose sections are pairs $(u, w) \in \mathbb{R}^n \times \mathbb{R}^{k \times m}$.
We will thus use these objects to describe parameter spaces.

\begin{definition}[Pullbacks of bundles and sections]\label{def:pullback-of-bundles}
  Let $Y = \coprod_{v \in V} Y_v$ be a \RevDiff{Euclidean} bundle over $V$,
  and let $U$ be a finite set.
  Any function $\pi : U \to V$ defines a \RevDiff{Euclidean} bundle
  $\pi^*Y = \coprod_{u \in U} Y_{\pi(u)}$,
  and a function $\pi^* : \prod_{v \in V} Y_v \to \prod_{u \in U} Y_{\pi(u)}$,
  called a \textit{pullback} of sections,
  $ \pi^* : s \mapsto {\left( s_{\pi(u)} \right)}_{u \in U} $
\end{definition}
\vspace{-.05in}
We will use pullbacks to describe how to ``make copies'' of a space
with a function $\pi : U \to V$. This is illustrated
in \Fig{pullback-bundle-new}, where from a bundle $Y$
we construct a new bundle having a varying number of copies of each space.
$\pi^*$ also transports a choice $s \in \prod_{v \in V} Y_v$
to $\prod_{u \in U} Y_{\pi(u)}$ by ``copying over''.

\begin{figure}[h]
  \centering

  \begin{tikzpicture}[scale=1.0,every node/.style={scale=1.0}]
    \tikzstyle{arrow} = [draw=black!50, -{latex[color=black!50]}, shorten >=2pt, shorten <=1pt]
    \tikzstyle{node} = [circle,draw=black,minimum size=12pt,node distance=2.5cm]

    \node[node, label={above:$\mathbb{R}$}] (A) at (0, 0) {};
    \node[node, label={above:$\mathbb{R}^2$}] (B) at (1.75, 0) {};
    \node[node, label={above:$\mathbb{R}^{k \times k}$}] (C) at (4, 0) {};

    \def\vsep{-1}

    \node[node, label={below:$\mathbb{R}$}] (A0) at (-0.5, \vsep) {};
    \node[node, label={below:$\mathbb{R}$}] (A1) at (+0.5, \vsep) {};
    \node[node, label={below:$\mathbb{R}^2$}] (B0) at (1.75, \vsep) {};
    \node[node, label={below:$\mathbb{R}^{k \times k}$}] (C0) at (3.0, \vsep) {};
    \node[node, label={below:$\mathbb{R}^{k \times k}$}] (C1) at (4.0, \vsep) {};
    \node[node, label={below:$\mathbb{R}^{k \times k}$}] (C2) at (5.0, \vsep) {};

    \path[draw=black=!50, dotted] (-1.0, 0.5 * \vsep - 0.05) -- (5.5, 0.5 * \vsep - 0.05);

    \path[arrow] (A0) -- (A);
    \path[arrow] (A1) -- (A);
    \path[arrow] (B0) -- (B);
    \path[arrow] (C0) -- (C);
    \path[arrow] (C1) -- (C);
    \path[arrow] (C2) -- (C);

    \node[label={right:$Y = \coprod_{v \in V} Y_v$}] at (6, 0+0.1) {};
    \node[label={right:$\pi : U \to V$}] at (5.5, \vsep/2) {};
    \node[label={right:$(\pi^*Y) = \coprod_{u \in U} Y_{\pi(u)}$}] at (6, \vsep-0.1) {};
  \end{tikzpicture}%
  \captionsetup{justification=centering}
  \caption{Illustration of the construction of a pullback bundle, with $\pi : U \to V$ indicated by arrows.\newline The top row is a set $V$ with 3 elements pictured as circles, and bottom row is $U$ with 6 elements.\newline The vector spaces attached to each element are depicted next to each corresponding circle.}%
  \label{fig:pullback-bundle-new}
\end{figure}
\vspace{-.1in}

\subsection{Generalizing multi-layer perceptrons to arbitrary computation graphs}

We use the definition of a (finite directed) graph $G = (V,E)$
as a finite set $V$ and a set $E \subseteq V \times V$.
We write $G(-,v)$ the parents of $v \in V$ in $G$,
and $G(v,-)$ its children, see
\Appendix{notations} for a summary of notations
and
\Appendix{graph-def-and-lemmas} for details on graphs.
With the idea of describing neural networks,
we take a computation graph whose vertices indicate
variables in a program,
and specify a set of ``initial'' vertices $I$ corresponding to inputs,
and ``terminal'' vertices $T$ corresponding to outputs.
Then, attach to each vertex $v$ a vector space $\Y_v$
of ``activations at $v$'', and to each edge $(u,v)$ a vector space
$\W_{(u,v)}$ of ``weights'' of the connection from $u$ to $v$.
For more generality, we also use a vector space $\Z_{(u,v)}$
of pre-activations to represent the intermediary variables
before the non-linearity $\sigma_v$ associated to $v$.

\begin{samepage}
\begin{definition}[Perceptron module]\label{def:PMod}
  Let $B = (V, E)$ be a finite directed acyclic graph.
  \newline
  A perceptron module over $B$ is a tuple $\basemod = ((I, T), (\Y, \Z, \W), (M, \sigma))$
  composed of
  \begin{itemize}[wide=0.5em, leftmargin=*, nosep, itemsep=2pt]
    \item A subset $I \subseteq V$ such that $v \in I \Rightarrow B(-,v) = \varnothing$, and a subset $T \subseteq V$.
    \item Three \RevDiff{Euclidean} bundles,
      $\Y = \coprod_{v \in V} \Y_v$,
      and $\W = \coprod_{e \in E} \W_e$,
      and $\Z = \coprod_{e \in E} \Z_e$
    \item A family of functions ${(M_e)}_{e \in E}$ such that
      $ M_{(u,v)} : \W_{(u,v)} \times \Y_u \to \Z_{(u,v)} $
    \item A family of functions ${(\sigma_v)}_{v \in V \setminus I}$ such that
      $ \sigma_v : \prod_{u \in B(-,v)} \Z_{(u,v)} \to \Y_v $
  \end{itemize}
  We write
  $\Param{\basemod} = \prod_{e \in E} \W_e$,
  and $\PMod{B}$ the set of perceptron modules over $B$.
\end{definition}
\end{samepage}

The definition of perceptron modules (and later their lifts in \Def{lift}) is not
restricted to multi-layer perceptrons,
and is chosen very general precisely to tackle advanced computation graphs
(e.g.\ \Fig{transformer-blueprint}).
In most simple instances, the $M$ map would be a regular multiplication
$(a,b) \in \mathbb{R} \times \mathbb{R} \mapsto ab \in \mathbb{R}$,
and the $\sigma$ map an addition and non-linearity,
e.g.\ $\sigma : z \in \mathbb{R}^k \mapsto \tanh \left(\sum_i z_i \right)$,
summing over parents.

\begin{definition}[Activation map, or ``forward'' function, of a perceptron module]\label{def:activation-map}
  Let $B = (V, E)$.\newline
  A perceptron module $\basemod = ((I, T), (\Y, \Z, \W), (M, \sigma)) \in \PMod{B}$
  comes equipped with a map
  \[ {\FF{\basemod} : \prod_{e \in E} \W_e \times \prod_{v \in I} \Y_v \to \prod_{v \in V} \Y_v } \]
  with $\left( \FF{\basemod} : (w, \Phi) \mapsto f \right)$ defined inductively over $v \in V$
  by $f_v = \Phi_v$ if $v \in I$, and for $v \notin I$ as
  \[ f_v = \sigma_v \left( {\left( M_{(u,v)}\left( w_{(u,v)}, f_u \right)\right)}_{u \in B(-v)} \right) \]
\end{definition}

In general, to be usable, it is also mandatory to describe how the inputs are connected.
For instance in \Fig{rosenblatt-graph} with a function $c : I \to \{ r,g, b \}$
to ensure $x \mapsto \FF{\basemod}(w, c^*x)$ has inputs in $\mathbb{R}^{\{r,g,b\}}$, not $\mathbb{R}^I$.

With the \RevDiff{Euclidean} bundles $\W = E \times \mathbb{R}$ and $\Y = V \times \mathbb{R}$,
the restriction to $T$ of this function $(w, \Phi) \mapsto \left( \FF{\basemod}(w,\Phi) \right) \rvert_T$
has type $\mathbb{R}^m \times \mathbb{R}^d \to \mathbb{R}^k$,
where $m = \#E \in \mathbb{N}$ is the number of parameters,
$d = \#I \in \mathbb{N}$ the number of inputs, and $k = \#T \in \mathbb{N}$ the number of outputs.
The notation with products will become more interesting when
the dimension of weights are more unusual.

\begin{wrapfigure}{r}{0.4\textwidth}
  \centering
  \resizebox{\linewidth}{!}{
    \begin{tikzpicture}[shorten >=1pt, draw=black, font=\normalsize]
      \tikzstyle{square} = [regular polygon,regular polygon sides=4]
      \tikzstyle{arrow} = [->]
      \tikzstyle{diagram-shape} = [draw,minimum size=16pt,node distance=0pt]
      \tikzstyle{edge} = [square,diagram-shape]
      \tikzstyle{vertex} = [circle,diagram-shape]
      \tikzstyle{term-vertex} = [vertex, double]
      \tikzstyle{init-vertex} = [vertex, densely dashed]
      \tikzstyle{weight-spawn} = [circle, draw, minimum size=3pt,inner sep=0pt, text width=1pt]

      \node[vertex, label={[align=center, label distance=0cm,rotate=0]below:{$$}}] (V-0) at (0.0,0.0) {\small $u$};
      \node[init-vertex, minimum size=14pt] at (0.0,0.0) {};
      \node[term-vertex, label={[align=center, label distance=0cm,rotate=0]below:{$\sigma_v$}}] (V-1) at (3.6,0.0) {\small $v$};

      \node[weight-spawn] (W-0) at (1.8,1.4400000000000002) {};
      \node[edge, label={[align=center, label distance=0.1cm, rotate=0, inner sep=0pt, minimum size=5pt]below:{$M_{uv}$}}] (E-0) at (1.8,0.0) {};

      \draw [rounded corners, arrow] (V-0) -- node[midway, above] {$\mathcal{Y}_u$}   (E-0);
      \draw [rounded corners, arrow] (E-0) -- node[midway, above] {$\mathcal{Z}_{uv}$}  (V-1);
      \path[arrow] (W-0) edge node[midway, right] {$\mathcal{W}_{uv}$} (E-0);
      \path[arrow] (V-1) edge node[midway, above] {$\mathcal{Y}_v$} (5.4, 0.0);
    \end{tikzpicture}
  }%
  \caption{Generic blueprint notation}%
  \label{fig:generic-diagram}
\end{wrapfigure}
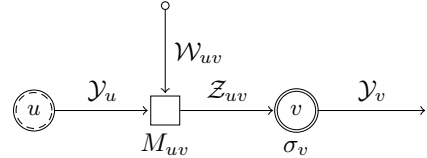
\textbf{Blueprint notation.}
Instead of depicting the acyclic graph $B$, with lots of annotations,
we use the following graphical convention.
We draw vertices as circles, edges as squares,
with horizontal arrows to indicate vertex-edge incidence,
and vertical arrows from small circles to indicate weights.
We annotate each arrow with a vector space ($\W$ from a small circle
to a square, $\Y$ from a circle to a square, $\Z$ from a square to a circle),
and each shape (circle or square)
--- except initial vertices ---
with a function (respectively $\sigma$ and $M$)
whose domain and codomain matches the spaces defined by the incoming
and outgoing arrows, cf.\ \Figure{generic-diagram}.
We depict terminal (resp.\ initial) vertices with a continuous (resp.\ dashed) double circle.

\subsection{Lifting perceptron modules through homomorphisms}

This theory describes complicated functions
by constructing a ``base'' perceptron module $\basemod$,
and then applying a ``lift'' operation (\Def{lift})
to build a more powerful
module $\liftmod{G}$, impossible to draw due to its large size.
The intuition for how to do so is depicted
in \Fig{rosenblatt} with the example of the multi-layer perceptron (MLP),
``replicating'' vertices vertically to increase the ``width''.
\Fig{rosenblatt-blueprint} depicts a perceptron module
corresponding to an MLP architecture
with two hidden layers,
with activation bundle
$\Y = V \times \mathbb{R}$ and weights $\W = E \times \mathbb{R}$,
parent summation and non-linearity $\sigma : \mathbb{R} \to \mathbb{R}$,
and maps $M_{(u,v)} : \mathbb{R} \times \mathbb{R} \to \mathbb{R}$
the multiplication $(a,b) \mapsto ab$.
No matrices appear on this blueprint.
Specifying this module,
a ``width'' for each layer, and the connection of inputs, uniquely defines the MLP.
Thus we annotate each vertex with an integer in \Fig{rosenblatt-blueprint},
to specify the width of the layer in \Fig{rosenblatt-graph}.
The parameter space after lifting becomes
$\mathbb{R}^{3 \times 5} \times \mathbb{R}^{5 \times 6} \times \mathbb{R}^{6 \times 4}$.
Note how the weight matrices $W_{(0,1)} \in \mathbb{R}^{3 \times 5}$,
$W_{(1,2)} \in \mathbb{R}^{5 \times 6}$ and $W_{(2,3)} \in \mathbb{R}^{6 \times 4}$
arise from the lift.
Similarly, for a convolution operation for which we would use this lift operation
to construct more channels, we expect a weight space $\W_{(u,v)} = V \approx \mathbb{R}^k$
to give rise to tensors $W_{(u,v)} \in V^{n_u \times n_v} \approx \mathbb{R}^{k \times n_u \times n_v}$,
corresponding to $n_u$ input-channels and $n_v$ output-channels for that convolution.

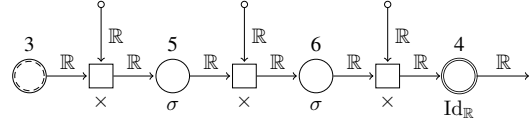
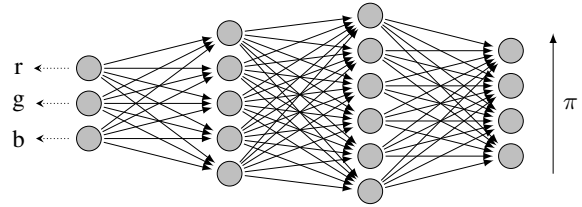
\begin{wrapfigure}{r}{0.55\textwidth}
  \centering
  \begin{subfigure}[r]{0.55\textwidth}
    \hfill{}%
    \resizebox{0.90\linewidth}{!}{
      \begin{tikzpicture}[shorten >=1pt, draw=black, font=\normalsize]
        \tikzstyle{square} = [regular polygon,regular polygon sides=4]
        \tikzstyle{arrow} = [->]
        \tikzstyle{diagram-shape} = [draw,minimum size=16pt,node distance=0pt]
        \tikzstyle{edge} = [square,diagram-shape]
        \tikzstyle{vertex} = [circle,diagram-shape]
        \tikzstyle{term-vertex} = [vertex, double]
        \tikzstyle{init-vertex} = [vertex, densely dashed]
        \tikzstyle{weight-spawn} = [circle, draw, minimum size=3pt,inner sep=0pt, text width=1pt]

        \node[vertex, label={[label distance=0cm]above:3} ,label={[align=center, label distance=0cm,rotate=0]below:{$$}}] (V-0) at (0.0,0.0) {\small };
        \node[init-vertex, minimum size=14pt] at (0.0,0.0) {};
        \node[vertex, label={[label distance=0cm]above:5} ,label={[align=center, label distance=0cm,rotate=0]below:{$\sigma$}}] (V-1) at (2.4,0.0) {\small };
        \node[vertex, label={[label distance=0cm]above:6} ,label={[align=center, label distance=0cm,rotate=0]below:{$\sigma$}}] (V-2) at (4.8,0.0) {\small };
        \node[term-vertex, label={[label distance=0cm]above:4} ,label={[align=center, label distance=0cm,rotate=0]below:{$\operatorname{Id}_{\mathbb{R}}$}}] (V-3) at (7.199999999999999,0.0) {\small };

        \node[weight-spawn] (W-0) at (1.2,1.2) {};
        \node[edge, label={[align=center, label distance=0.1cm, rotate=0, inner sep=0pt, minimum size=5pt]below:{$\times$}}] (E-0) at (1.2,0.0) {};

        \draw [rounded corners, arrow] (V-0) -- node[midway, above] {$\mathbb{R}$}   (E-0);
        \draw [rounded corners, arrow] (E-0) -- node[midway, above] {$\mathbb{R}$}  (V-1);
        \path[arrow] (W-0) edge node[midway, right] {$\mathbb{R}$} (E-0);

        \node[weight-spawn] (W-1) at (3.5999999999999996,1.2) {};
        \node[edge, label={[align=center, label distance=0.1cm, rotate=0, inner sep=0pt, minimum size=5pt]below:{$\times$}}] (E-1) at (3.5999999999999996,0.0) {};

        \draw [rounded corners, arrow] (V-1) -- node[midway, above] {$\mathbb{R}$}   (E-1);
        \draw [rounded corners, arrow] (E-1) -- node[midway, above] {$\mathbb{R}$}  (V-2);
        \path[arrow] (W-1) edge node[midway, right] {$\mathbb{R}$} (E-1);

        \node[weight-spawn] (W-2) at (6.0,1.2) {};
        \node[edge, label={[align=center, label distance=0.1cm, rotate=0, inner sep=0pt, minimum size=5pt]below:{$\times$}}] (E-2) at (6.0,0.0) {};

        \draw [rounded corners, arrow] (V-2) -- node[midway, above] {$\mathbb{R}$}   (E-2);
        \draw [rounded corners, arrow] (E-2) -- node[midway, above] {$\mathbb{R}$}  (V-3);
        \path[arrow] (W-2) edge node[midway, right] {$\mathbb{R}$} (E-2);

        \path[arrow] (V-3) edge node[midway, above] {$\mathbb{R}$} (8.399999999999999, 0.0);
      \end{tikzpicture}
    }%
    \caption{Blueprint notation of the base module $\basemod$ and lift annotation $n : \VB \to \mathbb{N}$ above vertices (see \Definition{fc-lift}) for MLPs}%
    \label{fig:rosenblatt-blueprint}
  \end{subfigure}
  \begin{subfigure}[r]{0.55\textwidth}
    \centering
    \vspace{.1in}
    \resizebox{1.00\linewidth}{!}{%
      \begin{tikzpicture}[scale=1.0,every node/.style={scale=1.0}]
        \def\size{10pt}
        \tikzstyle{arrow} = [draw=gray, -{latex[color=gray]}, shorten >=2pt, shorten <=1pt]
        \tikzstyle{inarrow} = [draw=black!90, -{stealth[color=black!90]}, shorten >=1pt, shorten <=3pt, densely dotted]
        \tikzstyle{node} = [circle,fill=black!25,draw=black,minimum size=\size,node distance=2.5cm]

        \def\firstnet{0}
        \def\gap{0.50}

        \def\layersep{2.0}

        \foreach \net/\offset/\sep in {1/\firstnet/\layersep} {
          \foreach \y in {0,...,2} {
            \node[node] (\net-I-\y) at (\offset, \gap*\y+\gap) {};
          }
          \foreach \y in {0,...,4} {
            \path node[node] (\net-H1-\y) at (\offset+1*\sep, \gap*\y) {};
          }
          \foreach \y in {0,...,5} {
            \path node[node] (\net-H2-\y) at (\offset+2*\sep, \gap*\y-\gap*0.5) {};
          }
          \foreach \y in {0,...,3} {
            \path node[node] (\net-O-\y) at (\offset+3*\sep, \gap*\y+\gap*0.5) {};
          }
        }

        \node (Name-0) at (-1.0, \gap*0+\gap) {b};
        \node (Name-1) at (-1.0, \gap*1+\gap) {g};
        \node (Name-2) at (-1.0, \gap*2+\gap) {r};

        \foreach \y in {0,...,2} {
          \path[inarrow] (1-I-\y) edge (Name-\y);
        }

        \foreach \s in {0,...,2}
        \foreach \t in {0,...,4}
        \path[arrow] (1-I-\s) edge (1-H1-\t);
        \foreach \s in {0,...,4}
        \foreach \t in {0,...,5}
        \path[arrow] (1-H1-\s) edge (1-H2-\t);
        \foreach \s in {0,...,5}
        \foreach \t in {0,...,3}
        \path[arrow] (1-H2-\s) edge (1-O-\t);

        \path[-latex] (3*\layersep+0.6, 0) edge node[xshift=.25cm] {$\pi$} (3*\layersep+0.6, \gap*4);
      \end{tikzpicture}
      }%
    \caption{Graph notation of the fully-connected lift $\liftmod{G}$, the homomorphism $\pi : G \to B$ is given by vertical projection to (a)}%
    \label{fig:rosenblatt-graph}
  \end{subfigure}%
  \caption{MLP representing functions $\mathbb{R}^{\{r,g,b\}} \to \mathbb{R}^4$}%
  \label{fig:rosenblatt}
\end{wrapfigure}
We will give a general definition of (sparse) lifts first,
the fully-connected example is recovered as a subcase later.
Intuitively, from the same base module of \Fig{rosenblatt-blueprint},
we want to construct the MLP with underlying graph $G$
(from \Figure{sparse-lift-large}) as a perceptron module.
This is done by ``copying'' nodes to construct a layer of hidden nodes in $G$
from the single node in $B$ that is copied over.
How to perform this copy (i.e.\ how many times to copy every node and how
to connect the copied nodes) is specified by $G$ and a homomorphism $\pi : G \to B$.
The vertex map $\pi : \VGG \to \VB$ indicates the number of copies
($v \in \VB$ yields nodes $\pi^{-1}(v) \subseteq \VGG$),
and the connectivity pattern
is indicated by the structure of the graph $G$ (which may be sparse)
and the induced edge map $\pi : \EGG \to \EB$.
This coincides with the transformation depicted in \Fig{rosenblatt}
(dense)
and \Fig{sparse-lift-large} (sparse).
The condition that $\pi$ is a homomorphism of graphs
ensures that this construction is well-defined.
When there are many incoming connections to a vertex $v \in \EGG$,
mapped by $\pi$ to a single edge $e \in \EB$,
such as in \Fig{rosenblatt-graph} and \Fig{sparse-lift},
the corresponding
pre-activation values $z_{(u,v)}$ for $(u,v) \in \pi^{-1}(e)$
are summed before the computation of the non-linearity.

\begin{figure}[h]
  \centering
  \begin{subfigure}[c]{.48\textwidth}
    \centering
    \begin{tikzpicture}[scale=1.0,every node/.style={scale=1.0}]
      \def\color{red!80!black}
      \def\nocolor{gray}

      \def\size{9pt}
      \def\sep{1.3}
      \def\gap{0.45}

      \tikzstyle{inarrow} = [draw=black!90, -{stealth[color=black!90]}, shorten >=1pt, shorten <=3pt, densely dotted]
      \tikzstyle{garrow} = [draw=\nocolor, -{latex[color=\nocolor]}, shorten >=2pt, shorten <=1pt, densely dashed]
      \tikzstyle{gnode} = [circle,fill=\nocolor!25,draw=\nocolor,minimum size=\size,node distance=2.5cm]
      \tikzstyle{rarrow} = [draw=\color, -{latex[color=\color]}, shorten >=2pt, shorten <=1pt]
      \tikzstyle{rnode} = [circle,fill=\color!25,draw=\color,minimum size=\size,node distance=2.5cm]

      \foreach \y in {0,...,2} {
        \node[rnode] (1-H0-\y) at (0, \gap*\y-\gap) {};
      }

      \node (Name-0) at (-1, \gap*0-\gap) {b};
      \node (Name-1) at (-1, \gap*1-\gap) {g};
      \node (Name-2) at (-1, \gap*2-\gap) {r};

      \foreach \y in {0,...,2} {
        \path[inarrow] (1-H0-\y) edge (Name-\y);
      }

      \path node[rnode] (1-H1-0) at (+1*\sep, \gap*0-0.9) {};
      \path node[rnode] (1-H1-1) at (+1*\sep, \gap*1-0.9) {};
      \path node[gnode] (1-H1-2) at (+1*\sep, \gap*2-0.9) {};
      \path node[rnode] (1-H1-3) at (+1*\sep, \gap*3-0.9) {};
      \path node[rnode] (1-H1-4) at (+1*\sep, \gap*4-0.9) {};

      \path node[gnode] (1-H2-0) at (+2*\sep, \gap*0-1.125) {};
      \path node[gnode] (1-H2-1) at (+2*\sep, \gap*1-1.125) {};
      \path node[rnode] (1-H2-2) at (+2*\sep, \gap*2-1.125) {};
      \path node[gnode] (1-H2-3) at (+2*\sep, \gap*3-1.125) {};
      \path node[gnode] (1-H2-4) at (+2*\sep, \gap*4-1.125) {};
      \path node[rnode] (1-H2-5) at (+2*\sep, \gap*5-1.125) {};

      \path node[gnode] (1-H3-0) at (+3*\sep, \gap*0-0.675) {};
      \path node[gnode] (1-H3-1) at (+3*\sep, \gap*1-0.675) {};
      \path node[rnode] (1-H3-2) at (+3*\sep, \gap*2-0.675) {};
      \path node[gnode] (1-H3-3) at (+3*\sep, \gap*3-0.675) {};

      \path[rarrow] (1-H0-0) edge (1-H1-0);
      \path[garrow] (1-H0-0) edge (1-H1-2);

      \path[rarrow] (1-H0-1) edge (1-H1-0);
      \path[rarrow] (1-H0-1) edge (1-H1-1);
      \path[rarrow] (1-H0-1) edge (1-H1-4);

      \path[rarrow] (1-H0-2) edge (1-H1-1);
      \path[garrow] (1-H0-2) edge (1-H1-2);
      \path[rarrow] (1-H0-2) edge (1-H1-3);
      \path[rarrow] (1-H0-2) edge (1-H1-4);

      \path[garrow] (1-H1-0) edge (1-H2-0);
      \path[rarrow] (1-H1-0) edge (1-H2-2);
      \path[garrow] (1-H1-0) edge (1-H2-4);
      \path[garrow] (1-H1-1) edge (1-H2-1);
      \path[rarrow] (1-H1-1) edge (1-H2-2);
      \path[garrow] (1-H1-2) edge (1-H2-4);
      \path[rarrow] (1-H1-3) edge (1-H2-2);
      \path[rarrow] (1-H1-3) edge (1-H2-5);
      \path[garrow] (1-H1-4) edge (1-H2-4);
      \path[rarrow] (1-H1-4) edge (1-H2-5);

      \path[garrow] (1-H2-0) edge (1-H3-0);
      \path[garrow] (1-H2-0) edge (1-H3-1);
      \path[garrow] (1-H2-1) edge (1-H3-1);
      \path[rarrow] (1-H2-2) edge (1-H3-2);
      \path[garrow] (1-H2-3) edge (1-H3-3);
      \path[garrow] (1-H2-4) edge (1-H3-1);
      \path[garrow] (1-H2-4) edge (1-H3-3);
      \path[rarrow] (1-H2-5) edge (1-H3-2);
      \path[garrow] (1-H2-5) edge (1-H3-3);
    \end{tikzpicture}
    \caption{Sparse lift similar to the multi-layer perceptron.}%
    \label{fig:sparse-lift-large}
  \end{subfigure}
  \hfill
  \begin{subfigure}[c]{.48\textwidth}
    \centering

    \begin{tikzpicture}[scale=1.0,every node/.style={scale=1.0}]
      \def\color{red!80!black}
      \def\nocolor{black}

      \def\size{9pt}
      \def\sep{1.3}
      \def\gap{0.45}

      \tikzstyle{inarrow} = [draw=black!90, -{stealth[color=black!90]}, shorten >=1pt, shorten <=3pt, densely dotted]
      \tikzstyle{garrow} = [draw=gray, -{latex[color=gray]}, shorten >=2pt, shorten <=1pt]
      \tikzstyle{gnode} = [circle,fill=\nocolor!25,draw=\nocolor,minimum size=\size,node distance=2.5cm]
      \tikzstyle{rarrow} = [draw=\color!80, -{latex[color=\color]}, shorten >=2pt, shorten <=1pt]
      \tikzstyle{rnode} = [circle,fill=\color!25,draw=\color,minimum size=\size,node distance=2.5cm]

      \foreach \y in {0,...,2} {
        \node[gnode] (1-H0-\y) at (0, \gap*\y-\gap) {};
      }

      \node (Name-0) at (-1, \gap*0-\gap) {b};
      \node (Name-1) at (-1, \gap*1-\gap) {g};
      \node (Name-2) at (-1, \gap*2-\gap) {r};

      \foreach \y in {0,...,2} {
        \path[inarrow] (1-H0-\y) edge (Name-\y);
      }

      \path node[gnode] (1-H1-0) at (+1*\sep, \gap*0-0.675) {};

      \path node[gnode] (1-H1-1) at (+1*\sep, \gap*1-0.675) {};

      \path node[gnode] (1-H1-2) at (+1*\sep, \gap*2-0.675) {};

      \path node[gnode] (1-H1-3) at (+1*\sep, \gap*3-0.675) {};

      \path node[gnode] (1-H2-0) at (+2*\sep, \gap*0-0.225) {};

      \path node[gnode] (1-H2-1) at (+2*\sep, \gap*1-0.225) {};

      \path node[gnode] (1-H3-0) at (+3*\sep, \gap*0-0.0) {};

      \path[garrow] (1-H0-0) edge (1-H1-0);

      \path[garrow] (1-H0-1) edge (1-H1-0);

      \path[garrow] (1-H0-1) edge (1-H1-1);

      \path[garrow] (1-H0-1) edge (1-H1-3);

      \path[garrow] (1-H0-2) edge (1-H1-1);

      \path[garrow] (1-H0-2) edge (1-H1-2);

      \path[garrow] (1-H0-2) edge (1-H1-3);

      \path[garrow] (1-H1-0) edge (1-H2-0);

      \path[garrow] (1-H1-1) edge (1-H2-0);

      \path[garrow] (1-H1-2) edge (1-H2-0);

      \path[garrow] (1-H1-2) edge (1-H2-1);

      \path[garrow] (1-H1-3) edge (1-H2-1);

      \path[garrow] (1-H2-0) edge (1-H3-0);

      \path[garrow] (1-H2-1) edge (1-H3-0);

    \end{tikzpicture}%
    \caption{Smaller sparse lift with the same base module $\basemod$ as above.
    This smaller network (b) is ``contained'' (\textit{strongly}, extracted
    by fibration, in the vocabulary of \Sec{convergence}) in the larger network (a).
    The inclusion is denoted by a (dark) red coloring of the nodes in (a).
    }%
    \label{fig:sparse-lift-small}
  \end{subfigure}
  \caption{Sparse lift graphs, denoted $G$ in the more general \Def{lift},
  not covered by \Def{fc-lift}.
  }%
  \label{fig:sparse-lift}
\end{figure}
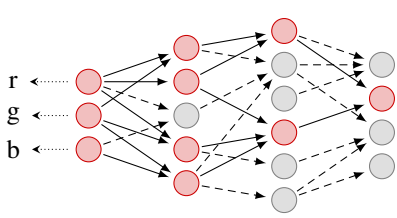
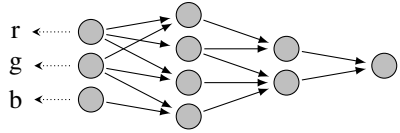

\begin{definition}[Lift of perceptron modules]\label{def:lift}
  Let $G = (\VGG, \EGG)$ and $B = (\VB, \EB)$ be graphs.
  A homomorphism of graphs $\pi : G \to B$
  defines a function $\pi^* : \PMod{B} \to \PMod{G}$ called ``lift'',
  %
  \[ \pi^* : \Big((I, T), (\Y, \Z, \W), (M, \sigma)\Big) \mapsto
  \Big((\pi^{-1}(I), \pi^{-1}(T)), (\pi^* \Y, \pi^* \Z, \pi^* \W), (\bar{M}, \bar{\sigma}) \Big)\]
  where for $e \in \EGG$, $\bar{M}_e = M_{\pi(e)}$,
  and for $v \in \VGG \setminus \pi^{-1}(I)$,
  $\> \bar{\sigma}_v : \prod_{u \in G(-,v)} \Z_{\pi(u,v)} \to \Y_{\pi(v)}$ is
  \[ \bar{\sigma}_v : z \mapsto \sigma_{\pi(v)}\left( {\left( \sum_{u \in \pi^{-1}(a) \cap G(-,v)} z_{(u,v)} \right)}_{a \in B(-,\pi(v))} \right) \]
  %
  %
  Let $\CC$ be a finite set and $p_\CC : \CC \to I$.
  Define $\LiftPMod{\basemod, \CC}$ as the set of tuples
  $(G, \pi, c)$
  where ${\pi : G \to B}$ is a graph homomorphism,
  and $c : \pi^{-1}(I) \to \CC$ an injection such that $p_\CC \circ c = \pi\rvert_{\pi^{-1}(I)}$.

  We extend the notation of forward functions
  to $\liftmod{G} = (G, \pi, c) \in \LiftPMod{\basemod, \CC}$
  by defining the function
  ${\FF{\liftmod{G}} : \Param{\liftmod{G}} \times \prod_{u \in \CC} \Y_{p_\CC(u)} \to \prod_{v \in \VGG} \Y_v}$
  as $\FF{\liftmod{G}} : (w,x) \mapsto \FF{\pi^*\basemod}(w, c^* x)$.
\end{definition}

Recall that
the bundle $\Y$ is equivalent to a family $(\Y_v)_{v \in \VB}$ of
\RevDiff{Euclidean} spaces, so the
the pullback $\pi^* \Y = \coprod_{v \in \VGG} \Y_{\pi(v)}$
is equivalent to a family $((\pi^* \Y)_v = \Y_{\pi(v)})_{v \in \VGG}$,
consistent with the intuition of ``copying'' activation spaces when replicating nodes
(and similarly for $\Z$ and $\W$ indexed by edges).
Validity of this definition
(e.g.\ well-definition of the $z$-sum)
is easy to check,
details in \Appendix{lift-well-defined}.
Our later convergence theorem will only apply to particular
sparse lifts, but most
common deep learning architectures
use a lift which is maximally connected, yielding dense weight tensors.

\begin{definition}[Fully-connected lift]\label{def:fc-lift}
  Let $B = (\VB, \EB)$ be a directed acyclic graph.\newline
  Let $n : \VB \to \mathbb{N}$ be a function, called the ``lifting dimension''.
  Define the graph $C = (\VC, \EC)$ as
  \[\begin{aligned}
    \VC = \{ \, (b, i) \mid b \in \VB, i \in [n_b] \, \}
    \quad\text{and}\quad
    \EC = \{ \, (u,v) \in \VC \times \VC \mid (\pi(u), \pi(v)) \in \EB \, \}
  \end{aligned}\]
  with $\pi : \VC \to \VB$ projection to the first coordinate.
  We call $C$ the fully-connected lift of $B$ along $n$.

  Similarly if $\basemod \in \PMod{B}$,
  we call $\pi^*\basemod \in \PMod{C}$ the fully-connected
  lift of $\basemod$ along $n$.
  \newline
  If $\CC = \coprod_{b \in \IB} [ n_b ]$
  (i.e.\ an order of inputs has been chosen),
  then $(C, \pi, \operatorname{Id}_\CC) \in \LiftPMod{\basemod, \CC}$.
\end{definition}

We can check that the forward function of the lifted module
from \Fig{rosenblatt} is
the usual perceptron forward. Indeed, for a vertex $v \in \VB \setminus \IB$,
and $i \in [n_v]$, we get the activation $f_{(v,i)} \in \Y_v = \mathbb{R}$ as
\[\begin{aligned}
  f_{(v,i)} = \sigma_v \left( \sum_{(u,j) \in C(-,(v,i))} M_{(u,v)} \left( w_{(u,j), (v,i)}, f_{(u,j)} \right) \right)
  = \sigma_v \left( \sum_{j \in [n_u]} f_{(u,j)} \, w_{(u,j), (v,i)} \right)
\end{aligned}\]
with $u$ the unique parent of $v$ in $B$ in the last part.
In matrix notation,
$
f_v = \sigma_v \left( f_u \cdot W_{(u,v)} \right) \in \mathbb{R}^{n_v}
$


\paragraph{A more interesting example.}
Using the operation
$\operatorname{AddSoftMul} : (\mathbb{R}^n \times \mathbb{R}^n) \times \mathbb{R}^{n \times n} \to \mathbb{R}^n$,
$((x,y), A) \mapsto x + \operatorname{softmax}(A) \cdot y$,
where the softmax is taken over each row (and potentially masked),
we recover modules similar to
the transformer blocks of
\citet{vaswani2017attention} (see \Figure{transformer-blueprint}).

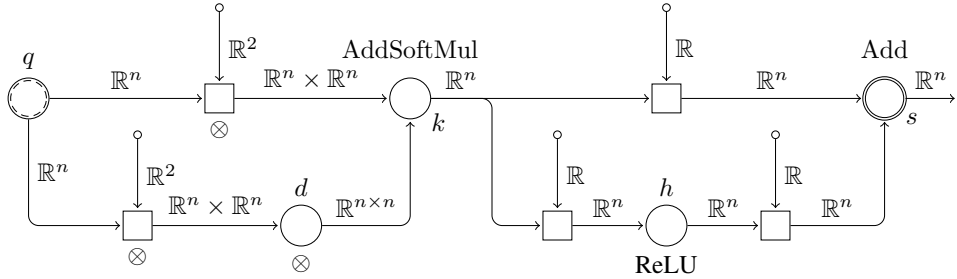
\begin{figure}[h]
  \centering
  \resizebox{.90\linewidth}{!}{
    \begin{tikzpicture}[shorten >=1pt, draw=black, font=\normalsize]
      \tikzstyle{square} = [regular polygon,regular polygon sides=4]
      \tikzstyle{arrow} = [->]
      \tikzstyle{diagram-shape} = [draw,minimum size=16pt,node distance=0pt]
      \tikzstyle{edge} = [square,diagram-shape]
      \tikzstyle{vertex} = [circle,diagram-shape]
      \tikzstyle{term-vertex} = [vertex, double]
      \tikzstyle{init-vertex} = [vertex, densely dashed]
      \tikzstyle{weight-spawn} = [circle, draw, minimum size=3pt,inner sep=0pt, text width=1pt]

      \node[vertex, label={[label distance=0cm]above:$q$} ,label={[align=center, label distance=0cm,rotate=0]below:{$$}}] (V-0) at (-1.125,0.0) {\small };
      \node[init-vertex, minimum size=14pt] at (-1.125,0.0) {};
      \node[vertex, label={[label distance=-0.1cm]-25:$k$} ,label={[align=center, label distance=0.15cm,rotate=0]above:{$\operatorname{AddSoftMul}$}}] (V-1) at (4.125,0.0) {\small };
      \node[vertex, label={[label distance=0cm]above:$d$} ,label={[align=center, label distance=0cm,rotate=0]below:{$\otimes$}}] (V-2) at (2.625,-1.75) {\small };
      \node[vertex, label={[label distance=0cm]above:$h$} ,label={[align=center, label distance=0cm,rotate=0]below:{ReLU}}] (V-3) at (7.6499999999999995,-1.75) {\small };
      \node[term-vertex, label={[label distance=-0.1cm]-25:$s$} ,label={[align=center, label distance=0.15cm,rotate=0]above:{$\operatorname{Add}$}}] (V-4) at (10.649999999999999,0.0) {\small };

      \node[weight-spawn] (W-0) at (1.5,1.25) {};
      \node[edge, label={[align=center, label distance=0.1cm, rotate=0, inner sep=0pt, minimum size=5pt]below:{$\otimes$}}] (E-0) at (1.5,0.0) {};

      \draw [rounded corners, arrow] (V-0) -- node[midway, above] {$\mathbb{R}^n$}   (E-0);
      \draw [rounded corners, arrow] (E-0) -- node[midway, above] {$\mathbb{R}^n \times \mathbb{R}^n$}  (V-1);
      \path[arrow] (W-0) edge node[midway, right] {$\mathbb{R}^2$} (E-0);

      \node[weight-spawn] (W-1) at (0.375,-0.5) {};
      \node[edge, label={[align=center, label distance=0.1cm, rotate=0, inner sep=0pt, minimum size=5pt]below:{$\otimes$}}] (E-1) at (0.375,-1.75) {};

      \draw [rounded corners, arrow] (V-0) -- node[midway, right] {$\mathbb{R}^n$}  (-1.125,-1.75) -- (E-1);
      \draw [rounded corners, arrow] (E-1) -- node[midway, above] {$\mathbb{R}^n \times \mathbb{R}^n$}  (V-2);
      \path[arrow] (W-1) edge node[midway, right] {$\mathbb{R}^{2}$} (E-1);

      \draw [rounded corners, arrow] (V-2) -- node[midway, above] {$\mathbb{R}^{n \times n}$} (4.125,-1.75) -- (V-1);
      \node[weight-spawn] (W-3) at (7.6499999999999995,1.25) {};
      \node[edge, label={[align=center, label distance=0.1cm, rotate=0, inner sep=0pt, minimum size=5pt]below:{}}] (E-3) at (7.6499999999999995,0.0) {};

      \draw [rounded corners, arrow] (V-1) --   (E-3);
      \draw [rounded corners, arrow] (E-3) -- node[midway, above] {$\mathbb{R}^n$}  (V-4);
      \path[arrow] (W-3) edge node[midway, right] {$\mathbb{R}$} (E-3);

      \node[weight-spawn] (W-4) at (6.1499999999999995,-0.5) {};
      \node[edge, label={[align=center, label distance=0.1cm, rotate=0, inner sep=0pt, minimum size=5pt]below:{}}] (E-4) at (6.1499999999999995,-1.75) {};

      \draw [rounded corners, arrow] (V-1) -- node[midway, above] {$\mathbb{R}^n$}  (5.25,0.0) -- (5.25,-1.75) -- (E-4);
      \draw [rounded corners, arrow] (E-4) -- node[midway, above] {$\mathbb{R}^n$}  (V-3);
      \path[arrow] (W-4) edge node[midway, right] {$\mathbb{R}$} (E-4);

      \node[weight-spawn] (W-5) at (9.149999999999999,-0.5) {};
      \node[edge, label={[align=center, label distance=0.1cm, rotate=0, inner sep=0pt, minimum size=5pt]below:{}}] (E-5) at (9.149999999999999,-1.75) {};

      \draw [rounded corners, arrow] (V-3) -- node[midway, above] {$\mathbb{R}^n$}   (E-5);
      \draw [rounded corners, arrow] (E-5) -- node[midway, above] {$\mathbb{R}^n$} (10.649999999999999,-1.75) -- (V-4);
      \path[arrow] (W-5) edge node[midway, right] {$\mathbb{R}$} (E-5);

      \path[arrow] (V-4) edge node[midway, above] {$\mathbb{R}^n$} (11.649999999999999, 0.0);
    \end{tikzpicture}
  }%
  \captionsetup{justification=centering}
  \caption{
    $\operatorname{MixingTransformer} : X \in \mathbb{R}^{n \times q} \mapsto Y \in \mathbb{R}^{n \times s}$,
    weights (top-to-bottom, left-to-right):
    \newline
    $(M_A, W_V) \in \mathbb{R}^{q \times k} \times \mathbb{R}^{q \times k}$,
    $M_B \in \mathbb{R}^{k \times s}$,
    $(W_Q, W_K) \in \mathbb{R}^{q \times d} \times \mathbb{R}^{q \times d}$,
    $H_0 \in \mathbb{R}^{k \times h}$,
    $H_1 \in \mathbb{R}^{h \times s}$,
    \newline
    $
    Z = X \cdot M_A + \operatorname{softmax}\left((X \cdot W_Q) \cdot {(X \cdot W_K)}^T \right) \cdot (X \cdot W_V)
    ,\quad
    Y = Z \cdot M_B + \sigma(Z \cdot H_0) \cdot H_1
    $
    \newline
    The first block $(X \mapsto Z)$ is a usual self-attention when $q = k$
    and the mixing matrix is $M_A = I_q$,
    the second block $(Z \mapsto Y)$ is a residual MLP block when $k = s$
    and the mixing matrix is $M_B = I_k$.
  }%
  \label{fig:transformer-blueprint}
\end{figure}

\section{Random sparse lifts and convergence of sparse perceptrons}\label{sec:convergence}

We start by reviewing the proof ideas
for the convergence theorem, to give intuition for the definitions.

\textbf{Split setup \& easy recombination.} 
To analyse a 
large neural network with linear last layer,
we split it into a composition,
first of a perceptron module
computing deep features
(whose number will scale) followed by a linear recombination function
(whose input dimension will scale, but output dimension remains fixed).
With fixed features, learning the last layer is easy
by convexity, in particular if ``optimal features''
are present then the global optimum of the last layer
must have optimal loss.

\textbf{Coverage property.} If the perceptron module representing the features is
sufficiently large, it should ``contain'' a sub-network computing
near-optimal features. The usual way to understand this term for a
directed graph $G = (\VGG, \EGG)$ is to select a ``subgraph'', which
is a graph $S = (\VS, \ES)$ such that $\VS \subseteq \VGG$ and $\ES \subseteq \EGG$.
This is for instance the way such a ``sub-network'' is understood
in the lottery ticket hypothesis \citep{frankle2018lottery}.
In the language of graph theory, this corresponds to an extraction
by \textit{homomorphism} (because the inclusion $\iota : \VS \to \VGG$
induces a homomorphism of graphs from $S$ to $G$).
These ``sub-networks'' are in general very different from the larger
network, and do not necessarily compute the same functions
(it is the lottery ticket hypothesis that they actually do learn
similar functions, by a yet-unexplained mechanism).
There is in graph theory (and algebraic topology more generally)
a different way to extract what we could call a ``strong sub-graph'',
by \textit{fibration} instead of \textit{homomorphism}.
The inclusion map $\iota : \VS \to \VGG$ is a fibration of graphs
if the following two conditions hold:
first, if $(u,v) \in \VS \times \VS$ and $(u,v) \in \EGG$ then $(u,v) \in \ES$
(no ``removal of edges'');
also, if $v \in \VS$, then $G(-,v) \subseteq \VS$ (no ``removal of parents'').
Intuitively, if $S$ and $G$ are viewed as computation graphs,
then removing an incoming edge or removing a parent will change
the computation, while removing a child does not.
Thus if $S$ is ``strong sub-graph'' of $G$, extracted by
\textit{fibration}, we can extract the subgraph then
perform a computation in the subgraph and isolate an activation,
or we could equivalently
perform the computation in the larger graph and then extract
the activations associated to the subgraph and isolate
the activation of the target vertex.
When multi-layer perceptrons are constructed with dense linear layers,
a network of width $2n$ does not contain (in the sense of
``strong sub-graphs'' extracted by \textit{fibration})
a sub-network of width $n$. This is because the number of terms in each
sum is $2n$ in the large nework, one for each parent of the node considered,
which is fully connected to every node of the previous layer.
However, fibrations preserve the number of parents,
and the smaller network has $n$ terms in each sum, again
one for each parent, but there are only $n$ such parents at width $n$.
Nonetheless, we can easily construct networks
that do have this property of containing smaller sub-networks by
construction, by simply sparsifying
the dense layers, similarly to a dropout masking operation that would
be performed at initialization and whose mask is held constant
throughout training.
In these sparse networks, increasing model size cannot hurt (provably)
because a large network ``contains'' (strongly) a small sub-network
that is computing the same thing as a smaller model.
The coverage property that we need is only very slightly stronger:
there needs to be a certain fraction (or ``volume'') of sub-networks that are
near-optimal, not just one, the precise details are in the formal statements
of \Appendix{cover-to-tangent-approximation}.

\textbf{Local stability of coverage.}
The existence of sub-networks computing interesting features at
initialization is insufficient for a convergence proof, because
we intend to train the weights of the large network, and thus
we expect the weights to move and have no guarantee that the
sub-network will continue to perform a similar computation throughout
training. The last ingredient we require is that the large
network has a low ``outgoing degree'' (i.e.\ each vertex
in the directed graph has few children),
such that there
is no vertex whose activation is re-used too much by later vertices,
no single point of failure that could perturb the entire network
if its weights move slightly too much in the wrong direction.
Coupled with a covering in volume, this low-degree property
implies that there is an entire region of parameters around initialization
where we can guarantee that there is a sub-network
with near-optimal features, rather than only at the single point of initialization.
This implies that at initialization of a very large sparse network,
we expect a large number of sub-networks to be near-optimal,
and throughout training the number of near-optimal sub-networks
can only decrease slowly (due to the low outgoing degree):
if a sub-network is perturbed enough to no longer be close to
an optimal feature, then the last layer can simply switch
the associated weight of this sub-network to a similar but
un-perturbed sub-network to recover optimality
(acceptable due to easy recombination).

\subsection{Random sparse lifts of perceptrons modules}

From now on, we will only ever consider a single base module
with various lifts.
Therefore, let $B = (\VB, \EB)$ be a finite directed acyclic
graph. Let $\basemod \in \PMod{B}$ be a perceptron module,
and write its components $\basemod = ((\IB, \TB), (\Y, \Z, \W), (M, \sigma))$.
We will also keep fixed the number of lifted inputs
(comparing modules with different inputs is not very useful anyway).
Let $n^0 : \IB \to \mathbb{N}$,
and $\CC = \coprod_{b \in \IB} [n^0_b]$
(without loss of generality on a finite set $\CC = \coprod_{b \in \IB} \CC_b$
after ordering $\CC_b \approx [n^0_b]$).

The first ingredient needed is the presence of a linear last layer
with fixed output dimension even when the number of nodes in
the perceptron module computing the ``deep features'' tends to infinity.

\begin{definition}[Perceptron with linear readout]\label{def:linreadout}
  Let $\liftmod{G} = (G, \pi, c) \in \LiftPMod{\basemod, \CC}$,
  and $k \in \mathbb{N}$.
  \newline
  Let $\LinReadout{\liftmod{G}, k} = \prod_{v \in T(\liftmod{G})} {\Y_{\pi(v)}}^k$
  with
$
\LinReadout{\liftmod{G}, k} \times
\prod_{v \in V(\liftmod{G})} \Y_{\pi(v)}
\to \mathbb{R}^k
$
  \vspace{-.06in}
  \[
    (a, x) \mapsto a \cdot x
    =
  {\left[ \sum_{b \in \TB} \sqrt{\frac{1}{\# \pi^{-1}(b)}} \sum_{v \in \pi^{-1}(b)} {\left\langle a_{i,v}, x_v \right\rangle}_{\Y_{b}} \right]}_{i \in [k]}
  \vspace{-.08in}
  \]
\end{definition}

We can now describe the sparsification procedure.
Note that the following definition is equivalent to using a fully-connected lift
and a random dropout mask $m$, drawn at initialization and fixed during training.
We expect the probability distribution chosen on weights $\W_e$
to be a normal distribution
in all applications (thus write it $\mathcal{N}$),
but we will only require the assumption that it has full support.

\begin{definition}[Random sparse lift]\label{def:random-sparse-lift}
  Let $n : \VB \to \mathbb{N}$ be such that $n_b = n^0_b$ for all $b \in \IB$.
  \linebreak[3]
  Let $\lambda : \EB \to \mathbb{R}_+$,
  and for every $e \in \EB$, let $\mathcal{N}_e$ be a probability distribution on $\W_e$.

  Let $C = (\VC, \EC)$ be the fully-connected lift of $B$ along $n$, with homomorphism $\pi : C \to B$.
  \newline
  Let $(m : \EC \to \{0,1\}, \> w : \EC \to \pi^*\W)$
  be independent random variables
  with distributions
  $m_{(u,v)} \sim \operatorname{Bern}(\lambda_{\pi(u,v)} / n_{\pi(u)})$
  Bernoulli
  for any $(u,v) \in \EC$,
  and with $w_e \sim \mathcal{N}_{\pi(e)}$ for $e \in \EC$.

  The random sparse lift of $\basemod$
  along $(n, \lambda)$
  with distribution $\mathcal{N}$
  is the tuple $(\liftmod{G}, w\rvert_\EG)$,
  where $G = (\VG, \EG)$ is the random graph with $\VG = \VC$, $\EG = \{ e \in \EC \mid m_e = 1 \}$,
  $\liftmod{G}$ is the lifted perceptron module
  $\liftmod{G} = (G, \pi, \operatorname{Id}_\CC) \in \LiftPMod{\basemod, \CC}$
  and
  $w\rvert_\EG \in \Param{\liftmod{G}}$.
\end{definition}
\vspace{-.05in}

%
%
For an architecture for which
universal approximation has been established
(i.e.\ the approximation ``number'' $h_0(f^\star, \varepsilon)$ is finite),
we show that
we can construct random sparse lifts for which
the learning ``number'' $h_1(f^\star, \varepsilon, \delta)$ is also finite
for failure probabilities $\delta > 0$ arbitrarily close to zero.
However, in general this ``number'' is not an integer but
an element of $\mathbb{N}^k$, where $k \in \mathbb{N}$ is the number of
scaling dimensions (one per vertex of $B$).
We refer to the network approximating the target function
as the \textit{witness network}, since it is a witness for the
universal approximation theorem at a given precision.
\vspace{-.05in}

\subsection{Convergence of sparse perceptrons}\label{sec:convergence-of-sparse-perceptrons}

Let $\mathcal{X} \subseteq \prod_{u \in \CC} \Y_{p_\CC(u)}$
be compact,
$\mathcal{D}$ a distribution on $\mathcal{X}$,
and let $f^\star : \mathcal{X} \to \mathbb{R}^k$ with
$\lVert f^\star \rVert_\mathcal{D}^2 < \infty$.
%
For any $\liftmod{G} \in \LiftPMod{\basemod, \CC}$,
define the loss $\mathcal{L}[\liftmod{G}] : \Param{\liftmod{G}} \times \LinReadout{\liftmod{G}, k} \to \mathbb{R}_+$
as
\[ \mathcal{L}[\liftmod{G}] : (w, a) \mapsto \mathbb{E}_{x \sim \mathcal{D}} \left[ \lVert a \cdot \FF{\liftmod{G}}(w, x) - f^\star(x) \rVert_2^2 \right] \]
Let $\lambda : \EB \to \mathbb{R}_+^*$.
For $e \in \EB$, let $\mathcal{N}_e$ be a full-support distribution
on $\W_e$.
Additionally, define ${\mathcal{S}_0 = \{ n : \VB \to \mathbb{N} \mid \forall b \in \IB, n_b = n^0_b,
\, \forall (a,b) \in \EB, n_b \geq n_a \log n_a \}}$,
where $n^0_b = \#p_\CC^{-1}(b)$.
We assume $\basemod$ has $(M, \sigma)$ maps that are continuously differentiable,
to ensure gradients exist and have
finite norm.
Finally, we assume that for $(a,b) \in \EB$ if $a \in \IB$ then $\lambda_{(a,b)} \leq \min \{n^0_a / 2, (n^0_a/3)^{1/2}\}$.

\begin{theorem}[Probable approximate correctness of random sparse lifts under gradient flow]%
  \label{thm:convergence}
  \hfill{}\break{}%
  Let $(\varepsilon, \delta) \in \mathbb{R}_+^* \times \interval[open left]{0}{1}$.
  If there exists
  $\liftmod{G}^\star \in \LiftPMod{\basemod, \CC}$
  a lifted perceptron module,
  and parameters
  $(w^\star, a^\star) \in \Param{\liftmod{G}^\star} \times \LinReadout{\liftmod{G}^\star, k}$
  such that $\mathcal{L}[\liftmod{G}^\star](w^\star, a^\star) < \varepsilon$,
  then there
  exists $\kappa \in \mathbb{R}_+^*$ and
  $N_1 : \VB \to \mathbb{N}$ (a size threshold)
  such that the following proposition holds:

  For all $n \in \mathcal{S}_0$ such that $n \succeq N_1$,
  with probability at least $(1 - \delta)$ over 
  $(\liftmod{G}, w)$
  a random sparse lift of $\basemod$ along $(n, \lambda)$ with distribution
  $\mathcal{N}$,
  and $a = 0 \in \LinReadout{\liftmod{G},k}$,
  the pair
  $(\mathcal{L}[\liftmod{G}], (w, a))$
  satisfies Convergence Criterion~\ref{def:cvcriterion}
  with limit error $\varepsilon$ and constant $\kappa$.
\end{theorem}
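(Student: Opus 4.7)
The natural route is to reduce this to Theorem~\ref{thm:pao-by-tangent-approximation}, so the work is to verify its two hypotheses for the distribution $\mathcal{N}_s$ on $(\liftmod{G}, w, a = 0)$ defined by a random sparse lift. Uniform boundedness of the initial loss is immediate: at $a = 0$ the output is the zero vector regardless of $w$ and $x$, so $\mathcal{L}[\liftmod{G}](w, 0) = \lVert f^\star \rVert_\mathcal{D}^2$ deterministically, independent of $n$. The real task is to establish Assumption~\ref{ass:tangent-approx} in the combined parameter space $\Param{\liftmod{G}} \times \LinReadout{\liftmod{G}, k}$, which is where the three ingredients flagged in the section preamble (split/recombine, coverage, local stability) must be combined.

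\textbf{Coverage via fibration-extracted copies of the witness.} I would first show that, with high probability over $(G, w)$, the random lift contains many disjoint sub-perceptron-modules obtained by fibration-extraction from $G$ that are isomorphic to $\liftmod{G}^\star$. Since the edge-retention probability is $\lambda_{\pi(u,v)} / n_{\pi(u)}$, the expected number of vertex-disjoint fibration copies of $\liftmod{G}^\star$ rooted at a given tuple of vertices in $G$ is of constant order in the product of the $\lambda_e$'s, so the total expected count scales multiplicatively with the $n_b$'s (for $b \notin \IB$). The scaling assumption $n_b \geq n_a \log n_a$ in $\mathcal{S}_0$ and the input-side bound $\lambda_{(a,b)} \leq \min\{n^0_a/2, (n^0_a/3)^{1/2}\}$ are precisely what let a second-moment argument concentrate this count and simultaneously control parent-multiplicities, so that a positive fraction of vertices of each type is the root of a valid witness copy with high probability. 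Because each $\mathcal{N}_e$ has full support and $(M, \sigma)$ are continuously differentiable, one can fix a neighbourhood $U^\star$ of $w^\star$ in $\Param{\liftmod{G}^\star}$ so small that any weights therein produce features within $\eta$ of the witness features in $L^2(\mathcal{D})$; the fraction of fibration copies whose restricted weight $w\rvert_{S}$ falls in $U^\star$ is then also bounded below by a positive constant, independently of $n$.

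\textbf{Stability under $r$-perturbation of the weights.} For a perturbation $w' = w + \Delta w$ with $\lVert \Delta w \rVert \leq r$, a fibration copy $S$ becomes ``broken'' only if $\lVert \Delta w\rvert_{S} \rVert$ exceeds the radius of $U^\star$. Here the low outgoing degree of $G$ enters: each edge of $B$ lifts to edges whose total number concentrates around $\lambda_e \cdot n_{\pi\text{-head}}$, and the average per-copy mass of $\Delta w$ is $O(r / (\text{number of copies}))$; a Markov / union bound over copies then shows only an $O(r)$ fraction are broken, leaving a positive fraction of intact near-optimal copies throughout the $R$-ball. Throughout this, continuity of activations in $w$ (from the $C^1$ hypothesis on $M, \sigma$) and compactness of $\mathcal{X}$ propagate the feature-approximation bound.

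\textbf{Tangent direction via linear recombination.} Given a surviving set $\mathcal{I}$ of intact copies at the perturbed weights $(w', a')$, the tangent direction $u$ is chosen to lie purely in the $\LinReadout{}$ coordinates. Concretely, $u_a$ is supported on terminal vertices of copies in $\mathcal{I}$, with the readout coefficients set to $a^\star$ on each copy minus the current $a'$-coordinate there; the $1/\sqrt{\#\pi^{-1}(b)}$ normalisation in Definition~\ref{def:linreadout}, combined with the fact that we spread mass over $\Theta(n)$ intact copies, keeps $\lVert u \rVert$ uniformly bounded (it contributes the constant $c$) while forcing the linearisation $F + \mathrm{d}F \cdot u$ to approximate $f^\star$ to within $\varepsilon$ by convexity of the readout in $a$ and the $\eta$-closeness of each copy's features to those of $\liftmod{G}^\star$. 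The extra $r$-slack in the tangent radius $c + r$ absorbs the drift in $a'$ within the $r$-ball. Then Theorem~\ref{thm:pao-by-tangent-approximation} delivers the Convergence Criterion conclusion.

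\textbf{Main obstacle.} The delicate step is quantifying the concentration of the number of fibration copies and the per-copy weight displacement simultaneously: one needs that after an $L^2$-perturbation of total budget $r$, the bookkeeping of which copies are broken and which remain intact is tight enough that a positive fraction of intact copies survives uniformly over all $r \leq R$ and all admissible $n \succeq N_1$, with failure probability $\leq \delta$. Getting the random-graph subgraph-count concentration and the perturbation-budget pigeonhole to cooperate, in a way that the resulting bound on $\lVert u \rVert$ grows only affinely in $r$, is where most of the technical work will sit.
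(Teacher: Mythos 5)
Your high-level plan is the same as the paper's: reduce to Theorem~\ref{thm:pao-by-tangent-approximation}, use $a = 0$ to get the trivial boundedness $\mathcal{L}[\liftmod{G}](w, 0) = \lVert f^\star \rVert_\mathcal{D}^2$, and establish Condition~\ref{ass:tangent-approx} on a ball around initialization by a coverage argument plus a stability argument, finishing with the linear-readout tangent direction. Where your route diverges is in the combinatorial core, and that is also where the gap sits.

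You want to show the random graph contains ``many disjoint fibration-extracted copies of $\liftmod{G}^\star$'' via a second-moment argument. But a second moment concentrates the \emph{total} count of (possibly heavily overlapping) copies; it does not directly give a large vertex-disjoint packing. Your stability argument then tacitly assumes disjointness: you distribute the $\ell_2$ perturbation budget $r$ over copies and invoke Markov to conclude only $O(r^2/\eta^2)$ of them break — but that budget argument is only valid if the copies are edge-disjoint, so that a single perturbed weight cannot break several copies at once. So the two halves of your argument presuppose each other's missing piece. The paper avoids this circularity entirely by \emph{not} packing disjoint copies. It constructs a single partial fibration $(S, \varphi) : G \rightharpoonup G^\star$ whose preimages $\varphi^{-1}(v)$ are merely required to be a fixed fraction $\alpha(v)$ of $\pi^{-1}(\pi_\star(v))$ (Definition~\ref{def:covering-partial-matchings}); the existence with high probability is shown by a layer-by-layer induction with Chernoff bounds on the number of ``extendable'' vertices, with Hall's theorem used only locally to choose disjoint candidate preimages within a layer (Proposition~\ref{prop:large-random-sparse-lifts-are-covering} and \Proposition{choice-lemma}). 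Because preimages in this single fibration do share edges, the stability step (Lemma~\ref{lem:resistance-extraction}) cannot use the clean per-copy budget split you propose; instead it tracks the cascade of rejected vertices ($W_b$ directly perturbed, $C_b$ rejected because a parent was rejected) and uses the out-degree bound from Lemma~\ref{lem:low-out-degree} to keep the cascade from amplifying. In short: the paper trades a cleaner stability argument for a cleaner existence argument, and the out-degree control is exactly the price it pays for not insisting on disjoint copies. If you want to keep your route, the work you still owe is a concentration statement for the \emph{packing number} of vertex-disjoint near-witness copies — not the copy count — and that is not supplied by the second moment method as stated.
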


We give in \Appendix{quantitative-pac-convergence} a quantitative version
with bounds on $N_1$ and $\kappa$ as a function of the
witness network structure and choice of parameters,
allowing for future quantitative research in this direction.
\linebreak[3]
Note that this is immediately extended to
a loss $f \mapsto \mathbb{E}_{(x,y)}[ \lVert f(x) - y \rVert_2^2 ]$
with $\mathbb{E}_{(x,y)}[\lVert y \rVert_2^2] < \infty$.
Indeed, that
is equal
to $\lVert f - f^\star \rVert_\mathcal{D}^2 + \mathbb{E}[\mathbb{V}[Y|X]]$
for $f^\star : x \mapsto \mathbb{E}_{(X,Y) \sim \mathcal{D}}[Y|X=x]$.
Moreover, note that this theorem holds for
any $\varepsilon \in \mathbb{R}_+^*$ and not only near zero.
This statement thus truly matches what it means to converge to the infimum of the
$\LiftPMod{\basemod,\CC}$ class with large lifts, which should prove most interesting when
that architecture is restricted to respect certain symmetries by construction.

We believe that
the conditions here are
interesting because they are easy to verify.
(1) The condition that the architecture is obtained as a random sparse
lift can be satisfied by construction with little restriction
on architecture search.
(2) The condition that any lift
(sparse or dense) with low loss exists
can be obtained by universal approximation,
or empirically because any network with low loss can
serve as witness.
Proving or disproving a similar result for dense lifts
is left as future work.

We modeled dynamics in continuous-time, and considered only convergence in loss.
Convergence of parameters will require more assumptions,
see e.g.\ \citet[Section 2]{patel2022global}.
Discrete time results
will also require more assumptions,
but are often obtained by
extending continous-time results,
see e.g.\ \citet[Theorem 1]{even2023sgd}.
We did not assume that $\mathcal{D}$
is finitely supported,
to facilitate extensions
in online learning with test-loss bounds
from stochastic gradient estimates on finite samples.

\textbf{Conclusion.}
We have shown PAC-convergence of large random sparse lifts under gradient flow
to the infimum over all lifts, by introducing a strong formalism
to define a large class of neural networks and the tools
to track their activations across training to show tangent
approximation properties for a long enough time.
We believe that this direction
constitutes a promising route to a strong convergence theory
for deep learning, with a principled architecture design
and testable empirical predictions.

\clearpage{}

\section*{Acknowledgements}

The authors acknowledge support from the French government under management of the Agence
Nationale de la Recherche, reference ANR-19-P3IA-0001 (PRAIRIE 3IA Institute).

\bibliography{bibliography}
\bibliographystyle{iclr2024_conference}

\clearpage
\appendix

\section{Summary of notations}%
\label{appendix:notations}

\renewcommand{\arraystretch}{1.6}
\begin{table}[h]
\caption{Summary of principal notations used in this document}
\begin{tabularx}{\textwidth}{@{}XX@{}}
\toprule
  $B = (\VB, \EB)$ & The base graph, for the base perceptron module \\
  $G = (\VGG, \EGG)$ & The lifted graph when there is only one, the graph of the random sparse lift in the last theorem \\
  $G^\star = (\VM, \EM)$ & The graph of the \textit{witness network}, the lifted perceptron module achieving the loss threshold. \\
\midrule
  $G(-, v) = \{ u \in \VGG \mid (u,v) \in \EGG \}$ for $v \in \VGG$ &
  The parents (direct ancestors) of vertex $v$ in $G$.
  \\
  $G(v, -) = \{ u \in \VGG \mid (v,u) \in \EGG \}$ for $v \in \VGG$ &
  The children (direct descendants) of $v$ in $G$.
  \\
\midrule
  $\basemod = ((\IB, \TB), (\Y, \Z, \W), (M, \sigma))$ & The base perceptron module (\Definition{PMod}). The functions $(M,\sigma)$ are continuously differentiable.\\
\midrule
  ${ \displaystyle \CC = \coprod_{b \in \IB} \CC_b }
  \quad\text{or}\quad p_\CC : \CC \to \IB$ &
  The ``meaningful names'' of input connections.
  This is ``simplified'' to
  $\mathcal{C}_b = \{ 0, \ldots \# \mathcal{C}_b -1 \}$ in the proof
  without loss of generality, but leaves room for easier implementations.
  For instance, for single input type
  $\IB = \{ \operatorname{img} \}$, lifted to three channels
  (one per RGB color), meaningful names can be
  $\mathcal{C}_{\operatorname{img}} = \{ r, g, b \}$, instead of $\{0,1,2\}$. \\
  ${ \displaystyle \mathcal{X} \subseteq \prod_{b \in \IB} \prod_{c \in \CC_b} \Y_b }$ &
  A compact subset of inputs accessed ``by name''.
  With two input types $\IB = \{ \operatorname{img}, \operatorname{text} \}$,
  lifted as above for the image and to $T \in \mathbb{N}$ tokens for the text with
  $\mathcal{C}_{\operatorname{text}} = [T]$, then
  $\prod_{b \in \IB} \prod_{c \in \CC_b} \Y_b = {\Y_{\operatorname{img}}}^{\{r,g,b\}} \times {\Y_{\operatorname{text}}}^{T}$.
  The spaces $\Y_b$ can be distinct,
  e.g.\ $\Y_{\operatorname{img}} = \mathbb{R}^{256 \times 256}$
  and $\Y_{\operatorname{text}} = \mathbb{R}^k$ with $k \in \mathbb{N}$
  an arbitrary embedding dimension.
  \\
\midrule
  $(G, \pi, c) \in \LiftPMod{\basemod, \CC}$ &
  A lifted perceptron module (\Definition{lift}).
  The homomorphism $\pi : G \to B$ encodes the structure of the lift,
  the map $c$ provides the connection to inputs
  (e.g.\ which vertex is the $r$-channel, etc.)
  \\
  ${ \displaystyle \Param{\basemod} = \prod_{e \in \EB} \W_{e} }$ &
  The parameter space of the base perceptron module $\basemod \in \PMod{B}$.
  There is one (multi-dimensional) weight per edge in the base module.
  All spaces $\W_e$ need not have the same dimension.
  \\
  ${ \displaystyle \Param{\liftmod{G}} = \prod_{e \in \EGG} \W_{\pi(e)} = \prod_{e \in \EB} \prod_{\substack{f \in \EGG \\ \pi(f) = e}} \W_{e} }$ &
  The parameter space of a lifted perceptron module
  $\liftmod{G} = (G, \pi, c) \in \LiftPMod{\basemod, C}$.
  There is one copy of $\W_e$ per edge $f \in \EGG$
  with $\pi(f) = e$.
  The number of edges will grow when scaling up, the
  dimension of the spaces $\W_e$ remain fixed.
  \\
  %
  ${\displaystyle \FF{\liftmod{G}} : \prod_{e \in \EGG} \W_{\pi(e)} \times \prod_{u \in \CC} \Y_{p_\CC(u)} \to \prod_{v \in \VGG} \Y_{\pi(v)}} $
  & Forward function of a lifted perceptron module $\liftmod{G} = (G, \pi, c) \in \LiftPMod{\basemod, \CC}$
  over $G = (\VGG, \EGG)$ with homomorphism $\pi : G \to B$. \\
  ${ \displaystyle \LinReadout{\liftmod{G}, k} = \prod_{v \in T(\liftmod{G})} {\Y_{\pi(v)}}^k }$
  & Linear readout of a perceptron module (i.e.\ linear last layer),
  see \Definition{linreadout}.
  \\
\bottomrule
\end{tabularx}
\end{table}

\clearpage{}

Throughout this paper, we use the french notation conventions,
in particular $0 \in \mathbb{N}$, the open and half-open intervals
are noted $\interval[open]{a}{b} = \{ u \in \mathbb{R} \mid a < u < b \}$
and $\interval[open right]{a}{b} = \{ u \in \mathbb{R} \mid a \leq u < b \}$.
We use liberally parenthesis or brackets to group factors in our
proofs, for readability and without any particular distinction.
We write ``$\log$'' for the natural (or base-$e$)
logarithm, i.e.\ $\forall u, \log \exp(u) = u$.
For time-dependent quantities,
we rarely distinguish between index notation and application,
so for $\theta : \mathbb{R}_+ \to \mathbb{R}^d$ a function,
we write $\theta_t = \theta(t)$ either in index or between parenthesis
and use whichever is more readable depending on context
(only exception is when both $\theta$ a function and $\theta_0$ a constant
are defined at the same time, but we take both to coincide anyway).
The norm notation $\lVert \cdot \rVert$ is used only in finite dimension
and always indicates the \RevDiff{Euclidean} norm
$\lVert v \rVert_V = \sqrt{\langle v, v \rangle_V}$,
a.k.a.\ $\ell_2$-norm: $\lVert u \rVert^2 = \sum_i u_i^2$
in an orthonormal basis.
On products of \RevDiff{Euclidean} spaces $V$ and $W$ the corresponding
norm is the induced \RevDiff{Euclidean} norm on the product
$\lVert (v,w) \rVert_{V \times W} = \sqrt{\lVert v \rVert_V^2 + \lVert w \rVert_W^2}$.

\begin{table}[h]
\caption{Summary of advanced notations used mostly in the appendix}
\begin{tabularx}{\textwidth}{@{}XX@{}}
\toprule
  $T(\liftmod{G}) = \pi^{-1}(\TB)$ and $I(\liftmod{G}) = \pi^{-1}(\IB)$
  & The initial and terminal nodes of a lifted
  perceptron module $\liftmod{G} = (G, \pi, c) \in \LiftPMod{\basemod, \CC}$.\\
  $V(\liftmod{G}) = \VGG$ and $E(\liftmod{G}) = \EGG$
  & The vertices and edges of the underlying graph $G = (\VGG, \EGG)$
  in a lifted perceptron module
  $\liftmod{G} = (G, \pi, c) \in \LiftPMod{\basemod, \CC}$.\\
\midrule
  $H = (\VH, \EH)$ & The other lifted graph when there are two. \\
  $S = (\VS, \ES)$ & The subgraph of $G$ extracted by fibration. \\
\midrule
  $G_a(-, v) = \pi^{-1}(a) \cap G(-,v)$&
  The ``type-$a$'' parents of $v \in \VGG$,
  with $a \in \VB$ a vertex of the base graph,
  and $\pi : G \to B$ is the homomorphism indicating ``types''.
  In MLPs, the ``type'' is the index or depth of the corresponding layer.
  In more general perceptron modules, the notions of layer and depth
  cease to be well-defined, they are replaced by the pre-image by $\pi$
  of a given vertex or edge in the base module.
  \\
\midrule
  ${\displaystyle \pi^*\Y = \coprod_{v \in \VGG} \Y_{\pi(v)} }$ &
  Pullback bundle of activations
  for a lift $(G, \pi, c) \in \LiftPMod{\basemod, \CC}$,
  see \Definition{pullback-of-bundles}.
  \\
  ${ \displaystyle \pi^* \W = \coprod_{e \in \EGG} \W_{\pi(e)},
  \quad \pi^* \Z = \coprod_{e \in \EGG} \Z_{\pi(e)} }$ &
  Pullback bundles for weights and pre-activations.
  \\
  ${\displaystyle (\pi^*w) \in \prod_{e \in \EGG} \W_{\pi(e)}}$ for ${\displaystyle w \in \prod_{b \in \EB} \W_b}$ &
  Pullback of sections (here, weights).
  This corresponds to an intuitive copy, i.e.\ $(\pi^*w)_e = w_{\pi(e)}$.
  \\
\midrule
  $\varphi^* : \prod_{h \in \VH} \Y_h \to \prod_{g \in \VGG} \Y_{\varphi(g)}$ &
  Pullback of activations by a map $\varphi : \VGG \to \VH$.
  \\
\midrule
  $(S, \varphi) : G \rightharpoonup H$ &
  The partial fibration from graph $G$ to graph $H$,
  i.e.\ a subgraph $S$ of $G$ such that the inclusion
  $\iota : S \to G$ is a fibration, together with
  a fibration $\varphi : S \to H$,
  see \Definition{partial-fibration}.
  \\
  $(S, \varphi) : (G, \pi, c) \rightharpoonup (G^\star, \pi_\star, c_\star)$ & The $\LiftPMod{\basemod, \CC}$-morphism from the random sparse lift to the witness.
  See \Definition{liftpmod-morphism}.\\
\bottomrule
\end{tabularx}
\end{table}

\clearpage{}

\section{Organisation of the appendix}%
\label{appendix:organisation}

\Appendix{graph-def-and-lemmas} recalls the usual definitions
for graphs and various types of morphisms between them.
\Appendix{cvcriterion-relationships}
studies the effect of the constants in the definition
of Convergence~Criterion~\ref{def:cvcriterion}
to give some early intuition.
\Appendix{pao-by-tangent-approximation}
gives the proof
of the probably approximately correct convergence
\Theorem{pao-by-tangent-approximation}, showing how tangent approximation properties
are sufficient to get convergence.
\Appendix{liftpmod-morphisms}
introduces the concept of morphisms of lifted perceptrons.
\Appendix{activation-tracking}
shows how to track activations even when weights can move,
by leveraging the aforementionned morphisms.
\Appendix{cover-to-tangent-approximation}
shows how the tangent approximation property can be obtained
from a covering property.
\Appendix{cover-propagation} shows that the covering property
previously defined is satisfied with high
probability in random sparse lifts.
\Appendix{quantitative-pac-convergence}
ties everything together in a quantitative version
of \Theorem{convergence}.
\Appendix{complete-proofs} provides all the more cumbersome proofs of
well-definition of all concepts introduced in the document.
Finally, \Appendix{technical-lemmas} provides the proof of several
small technical lemmas for bounding random variables that
were used to alleviate the proofs of previous sections.

\section{Graph definitions and lemmas}\label{appendix:graph-def-and-lemmas}

\subsection{Graph definitions}\label{appendix:graph-def}

We consider the language of graphs known to the reader,
see \citet{wilson10graph}
for a thorough introduction otherwise.
All graphs we consider are finite directed
(see \citet[Chapter 7]{wilson10graph} more precisely)
and acyclic. We use the following definitions in our proofs.

\begin{definition}[Graph]
  A (directed) graph is a couple $(V, E)$ of a finite set $V$ and a set $E \subseteq V \times V$.
\end{definition}

The set $V$ is called the set of ``vertices'', and $E$ the set of ``edges''.
When $G = (V, E)$ is a graph, and $v \in V$,
we write $G(v,-) = \{ \, u \in V \mid (v,u) \in E \, \}$ the set
of out-neighbours of $v$ (also called \textit{children}), and
$G(-,v) = \{ \, u \in V \mid (u,v) \in E \, \}$ the set
of in-neighbours of $v$ (also called \textit{parents}).

\begin{definition}[Homomorphism]
  Let $G = (V_G, E_G)$ and $H = (V_H, E_H)$ be graphs.
  A function $\varphi : V_G \to V_H$ is a homomorphism of graphs if
  $ (u,v) \in E_G \Rightarrow \left( \varphi(u), \varphi(v) \right) \in E_H $
\end{definition}

We write homomorphisms as $\varphi : G \to H$, by extending naturally
the function of vertices to a function of edges
$\varphi : E_G \to E_H, (u,v) \mapsto (\varphi(u), \varphi(v))$.

It is immediate from the definition that the composition of
homomorphisms is also a homomorphism.

For our purposes, a directed graph $G = (V, E)$ is acyclic if there exists
a total order $\prec$ on $V$ such that $(u,v) \in E \Rightarrow u \prec v$.
This is equivalent to the definition of acyclicity as being
``without cycles''.

\begin{definition}[Fibration]\label{def:fibration}
  A fibration from a graph $G = (\VG, \EG)$ to a graph $H$ is a homomorphism $\varphi : G \to H$
  such that for all $v \in \VG$, the restriction
  $\varphi\rvert_{G(-,v)} : G(-,v) \to H(-, \varphi(v))$ is bijective.
\end{definition}
In words, a fibration is a homomorphism that is also an isomorphism of in-neighbourhoods.

\subsection{Fibrations are closed under composition}

\begin{proposition}
  If $\varphi : G \to H$ and $\psi : H \to K$ are fibrations,
  then $\psi \circ \varphi : G \to K$ is a fibration.
\end{proposition}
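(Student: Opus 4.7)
The plan is to unpack the definition of fibration directly, reducing to the fact that both (i) composition of graph homomorphisms is a homomorphism and (ii) composition of bijections is a bijection.

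First I would observe that $\psi \circ \varphi : V_G \to V_K$ is a homomorphism of graphs, which is noted immediately after \Definition{fibration} in the excerpt. So it remains only to verify the local bijectivity condition of \Definition{fibration}, namely that for every $v \in V_G$, the restriction $(\psi \circ \varphi)\rvert_{G(-,v)} : G(-,v) \to K(-, (\psi \circ \varphi)(v))$ is a bijection.

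Next, fix $v \in V_G$. I would decompose this restriction as a composition. Because $\varphi$ is a homomorphism, whenever $u \in G(-,v)$ we have $(\varphi(u), \varphi(v)) \in E_H$, i.e.\ $\varphi(u) \in H(-, \varphi(v))$; hence $\varphi\rvert_{G(-,v)}$ really corestricts to a map $G(-,v) \to H(-, \varphi(v))$, and this map is bijective because $\varphi$ is a fibration. Similarly $\psi\rvert_{H(-,\varphi(v))} : H(-, \varphi(v)) \to K(-, \psi(\varphi(v)))$ is bijective because $\psi$ is a fibration. The restriction of $\psi \circ \varphi$ to $G(-,v)$ is by construction the composition of these two maps, and is therefore a bijection onto $K(-, (\psi \circ \varphi)(v))$.

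Since both required conditions hold, $\psi \circ \varphi$ is a fibration, concluding the proof. There is no real obstacle here: the only thing to be mildly careful about is matching codomains in the factorisation, which is immediate from the homomorphism property of $\varphi$. The argument is entirely formal and uses no structural property of the graphs beyond the definitions.
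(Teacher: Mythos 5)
Your proof is correct and follows essentially the same route as the paper: reduce to the fact that compositions of homomorphisms are homomorphisms and compositions of bijections are bijections, then factor the restriction of $\psi \circ \varphi$ to $G(-,v)$ through $H(-,\varphi(v))$ using the two fibration hypotheses. The only cosmetic difference is that the paper introduces the two restricted bijections as named maps $s$ and $t$ and then verifies $s \circ t$ agrees with $\psi \circ \varphi$ pointwise, whereas you observe the factorisation holds by construction; both are fine.
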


\begin{proof}
  A fibration is a homomorphism that is an isomorphism of in-neighbourhoods.
  Therefore, since $\psi \circ \varphi$ is a homomorphism by composition,
  it suffices to show that it is an isomorphism of in-neighbourhoods.
  Write $G = (V_G, E_G)$, $H = (V_H, E_H)$ and $K = (V_K, E_K)$.
  Let $v_K \in V_K$, and $v_G \in V_G$ such that $(\psi \circ \varphi)(v_G) = v_K$.
  Write $v_H = \varphi(v_G) \in V_H$.
  By the fibration property of $\psi$, there exists a bijection
  $s : H(-,v_H) \to K(-,v_K)$ with $s = \psi\rvert_{H(-,v_H)}$.
  By the fibration property of $\varphi$, there exists a bijection
  $t : G(-,v_G) \to H(-,v_H)$ with $t = \varphi\rvert_{G(-,v_G)}$.
  The composition $s \circ t : G(-,v_G) \to K(-,v_K)$ is a bijection
  as a composition of bijections, it remains to check that it
  coincides with $(\psi \circ \varphi)$, which is immediate
  since $(s \circ t)(u) = s(t(u)) = s(\varphi(u)) = \psi(\varphi(u))$.
\end{proof}

\clearpage{}

\section{Convergence~Criterion~\ref{def:cvcriterion} : dependency on parameters}%
\label{appendix:cvcriterion-relationships}

\begin{proposition}[Convergence Criterion weakening]%
  \label{prop:cvcriterion-weakening}
  \hfill{}\break{}
  Let $\kappa_0 \in \mathbb{R}_+$ and $\varepsilon_0 \in \mathbb{R}_+$.
  Let $\kappa_1 \in \mathbb{R}_+$ and $\varepsilon_1 \in \mathbb{R}_+$
  be such that $\kappa_1 \leq \kappa_0$ and $\varepsilon_1 \geq \varepsilon_0$.

  If $(\mathcal{L} : \Theta \to \mathbb{R}_+, \theta_0 \in \Theta)$
  satisfies Convergence~Criterion~\ref{def:cvcriterion}
  with limit error $\varepsilon_0$ and constant $\kappa_0$,
  then $(\mathcal{L}, \theta_0)$ also satisfies Convergence~Criterion~\ref{def:cvcriterion}
  with limit error $\varepsilon_1$ and constant $\kappa_1$.
\end{proposition}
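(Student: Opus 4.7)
The plan is straightforward monotonicity: both the constant $c = \mathcal{L}(\theta_0)^3$ and the underlying gradient flow trajectory $\theta : \mathbb{R}_+ \to \Theta$ are determined solely by $(\mathcal{L}, \theta_0)$ and not by the criterion parameters, so it suffices to compare the two upper bounds pointwise in $t$.

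First I would fix an arbitrary gradient flow $\theta$ with $\theta(0) = \theta_0$ and $\partial_t \theta = -\nabla \mathcal{L}(\theta)$, and note that the hypothesis gives
\[
  \mathcal{L}(\theta_t) \leq \varepsilon_0 + \frac{1}{\sqrt[3]{\kappa_0 \, t + 1/c}}
\]
for every $t \in \mathbb{R}_+^*$, with $c = \mathcal{L}(\theta_0)^3$. Then I would observe two elementary monotonicities: since $\kappa_1 \leq \kappa_0$, for every $t \geq 0$ we have $\kappa_1 t + 1/c \leq \kappa_0 t + 1/c$, and the real cube root is increasing on $\mathbb{R}_+$, so the reciprocal satisfies $1/\sqrt[3]{\kappa_0 t + 1/c} \leq 1/\sqrt[3]{\kappa_1 t + 1/c}$; and since $\varepsilon_0 \leq \varepsilon_1$, we can simply add the inequality $\varepsilon_0 \leq \varepsilon_1$ to the previous one.

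Chaining these two bounds gives
\[
  \mathcal{L}(\theta_t) \leq \varepsilon_0 + \frac{1}{\sqrt[3]{\kappa_0 t + 1/c}} \leq \varepsilon_1 + \frac{1}{\sqrt[3]{\kappa_1 t + 1/c}},
\]
which is exactly the statement of Convergence Criterion~\ref{def:cvcriterion} for $(\mathcal{L}, \theta_0)$ with limit error $\varepsilon_1$ and constant $\kappa_1$, since $c$ depends only on $\mathcal{L}(\theta_0)$ and is unchanged. Because the gradient flow $\theta$ was arbitrary, the conclusion follows.

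There is no real obstacle here; the only small subtlety is that one must check $c$ is a function of $(\mathcal{L}, \theta_0)$ only, so that the same $c$ appears on both sides and the monotonicity in $\kappa$ can be applied termwise. The degenerate case $\kappa_1 = 0$ is harmless: $1/\sqrt[3]{0 \cdot t + 1/c} = \sqrt[3]{c} = \mathcal{L}(\theta_0)$, and the bound $\mathcal{L}(\theta_t) \leq \varepsilon_1 + \mathcal{L}(\theta_0)$ is still implied by the hypothesis together with $\varepsilon_0 \leq \varepsilon_1$.
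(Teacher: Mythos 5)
Your proof is correct and follows essentially the same approach as the paper's: the paper packages the bound as a function $S_t(\kappa,\varepsilon,c)$ and checks its monotonicity in $\kappa$ and $\varepsilon$ via partial derivatives, whereas you observe the same monotonicity directly from the cube root being increasing. Both arguments hinge on noting that $c = \mathcal{L}(\theta_0)^3$ is unchanged so the comparison is pointwise in $t$; the paper additionally records monotonicity in $c$ as a bonus remark for later use, which you do not need here.
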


This is consistent with the intuition that in
Convergence~Criterion~\ref{def:cvcriterion},
the constant $\kappa$ is a form of ``speed'' (higher speed is more impressive)
and $\varepsilon$ a form of ``error'' (lower error is more impressive).

\begin{proof}[Proof of \Proposition{cvcriterion-weakening}]
  For any $t \in \mathbb{R}_+$,
  let $S_t : \mathbb{R}_+ \times \mathbb{R}_+ \times \mathbb{R}_+^* \to \mathbb{R}_+$
  be the function
  \[ S_t : (\kappa, \varepsilon, c) \mapsto \varepsilon + \dfrac{1}{\sqrt[3]{\kappa\, t + \frac{1}{c}}} \]

  Let us show that $S_t$ is increasing with respect to $\varepsilon$,
  decreasing with respect to $\kappa$ and increasing with respect to $c$.
  Let $(\kappa, \varepsilon, c) \in \mathbb{R}_+^2 \times \mathbb{R}_+^*$.
  Compute the following derivatives and observe their sign
  \[\begin{aligned}
    \partial_\varepsilon S_t(\kappa, \varepsilon, c) &= +1 &\geq 0
    \\ \partial_\kappa S_t(\kappa, \varepsilon, c) &= - \frac{1}{3} \left( t \right) \left( \kappa \,t + 1/c\right)^{-4/3} &\leq 0
    \\ \partial_c S_t(\kappa, \varepsilon, c) &= - \frac{1}{3} \left( \frac{-1}{c^2} \right) \left( \kappa \,t + 1/c\right)^{-4/3} &\geq 0
  \end{aligned}\]

  Now to conclude, let $(\mathcal{L}, \theta_0)$
  be a pair satisfying Convergence~Criterion~\ref{def:cvcriterion}
  with limit error $\varepsilon_0$ and constant $\kappa_0$.
  Let $\theta : \mathbb{R}_+ \to \Theta$ be such that $\theta(0) = \theta_0$
  and $\partial_t \theta = - \nabla \mathcal{L}(\theta)$.
  Let $c_0 = \mathcal{L}(\theta_0)^3$
  and $t \in \mathbb{R}_+$.
  By assumption, $\mathcal{L}(\theta_t) \leq S_t(\kappa_0, \varepsilon_0, c_0)$.
  By the study of variations above,
  $S_t(\kappa_0, \varepsilon_0, c_0) \leq S_t(\kappa_1, \varepsilon_0, c_0) \leq S_t(\kappa_1, \varepsilon_1, c_0)$.
  Thus for $t \in \mathbb{R}_+$,
  it holds $\mathcal{L}(\theta_t) \leq S_t(\kappa_1, \varepsilon_1, c_0)$,
  which is the definition of Convergence~Criterion~\ref{def:cvcriterion}
  with limit error $\varepsilon_1$ and constant $\kappa_1$, and concludes the proof.
\end{proof}

\textit{Bonus}: Note that we have also shown increase of $S_t$ with respect to $c$,
so in later proofs it is sufficient to show $\mathcal{L}(\theta_t) \leq S_t(\kappa, \varepsilon, c)$
with an inequality $c \leq \mathcal{L}(\theta_0)^3$ rather than equality.

\vspace{.2in}

\begin{definition}[Extended Convergence Criterion]
  Let $\varepsilon \in \mathbb{R}_+$.
  The pair $(\mathcal{L} : \Theta \to \mathbb{R}_+, \theta_0 \in \Theta)$
  satisfies the extended Convergence Criterion with
  limit error $\varepsilon$ and constant $\kappa = +\infty$
  if for all $\theta : \mathbb{R}_+ \to \Theta$
  such that $\theta(0) = \theta_0$,
  satisfying $\partial_t \theta = - \nabla \mathcal{L}(\theta)$,
  it holds $\forall t \in \mathbb{R}_+, \, \mathcal{L}(\theta_t) \leq \varepsilon$.
\end{definition}

It is immediately observed by the previous proposition
--- and the fact that $\left[ S_t(\kappa, \varepsilon,c) \to \varepsilon \right]$ when $\left[ \kappa \to +\infty \right]$ ---
that this statement is equivalent to ``Convergence~Criterion~\ref{def:cvcriterion} is
satisfied with constant $+\infty$ if it is satisfied with all
finite constants $\kappa \in \mathbb{R}_+$''.
This is also consistent with the intuition that the statement
of a flow
reaching a loss $\varepsilon$ with ``speed'' $\kappa$
can be naturally extended to a statement with ``infinite speed''
if the flow just reaches loss $\varepsilon$ instantly.
Because of this consistency with the previous criterion,
we do not in the following distinguish this extension from Convergence~Criterion~\ref{def:cvcriterion}.

\clearpage{}

\section{Probable approximate correctness by tangent approximation}%
\label{appendix:pao-by-tangent-approximation}

\begin{lemma}[Convergence criterion by integration of a separable Kurdyka-\Loja{} inequality]
  \hfill{}\break{}\label{lem:KL-integration}
  Let $\Theta$ be a vector space, $\mathcal{L} : \Theta \to \mathbb{R}_+$ a differentiable function,
  and $\theta_0 \in \Theta$.
  Let
  $\varepsilon \in \mathbb{R}_+^*$,
  $R \in \mathbb{R}_+$,
  $c \in \mathbb{R}_+^*$.
  If for all $\theta \in \Theta$
  such that $\lVert \theta - \theta_0 \rVert \leq R$,
  it holds
  $\lVert \nabla \mathcal{L}(\theta) \rVert \geq {\left( \mathcal{L}(\theta) - \varepsilon \right)}_+ / {\left( \lVert \theta - \theta_0 \rVert + c \right)} $,
  Then the pair $(\mathcal{L}, \theta_0)$ satifies Convergence~Criterion~\ref{def:cvcriterion}
  with limit error $\varepsilon_0 = \tau \cdot \mathcal{L}(\theta_0) + (1 - \tau) \cdot \varepsilon \in \mathbb{R}_+$
  where $\tau = c / (R + c) \in \interval[open left]{0}{1}$,
  and with constant $\kappa = 3 \, c^{-2} (\mathcal{L}(\theta_0) - \varepsilon)_+^{-2} \in \mathbb{R}_+ \cup \{ + \infty \}$,
\end{lemma}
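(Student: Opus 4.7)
The plan is to introduce two auxiliary scalar quantities: $u(t) = \mathcal{L}(\theta_t) - \varepsilon$ and the cumulative arc length $\rho(t) = \int_0^t \lVert \nabla \mathcal{L}(\theta_s) \rVert \, ds$, which upper-bounds $\lVert \theta_t - \theta_0 \rVert$ by the triangle inequality. Writing $\phi = \rho + c$, the gradient flow gives $\phi'(t) = \lVert \nabla \mathcal{L}(\theta_t) \rVert$ and $u'(t) = -(\phi'(t))^2$. Let $T^\star = \inf \{ t : \lVert \theta_t - \theta_0 \rVert > R \} \in \mathbb{R}_+ \cup \{ +\infty \}$ be the first exit time of the closed $R$-ball. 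I first derive the bound on $[0, T^\star)$, where the \Loja{} hypothesis is available, then extend to $t \geq T^\star$ by monotonicity of $\mathcal{L}$ along the flow.

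The first step is a monotonicity lemma: the product $u\phi$ is non-increasing on $[0, T^\star)$. For $t < T^\star$, the hypothesis yields $\phi'(t) \geq u(t)_+ / (\lVert \theta_t - \theta_0 \rVert + c) \geq u(t)_+ / \phi(t)$, hence $\phi(t)\phi'(t) \geq u(t)_+ \geq u(t)$. Differentiating the product gives $(u\phi)'(t) = -\phi(t)(\phi'(t))^2 + u(t)\phi'(t) = \phi'(t)(u(t) - \phi(t)\phi'(t)) \leq 0$. Integrating yields $u(t)\phi(t) \leq u(0)\phi(0) = c \cdot u(0)$, which upper-bounds $\phi(t)$ by $c u(0)/u(t)$ whenever $u(t) > 0$. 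Plugging this into the \Loja{} inequality $-u'(t) \geq u(t)^2 / \phi(t)^2$ produces $-u'(t) \geq u(t)^4 / (c u(0))^2$, a separable ODE whose integration gives $u(t)^{-3} \geq u(0)^{-3} + \kappa t$ with $\kappa = 3 c^{-2} u(0)^{-2}$. Since $u(0) \leq \mathcal{L}(\theta_0)$, one has $u(t)^{-3} \geq \mathcal{L}(\theta_0)^{-3} + \kappa t$, so $\mathcal{L}(\theta_t) - \varepsilon \leq (\mathcal{L}(\theta_0)^{-3} + \kappa t)^{-1/3}$ for $t \in [0, T^\star)$.

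If $T^\star = +\infty$, this directly gives Convergence Criterion~\ref{def:cvcriterion} with limit error $\varepsilon \leq \varepsilon_0$ and constant $\kappa$. Otherwise, continuity of the flow gives $\lVert \theta_{T^\star} - \theta_0 \rVert = R$, so $\phi(T^\star) \geq R + c$, and passing to the limit in the product bound yields $u(T^\star) \leq c u(0)/(R + c) = \tau u(0)$, i.e.\ $\mathcal{L}(\theta_{T^\star}) \leq \tau \mathcal{L}(\theta_0) + (1-\tau)\varepsilon = \varepsilon_0$. Since $\mathcal{L} \circ \theta$ is non-increasing along the gradient flow, $\mathcal{L}(\theta_t) \leq \varepsilon_0$ for all $t \geq T^\star$, trivially meeting the criterion; and for $t < T^\star$ the cubic-decay bound meets it since $\varepsilon \leq \varepsilon_0$. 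The degenerate case $u(0) \leq 0$ (where $\kappa = +\infty$ by the convention from \Appendix{cvcriterion-relationships}) is handled by $\mathcal{L}(\theta_t) \leq \mathcal{L}(\theta_0) \leq \varepsilon \leq \varepsilon_0$ for all $t$. The main technical obstacle is the product-monotonicity step --- being careful about the sign of $u$ and handling the \Loja{} factor $u_+$ --- once established the rest reduces to elementary separable-ODE integration and tracking the exit time.
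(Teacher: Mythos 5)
Your proof is correct and follows the same structure as the paper's: track the cumulative arc length, establish the radius bound by showing a scalar quantity is non-increasing, plug it back into the K-L inequality to get a separable cubic-decay ODE, and conclude by case disjunction on the exit time. Your derivation of the radius bound via the sign of $(u\phi)'$ is an algebraic rearrangement of the paper's logarithmic integration $\partial_t \log(r_t+c) \leq -\partial_t \log(\mathcal{L}(\theta_t)-\varepsilon)$ (both yield $u(t)\phi(t)\leq c\,u(0)$); just note in passing that once $u$ crosses zero on $[0,T^\star)$ the cubic-decay inequality should be read as $\mathcal{L}(\theta_t)-\varepsilon \leq (\mathcal{L}(\theta_0)^{-3}+\kappa t)^{-1/3}$ (trivially true there by monotonicity of $u$), and that if $u(T^\star)<0$ the bound $u(T^\star)\leq \tau u(0)$ holds for the trivial reason that the right-hand side is non-negative.
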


Since Convergence~Criterion~\ref{def:cvcriterion}
uses a bound $\mathcal{L}(\theta_t) \leq \varepsilon_0 + 1/\sqrt[3]{\kappa\, t + 1/C_0}$
for which the right hand side is strictly decreasing with respect to $\kappa$,
we can naturally extend its definition to $\kappa = +\infty$ if
the convergence criterion is satisfied
for all positive finite $\kappa$, which is also equivalent
to simply a bound of $\forall t, \, \mathcal{L}(\theta_t) \leq \varepsilon_0$.
Those are not the most interesting cases anyway, but this stresses the fact that this
statement is well-defined in all edge cases.
Note also that for fixed $c$, this statement has $\varepsilon_0 \underset{R \to \infty}{\longrightarrow} \varepsilon$.

\begin{proof}
  Let $\theta : \mathbb{R}_+ \to \Theta$ be a differentiable curve
  such that $\theta(0) = \theta_0$
  and $\partial_t \theta = - \nabla \mathcal{L}(\theta)$.
  Note that if $\mathcal{L}(\theta_0) \leq \varepsilon$ then
  the conclusion is immediate because $\mathcal{L}(\theta_0) \leq \varepsilon_0$
  and the loss is non-increasing
  since
  $\partial_t (\mathcal{L} \circ \theta)_t = \nabla \mathcal{L}(\theta_t) \cdot \partial_t \theta_t = - \lVert \nabla \mathcal{L}(\theta_t) \rVert^2 \leq 0$.
  It remains to tackle only the other case,
  thus for the remainder of the proof,
  let us assume $\mathcal{L}(\theta_0) > \varepsilon$,
  and in particular $\varepsilon \leq \varepsilon_0$,
  then define
  $T = \inf\left( \{\, t \in \mathbb{R}_+ \mid \lVert \theta_t - \theta_0 \rVert \leq R \,\}
  \bigcap \{\, t \in \mathbb{R}_+ \mid \mathcal{L}(\theta_t) > \varepsilon \,\} \right)$.

  \textbf{Radius upper-bound by separable K\L{} integration.}
  Define $r : [0,T[ \to \mathbb{R}_+$ as $r : t \mapsto \int_0^t \lVert \partial_t \theta_u \rVert \d u$.
  Observe that for all $t \leq T$, by triangle inequality it holds
  $\lVert \theta_t - \theta_0 \rVert \leq r(t)$.
  Therefore,
  \[
    \partial_t r_t
    = \lVert \partial_t \theta_t \rVert
    = \lVert \nabla \mathcal{L}(\theta_t) \rVert
    \underset{\step{1}}{=} \frac{\lVert \nabla \mathcal{L}(\theta_t) \rVert^2}{\lVert \nabla \mathcal{L}(\theta_t) \rVert}
    \underset{\step{2}}{\leq} \frac{\lVert \nabla \mathcal{L}(\theta_t) \rVert^2}{\frac{1}{r_t + c} \left( \mathcal{L}(\theta_t) - \varepsilon \right)_+}
    \underset{\step{3}}{=} (r_t + c) \frac{-\partial_t (\mathcal{L} \circ\theta)_t}{\mathcal{L}(\theta_t) - \varepsilon}
  \]
  where $\step{1}$ is valid because $\lVert \nabla \mathcal{L}(\theta_t) \rVert > 0$
  by the initial assumption and restriction to $\mathcal{L}(\theta_t) > \varepsilon$,
  and $\step{2}$ is the initial assumption again,
  and $\step{3}$ uses $\partial_t (\mathcal{L} \circ \theta)_t = - \lVert \nabla \mathcal{L}(\theta_t) \rVert^2$ and $\mathcal{L}(\theta_t) > \varepsilon$.

  This corresponds to the inequality $\partial_t (\Psi \circ r) \leq \partial_t (\Phi \circ \mathcal{L} \circ \theta)$
  where $\Psi : s \mapsto \log(s + c)$ and $\Phi : s \mapsto - \log(s - \varepsilon)$.
  Integrating the inequality between $0$ and $t < T$ yields the inequality
  \[
    \log\left(\frac{r_t + c}{0 + c}\right) = \Bigl[ \log(r_u + c) \Bigr]_0^t \leq \Bigl[ - \log\left( \mathcal{L}(\theta_u) - \varepsilon \right) \Bigr]_0^t
    = \log\left(\frac{\mathcal{L}(\theta_0) - \varepsilon}{\mathcal{L}(\theta_t) - \varepsilon}\right)
  \]
  Thus after taking exponentials on both sides,
  \begin{equation}\label{eq:radius-bound}
    r_t + c \leq c \, \frac{\mathcal{L}(\theta_0) - \varepsilon}{\mathcal{L}(\theta_t) - \varepsilon}
  \end{equation}

  \textbf{Radius bound injection and reintegration.}
  Define the $\varepsilon$-discounted loss $\mathcal{L}^\varepsilon : \Theta \to \mathbb{R}_+$
  as $\mathcal{L}^\varepsilon : \theta \mapsto \left(\mathcal{L}(\theta) - \varepsilon \right)_+$.
  Injecting the radius bound inequality (\ref{eq:radius-bound})
  into the initial assumption,
  \[
    \partial_t (\mathcal{L}^\varepsilon \circ \theta)_t
    \underset{\step{1}}{=} \partial_t (\mathcal{L} \circ \theta)_t
    = - \lVert \nabla \mathcal{L}(\theta_t) \rVert^2
    \underset{\step{2}}{\leq} - \frac{\left(\mathcal{L}^\varepsilon(\theta_t)\right)^2}{\left(r_t + c\right)^2}
    \underset{\step{3}}{\leq} - \frac{\left(\mathcal{L}^\varepsilon(\theta_t)\right)^4}{\left(c \, \mathcal{L}^\varepsilon(\theta_0)\right)^2}
  \]
  where $\step{1}$ is because for all $t < T$, it holds $\mathcal{L}(\theta_t) = \mathcal{L}^\varepsilon(\theta_t) + \varepsilon$
  by definition of $T$,
  $\step{2}$ is the initial assumption coupled with $r_t \geq \lVert \theta_t - \theta_0 \rVert$,
  and $\step{3}$ is the injection of the radius bound inequality.

  Therefore, let $\kappa = 3 / \left( c \, \mathcal{L}^\varepsilon(\theta_0) \right)^2 \in \mathbb{R}_+^*$,
  and $\Xi : u \mapsto - 1 / u^3$.
  We have shown the inequality
  \[
    \partial_t \left( \Xi \circ \mathcal{L}^\varepsilon \circ \theta \right)_t
    = +3 \frac{\partial_t (\mathcal{L}^\varepsilon\circ \theta)_t}{\left(\mathcal{L}^\varepsilon(\theta_t)\right)^4}
    \leq - \kappa
  \]
  Integrating between $0$ and $t < T$,
  this yields
  $- \mathcal{L}^\varepsilon(\theta_t)^{-3} + \mathcal{L}^\varepsilon(\theta_0)^{-3} \leq - \kappa t$,
  thus after inverting $\Xi$,
  \begin{equation}\label{eq:cv1-reintegration}
    \forall t < T, \quad \mathcal{L}^\varepsilon(\theta_t) \leq \left( \mathcal{L}^\varepsilon(\theta_0)^{-3} + \kappa t \right)^{-1 / 3}
  \end{equation}

  \textbf{Conclusion.}
  We are now ready to conclude by case disjunction.
  If $T = + \infty$, Eq~(\ref{eq:cv1-reintegration}) immediately implies Convergence~Criterion~\ref{def:cvcriterion}.
  If $T < + \infty$, there are two cases to tackle.
  First, if $\mathcal{L}(\theta_T) \leq \varepsilon$,
  then the bound of Eq~(\ref{eq:cv1-reintegration})
  holds for $t < T$ and is extended to all $t \geq T$ by the non-increasing property
  $\mathcal{L}(\theta_t) \leq \mathcal{L}(\theta_T) \leq \varepsilon \leq \varepsilon_0$, which concludes.
  Lastly, if $\lVert \theta_T - \theta_0 \rVert = R$,
  then by inequality~(\ref{eq:radius-bound}) we get
  $\mathcal{L}(\theta_T) - \varepsilon \leq \frac{c}{R + c}(\mathcal{L}(\theta_0) - \varepsilon)$.
  Therefore $\mathcal{L}(\theta_T) \leq \tau \cdot \mathcal{L}(\theta_0) + (1 - \tau) \cdot \varepsilon = \varepsilon_0$
  where $\tau = c / (R + c)$.
  We conclude this case again by non-increasing extension, which completes the proof.
\end{proof}

\begin{theorem}[Quantitative version of \Theorem{pao-by-tangent-approximation}]
  \label{thm:quantitative-pao}
  Let $(\varepsilon, \delta) \in \mathbb{R}_+^* \times \interval[open left]{0}{1}$.
  \newline
  If there exists $C : \interval[open left]{0}{1} \to \mathbb{R}_+$ such that
  for any $s \in \mathcal{S}_0$ and
  for any $\delta \in \interval[open left]{0}{1}$
  it holds $\mathbb{P}_{(g, \theta) \sim \mathcal{N}_s}
  \left[\mathcal{L} \circ F_{(s,g)}(\theta) \leq C(\delta) \right] \geq 1 - \delta$,
  and if there exists constants
  $\varepsilon_0 < \varepsilon$ and $\delta_0 < \delta$
  such that
  \Assumption{tangent-approx} is satisfied with parameters $(\varepsilon_0, \delta_0)$,
  then let $c \in \mathbb{R}_+$ be the corresponding intercept,
  let $s_1 \in \scaleset$
  be the threshold size associated by \Assumption{tangent-approx} to
  radius $R = c \, ( C(\delta - \delta_0) - \varepsilon_0) / (\varepsilon - \varepsilon_0)$,
  and let ${\kappa = 3 \, c^{-2} \, C(\delta - \delta_0)^{-2}} {\in \mathbb{R}_+^*}$.

  It holds
  for all $s \in \subscaleset$ satisfying $s \succeq s_1$,
  that if $(g, \theta_0) \sim \mathcal{N}_s$, then
  the pair $(\mathcal{L} \circ F_{(s,g)}, \theta_0)$ satisfies
  Convergence Criterion~\ref{def:cvcriterion}
  with limit error $\varepsilon$ and constant $\kappa$
  with probability at least $(1 - \delta)$.
\end{theorem}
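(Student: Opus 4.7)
The plan is to combine the separable Kurdyka--\Loja{} integration result of \Lemma{KL-integration} with the tangent-approximation hypothesis of \Assumption{tangent-approx}, weaken the resulting constants via \Proposition{cvcriterion-weakening}, and take a union bound to handle the two sources of randomness. The paper already signals that this is a slight extension of \citet[Proposition~4.6]{robin2022convergence}, so the high-level structure is essentially prescribed.

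Writing $C := C(\delta - \delta_0)$, I would work on the intersection $E_1 \cap E_2$ of the two events $E_1 := \{\mathcal{L} \circ F_{(s,g)}(\theta_0) \leq C\}$ and $E_2 := \{\forall r \leq R,\ B_s(\theta_0, r) \subseteq \mathcal{A}_{s,g}(c+r, \varepsilon_0)\}$. The uniform initial-loss hypothesis gives $\mathbb{P}(E_1) \geq 1 - (\delta - \delta_0)$, and because $s \succeq s_1$ with $s_1$ the size threshold assigned by \Assumption{tangent-approx} to radius $R$, also $\mathbb{P}(E_2) \geq 1 - \delta_0$; a union bound yields $\mathbb{P}(E_1 \cap E_2) \geq 1 - \delta$.

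On event $E_2$, for any $\theta \in B_s(\theta_0, R)$ the definition of $\mathcal{A}_{s,g}$ produces a direction $u$ with $\lVert u \rVert \leq c + \lVert \theta - \theta_0 \rVert$ such that $\lVert F_{(s,g)}(\theta) + \mathrm{d}F_{(s,g)}(\theta) \cdot u - f^\star \rVert_\mathcal{D}^2 < \varepsilon_0$. Expanding the square via the parallelogram identity (as spelled out between \Assumption{tangent-approx} and Convergence~Criterion~\ref{def:cvcriterion}), discarding the non-negative $\lVert \mathrm{d}F(\theta)\cdot u \rVert_\mathcal{D}^2$ term, and applying Cauchy--Schwarz to $\nabla(\mathcal{L} \circ F_{(s,g)})(\theta) \cdot u = 2\,\langle \mathrm{d}F_{(s,g)}(\theta) \cdot u,\ F_{(s,g)}(\theta) - f^\star\rangle_\mathcal{D}$, one obtains the separable K\L{} bound
\[
  \lVert \nabla (\mathcal{L} \circ F_{(s,g)})(\theta) \rVert \;\geq\; \frac{(\mathcal{L} \circ F_{(s,g)}(\theta) - \varepsilon_0)_+}{c + \lVert \theta - \theta_0 \rVert}
\]
for every $\theta$ in the ball, which is exactly the hypothesis of \Lemma{KL-integration} at parameters $(\varepsilon_0, R, c)$. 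That lemma then delivers Convergence~Criterion~\ref{def:cvcriterion} at the intermediate parameters $\tilde\varepsilon = \tau\, \mathcal{L} \circ F_{(s,g)}(\theta_0) + (1-\tau)\varepsilon_0$ and $\tilde\kappa = 3 c^{-2} (\mathcal{L} \circ F_{(s,g)}(\theta_0) - \varepsilon_0)_+^{-2}$, with $\tau = c/(R+c)$.

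On event $E_1$ the bound $\mathcal{L} \circ F_{(s,g)}(\theta_0) \leq C$ combined with the statement's choice of $R$ gives $\tilde\varepsilon \leq \varepsilon$ and $\tilde\kappa \geq \kappa = 3 c^{-2} C^{-2}$, and \Proposition{cvcriterion-weakening} then transports the criterion to the target $(\varepsilon, \kappa)$. The only mildly delicate part --- and the main obstacle --- is this last constant-bookkeeping step: one must verify that $R = c\,(C - \varepsilon_0)/(\varepsilon - \varepsilon_0)$ is indeed large enough to force $\tau = c/(R+c)$ small enough that $\tau\,C + (1-\tau)\varepsilon_0 \leq \varepsilon$. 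After substitution this reduces to the trivial inequality $\varepsilon_0 \leq \varepsilon$, so it works, but it is the place where the specific numerical form of $R$ in the statement is justified.
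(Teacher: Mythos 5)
Your proposal is correct and reproduces the paper's argument almost line for line: the union bound over the two events (initial loss bounded, tangent approximation on the $R$-ball), the derivation of the separable Kurdyka--\Loja{} inequality via the parallelogram identity and the extremal form of Cauchy--Schwarz (the paper phrases this as the variational form of the $\ell_2$-norm, but it is the same step), application of \Lemma{KL-integration}, and then \Proposition{cvcriterion-weakening} after checking that the specific choice of $R$ makes the intermediate limit error at most $\varepsilon$ and the intermediate constant at least $\kappa$. Your final bookkeeping step is in fact slightly cleaner than the paper's: the paper asserts $\tau = c/(R+c) = (\varepsilon - \varepsilon_0)/(C-\varepsilon_0)$, which with the stated $R$ is only an inequality $\tau \leq (\varepsilon-\varepsilon_0)/(C-\varepsilon_0)$, whereas your substitution correctly reduces $\tau\,C + (1-\tau)\varepsilon_0 \leq \varepsilon$ to the trivial $\varepsilon_0 \leq \varepsilon$.
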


Note that \Theorem{pao-by-tangent-approximation} is a direct consequence of \Theorem{quantitative-pao},
it thus suffices to prove the latter.

\newcommand{\cC}{\mathrm{C}}

\begin{proof}[Proof of \Theorem{quantitative-pao}]
  Let $s \in \mathcal{S}_0$ such that $s \succeq s_1$.
  Let $(g, \theta_0) \sim \mathcal{N}_s$.
  Let $A$ be the event: $[ \forall r \leq R, B(\theta_0, r) \subseteq \mathcal{A}_{s,g}(c + r, \varepsilon_0) ]$.
  By \Assumption{tangent-approx}, $\mathbb{P}(A) \geq (1 - \delta_0)$.
  Let $B$ be the event: $[ \mathcal{L} \circ F_{(s,g)}(\theta_0) \leq C(\delta - \delta_0) ]$.
  By boundedness assumption, $\mathbb{P}(B) \geq 1 - (\delta - \delta_0)$.
  Therefore, by union bound,
  $\mathbb{P}(A \cap B) \geq 1 - \mathbb{P}(\neg A) - \mathbb{P}(\neg B) \geq 1 - \delta$.
  Let $S$ be the event $[(\mathcal{L} \circ F_{(s,g)}, \theta_0) \,\text{satisfies Convergence~Criterion~\ref{def:cvcriterion}
  with limit error}\, \varepsilon \,\text{and constant}\, \kappa ]$.
  It only remain to show that $S \subseteq A \cap B$,
  since that will imply $\mathbb{P}(S) \geq \mathbb{P}(A \cap B) \geq 1 - \delta$.

  We will do so by leveraging \Lemma{KL-integration}.
  Write for shortness $\cC = C(\delta - \delta_0)$.
  Let $(g, \theta_0) \in G_s \times \Theta_s$ be such that
  $\forall r \leq R, B(\theta_0, r) \subseteq \mathcal{A}_{s,g}(c + r, \varepsilon_0)$,
  and $\mathcal{L} \circ F_{(s,g)}(\theta_0) \leq \cC$ (i.e.\ both events $A$ and $B$ are realised).
  Let $\theta \in B(\theta_0, R)$, and let us show that
  the separable Kurdyka-\Loja{} bound
  $\lVert \nabla (\mathcal{L} \circ F_{(s,g)})(\theta) \rVert \geq (\mathcal{L} \circ F_{(s,g)}(\theta) - \varepsilon_0)_+ / (\lVert \theta - \theta_0 \rVert + c)$
  holds.
  Let $r = \lVert \theta - \theta_0 \rVert$.
  Since $\theta \in \mathcal{A}_{s,g}(c + r, \varepsilon_0)$,
  let $u \in \Theta_s$ be such that
  $\lVert u \rVert \leq c + r$ and $\lVert F_{(s,g)}(\theta) + \d F_{(s,g)}(\theta) \cdot u - f^\star \rVert_\mathcal{D}^2 < \varepsilon_0$.
  First, note that if $\mathcal{L} \circ F_{(s,g)}(\theta) \leq \varepsilon$ then the bound holds immediately
  since the right-hand side is null, thus we can assume in the following that $\mathcal{L} \circ F_{(s,g)}(\theta) > \varepsilon_0$.
  By the variational form of the $\ell_2$-norm,
  \begin{equation}\label{eq:variational-norm-bound}
    \lVert \nabla (\mathcal{L} \circ F_{(s,g)}) (\theta) \rVert^2
    = \sup_{v \in \Theta_s} \frac{\left(\d (\mathcal{L} \circ F(s,g)) (\theta) \cdot v \right)^2}{\lVert v \rVert^2}
    \geq \frac{\left(- \d (\mathcal{L} \circ F(s,g)) (\theta) \cdot u \right)^2}{\lVert u \rVert^2}
  \end{equation}
  Then since $\mathcal{L}(f) = \lVert f - f^\star\rVert_\mathcal{D}^2$, we can expand the derivative
  to
  \[\begin{aligned}
    - \d (\mathcal{L} \circ F_{(s,g)} )(\theta) \cdot u
    &= - \d \mathcal{L}(F_{(s,g)}(\theta)) \cdot \left( \d F_{(s,g)}(\theta) \cdot u \right)
    \\ &= - 2 \left\langle \d F_{(s,g)} (\theta) \cdot u, F_{(s,g)}(\theta) - f^\star \right\rangle_{\mathcal{D}}
    \\ &\underset{\step{1}}{=}
      \mathcal{L} \circ F_{(s,g)}(\theta)
      + \left\lVert \d F_{(s,g)}(\theta) \cdot u \right\rVert_\mathcal{D}^2
      - \left\lVert F_{(s,g)}(\theta) + \d F_{(s,g)}(\theta) \cdot u - f^\star \right\rVert_\mathcal{D}^2
    \\ &\underset{\step{2}}{\geq} \mathcal{L} \circ F_{(s,g)}(\theta) + 0 - \varepsilon_0
  \end{aligned}\]
  where $\step{1}$ is the parallelogram indentity $-2 \langle a, b \rangle = \lVert a \rVert^2 + \lVert b \rVert^2 - \lVert a + b \rVert^2$
  with the definition of $\mathcal{L}$,
  and $\step{2}$ is the approximation inequality defining $u$.
  Using the positive part $(\cdot)_+ : s \mapsto \max\{0, s\}$, we can thus bound the square
  of this quantity irrespectively of its sign, as follows
  \[ \left( - \d (\mathcal{L} \circ F_{(s,g)} )(\theta) \cdot u \right)^2 \geq \left(\mathcal{L} \circ F_{(s,g)}(\theta) - \varepsilon_0 \right)_+^2 \]
  Observing that $\lVert u \rVert \leq c + r$ and injecting both of these bounds
  into Eq~(\ref{eq:variational-norm-bound}) concludes the proof of the separable Kurdyka-\Loja{} bound.

  Applying now \Lemma{KL-integration},
  this implies that the pair $(\mathcal{L} \circ F_{(s,g)}, \theta_0)$
  satisfies Convergence Criterion~\ref{def:cvcriterion} with
  limit error $\varepsilon_1 = \tau \cdot \mathcal{L} \circ F_{(s,g)} (\theta_0) + (1 - \tau) \cdot \varepsilon_0$
  for $\tau = c / (R + c)$
  and constant $\kappa_1 = 3 \, c^{-2} \left( \mathcal{L} \circ F_{(s,g)} (\theta_0) - \varepsilon_0 \right)_+^2$.
  By decrease with respect to $\kappa$ and increase with respect to $\varepsilon$
  of the Convergence~Criterion~\ref{def:cvcriterion} bound,
  it now only remains to show that $\varepsilon_1 \leq \varepsilon$ and $\kappa_1 \geq \kappa$.
  The latter is immediate because
  ${\left(\mathcal{L} \circ F_{(s,g)} (\theta_0) - \varepsilon_0 \right)_+} \leq
  {\mathcal{L} \circ F_{(s,g)} (\theta_0)} \leq \cC$,
  thus $\kappa_1 \geq 3\, c^{-2} \, \cC^{-2} = \kappa$.
  Then, use the fact that $\tau = c / (R + c) = (\varepsilon - \varepsilon_0) / (\cC - \varepsilon_0)$ to get
  $
    \varepsilon_1
    \leq \tau \, \cC + ( 1 - \tau ) \, \varepsilon_0
    = \tau (\cC - \varepsilon_0) + \varepsilon_0
    = \varepsilon
  $.
  Thus the pair $(\mathcal{L} \circ F_{(s,g)}, \theta_0)$
  satisfies Convergence~Criterion~\ref{def:cvcriterion}
  with limit error $\varepsilon$ and constant $\kappa$,
  which completes the proof that $S \subseteq A \cap B$
  and thus concludes the proof of \Theorem{quantitative-pao}.
\end{proof}

\clearpage{}

\section{Morphisms of lifted perceptron modules to track activations}%
\label{appendix:liftpmod-morphisms}


\begin{definition}[Morphism of lifted perceptrons]\label{def:liftpmod-morphism}
  Let $(G, \pi_G, c_G) \in \LiftPMod{\basemod, \CC}$
  and $(H, \pi_H, c_H) \in \LiftPMod{\basemod, \CC}$.
  A morphism of lifted perceptrons $\varphi : (G, \pi_G, c_G) \to (H, \pi_H, c_H)$
  is a fibration $\varphi : G \to H$ such that $\pi_G = \pi_H \circ \varphi$
  and $c_G = c_H \circ \varphi$.
\end{definition}

In other words, a morphism of lifted perceptrons $\liftmod{G}$ to $\liftmod{H}$
is a correspondance $\varphi$ between the underlying graphs $G$ and $H$ that preserves
the activation-defining ``types'' $\pi$ of vertices and edges,
and the input connections $c$,
i.e.\ such that the following three diagrams commute

\hfill{}%
\begin{tikzpicture}
  \def\gap{1.5}
  \node (VG) at (0, 0) {$\VGG$};
  \node (VB) at ({\gap/2},-1.2) {$\VB$};
  \node (VH) at (\gap, 0) {$\VH$};

  \draw[->] (VG) -- node[midway, left] {$\pi_G$} (VB);
  \draw[->] (VH) -- node[midway, right] {$\pi_H$} (VB);
  \draw[->] (VG) -- node[midway, above] {$\varphi$} (VH);
\end{tikzpicture}%
\hfill{}%
\begin{tikzpicture}
  \def\gap{1.5}
  \node (EG) at (0, 0) {$\EGG$};
  \node (EB) at ({\gap/2},-1.2) {$\EB$};
  \node (EH) at (\gap, 0) {$\EH$};

  \draw[->] (EG) -- node[midway, left] {$\pi_G$} (EB);
  \draw[->] (EH) -- node[midway, right] {$\pi_H$} (EB);
  \draw[->] (EG) -- node[midway, above] {$\varphi$} (EH);
\end{tikzpicture}%
\hfill{}%
\begin{tikzpicture}
  \def\gap{1.5}
  \node (IG) at (0, 0) {$\IG$};
  \node (IB) at ({\gap/2},-1.2) {$\CC$};
  \node (IH) at (\gap, 0) {$\IH$};

  \draw[->] (IG) -- node[midway, left] {$c_G$} (IB);
  \draw[->] (IH) -- node[midway, right] {$c_H$} (IB);
  \draw[->] (IG) -- node[midway, above] {$\varphi$} (IH);
\end{tikzpicture}%
\hfill{}\break{}%
\vspace{-.15in}

In the example of \Figure{rosenblatt} and \Figure{sparse-lift}
of multi-layer perceptron with three layers,
the base graph has $\VB = \{ 0, 1, 2, 3 \}$,
and a lifted vertex $v \in \VG$ has $\pi(v) = k \in \VB$
if the vertex $v$ is located in layer $k$.
The condition that $\varphi$ preserves $\pi$ means that
a vertex of layer $k$ in $\liftmod{G}$ must be mapped to a
vertex of layer $k$ in $\liftmod{H}$. Mapping across layers
would not make much sense, because it would not be
compatible with the definition of the forward function.
Similarly for this example, we have $\CC = \{ r, g, b \}$
indicating which of the lifted inputs corresponds
to each channel. The mapping $\varphi$ must also
preserve these connections (map the $r$ channel of $G$ to the $r$ channel of $H$, etc.).

\begin{proposition}[$\LiftPMod{}$-morphisms preserve activations]\label{prop:morphisms-preserve-activations}
  \hfill{}\break{}
  Let $\liftmod{G} \in \LiftPMod{\basemod, \CC}$
  and $\liftmod{H} \in \LiftPMod{\basemod, \CC}$.
  If $\varphi : \liftmod{G} \to \liftmod{H}$ is a $\LiftPMod{\basemod, \CC}$-morphism,
  then for all
  $w_G \in \Param{\liftmod{G}}$ and $w_H \in \Param{\liftmod{H}}$,
  it holds
  \[ \Big[ \> w_G = \varphi^* w_H \> \Big] \Rightarrow \Big[ \> \FF{\liftmod{G}}\left( w_G, \cdot \right) = \varphi^* \circ \FF{\liftmod{H}}\left( w_H, \cdot \right) \> \Big] \]
\end{proposition}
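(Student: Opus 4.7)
The plan is to prove the pointwise equality $f^G_v := \FF{\liftmod{G}}(w_G, x)_v = \FF{\liftmod{H}}(w_H, x)_{\varphi(v)} =: (\varphi^* f^H)_v$ for every $v \in \VGG$, by induction on $v$ in a topological order of $G$ (which exists since $G$ is acyclic, and which corresponds to the order in which the forward function of \Definition{activation-map} computes activations). Compatibility of types $\Y_{\pi_G(v)} = \Y_{\pi_H(\varphi(v))}$ throughout is immediate from $\pi_G = \pi_H \circ \varphi$.

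The base case, $v \in I(\liftmod{G}) = \pi_G^{-1}(\IB)$, follows from unwinding the definition of $\FF{\liftmod{G}}(w_G, x) = \FF{\pi_G^* \basemod}(w_G, c_G^* x)$, so $f^G_v = x_{c_G(v)}$, and similarly $f^H_{\varphi(v)} = x_{c_H(\varphi(v))}$; these agree precisely because $c_G = c_H \circ \varphi$.

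For the inductive step on $v \in \VGG \setminus I(\liftmod{G})$, I expand $f^G_v$ using $\bar\sigma_v$ from \Definition{lift}, and likewise $f^H_{\varphi(v)}$ using $\bar\sigma_{\varphi(v)}$. The outer non-linearity is the same $\sigma_{\pi_G(v)} = \sigma_{\pi_H(\varphi(v))}$, so it suffices to match, for each $a \in B(-, \pi_G(v))$, the two inner sums
\[
\sum_{u \in \pi_G^{-1}(a) \cap G(-,v)} M_{\pi_G(u,v)}\bigl(w^G_{(u,v)}, f^G_u\bigr)
\quad\text{and}\quad
\sum_{k \in \pi_H^{-1}(a) \cap H(-,\varphi(v))} M_{\pi_H(k,\varphi(v))}\bigl(w^H_{(k,\varphi(v))}, f^H_k\bigr).
\]
The key step---and the main thing to verify carefully---is that the restriction $\varphi|_{G(-,v)} : G(-,v) \to H(-,\varphi(v))$ is a bijection (this is the defining \Definition{fibration} fibration property), and that this bijection further restricts, thanks to $\pi_G = \pi_H \circ \varphi$, to a bijection between $\pi_G^{-1}(a) \cap G(-,v)$ and $\pi_H^{-1}(a) \cap H(-,\varphi(v))$. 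Under the correspondence $u \leftrightarrow k = \varphi(u)$ one checks term by term that (i) $\pi_G(u,v) = \pi_H(k, \varphi(v))$, so the $M$-maps coincide; (ii) $w^G_{(u,v)} = (\varphi^* w^H)_{(u,v)} = w^H_{(k, \varphi(v))}$ by the hypothesis $w_G = \varphi^* w_H$ and \Definition{pullback-of-bundles}; and (iii) $f^G_u = f^H_{\varphi(u)} = f^H_k$ by the induction hypothesis applied to the parent $u$ of $v$.

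The main obstacle is conceptual rather than computational: recognising that exactly the fibration condition (bijectivity of the restriction to in-neighbourhoods), as opposed to the weaker homomorphism condition, is what licenses the term-by-term matching of the sums. Homomorphism alone would allow $\varphi$ to collapse or omit parents of $v$, producing extra or missing summands on one side; and mere injectivity of $\varphi$ would allow $H$ to have parents of $\varphi(v)$ not in the image of $\varphi|_{G(-,v)}$. The fibration property is precisely the strength required, and the rest of the argument is bookkeeping in the definitions.
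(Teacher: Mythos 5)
Your proof is correct and takes essentially the same approach as the paper: induction on a topological ordering of $\VGG$, the base case resolved by $c_G = c_H \circ \varphi$, and the inductive step matching the type-$a$ inner sums term by term via the bijection $\varphi|_{G(-,v)}$ (refined by $\pi_G = \pi_H \circ \varphi$ to a bijection on type-$a$ parents), using $w_G = \varphi^* w_H$ and the induction hypothesis on parents. The paper phrases the induction as a contradiction on a minimal failing vertex rather than a direct forward induction, but this is a cosmetic difference.
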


Thus one can transform the graph and weights, then compute the forward,
or compute the forward first, then transform the activations and input connections.
This property of compatibility of the forward operator and the morphisms
of $\LiftPMod{}$ corresponds to the following commutative diagram

\hfill{}%
\begin{tikzpicture}
  \def\w{3}
  \def\h{1.5}
  \node (TL) at (0,0) {$(\liftmod{G}, w_G)$};
  \node (TR) at (\w,0) {$(\liftmod{H}, w_H)$};
  \node (BL) at (0,-\h) {$\FF{\liftmod{G}}\left(w_G, \cdot\right)$};
  \node (BR) at (\w,-\h) {$\FF{\liftmod{H}}\left(w_H, \cdot\right)$};

  \draw[->] (TR) -- node[midway, above] {$\varphi^*$} (TL);
  \draw[->] (BR) -- node[midway, above] {$\varphi^*$} (BL);
  \draw[->] (TL) -- node[midway, left] {$\FF{\cdot}(\cdot, -)$} (BL);
  \draw[->] (TR) -- node[midway, right] {$\FF{\cdot}(\cdot, -)$} (BR);
\end{tikzpicture}
\hfill{}\break{}%
\vspace{-.15in}

The proof of this proposition is deferred to \Appendix{new-proof-of-morphisms-preserve-activations}.
Intuitively, this proposition states that a fibration of graphs
compatible with the lift and preserving the weights
means that the lifted perceptron modules ``compute the same thing''.
Fibrations are crucial
for this property to hold, a homomorphism of graphs for $\varphi$ would not suffice,
thus the restriction to fibrations in the previous definition.

\subsection{Proof that morphisms preserve activations}\label{appendix:new-proof-of-morphisms-preserve-activations}

\begin{proof}[Proof of \Proposition{morphisms-preserve-activations}]

  Let $\liftmod{G} = (G, \pi_G, c_G) \in \LiftPMod{\basemod, \CC}$
  and $\liftmod{H} = (H, \pi_H, c_H) \in \LiftPMod{\basemod, \CC}$.
  Let ${\varphi : G \to H}$ be a fibration such that
  ${(\pi_G = \pi_H \circ \varphi)} : {\VGG \to \VB}$
  and $(c_G = c_H \circ \varphi) : {I(G) \to C}$.
  Let $w_G \in \Param{\liftmod{G}}$ and $w_H \in \Param{\liftmod{H}}$,
  be such that
  $w_G = \varphi^* w_H$.
  Let us show that $\FF{\liftmod{G}}(w_G, \cdot)
  = \varphi^* \circ \FF{\liftmod{H}}(w_H, \cdot)
  : \prod_{u \in \CC} \Y_{p_\CC(u)} \to \prod_{v \in \VGG} \Y_{\pi_G(v)}$.

  Let us check first that these expressions are all well defined.
  As a function of sets $\varphi : \VGG \to \VH$ induces
  a pullback $\varphi^* : \prod_{u \in V(H)} \Y_{\pi_H(u)} \to \prod_{v \in V(G)} \Y_{\pi_H(\varphi(v))} = \prod_{v \in V(G)} \Y_{\pi_G(v)}$.
  Therefore, the expression $\varphi^* \circ \FF{\liftmod{H}}(w_H, \cdot)$
  is well-defined and has the correct type signature.
  Similarly for the weights, $w_H \in \Param{\liftmod{H}} = \prod_{e \in \EH} \W_{\pi_H(e)}$.
  As a function of sets, ${\varphi : \EGG \to \EH}$ defines a pullback
  $\varphi^* : \prod_{e \in \EH} \W_{\pi_H(e)} \to \prod_{e \in \EGG}  \W_{\pi_H(\varphi(e))}
  = \prod_{e \in \EGG} \W_{\pi_G(e)} = \Param{\liftmod{G}}$, where the first equality follows from
  ${\pi_G = \pi_H \circ \varphi}$.
  Therefore the assumption $w_G = \varphi^* w_H$ is well-formed.

  Let $x \in \prod_{u \in \CC} \Y_{p_\CC(u)}$.
  We will proceed by induction on a topological ordering $\prec$ over $\VGG$
  (available since $G$ is acyclic). Let $U \subseteq \VGG$ be the set
  of vertices such that
  \[ v \in U \Leftrightarrow {\left[ \FF{\liftmod{G}}(w_G, x) \right]}_v \neq {\left[ \varphi^* \, \FF{\liftmod{H}}(w_H, x) \right]}_v \]

  Let us show that $U = \varnothing$. By contradiction, if $U$ is not empty,
  then let $v \in U$ be minimal for the total order $\prec$.
  Proceed by case disjunction on $(\IG, \VGG \setminus \IG)$.
  For the first case, if $v \in U \cap \IG$, then $\varphi(v) \in \IH$.
  Therefore by definition of the activation map, we get the contradiction
  \[ {\left[ \FF{\liftmod{G}}(w_G, x) \right]}_v
  \underset{\step{1}}{=} \left[ c_G^* x \right]_v
  \underset{\step{2}}{=} x_{c_G(v)}
  \underset{\step{3}}{=} x_{c_H(\varphi(v))}
  \underset{\step{4}}{=} \left[ \FF{\liftmod{H}}(w_H, x) \right]_{\varphi(v)}
  \underset{\step{5}}{=} \left[ \varphi^* \FF{\liftmod{H}}(w_H, x) \right]_{v}
  \]
  where $\step{1}$ is the definition of the activation map of lifts
  (\Definition{lift} and \Definition{activation-map})
  for $v \in \IG$,
  $\step{2}$ is the definition of the pullback $c_G^*$,
  $\step{3}$ follows from $c_G = c_H \circ \varphi$,
  $\step{4}$ is the same argument as $\step{1}$ for $\varphi(v) \in \IH$,
  and $\step{5}$ the definition of $\varphi^*$,
  which concludes this first case.

  For the second case, assume $v \in U \cap (\VGG \setminus \IG)$.
  Write for shortness $g = \FF{\liftmod{G}}(w_G, x)$
  and $h = \FF{\liftmod{H}}(w_H, x)$.
  The fact that $v \in U$ implies that $g_v \neq h_{\varphi(v)}$,
  but the assumption that $v$ is minimal in $U$
  implies that for all $u \in \VGG$, if $u \prec v$,
  then $g_u = h_{\varphi(u)}$ (otherwise $u \in U$ by definition, and
  thus $v$ is not minimal).
  Writing $G_a(-,v) = \pi_G^{-1}(a) \cap G(-,v)$ for shortness (resp. $H_a(-,t) = \pi_H^{-1}(a) \cap H(-,t)$),
  by definition of activation maps (\Def{activation-map})
  and lift (\Def{lift}),
  \[ g_v = \sigma_{\pi_G(v)} \left( {\left( \sum_{u \in G_a(-,v)} M_{\pi_G(u,v)} \left( {\left[ w_G \right]}_{(u,v)}, g_u \right) \right)}_{a \in B(-,\pi_G(v))} \right) \]
  Now, by definition, $\varphi : G \to H$ being a
  fibration implies that $\varphi\vert_{G(-,v)} : G(-,v) \to H(-, \varphi(v))$
  is a bijection (\Definition{fibration}).
  Moreover, the fact that $\pi_G = \pi_H \circ \varphi$
  implies that for all $a \in \VB$, the restriction of $\varphi$
  is also a bijection $G_a(-,v) \to H_a(-, \varphi(v))$.
  Therefore, we can replace the terms in the sum one-by-one
  (note that this is not possible with a homomorphism, we need
  a fibration to do this), as follows.
  Write $\psi = \varphi\rvert_{G(-,v)}$ the bijection,
  and rewrite the previous equality
  \[\begin{aligned}
    g_v
       &\underset{\step{1}}{=}
       \sigma_{\pi_G(v)} \left( {\left( \sum_{u \in G_a(-,v)} M_{\pi_G(u,v)} \left( {\left[ w_G \right]}_{(u,v)}, g_u \right) \right)}_{a \in B(-,\pi_G(v))} \right)
    \\ &\underset{\step{2}}{=}
       \sigma_{\pi_G(v)} \left( {\left( \sum_{u \in H_a(-,\varphi(v))} M_{\pi_G(\psi^{-1}(u),v)} \left( {\left[ w_G \right]}_{(\psi^{-1}(u),v)}, g_{\psi^{-1}(u)} \right) \right)}_{a \in B(-,\pi_G(v))} \right)
    \\ &\underset{\step{3}}{=}
       \sigma_{\pi_H(\varphi(v))} \left( {\left( \sum_{u \in H_a(-,\varphi(v))} M_{\pi_H(\varphi(\psi^{-1}(u),v))} \left( {\left[ w_G \right]}_{(\psi^{-1}(u),v)}, g_{\psi^{-1}(u)} \right) \right)}_{a \in B(-,\pi_H(\varphi(v)))} \right)
    \\ &\underset{\step{4}}{=}
       \sigma_{\pi_H(\varphi(v))} \left( {\left( \sum_{u \in H_a(-,\varphi(v))} M_{\pi_H(u,\varphi(v))} \left( {\left[ w_H \right]}_{(u,\varphi(v))}, h_{u} \right) \right)}_{a \in B(-,\pi_H(\varphi(v)))} \right)
    \\ &\underset{\step{5}}{=}
       h_{\varphi(v)}
  \end{aligned}\]
  where $\step{1}$ is by definition of $g$,
  $\step{2}$ is the rewriting of terms reindexed by the bijection,
  $\step{3}$ uses $\pi_G = \pi_H \circ \varphi$,
  $\step{4}$ is the assumption that $\psi^{-1}(u) \notin U$ by minimality of $v$
  and $\varphi(\psi^{-1}(u)) = u$,
  applied to $g_{\psi^{-1}(u)} = h_{\varphi(\psi^{-1}(u))} = h_u$
  and rewriting of $w_G = \varphi^* w_H$.
  Finally $\step{5}$ is the definition of $h$, constitutes the contradiction
  for the second case, thus concludes the induction and the proof.
\end{proof}

\clearpage

\section{Tracking activations across deformations of weights}%
\label{appendix:activation-tracking}

\begin{definition}[Quantitative continuity]\label{def:new-continuity-constant}
  Let $G = (\VGG, \EGG)$ be a graph.
  \hfill{}\break{}
  Let $\liftmod{G} = ((I,T), (\Y, \W, \Z), (M, \sigma)) \in \PMod{G}$
  and $w^0 \in \Param{\liftmod{G}}$.
  Let ${\mathcal{X} \subseteq \prod_{v \in I} \Y_{v}}$.

  For any $v \in \VGG$, write $F_v = \sigma_v \circ \left( M_{(u,v)} \right)_{u \in G(-,v)} : \prod_{u \in G(-,v)} \W_{(u,v)} \times \Y_u  \to \Y_v$.
  \newline
  Write $g^0 = \FF{\liftmod{G}}(w^0, \cdot)$
  such that for $x \in \mathcal{X}$ and $v \in \VGG$, we have $g^0_v(x) = \left[ \FF{\liftmod{G}}(w^0, x)\right]_v \in \Y_v$.

  For any $\eta \in \mathbb{R}_+^*$ and $x \in \mathcal{X}$,
  define $L(\eta, x) : \VGG \mapsto \mathbb{R}_+$
  inductively as follows from
  $\left[ L(\eta, x) \right]_b = 0$ for $b \in I$ and propagated
  to $v \in \VGG \setminus I$ as
  \[
    \left[ L(\eta, x) \right]_v =
    \sup_{\substack{w \in \prod_{u \in G(-,v)} \W_{(u,v)} \\ \forall u, \lVert w_{(u,v)} - w^0_{(u,v)} \rVert \leq \eta }}
    \>\>
    \sup_{\substack{g \in \prod_{u \in G(-,v)} \Y_v\\ \forall u, \lVert g_u - g_u^0(x) \rVert \leq \left[ L(\eta, x) \right]_u }}
    \left\lVert F_v(w,g) - F_v(w^0, g^0(x)) \right\rVert
  \]
  Then write $L[\liftmod{G}, w^0](\eta, \mathcal{X}) = \sup_{x \in \mathcal{X}} \max_{v \in \VGG} \left[ L(\eta, x) \right]_v$.
  If $\mathcal{X}$ is compact, this constant is finite.
\end{definition}

This notion will be used in conjunction with
\Proposition{morphisms-preserve-activations},
which is the reason this convoluted definition is chosen
instead of just $\left[ \sup_{x \in \mathcal{X}} \sup_{\lVert w - w^0 \rVert_{\infty} \leq \eta} \lVert \FF{\liftmod{G}}(w, x) - \FF{\liftmod{G}}(w^0,x) \rVert_\infty \right]$,
which is much more simply stated.
The construction of \Definition{new-continuity-constant} is such that the
constant $L$ is preserved by morphisms, thus if we start
with two networks with similar weights and a morphism between
them, we can track the evolution of all activations with a single constant.

\begin{proposition}[Tracking activations through morphisms and across weight deformations]\label{prop:cross-morphism-activation-continuity}
  \hfill{}\break{}
  Let $\liftmod{G} = (G, \pi, c) \in \LiftPMod{\basemod, \CC}$ and $w_G \in \Param{\liftmod{G}}$ for underlying graph $G = (\VGG, \EGG)$.
  \newline
  Let $\liftmod{H} \in \LiftPMod{\basemod, \CC}$ and $w_G \in \Param{\liftmod{H}}$.
  Let $\mathcal{X} \subseteq \prod_{u \in \CC} \Y_{\pi_C(u)}$.

  Let $\eta \in \mathbb{R}_+$.
  Assume $\varphi : \liftmod{G} \to \liftmod{H}$ is a $\LiftPMod{\basemod, \CC}$-morphism
  such that
  \[ \forall e \in \EGG, \>
  {\left\lVert \left[ w_G \right]_e - \left[ \varphi^* w_H \right]_e \right\rVert \leq \eta} \]
  Then for any $v \in \VGG$
  it holds $\sup_{x \in \mathcal{X}} \left\lVert \left[ \FF{\liftmod{G}}(w_G, x) \right]_v - \left[ \FF{\liftmod{H}}(w_H, x) \right]_{\varphi(v)} \right\rVert \leq L[H, w_H](\eta, \mathcal{X})$
\end{proposition}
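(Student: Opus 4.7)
The plan is to reduce the claim to a pure continuity statement on a single perceptron module by inserting the ``pullback weight'' $\tilde w_G = \varphi^* w_H \in \Param{\liftmod{G}}$ as an intermediate. By \Proposition{morphisms-preserve-activations} applied to $(w_G, w_H) \leftarrow (\tilde w_G, w_H)$, we have the pointwise identity $[\FF{\liftmod{G}}(\tilde w_G, x)]_v = [\FF{\liftmod{H}}(w_H, x)]_{\varphi(v)}$ for every $v \in \VGG$ and every $x \in \mathcal{X}$. Thus the quantity to bound can be rewritten as $\lVert [\FF{\liftmod{G}}(w_G, x)]_v - [\FF{\liftmod{G}}(\tilde w_G, x)]_v \rVert$, a comparison of two forwards through the \emph{same} lifted graph $G$ whose weight vectors $w_G$ and $\tilde w_G$ differ by at most $\eta$ coordinatewise.

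The first key step is then to propagate this weight perturbation to a perturbation of activations, using the inductive construction of \Definition{new-continuity-constant}. By induction over a topological ordering on $\VGG$, one checks that $\lVert [\FF{\liftmod{G}}(w_G, x)]_v - [\FF{\liftmod{G}}(\tilde w_G, x)]_v \rVert \leq [L_G(\eta, x)]_v$, where $L_G$ denotes the continuity constants of $(\pi^*\basemod, \tilde w_G) \in \PMod{G}$: the base case on $v \in \IG$ is trivial (both forwards coincide on inputs), and the induction step follows by taking $(w, g) = (w_G\rvert_{G(-,v)},\, \FF{\liftmod{G}}(w_G,x)\rvert_{G(-,v)})$ inside the supremum in \Definition{new-continuity-constant}, using the induction hypothesis to ensure membership in the constraint set.

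The second and main step is to show the transfer inequality $[L_G(\eta, x)]_v \leq [L_H(\eta, x)]_{\varphi(v)}$, again by induction along a topological ordering; this is where the fibration hypothesis on $\varphi$ is essential. Since $\varphi$ is a $\LiftPMod{\basemod,\CC}$-morphism, its restriction to $G(-,v)$ is a bijection onto $H(-,\varphi(v))$ that preserves $\pi$ (hence the $a$-indexed partition of parents, the spaces $\W_{\pi(u,v)}$, $\Y_{\pi(u)}$, and the maps $M_{\pi(u,v)}$, $\sigma_{\pi(v)}$ appearing in $F_v$). Consequently the function $F_v$ built over parents of $v$ in $G$ is identified, via reindexing by $\varphi$, with the function $F_{\varphi(v)}$ built over parents of $\varphi(v)$ in $H$; and by the morphism identity, the base values $\tilde w_G$, $g^0_G(x)$ at $v$ match $w_H$, $g^0_H(x)$ at $\varphi(v)$. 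The supremum defining $[L_G(\eta,x)]_v$ is thus taken over a set in bijection with the one defining $[L_H(\eta,x)]_{\varphi(v)}$, except that the activation radii $[L_G(\eta,x)]_u$ for $u \in G(-,v)$ are, by induction hypothesis, at most $[L_H(\eta,x)]_{\varphi(u)}$ — so the $G$-sup is taken over a smaller set and the inequality follows.

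Combining the two steps yields $\lVert [\FF{\liftmod{G}}(w_G, x)]_v - [\FF{\liftmod{H}}(w_H, x)]_{\varphi(v)} \rVert \leq [L_H(\eta, x)]_{\varphi(v)}$; taking the supremum over $x \in \mathcal{X}$ and the maximum over $v$ concludes. The main obstacle is the second step: writing the identification of $F_v$ with $F_{\varphi(v)}$ carefully enough that the reindexing of the supremum set is rigorous — this essentially repeats the bijection-of-parents argument from the proof of \Proposition{morphisms-preserve-activations}, but applied now to the auxiliary parameters $(w,g)$ ranging in a ball rather than to the base weights themselves.
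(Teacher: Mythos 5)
Your proof is correct, and it takes a genuinely different route from the paper's, so a brief comparison is in order.

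The paper proves the claim in a single induction over a topological ordering of $\VGG$: it directly compares $g_v = [\FF{\liftmod{G}}(w_G,x)]_v$ with $h_{\varphi(v)} = [\FF{\liftmod{H}}(w_H,x)]_{\varphi(v)}$, using the fibration's bijection of parents to rewrite $g_v$ as $\sigma_{\pi_H(\varphi(v))}$ applied to arguments indexed by $H(-,\varphi(v))$, and then observes that the $\eta$-bound on weights and the inductive hypothesis on grandparent activations place those arguments inside the constraint balls defining $[L_H(\eta,x)]_{\varphi(v)}$, so that the difference is dominated by the supremum. You instead factor the comparison through the pullback weight $\tilde w_G = \varphi^* w_H$: step 1 invokes \Proposition{morphisms-preserve-activations} to replace $h_{\varphi(v)}$ by $[\FF{\liftmod{G}}(\tilde w_G, x)]_v$, step 2 is a pure within-$G$ continuity estimate bounding $\lVert [\FF{\liftmod{G}}(w_G,x)]_v - [\FF{\liftmod{G}}(\tilde w_G,x)]_v \rVert$ by an auxiliary constant $[L_G(\eta,x)]_v$ built over $G$ with base point $\tilde w_G$, and step 3 transfers $[L_G(\eta,x)]_v \leq [L_H(\eta,x)]_{\varphi(v)}$ by a second induction using the same bijection of parents. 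Your decomposition is arguably cleaner conceptually, since it isolates the role of each hypothesis — step~2 uses only the $\eta$-bound on edge weights, while steps 1 and 3 use only the $\LiftPMod{}$-morphism structure — at the cost of introducing the auxiliary constant $L_G$ and a second induction. The paper's version is shorter because it never materializes $L_G$: it uses the $\eta$-bound and the fibration simultaneously at each inductive step. Both are sound; one minor remark is that in step 3 your parent-level bijection argument would in fact yield equality $[L_G(\eta,x)]_v = [L_H(\eta,x)]_{\varphi(v)}$ (since the induction hypothesis may be taken as an equality and the fibration reindexes the entire supremum bijectively), but the one-sided inequality you state is all that is needed.
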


\begin{proof}[Proof of \Proposition{cross-morphism-activation-continuity}]
  To avoid confusions, write
  the lifted modules $\liftmod{G} = (G, \pi_G, c_G)$ and $\liftmod{H} = (H, \pi_H, c_H)$,
  for the underlying graphs $G = (\VGG, \EGG)$ and $H = (\VH, \EH)$.
  For $v_H \in \VH$, we use the notation $\left[ L(\eta, x) \right]_{v_H}$
  associated to vertices $\VH$ by \Definition{new-continuity-constant}
  in the construction of $L[H, w_H](\eta, \mathcal{X})$.
  Let $x \in \mathcal{X}$.
  For shortness, define $g = \FF{\liftmod{G}}(w_G, x)$
  and $h = \FF{\liftmod{H}}(w_H, x)$.
  Let us show by induction on $v \in \VGG$
  that $\lVert g_v - h_{\varphi(v)} \rVert \leq \left[ L(\eta, x) \right]_{\varphi(v)}$.

  To that end, let $v \in \VGG$ be the minimal vertex
  satisfying $\lVert g_v - h_{\varphi(v)} \rVert > [ L(\eta, x) ]_{\varphi(v)}$,
  for a topological ordering of $G$.
  If $\pi_G(v) \in \IB$, then by definition
  of a lift and forward (resp. \Def{lift} and \Def{activation-map})
  it holds $g_v = x_{c_G(v)} = x_{c_H(\varphi(v))} = h_{\varphi(v)}$
  which constitutes the contradiction.
  For the remaining case of $v \in \VGG \setminus \pi_G^{-1}(\IB)$,
  let $\psi = \varphi\vert_{G(-,v)}$
  be the bijection between parents ${\psi : G(-,v) \to H(-, \varphi(v))}$
  (existence and bijectvity follow from the fact that $\varphi$ is a fibration).
  Thus,
  \[\begin{aligned}
    g_v &= \sigma_{\pi_G(v)} \left( {\left( M_{\pi_G(u,v)} \left( {\left[ w_G \right]}_{(u,v)}, g_u \right) \right)}_{u \in G(-,v)} \right)
    \\ &= \sigma_{\pi_H(\varphi(v))} \left( {\left( M_{\pi_H(s,\varphi(v))} \left( {\left[ w_G \right]}_{(\psi^{-1}(s),v)}, g_{\psi^{-1}(s)} \right) \right)}_{s \in H(-,\varphi(v))} \right)
  \end{aligned}\]
  However by assumption we have $\lVert {\left[ w_G \right]}_{\psi^{-1}(s), v}  - { \left[ w_H \right]}_{s,\varphi(v)} \rVert \leq \eta$,
  and by minimality of $v$ it holds $\lVert g_{\psi^{-1}(s)} - h_s \rVert \leq \left[ L(\eta, x) \right]_s$,
  thus by definition of $\left[ L(\eta, x) \right]_{\varphi(v)}$ as the supremum over such values,
  we get $\lVert g_v - h_{\varphi(v)} \rVert \leq [ L(\eta, x) ]_{\varphi(v)}$,
  which consitutes the contradiction and concludes the induction.
  Finally, observing that $\left[ L(\eta, x) \right]_{\varphi(v)} \leq L[\liftmod{H}, w_H](\eta, \mathcal{X})$
  concludes the proof.
  %
\end{proof}

\clearpage{}

\section{From a covering partial morphism to tangent approximation}%
\label{appendix:cover-to-tangent-approximation}

\subsection{Partial morphisms, a strong meaning of ``sub-network''}

\begin{definition}[Partial fibration]\label{def:partial-fibration}
  A partial fibration $(S, \varphi) : G \rightharpoonup H$ is a subgraph $S$ of $G$
  such that the inclusion map $\iota : S \to G$ is a fibration, together
  with a fibration $\varphi : S \to H$.
\end{definition}

A subgraph $S$ of $G$ such that the inclusion $\iota : S \to G$ is a fibration
will lead to an ``extraction'' of a perceptron module from a larger module,
compatible with forward functions.

\begin{samepage}
\begin{definition}[Partial morphisms of $\LiftPMod{}$]%
  \label{def:partial-liftpmod-morphism}
  \hfill{}\break{}
  Let $(G, \pi_G, c_G) \in \LiftPMod{\basemod, \CC}$
  and $(H, \pi_H, c_H) \in \LiftPMod{\basemod, \CC}$.
  A partial morphism of $\LiftPMod{\basemod, \CC}$ is
  a partial fibration of graphs $(S, \varphi) : G \rightharpoonup H$,
  such that the inclusion $\iota : S \to G$ induces
  a $\LiftPMod{\basemod, \CC}$-morphism
  $\iota : (S, \pi_G\vert_{S}, c_G\rvert_S) \to (G, \pi_G, c_G)$,
  and the fibration $\varphi : S \to H$ induces
  a $\LiftPMod{\basemod, \CC}$-morphism
  $\varphi : (S, \pi_G\rvert_S, c_G\rvert_S) \to (H, \pi_H, c_H)$.
\end{definition}
\end{samepage}

\begin{definition}[Covering partial matchings]\label{def:covering-partial-matchings}
  \hfill{}\break{}
  Let $\liftmod{G} = (G, \pi_G, c_G) \in \LiftPMod{\basemod, \CC}$
  and 
  $ w_G \in \Param{\liftmod{G}}$.
  \newline
  Let $\liftmod{H} = ({H}, \pi_H, c_H) \in \LiftPMod{\basemod, \CC}$
  and 
  $w_H \in \Param{\liftmod{H}}$, over graph $H = (\VH, \EH)$.

  Let $\alpha : \VH \to [0,1]$ and $\eta \in \mathbb{R}_+^*$.
  A partial $\LiftPMod{\basemod, \CC}$-morphism
  $(S, \varphi) : \liftmod{G} \rightharpoonup \liftmod{H}$
  is said to be an $(\alpha, \eta)$-covering if it satsfies the following conditions:
  \[\begin{aligned}
    \forall e \in E(S),  & \quad \left\lVert {\left[\iota^*w_G\right]}_{e} - {\left[\varphi^*w_H\right]}_{e} \right \rVert \leq \eta
    \\ \forall v \in \VH, & \quad \# \varphi^{-1}(v) \geq \alpha(v) \cdot \# \pi_G^{-1} \left( \pi_H(v) \right)
  \end{aligned}\]

  We write $\mathcal{M}_\eta^\alpha[(\liftmod{G}, w_G), (\liftmod{H}, w_H)]$ the
  set of such partial morphisms from $\liftmod{G}$ to $\liftmod{H}$.
  Additionally, we say that $(\liftmod{G}, w_G)$ has $(\alpha, \eta)$-cover of $(\liftmod{H}, w_H)$
  if the set of such partial morphisms is not empty.
\end{definition}

\subsection{Tangent approximation by subnetwork-matching}%
\label{appendix:contained-witness-yields-tangent-approximation}

Let us show that the tangent approximation property at a point
can be reduced to a covering property.

\begin{proposition}%
  \label{prop:witness-gives-tangent-approximation}
  \hfill{}\break{}%
  Let $\liftmod{G}\in \LiftPMod{\basemod, \CC}$,
and let $\theta_0 = (w^0, a^0) \in \Param{\liftmod{G}} \times \LinReadout{\liftmod{G}, k}$.
Define the shorthands
$\Theta = \Param{\liftmod{G}} \times \LinReadout{\liftmod{G}, k}$
and $F : \Theta \to (\mathcal{X} \to \mathbb{R}^k)$
the forward function ${F : (w, a) \mapsto a \cdot \FF{\liftmod{G}}(w, -)}$,
with norm
$\lVert (w,a) \rVert^2_\Theta = \sum_{e \in \EGG} {\lVert w_e \rVert}^2 + \sum_{i \in [k], v \in T(\liftmod{G})} \lVert a_{i,v} \rVert^2$.

  Let $\liftmod{G}^\star = (G^\star, \pi_\star, c_\star) \in \LiftPMod{\basemod, \CC}$
and $(w^\star, a^\star) \in \Param{\liftmod{G}^\star} \times \LinReadout{\liftmod{G}^\star, k}$
  over $G^\star = (\VM, \EM)$.
  Let $\eta \in \mathbb{R}_+$, and $\alpha : \VM \to \interval[open left]{0}{1}$.
  If $(\liftmod{G}, w^0)$
  has $(\alpha, \eta)$-cover of $(\liftmod{G}^\star, w^\star)$,
then
\[
  \inf_{\substack{u \in \Theta\\
    \lVert u \rVert_\Theta \leq \kappa}
  }
  \,
  \bigl \lVert F(\theta_0) + \d F(\theta_0) \cdot u - f^\star \bigr \rVert_\mathcal{D}
  \> \leq \>
    \bigl \lVert a^\star \cdot \FF{\liftmod{G}^\star}(w^\star, \cdot) - f^\star \bigr \rVert_\mathcal{D}
  +
    C^\star \cdot L[\liftmod{G}^\star, w^\star](\eta, \mathcal{X})
\vspace{-.05in}
\]

with
  $C^\star = \sum_{i \in [k]} \sum_{v \in V^\star} \frac{\lVert a^\star_{i,v} \rVert}{\sqrt{ \#\pi_\star^{-1}(\pi_\star(v))} }$
  and $\kappa =
  \lVert a^0 \rVert +
  \sqrt{%
    \sum_{i \in [k]} \sum_{v \in V^\star} \frac{\lVert a^\star_{i,v} \rVert^2 }{ \alpha(v) \cdot \#\pi_\star^{-1}(\pi_\star(v)) }
  }
  $.
\end{proposition}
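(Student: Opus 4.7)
The idea is to exploit the linearity of $F$ in the last-layer parameter $a$: choose the tangent direction $u = (0, a^+) \in \Theta$ that leaves $w^0$ unchanged and only modifies the linear readout. Because $F(w,a) = a \cdot \FF{\liftmod{G}}(w,\cdot)$ is linear in $a$, one has $F(\theta_0) + \d F(\theta_0) \cdot (0, a^+) = (a^0 + a^+) \cdot \FF{\liftmod{G}}(w^0, \cdot)$. Setting $a := a^0 + a^+$, it suffices to exhibit some $a \in \LinReadout{\liftmod{G}, k}$ such that (i) $a \cdot \FF{\liftmod{G}}(w^0, \cdot)$ approximates $a^\star \cdot \FF{\liftmod{G}^\star}(w^\star, \cdot)$ in $\lVert \cdot \rVert_\mathcal{D}$ up to $C^\star \cdot L[\liftmod{G}^\star, w^\star](\eta, \mathcal{X})$, and (ii) $\lVert a \rVert \leq \kappa - \lVert a^0 \rVert$, since then $\lVert u \rVert_\Theta = \lVert a - a^0 \rVert \leq \lVert a \rVert + \lVert a^0 \rVert \leq \kappa$, and the conclusion follows from a triangle inequality against $\lVert a^\star \cdot \FF{\liftmod{G}^\star}(w^\star, \cdot) - f^\star \rVert_\mathcal{D}$.

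The construction of $a$ uses the covering partial morphism $(S, \varphi)$ to transplant $a^\star$ onto $\liftmod{G}$. For $v \in T(\liftmod{G}) \cap V(S)$, let $b = \pi_G(v) = \pi_\star(\varphi(v)) \in \TB$, and set
\[
a_{i, v} = \frac{1}{\#\varphi^{-1}(\varphi(v))} \sqrt{\frac{\#\pi_G^{-1}(b)}{\#\pi_\star^{-1}(b)}} \, a^\star_{i, \varphi(v)},
\]
and $a_{i, v} = 0$ for $v \notin V(S)$. Substituting into \Definition{linreadout} and reindexing the inner sum via the fibers of $\varphi$, a direct computation gives
\[
a \cdot \FF{\liftmod{G}}(w^0, x) = \sum_{i, v^\star} \sqrt{\tfrac{1}{\#\pi_\star^{-1}(\pi_\star(v^\star))}} \, \bigl\langle a^\star_{i, v^\star}, \bar{y}_{v^\star}(x) \bigr\rangle_{i \in [k]},
\]
where $\bar{y}_{v^\star}(x)$ is the average of $[\FF{\liftmod{G}}(w^0, x)]_v$ over $v \in \varphi^{-1}(v^\star)$. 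Because $(S, \varphi)$ is an $(\alpha, \eta)$-cover, the weight-perturbation hypothesis of \Proposition{cross-morphism-activation-continuity} is met, so each $[\FF{\liftmod{G}}(w^0, x)]_v$ is within $L[\liftmod{G}^\star, w^\star](\eta, \mathcal{X})$ of $[\FF{\liftmod{G}^\star}(w^\star, x)]_{\varphi(v)}$; by convexity the same bound transfers to $\bar{y}_{v^\star}(x)$.

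Applying Cauchy--Schwarz coordinate by coordinate and then $\lVert \cdot \rVert_2 \leq \lVert \cdot \rVert_1$ over the $i$-coordinates yields the pointwise bound
\[
\bigl\lVert a \cdot \FF{\liftmod{G}}(w^0, x) - a^\star \cdot \FF{\liftmod{G}^\star}(w^\star, x) \bigr\rVert_2 \leq C^\star \cdot L[\liftmod{G}^\star, w^\star](\eta, \mathcal{X}),
\]
uniform in $x \in \mathcal{X}$, which integrates into the required $\lVert \cdot \rVert_\mathcal{D}$-bound. Squaring and summing the definition of $a$ above, and then invoking the covering inequality $\#\varphi^{-1}(v^\star) \geq \alpha(v^\star) \cdot \#\pi_G^{-1}(\pi_\star(v^\star))$ to cancel the $\#\pi_G^{-1}(b)$ factor in the numerator, produces $\lVert a \rVert^2 \leq \sum_{i, v^\star} \lVert a^\star_{i, v^\star} \rVert^2 / (\alpha(v^\star) \cdot \#\pi_\star^{-1}(\pi_\star(v^\star)))$, which is exactly $(\kappa - \lVert a^0 \rVert)^2$.

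The only delicate step is the reindexing of the readout sum via the fibers of $\varphi$: one must verify that every $v \in \pi_G^{-1}(b) \cap V(S)$ has $\varphi(v) \in \pi_\star^{-1}(b)$ (this follows from $\pi_G\rvert_S = \pi_\star \circ \varphi$, built into the definition of a partial $\LiftPMod{\basemod, \CC}$-morphism), and that the normalization $\sqrt{\#\pi_G^{-1}(b)}$ in the outer readout cancels with $\sqrt{\#\pi_G^{-1}(b)/\#\pi_\star^{-1}(b)}$ in the definition of $a_{i,v}$ to leave the witness-level $\sqrt{1/\#\pi_\star^{-1}(b)}$. Everything else is triangle inequality, Cauchy--Schwarz, and a single application of \Proposition{cross-morphism-activation-continuity}.
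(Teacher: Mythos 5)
Your proof is correct and follows essentially the same route as the paper's: the same tangent direction $u=(0,a-a^0)$ with $a$ obtained by averaging $a^\star$ over the fibers of $\varphi$ with the normalization $\#\varphi^{-1}(\varphi(v))^{-1}\sqrt{\#\pi_G^{-1}(b)/\#\pi_\star^{-1}(b)}$, the same decomposition by triangle inequality against the witness, the same rewriting of the readout as a sum of fiber-averages, the same application of \Proposition{cross-morphism-activation-continuity}, Cauchy--Schwarz and $\lVert\cdot\rVert_2\leq\lVert\cdot\rVert_1$ for the error term, and the covering inequality to bound $\lVert a\rVert$. No divergence worth noting.
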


Since $L[\liftmod{G}^\star, w^\star](\eta, \mathcal{X}) \underset{\eta \to 0}{\longrightarrow} 0$,
this bound tends to
$\lVert a^\star \cdot \FF{\liftmod{G}^\star}(w^\star, \cdot) - f^\star \rVert_\mathcal{D}$
in the limit $\eta \to 0$.

This should be fairly straightforwardly connected
to the tangent approximation property at $\theta_0$.
Indeed if $\lVert \FF{\liftmod{G}^\star}(w^\star, \cdot) - f^\star \rVert^2_\mathcal{D} \leq \varepsilon$
then this property will constitute tangent approximation at $\theta_0$
with any error $\varepsilon_1 > \varepsilon$ provided
$\eta$ is taken sufficiently small. The tangent radius
$\kappa$ also has a manageable form.

Note that this is consistent with the intuition given by
\citet{frankle2018lottery}:
the \textit{Lottery Ticket Hypothesis} postulates
that if a large enough network contains a smaller network
which achieves low loss (a \textit{Lottery Ticket}),
then the larger network will achieve low loss.
The principal distinction with the present discussion
--- apart from the added generality of generic perceptron modules
instead of just multi-layer perceptrons ---
is that \citet{frankle2018lottery} considered dense networks
whose ``sub-networks'' were extracted by \textit{homomorphism},
whereas our proof is restricted to extractions by \textit{fibration},
which is much more restrictive.
The overarching intuition carries over even if the details don't match,
largely because these early postulates were intentionally
fuzzy in nature to guide intuition,
whereas our proofs are rigid by design in order to yield precise
empirical predictions.

\begin{proof}[Proof of \Proposition{witness-gives-tangent-approximation}]
  Note first that
  the function $F : (w,a) \mapsto a \cdot \FF{\liftmod{G}}(w, -)$
  is linear with respect to $a$, therefore for any $b \in \LinReadout{\liftmod{G}, k}$,
  it holds
  $\d F(w,a) \cdot (0, b) = b \cdot \FF{\liftmod{G}}(w, -)$.
  Then, from $(S, \varphi) : G \rightharpoonup G^\star$
  a partial fibration with $(\alpha, \eta)$-cover,
  construct $u \in \LinReadout{\liftmod{G}, k}$
  defined for $v \in T(\liftmod{G})$ by
  $u_v = a^\star_{\varphi(v)} / s_v$,
  where $s_v = \# \varphi^{-1}(\varphi(v)) \sqrt{ \#\pi_\star^{-1}(\pi(v)) / \# \pi^{-1}(\pi(v)) }$.
  \begin{equation}\label{eq:main-witness-ineq}
    \begin{aligned}
    &\lVert F(w^0,a^0) + \d F(w^0, a^0) \cdot (0, u-a^0) - f^\star \rVert_\mathcal{D}
    \\&\underset{\step{1}}{=} \lVert \d F(w^0,a^0) \cdot (0, u) - f^\star \rVert_\mathcal{D}
    \> \underset{\step{2}}{=} \lVert u \cdot \FF{\liftmod{G}}(w^0,-) - f^\star \rVert_\mathcal{D}
    \\&\underset{\step{3}}{\leq}
    \lVert a^\star \cdot \FF{\liftmod{G}^\star}(w^\star, -) - f^\star \rVert_\mathcal{D} +
    \lVert u \cdot \FF{\liftmod{G}}(w^0,-) - a^\star \cdot \FF{\liftmod{G}^\star}(w^\star,-) \rVert_\mathcal{D}
    \end{aligned}
  \end{equation}
  where
  $\step{1}$ is the fact that for any $a$, we have $F(w^0,a) = a \cdot \FF{\liftmod{G}}(w^0, -) = \d F(w^0, a^0) \cdot (0, a)$,
  together with linearity of the differential,
  $\step{2}$ is the simplification of the differential,
  and $\step{3}$ is subadditivity of the semi-norm $\lVert \cdot \rVert_\mathcal{D}$
  on the space $(\mathcal{X} \to \mathbb{R}^k)$
  (see \Proposition{subadditivity-of-D-seminorm}).
  Let us show that this is valid upper-bound for the infimum in the claim,
  because $\lVert (0, u-a^0)\rVert_\Theta \leq \kappa$.
  Indeed, by subadditivity of the norm
  $\lVert (0, u-a^0)\rVert_\Theta \leq \lVert (0, u) \rVert_\Theta + \lVert (0, a^0) \rVert_\Theta = \lVert u \rVert + \lVert a^0 \rVert$.
  Moreover,
  \[\begin{aligned}
    \lVert u \rVert^2
    &= \sum_{i \in [k]} \sum_{v \in T(\liftmod{G})} \lVert u_{i,v} \rVert^2
    = \sum_{i \in [k]} \sum_{u \in T(\liftmod{G}^\star)} \sum_{v \in \varphi^{-1}(u)} \lVert a^\star_u \rVert^2 / s_v^2
    \\ &= \sum_{i \in [k]} \sum_{u \in T(\liftmod{G}^\star)} \sum_{v \in \varphi^{-1}(u)} \frac{ \lVert a^\star_u \rVert^2 \, \# \pi^{-1}(\pi_\star(u)) }{ \# \varphi^{-1}(\varphi(v))^2 \, \#\pi_\star^{-1}(\pi_\star(u)) }
    \\ &= \sum_{i \in [k]} \sum_{u \in T(\liftmod{G}^\star)} \frac{ \lVert a^\star_u \rVert^2 \, \#\pi^{-1}(\pi_\star(u)) }{ \# \varphi^{-1}(u) \, \#\pi_\star^{-1}(\pi_\star(u)) }
    \underset{\step{1}}{\leq}
        \sum_{i \in [k]} \sum_{u \in T(\liftmod{G}^\star)} \frac{\lVert a^\star_u \rVert^2 }{ \alpha(u) \, \# \pi_\star^{-1}(\pi_\star(u)) }
  \end{aligned}\]
  where $\step{1}$ is the covering property
  ensuring that $\#\varphi^{-1}(\varphi(v)) \geq \alpha_{\varphi(v)} \, \# \pi^{-1}(\pi(v))$.
  Now to bound the second term in \Eq{main-witness-ineq}, let us show a more convenient rewriting of
    $u \cdot \FF{\liftmod{G}}(w^0, x)$.
    Write for shortness for any $u \in T(\liftmod{G}^\star)$
    the average $F_u(x) = \sum_{v \in \varphi^{-1}(u)} \FF{\liftmod{G}}(w^0, x)_v / \#\varphi^{-1}(u)$.
  \[
    \begin{aligned}
      u \cdot \FF{\liftmod{G}}(w^0, x)
      \underset{\step{1}}{=} &\left[ \sum_{b \in \TB} \frac{1}{\sqrt{\# \pi^{-1}(b)}} \sum_{v \in \pi^{-1}(b)} \langle u_{i,v}, \FF{\liftmod{G}}(w^0,x)_v \rangle \right]_{i \in [k]}
      \\ \underset{\step{2}}{=} &\left[ \sum_{b \in \TB}\sum_{u \in \pi_\star^{-1}(b)} \sum_{v \in \varphi^{-1}(u)}  \frac{1}{\sqrt{\# \pi_\star^{-1}(b)}} \frac{\langle a^\star_{i, u}, \FF{\liftmod{G}}(w^0,x)_v \rangle}{\# \varphi^{-1}(u) } \right]_{i \in [k]}
      \\ \underset{\step{3}}{=} &\left[ \sum_{b \in \TB}\sum_{u \in \pi_\star^{-1}(b)}  \frac{1}{\sqrt{\# \pi_\star^{-1}(b)}} \langle a^\star_{i, u}, F_u(x) \rangle \right]_{i \in [k]}
    \end{aligned}
  \]
  where $\step{1}$ is by defintion of the action,
  $\step{2}$ by definition of $u_v$,
  and $\step{3}$ by linearity.
  Next, by $\Proposition{cross-morphism-activation-continuity}$
  and subadditivity, we have
  $\lVert F_u(x) - \FF{\liftmod{G}^\star}(w^\star, x) \rVert \leq L[\liftmod{G}^\star, w^\star](\eta, \mathcal{X})$.
  Thus,
  \[\begin{aligned}
    \left\lVert u \cdot \FF{\liftmod{G}}(w^0, -) - a^\star \cdot \FF{\liftmod{G}^\star}(w^\star, -) \right\rVert_{\mathcal{D}}^2
    &= \mathbb{E}_x \left[ \sum_{i \in [k]} \left\lvert \sum_{b \in \TB} \sum_{u \in \pi_\star^{-1}(b)}
      \frac{%
        \langle a^\star_{i,u}, F_u - \FF{\liftmod{G}^\star}(w^\star, x) \rangle
      }{\sqrt{\# \pi_\star^{-1}(b) }}
      \right\rvert^2 \right]
    \\ &\underset{\step{1}}{\leq}
      \mathbb{E}_x \left[ \sum_{i \in [k]} \left\lvert \sum_{b \in \TB} \sum_{u \in \pi_\star^{-1}(b)}
      \frac{%
        \lVert a^\star_{i,u} \rVert \cdot \lVert F_u - \FF{\liftmod{G}^\star}(w^\star, x) \rVert
      }{\sqrt{\# \pi_\star^{-1}(b) }}
        \right\rvert^2 \right]
    \\ &\leq
      \mathbb{E}_x \left[ \sum_{i \in [k]} \left\lvert  L[\liftmod{G}^\star, w^\star](\eta,X) \sum_{b \in \TB} \sum_{u \in \pi_\star^{-1}(b)}
      \frac{%
      \lVert a^\star_{i,u} \rVert
      }{\sqrt{\# \pi_\star^{-1}(b) }}
      \right\rvert^2 \right]
  \end{aligned}\]
  where
  $\step{1}$ is the Cauchy-Schwarz inequality in $\Y_{\pi_\star(u)}$.
  Thus taking square roots and using the inequality $\lVert \cdot \rVert_2 \leq \lVert \cdot \rVert_1$,
  we get
  $\left\lVert u \cdot \FF{\liftmod{G}}(w^0, -) - a^\star \cdot \FF{\liftmod{G}^\star}(w^\star, -) \right\rVert_{\mathcal{D}} \leq C^\star \cdot L[\liftmod{G}^\star, w^\star](\eta, \mathcal{X})$.
  Reinjecting this inequality into the first bound \Eq{main-witness-ineq} concludes the proof.
\end{proof}

\clearpage{}

\section{Coverage of random sparse lifts}%
\label{appendix:cover-propagation}

\begin{definition}[$\alpha$ parameter of a perceptron module]\label{def:alpha-parameter}
  \hfill{}\break{}
  Let $\liftmod{G}^\star = (G^\star, \pi_\star, c_\star) \in \LiftPMod{\basemod, \CC}$
  and 
  $w^\star \in \Param{\liftmod{G}^\star}$,
  with graph $G^\star = (\VM, \EM)$.
  \newline
  Let $\lambda : \EB \to \mathbb{R}_+^*$.
  For $e \in \EB$, let $\mathcal{N}_e$ be a full support
  distribution on $\W_e$.
  Let $\eta \in \mathbb{R}_+^*$.

  Define $\alpha_\eta : \VM \to \interval[open left]{0}{1}$,
  inductively over $v \in \VM$,
  from $\alpha_\eta(u) = 1 / \# \pi^{-1}(\pi(u))$
  for $u \in I(G^\star)$,
  and propagated, with the notation
  $\mathrm{K}_v = 2^{1 + \#B(-, \pi_\star(v))} \cdot \# \pi_\star^{-1}(\pi_\star(v))$,
  as follows to $v \in \VM \setminus I(\liftmod{G}^\star)$
  \[
    \alpha_\eta(v) =
    \frac{1}{\mathrm{K}_v}
     \left( \prod_{a \in B(-, \pi_\star(v))} \mathcal{P}\left( k_a(v); \lambda_{(a, \pi_\star(v))} \right) \right)
     \left( \prod_{u \in G^\star(-,v)} p_{\pi_\star(u,v)}\left( w^\star_{(u,v)}, \eta \right) \, \alpha_\eta(u) \right)
  \]
  where $k_a(v) = \# \left( G^\star(-,v) \cap \pi_\star^{-1}(a) \right) \in \mathbb{N}$
  is the number of ``type-$a$'' parents of vertex $v$ in $G^\star$,
  $\mathcal{P}(k; \lambda) = e^{-\lambda} \lambda^k / {k!}$
  is the Poisson density,
  and for $e \in \EB$, we write
  $p_e : \W_e \times \mathbb{R}_+^* \to \mathbb{R}_+^*$
  the volume of a ball as measured by $\mathcal{N}_e$,
  i.e.\ $p_{e} : (s,\tau) \mapsto \mathbb{P}_{w \sim \mathcal{N}_e}\left( \lVert w - s \rVert_{\W_e} \leq \tau \right) $.
\end{definition}

Validity of this definition (e.g.\ image of the function $\alpha$) is detailled in \Appendix{alpha-well-defined}.
The form of $\alpha$ at vertex $v$ (forgetting $K_v$) is intuitively the probability
of guessing the correct degree, then the probability that for each parent,
both the weight and the corresponding subnetwork match the witness.

Based on the results of \Appendix{pao-by-tangent-approximation},
we know that it is sufficient for convergence to show
tangent approximation on a large ball around initialization.
For a given lift and choice of parameters,
we now know from \Appendix{contained-witness-yields-tangent-approximation}
that it is sufficient to show existence of a
witness extraction (a partial morphism such that weights approximatively match)
on that ball.
Let us start by showing that at the initial point, there exists
a witness extraction with high probability, we will extend
this to balls later.

\begin{proposition}[Large random sparse lifts are covering with high probability]%
  \label{prop:large-random-sparse-lifts-are-covering}
  \hfill{}\break{}
  Let $\lambda : \EB \to \mathbb{R}_+^*$.
  For $e \in \EB$, let $\mathcal{N}_e$ be a distribution on $\W_e$ with full support.

  Let $\liftmod{G}^\star = (G^\star, \pi_\star, c_\star) \in \LiftPMod{\basemod, \CC}$ and $w^\star \in \Param{\liftmod{G}^\star}$,
  over graph $G^\star = (\VM, \EM)$.
  \newline%
  Let $\eta \in \mathbb{R}_+$.
  Define $\alpha_\eta : \VM \to \interval[open left]{0}{1}$
  the $\alpha$-parameter associated to $(\liftmod{G}^\star, w^\star)$ by \Definition{alpha-parameter}

  Let $n : \VB \to \mathbb{N}$ such that if $b \in \IB$ then $n_b = \# p_\CC^{-1}(b)$.
  Let $(G, w)$ be a random sparse lift of $\basemod$ along $(n, \lambda)$ with distribution $\mathcal{N}$.
  If for all $(a,b) \in \EB$, $n_a \geq 2 \max ( \lambda_{(a,b)}, \lambda_{(a,b)}^2 / \log(2) )$, then
  \[
    \mathbb{P}\Bigl( \mathcal{M}^{\alpha_\eta}_\eta\left[ (\liftmod{G},w), (\liftmod{G}^\star, w^\star) \right] \Bigr)
    \geq
    \prod_{b \in \VB \setminus \IB} \left( 1 -  \sum_{v \in \pi_\star^{-1}(b)}
    \exp\left({- \frac{\tau(v)^2}{4} \, n_b \cdot \alpha(v)}\right) \right)_+
  \]
  where $\tau : \VM \to [0,1]$ is $\tau : v \mapsto \left(1 - \frac{1}{n_{\pi_\star(v)} \alpha(v)} \right)_+$
  which tends to $1$ when $\left(n_{\pi_\star(v)} \alpha(v)\right) \to +\infty$.
\end{proposition}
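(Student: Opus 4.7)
I would proceed by induction on a topological order of $\VB$, constructing the subgraph $S$ and the fibration $\varphi : S \to G^\star$ one vertex-type $b \in \VB$ at a time. The inductive invariant: conditional on $\varphi$ having been built over $\pi_G^{-1}$ of the strict ancestors of $b$, with the stated per-layer conditional probability one can extend $\varphi$ to $\pi_G^{-1}(b)$ so that every $v \in \pi_\star^{-1}(b)$ gets $\#\varphi^{-1}(v) \geq \alpha_\eta(v) \cdot n_b$ preimages, with each selected $u \in \varphi^{-1}(v)$ equipped with a parent bijection $G(-, u) \to G^\star(-, v)$ preserving $\pi$ and assigned via $\varphi$ at previous layers, and with each paired edge having weight discrepancy at most $\eta$ from its target in $w^\star$. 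The base case $b \in \IB$ is immediate: composing the injections gives $c_\star^{-1} \circ c$ as a partial one-to-one matching, realising $\alpha_\eta(v) n_b = 1$.

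\textbf{Inductive step.} For $b \in \VB \setminus \IB$, I would pre-partition $\pi_G^{-1}(b)$ into $\#\pi_\star^{-1}(b)$ roughly-equal chunks $\{T_v\}_{v \in \pi_\star^{-1}(b)}$ (each with $|T_v| \geq n_b / \#\pi_\star^{-1}(b)$), and for $u \in T_v$ define the matchability indicator $M_u(v) \in \{0, 1\}$: $M_u(v) = 1$ iff (a) the type-$a$ in-degree of $u$ in $G$ equals $k_a(v)$ for every $a \in B(-, b)$, and (b) there is a parent pairing in which every paired $G$-parent of $u$ already lies in $\varphi^{-1}$ of its targeted $G^\star$-parent, with edge-weight discrepancy at most $\eta$. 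Since edges entering distinct $u, u' \in \pi_G^{-1}(b)$ are jointly independent and jointly independent of everything fixed at earlier layers, the indicators $\{M_u(v)\}_{u \in T_v}$ are i.i.d.\ Bernoulli given the previous-layer $\varphi$. A Poisson-versus-Binomial comparison, valid under the hypothesis $n_a \geq 2 \max(\lambda, \lambda^2/\log 2)$ and contributing $1/2$ per parent type, together with the inductive lower bound $\alpha_\eta(u^\star)$ on the fraction of previous-layer preimages of each $u^\star \in G^\star(-, v)$ and the independence of edge weights from matchability, yields $\mathbb{E}[M_u(v)] \geq 2 \alpha_\eta(v) \cdot \#\pi_\star^{-1}(b)$, which is precisely the definition of $\alpha_\eta$ after absorbing the factors $2^{1 + \#B(-, b)}$ and $\#\pi_\star^{-1}(b)$ into $K_v$.

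\textbf{Concentration, iteration, and main obstacle.} Applying the multiplicative Chernoff lower tail to $\sum_{u \in T_v} M_u(v)$, whose mean is at least $2 n_b \alpha_\eta(v)$, bounds the per-vertex failure probability by $\exp(-\tau(v)^2 n_b \alpha_\eta(v)/4)$ with the specified $\tau(v) = (1 - 1/(n_b \alpha_\eta(v)))_+$; a union bound over $v \in \pi_\star^{-1}(b)$ gives the inner sum in the displayed lower bound. The product across layers follows from iterated conditioning: writing the global success event as an intersection of per-layer success events and applying the tower property, each conditional per-layer success probability is at least $(1 - \sum_v \exp(\cdot))_+$, and the joint probability is at least the product of these factors. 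The main technical obstacle will be the combinatorial joint parent pairing at a fixed $u$: matching $G(-, u)$ to $G^\star(-, v)$ while respecting $\pi$-types simultaneously across all $a \in B(-, b)$, and enforcing both weight tolerance and upstream matchability, forces a careful conditioning argument since the weight variables and the random parent identities within $G(-, u)$ must be treated jointly (independent copies of Bernoulli edges combined with independent weight draws, factored through the uniform random ordering of parents per type). The preliminary partition $\{T_v\}$ is the bookkeeping device that converts the multi-target matchability relation into a functional $\varphi$ while retaining the full Chernoff exponent.
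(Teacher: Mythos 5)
Your plan is structurally sound and follows the same broad strategy as the paper (induction along a topological order of $\VB$, conditioning on the $\sigma$-algebra of earlier layers, per-layer Chernoff plus union bound, towering to get the product over layers), but the way you resolve the multi-target assignment is genuinely different from the paper's and arguably cleaner in one respect. The paper lets \emph{every} $j\in\pi^{-1}(b)$ be a candidate preimage for \emph{every} $v\in\pi_\star^{-1}(b)$, gets large candidate sets $E_v(\varphi)$, and then has to disjointify them after the fact via a Hall's-marriage argument (\Proposition{choice-lemma}); the factor $\#\pi_\star^{-1}(b)$ in $K_v$ is exactly what pays for the $\lfloor\cdot/P\rfloor$ in that selection step. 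You instead pre-partition $\pi^{-1}(b)$ into disjoint chunks $T_v$, so your candidate sets never overlap and Hall's theorem is unnecessary. You also implicitly replace the paper's supremum trick ($\ind{\mathcal{M}(A_b)\neq\varnothing}=\sup_{(S,\varphi)}\ind{(S,\varphi)\in\mathcal{M}(A_b)}$) by a greedy deterministic construction of $\varphi$, which simplifies the tower argument. The cost of pre-partitioning is that each target sees only $\lfloor n_b/\#\pi_\star^{-1}(b)\rfloor$ trials rather than $n_b$, so the raw Chernoff exponent is $\#\pi_\star^{-1}(b)$ times smaller; the paper's bound discards that factor anyway when going from $\mu_v$ to $\alpha(v)$, so both land near $\exp(-\tfrac{\tau^2}{4}n_b\alpha(v))$, but your route picks up a small slack of order $\exp\bigl(\tfrac{\tau^2}{4}(\#\pi_\star^{-1}(b)-1)\alpha(v)\bigr)\le\exp(1/16)$ from the floor in the chunk size, which you do not explicitly absorb. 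This is a constant-factor issue, not a structural gap, but to reproduce the stated exponent exactly you would need a slightly more careful choice of the Chernoff parameter $\delta$ than just $\tau/2$, exploiting the margin that the paper builds into $\tau(v)=(1-1/(n_b\alpha(v)))_+$. Finally, your matchability condition~(a) requires the in-degree of $u$ to \emph{equal} $k_a(v)$, which is both correct and slightly more careful than the paper's set-equality definition of $E_i^a(\varphi)$ (whose lower-bound computation effectively imposes the same cardinality constraint anyway); the item you flag as the ``main technical obstacle'' (the parent-pairing counting under joint $m$/$w$ randomness) is essentially the paper's counting trick with the $\frac{1}{k!}$ averaging over bijections, which is standard once the conditional independence structure via $\mathcal{F}_b$ is set up, whereas I would rank the disjointification (in the paper) or the floor bookkeeping (in your version) as comparably subtle.
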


We will write $G^\star_a(-,v) = G^\star(-,v) \cap \pi_\star^{-1}(a)$
with $a \in \VB$ in index because this will appear often in this proof.
Similarly for the graph $G$, we write $G_a(-,v) = G(-,v) \cap \pi^{-1}(a)$
for $v \in \VG$.

For any vertex $v \in \VB$, define the
set $A_v = \{ u \in \VB, u \preceq v \}$ (of ``ancestors'' of $v$).
Define the filtration ${(\mathcal{F}_b)}_{b \in \VB}$
induced by the strict total order
$\prec$ on $\VB$ as the $\sigma$-algebra
\[ \mathcal{F}_b = \sigma\left( \bigcup_{\substack{(x,y) \in \EB\\ y \prec b}} \bigcup_{\substack{e \in \EC\\ \pi(e) = (x,y)}} \{ m_e, w_e \} \right) \]
Conditioning on the $\sigma$-algebra
$\mathcal{F}_b$ will ``freeze'' the random variables
corresponding to the subset $\{ v \in \VG \mid \pi(v) \in A_b \setminus \{ b \} \}$,
and allow the study in isolation of what happens when adding $b \in \VB$.

\begin{proof}[Proof of \Proposition{large-random-sparse-lifts-are-covering}]
  We will proceed by induction on $\VB$, the vertices of the base graph,
  using a strict total topological order $\prec$ on $\VB$
  (i.e.\ such that $(u,v) \in \EB \Rightarrow u \prec v$),
  available by acyclicity.

  We use the notations from \Definition{random-sparse-lift}
  of random sparse lifts,
  with the bernoulli random variables
  $m_{(u,v)} \sim \mathcal{B}(\lambda_{\pi(u,v)} / n_{\pi(u)})$
  and weights $w_{(u,v)} \sim \mathcal{N}_{\pi(u,v)}$
  for all $(u,v) \in \EC$, where $C = (\VC, \EC)$ is the fully-connected
  lift of $B$ along $n$ (\Definition{fc-lift})
  to define the random sparse lift $(G,w,a)$.

  \paragraph{Induction hypothesis}.
  For any subset $U \subseteq \VB$,
  define $\mathcal{M}(U)$ the set of
  $\LiftPMod{\basemod, \CC}$-partial-morphism
  $(S, \varphi) : \liftmod{G} \to \liftmod{G}^\star$
  such that $\VS \subseteq \pi^{-1}(U)$,
  and the following two conditions hold:
  \[\begin{aligned}
    \forall e \in E(S),  & \quad \lVert w_{e} - w^\star_{\varphi(e)} \rVert_{\W_{\pi(e)}} \leq \eta
    \\ \forall v \in \pi_\star^{-1}(U), & \quad \# \varphi^{-1}(v) \geq \alpha(v) \cdot \# \pi^{-1} \left( \pi_\star(v) \right)
  \end{aligned}\]
  The first condition is identical to the definition of covering matchings,
  the second is quantified differently (on $\pi_\star^{-1}(U)$ not $\VM$),
  but coincides with the definition when $U = \VB$.

  By contradiction, if the claimed inequality
  does not hold,
  then let $b \in \VB$ be the minimal vertex
  (for the $\prec$ topological order)
  such that
  the following inequality holds
  \begin{equation}\label{eq:alpha-propagation-induction}
    \mathbb{P}\left( \mathcal{M}(A_b) \neq \varnothing \right)
    \> <
    \prod_{a \in A_b \setminus \IB}
    {\left( 1 - \sum_{v \in \pi_\star^{-1}(a)} \exp\left({- \frac{\tau(v)^2}{4} \, n_a \cdot \alpha(v)}\right) \right)}_+
  \end{equation}
  Let us show a contradiction (thus that there is no
  vertex such that this condition holds, thus
  the inverse inequality holds for any $b$ and in particular
  it holds for $\mathcal{M}(\VB)$ which matches the claim exactly).

  \paragraph{Induction intialization.}
  Let us proceed by case disjunction. If $b \in \IB$ first,
  then let us show a contradiction.
  First, to tackle the edge case,
  note that $\mathcal{M}(\varnothing) \neq \varnothing$
  almost surely because it contains $(S, \varphi)$ a partial morphism
  with $S$ the empty graph (no vertices and no edges).
  Then, let us show that $\mathbb{P}(\mathcal{M}(A_b) \neq \varnothing)
  \geq \mathbb{P}(\mathcal{M}(A_b \setminus \{ b \}) \neq \varnothing)$
  by the almost-sure extension that follows.
  Recall for that matter that the function
  $p_\CC : \CC \to \IB$ partitions
  $\CC$ into subsets $p_\CC^{-1}(u) \subseteq \CC$
  indexed by $u \in \IB$.
  Moreover, by construction of a random sparse lift,
  for any $u \in \IB$, we have $\pi^{-1}(u) = p_\CC^{-1}(u)$.
  Thus,
  if $(S, \varphi) \in \mathcal{M}(A_b \setminus \{ b \})$
  for a graph $S = (\VS, \ES)$,
  construct the extension $((\VS^+, \ES), \varphi^+)$
  as $\VS^+ = \VS \cup c_\star(\pi_\star^{-1}(b))$
  with $\varphi^+\rvert_{\VS} = \varphi$
  and for $v \in \VS^+ \setminus \VS$, let $\varphi^+(v) = c_\star^{-1}(v) \in \pi_\star^{-1}(b) $.
  This construction is well-defined by
  injectivity of $c_\star : \pi_\star^{-1}(\IB) \to \CC$.
  It is also immediately verified that $\varphi^+$ is a fibration
  because elements of $\VS^+ \setminus \VS$ have no parents
  (more formally, $ v\in \VS^+ \setminus \VS \Rightarrow \pi(v) = b \Rightarrow G(-,v) = \varnothing$).
  The homomorphism $\pi_S : \VS^+ \to \VB$ is obtained by restriction
  $\pi_S = \pi\rvert_{\VS^+}$.
  Additionally, for every $u \in \pi_\star^{-1}(u)$,
  we have $\# (\varphi^+)^{-1}(u) = 1 = \alpha(u) \cdot \# \pi_\star^{-1}(\pi_\star(u)) = \alpha(u) \cdot \# \pi_S^{-1}(\pi_\star(u))$
  by definition of $\alpha$.
  Thus $((\VS^+, \ES), \varphi^+) \in \mathcal{M}(A_b)$.
  This concludes the proof that $\mathbb{P}(\mathcal{M}(A_b) \neq \varnothing) \geq \mathbb{P}(\mathcal{M}(A_b \setminus \{ b \}) \neq \varnothing)$.
  Let us now show why this contradicts the definition of $b$.
  If $A_b \setminus \{b\} = \varnothing$, then
  the inequality \Eq{alpha-propagation-induction} contradicts $\mathbb{P}(\mathcal{M}(\varnothing) \neq \varnothing) = 1$.
  If on the other hand there is $a \in A_b$ such that $a \prec b$,
  then $a$ must also satisfy the condition \Eq{alpha-propagation-induction},
  which contradicts minimality of $b$.
  This concludes the case $b \in \IB$.

  \paragraph{Induction propagation by morphism extension.}
  The case $b \in \IB$ has been tackled.
  Therefore let us assume for the remainder of the
  proof that $b \in \VB \setminus \IB$.
  The idea is roughly the same as for the previous case,
  we will work conditionally on $\mathcal{F}_b$
  and show that we can extend partial morphisms
  from $\mathcal{M}(A_b \setminus \{b\})$
  to $\mathcal{M}(A_b)$ with a high-enough probability,
  and use it to form a contradiction.

  Intuitively, if $(S,\varphi) \in \mathcal{M}(A_b \setminus \{b\})$,
  we want to construct for
  every $i \in \pi_\star^{-1}(b)$ a set $E_i(\varphi) \subseteq \pi^{-1}(b)$
  such that $j \in E_i(\varphi)$ if we can extend
  $\varphi$ into a partial fibration by mapping $j$ to $i$, and
  if this extension continues to satisfy both the
  type conditions and the weight-matching condition.
  We do so by considering parents one at a time, then take
  intersections, and finally show that $E_i(\varphi)$
  is sufficiently large with a sufficiently high probability.

  \paragraph{Extension (a): single-parent compatibility.}
  For every $(S, \varphi) \in \mathcal{M}(A_b \setminus \{b\})$
  a partial morphism with $S = (\VS, \ES)$,
  and every $a \in B(-,b)$, let $E_i^a(\varphi) \subseteq \pi^{-1}(b)$
  be the subset of $V$ defined as
  \[\begin{aligned}
    j \in E_i^a(\varphi)
    \> \Leftrightarrow \>
    \begin{cases}
      G_a(-, j) \subseteq \VS
      \\ \varphi(G_a(-, j)) = G_a^\star(-,i)
      \\ \forall u \in G_a(-,j), \> \lVert w_{(u,j)} - w^\star_{(\varphi(u), i)} \rVert \leq \eta
    \end{cases}
  \end{aligned}\]
  With the previous notation of $\mathcal{P}(k; \lambda) = e^{-\lambda} \lambda^k / k!$
  and $p_e(w, \eta) = \mathbb{P}_{u \sim \mathcal{N}_e}( \lVert u - w \rVert \leq \eta)$,
  define
  \[
    \mu_{i,a} = \frac{1}{2}\,
    \mathcal{P}\left(\#G_a^\star(-,i); \lambda_{(a,b)} \right)
    \prod_{u \in G^\star_a(-,i)} p_{(a,b)} \left(w^\star_{(u,i)}, \eta \right) \cdot \alpha(u)
  \]
  Observe that this definition is chosen such that $\alpha(i) = \frac{1}{2} \left(\prod_{a \in B(-,b)} \mu_{i,a} \right) / \# \pi_\star^{-1}(\pi_\star(i))$.

  Let us show that $\Pcond{j \in E_i^a(\varphi)}{(S, \varphi) \in \mathcal{M}(A_b \setminus \{ b \}), \mathcal{F}_b} \geq \mu_{i,a}$.
  For shortness in the writing of this proof,
  let $k = \# G_a^\star(-,i) \in \mathbb{N}$
  and $q_{(a,b)} =\lambda_{(a,b)} / n_a \in [0,1]$.
  For a subset $I \subseteq \pi^{-1}(a) \subseteq \VGG$ such that $\#I = k$,
  write $Z(I)$ the set of bijections from $I$ to $G_a^\star(-,i)$.
  Now, observe that by disjunction over such subsets $I$ and average over $Z(I)$,
  it holds
  \[
    \ind{j \in E_i^a(\varphi)}
    \geq \sum_{\substack{I \subseteq \pi^{-1}(a) \\ \# I = k}}
    \ind{I = G_a(-,j)}
    \ind{I \subseteq \VS}
    \frac{1}{\# Z(I)} \sum_{\zeta \in Z(I)} \ind{\varphi\rvert_I = \zeta}
    \prod_{u \in I} \ind{\lVert w_{(u,j)} - w^\star_{(\zeta(u), i)} \rVert \leq \eta}
  \]

  By independence and linearity, we can take the conditional expectation
  \[\begin{aligned}
    &\Econd{\ind{j \in E_i^a(\varphi)}}{(S, \varphi) \in \mathcal{M}(A_b \setminus \{ b \}), \mathcal{F}_b}
    \\ &\quad
      \underset{\step{1}}{\geq}
      \sum_{\substack{I \subseteq \pi^{-1}(a) \\ \# I = k }}
      \ind{I \subseteq \VS} \cdot
      q_{(a,b)}^{k} {\left( 1 - q_{(a,b)} \right)}^{n_a - k}
      \frac{1}{k!} \sum_{\zeta \in Z(I)} \ind{\varphi\rvert_I = \zeta}
      \prod_{u \in I} p_{(a,b)}\left( w^\star_{(\zeta(u),i)}, \eta \right)
    \\ &\quad
      \underset{\step{2}}{\geq}
      q_{(a,b)}^{k} {\left( 1 - q_{(a,b)} \right)}^{n_a - k_{i,a}}
      \frac{1}{k!}
      \sum_{\substack{I \subseteq \pi^{-1}(a) \\ \# I = k }}
      \ind{I \subseteq \VS} \cdot
      \sum_{\zeta \in Z(I)} \ind{\varphi\rvert_I = \zeta}
      \prod_{u \in I} p_{(a,b)}\left( w^\star_{(\zeta(u),i)}, \eta \right)
    \\ &\quad
      \underset{\step{3}}{\geq}
      \frac{1}{2}
      \frac{\lambda_{(a,b)}^{k}}{k!} e^{-\lambda_{(a,b)}}
      \frac{1}{n_a^{k}}
      \sum_{\substack{I \subseteq \pi^{-1}(a) \\ \# I = k }}
      \ind{I \subseteq \VS} \cdot
      \sum_{\zeta \in Z(I)} \ind{\varphi\rvert_I = \zeta}
      \prod_{u \in I} p_{(a,b)}\left( w^\star_{(\zeta(u),i)}, \eta \right)
    \\ &\quad
      \underset{\step{4}}{\geq}
      \frac{1}{2}
      \frac{\lambda_{(a,b)}^{k}}{k!} e^{-\lambda_{(a,b)}}
      \prod_{u \in G^\star_a(-,i)} p_{(a,b)}\left( w^\star_{(u,i)}, \eta \right)
      \cdot
      \frac{1}{n_a^{k}}
      \sum_{\substack{I \subseteq \pi^{-1}(a) \\ \# I = k }}
      \ind{I \subseteq \VS} \cdot
      \sum_{\zeta \in Z(I)} \ind{\varphi\rvert_I = \zeta}
    \\ &\quad
      \underset{\step{5}}{\geq}
      \frac{1}{2}
      \frac{\lambda_{(a,b)}^{k}}{k!} e^{-\lambda_{(a,b)}}
      \prod_{u \in G^\star_a(-,i)} p_{(a,b)}\left( w^\star_{(u,i)}, \eta \right)
        \cdot \alpha(u)
    \\ &\quad = \mu_{i,a}
  \end{aligned}\]
  where
  $\step{1}$ is linearity of the expectation and independence of $m$ and $w$,
  $\step{2}$ is a factorization of constants related to $m$,
  $\step{3}$ is the lower-bound of \Proposition{quantitative-poisson-limit}
  because by assumption
  $n_a \geq \max(2 \lambda_{(a,b)}, 2 \lambda_{(a,b)}^2 / \log(2) )$,
  $\step{4}$ is a factorization of constants related to $w$,
  and $\step{5}$ is the following counting trick:
  \[\begin{aligned}
    \frac{1}{n_a^{k}}
      \sum_{\substack{I \subseteq \pi^{-1}(a) \\ \# I = k }}
      \ind{I \subseteq \VS} \cdot
      \sum_{\zeta \in Z(I)} \ind{\varphi\rvert_I = \zeta}
    & \underset{\step{1}}{\geq}
      \frac{1}{n_a^{k}}
      \sum_{\substack{ \psi : G_a^\star(-,i) \to \VS \\ \forall u, \psi(u) \in \varphi^{-1}(u) }} 1
    \\ &\underset{\step{2}}{\geq}
      \frac{1}{n_a^{k}}
      \prod_{u \in G_a^\star(-,i)} \left( \# \varphi^{-1}(u) \right)
    \\ &\underset{\step{3}}{\geq}
      \frac{1}{n_a^{k}}
      \prod_{u \in G_a^\star(-,i)} \left( \alpha(u) \cdot n_a \right)
    \\ &\geq \prod_{u \in G_a^\star(-,i)} \alpha(u)
  \end{aligned}\]
  where $\step{1}$ is a lower-bound by exhibition of a subset of terms satisfying the indicator conditions,
  $\step{2}$ is the counting of such terms,
  and $\step{3}$ is the hypothesis $(S, \varphi) \in \mathcal{M}\left(A_b \setminus \{ b \}\right)$
  with the $\alpha$-cover property applied to $a \in B(-,b) \subseteq (A_b \setminus \{ b \})$.

  This concludes the proof of the inequality $\Econd{\ind{j \in E_i^a(\varphi)}}{(S, \varphi) \in \mathcal{M}(A_b \setminus \{ b \}), \mathcal{F}_b} \geq \mu_{i,a}$.

  \paragraph{Extension (b): multi-parent compatibility.}
  Define now $E_i(\varphi) = \bigcap_{a \in B(-,b)} E_i^a(\varphi)$.
  By conditional independence
  given $\mathcal{F}_b$
  of the events $( j \in E_i^a(\varphi) )_{a}$ indexed by $a \in B(-,b)$,
  we get
  \[\begin{aligned}
    \Econd{\ind{j \in E_i(\varphi)}}{ (S, \varphi) \in \mathcal{M}\left( A_b \setminus \{ b \} \right), \mathcal{F}_b }
    & =
    \Econd{\prod_{a \in B(-,b)} \ind{j \in E_i^a(\varphi)}}{ (S, \varphi) \in \mathcal{M}\left( A_b \setminus \{ b \} \right), \mathcal{F}_b }
  \\ & =
    \prod_{a \in B(-,b)} \Econd{\ind{j \in E_i^a(\varphi)}}{ (S, \varphi) \in \mathcal{M}\left( A_b \setminus \{ b \} \right), \mathcal{F}_b }
  \\ & \geq
    \prod_{a \in B(-,b)} \mu_{i,a}
  \end{aligned}\]
  Write for shortness $\mu_i = \prod_{a \in B(-,b)} \mu_{i,a}$,
  recall that $\alpha(i) = \frac{1}{2} \mu_i / \# \pi_\star^{-1}(b)$,
  and equipped with the inequality
  $\Econd{\ind{j \in E_i(\varphi)}}{ (S, \varphi) \in \mathcal{M}\left( A_b \setminus \{ b \} \right), \mathcal{F}_b } \geq \mu_i$,
  let us
  show that $E_i(\varphi)$ is large enough.

  \paragraph{Extension (c): probability amplification.}
  The events $(j \in E_i(\varphi))_{j}$
  indexed by $ j\in \pi^{-1}(b)$ are independent conditionally on $\mathcal{F}_b$.
  Thus the random variables $X_j = \ind{j \in E_i(\varphi)}$ for $j \in \pi^{-1}(b)$ have a Bernoulli distribution
  and conditional expectation
  \[ \Pcond{j \in E_i(\varphi)}{ (S, \varphi) \in \mathcal{M}(A_b \setminus \{ b \}), \mathcal{F}_b } \geq \mu_i > 0 \]
  Let $\delta_i = \frac{1}{2} \tau(i) \in \interval{0}{1}$.
  Since $\# E_i(\varphi) = \sum_{j} X_j$,
  we get by amplification (Appendix-Lemma~\ref{appendix-prop:chernoff-amplification})
  \[\begin{aligned}
    &\Pcond{ \# E_i(\varphi)\leq (1 - \delta_i) \, \mu_i \cdot n_b }{ (S,\varphi) \in \mathcal{M}(A_b \setminus \{b\}), \mathcal{F}_b }
    \\ & \qquad \leq \exp\left({- \frac{1}{2} \, \delta_i^2 \, n_b \cdot \mu_i}\right)
    = \exp\left(- \frac{\tau(i)^2}{8} \, n_b \cdot \mu_i \right)
    \\ &\qquad\leq \exp\left({- \frac{\tau(i)^2}{4} n_b \cdot \alpha(i) }\right)
  \end{aligned}\]
  We can then get a lower-bound on the probability that all candidate
  preimage sets $E_i(\varphi)$ are sufficiently large, by union bound
  (since their cardinals are not independent)
  \[\begin{aligned}
    & \Econd{\prod_{i \in \pi_\star^{-1}(b)} \ind{ \# E_i(\varphi) \geq (1 - \delta_i) n_b \mu_i} }{ (S, \varphi) \in \mathcal{M}(A_b \setminus \{ b \}), \mathcal{F}_b}
    \\ & \quad \geq
      1 - \sum_{i \in \pi_\star^{-1}(b)} \Econd{\ind{ \# E_i(\varphi) < (1 - \delta_i) n_b \mu_i} }{ (S, \varphi) \in \mathcal{M}(A_b \setminus \{ b \}), \mathcal{F}_b}
    \\ & \quad \geq
      1 - \sum_{i \in \pi_\star^{-1}(b)} \exp\left({- \frac{\tau(i)^2}{4} \, n_b \cdot \alpha(i) } \right)
  \end{aligned}\]
  We will use positive parts ${(\cdot)}_+$
  in the following to
  handle nicely the case where this is strictly negative.

  \paragraph{Extension (d): tackling overlap.}
  Let us start by showing that for any partial morphism $(S, \varphi)$,
  \begin{equation}\label{eq:overlap-propagation}
    \ind{\mathcal{M}(A_b) \neq \varnothing}
    \geq
    \ind{(S, \varphi) \in \mathcal{M}(A_b \setminus \{b \})}
    \cdot \prod_{i \in \pi_\star^{-1}(b)} \ind{\# E_i(\varphi) \geq (1 - \delta_i) n_b \mu_i}
  \end{equation}
  If $(S, \varphi) \in \mathcal{M}(A_b \setminus \{b\})$
  and if for every $i \in \pi_\star^{-1}(b)$ it holds $\#E_i(\varphi) \geq (1 - \delta_i) n_b \mu_i$
  then let us construct an extension
  $(S^+, \varphi^+) \in \mathcal{M}(A_b)$.
  The difficulty is that the sets $(E_i(\varphi))_i$ may not be disjoint,
  thus we must select disjoint subsets of sufficient size to satisfy the covering condition.
  Choose for every $i \in \pi_\star^{-1}(b)$
  a subset $V_i \subseteq E_i(\varphi)$ such that $(V_i)_{i}$ are disjoint,
  and
  $\#V_i \geq \frac{1}{2} n_b \mu_i / \# \pi_\star^{-1}(b)$.

  Proof that such choice is possible is given in \Proposition{choice-lemma},
  since if $n_b \alpha(i) \geq 1$, for $P = \#\pi_\star^{-1}(b)$,
  \[ \left\lfloor \frac{\# E_i(\varphi)}{P} \right\rfloor \geq (1 - \delta_i) \frac{n_b \mu_i}{P} - 1
  = \left(\frac{1}{2} + \frac{1}{2 n_b \alpha_i}\right) \frac{n_b \mu_i}{P} - 1
  = \frac{n_b \mu_i}{2 P} = n_b \alpha(i)
  \]
  And if $n_b \alpha(i) < 1$, then
  \Eq{alpha-propagation-induction} is immediately
  contradicted because the right-hand side is zero.

  Then, let $\VS^+ = \bigcup_i V_i$
  be the extended vertex set and
  $\ES^+ = \bigcup_{v \in \VS^+} \bigcup_{u \in G(-,v)} (u,v)$
  the corresponding edges.
  Define $((\VS \cup \VS^+, \ES \cup \ES^+), \varphi^+)$
  the morphism obtained
  as $\varphi^+\rvert_\VS = \varphi$
  and for $v \in V_i \subseteq \VS^+$ as $\varphi^+(v) = i$.
  By construction of $(V_i)_i$, this morphism is a fibration of graphs.
  Since for all $i \in \pi_\star^{-1}(b)$, we have
  $\#(\varphi^+)^{-1}(i) = \#V_i \geq \frac{1}{2} n_b \mu_i / \#\pi_\star^{-1}(b) = n_b \, \alpha(i)$,
  thus the covering condition is satisfied,
  and therefore $((\VS \cup \VS^+, \ES \cup \ES^+), \varphi^+) \in \mathcal{M}(A_b)$,
  which conclues the proof of \Eq{overlap-propagation}.

  \paragraph{Towering.}
  We can now proceed by using the property that
  $\ind{\mathcal{M}(A_b) \neq \varnothing} = \sup_{(S, \varphi)} \ind{(S,\varphi) \in \mathcal{M}(A_b)}$.

  \[\begin{aligned}
    \Econd{ \ind{\mathcal{M}(A_b) \neq \varnothing} }{\mathcal{F}_b}
    & \geq \Econd{ \sup_{(S,\varphi)}
        \ind{(S, \varphi) \in \mathcal{M}(A_b \setminus \{b \})}
        \cdot \prod_{i \in \pi_\star^{-1}(b)} \ind{ \# E_i(\varphi) \geq (1 - \delta_i) n_b \mu_i }
      }{ \mathcal{F}_b }
    \\ & \geq
      \sup_{(S,\varphi)} \Econd{
        \ind{(S, \varphi) \in \mathcal{M}(A_b \setminus \{b \})}
        \cdot \prod_{i \in \pi_\star^{-1}(b)} \ind{ \# E_i(\varphi) \geq (1 - \delta_i) n_b \mu_i }
      }{ \mathcal{F}_b }
    \\ & =
      \sup_{(S,\varphi)}
        \ind{(S, \varphi) \in \mathcal{M}(A_b \setminus \{b \})}
        \cdot \Econd{
          \prod_{i \in \pi_\star^{-1}(b)} \ind{ \# E_i(\varphi) \geq (1 - \delta_i) n_b \mu_i }
      }{ \mathcal{F}_b }
    \\ & \geq
      \sup_{(S,\varphi)}
        \ind{(S, \varphi) \in \mathcal{M}(A_b \setminus \{b \})}
        \cdot
        {\left(
        1 - \sum_{i \in \pi_\star^{-1}(b)} \exp\left({- \frac{\tau(i)^2}{4} n_b \cdot \alpha(i)}\right)
        \right)}_+
    \\ & \geq
        \ind{\mathcal{M}(A_b \setminus \{b \}) \neq \varnothing}
        \cdot
        {\left(
        1 - \sum_{i \in \pi_\star^{-1}(b)} \exp\left({- \frac{\tau(i)^2}{4} n_b \cdot \alpha(i)}\right)
        \right)}_+
  \end{aligned}\]

  Finally, we can conclude by a towering argument
  \[ \begin{aligned}
    \mathbb{E}\left[ \ind{\mathcal{M}(A_b) \neq \varnothing} \right]
    &= \mathbb{E}\left[ \Econd{ \ind{\mathcal{M}(A_b) \neq \varnothing} }{\mathcal{F}_b} \right]
    \\ &\geq  \mathbb{E}\left[
        {\left(
        1 - \sum_{i \in \pi_\star^{-1}(b)} \exp\left({- \frac{\tau(i)^2}{4} n_b \cdot \alpha(i)}\right)
        \right)}_+
        \cdot
        \ind{\mathcal{M}(A_b \setminus \{b \}) \neq \varnothing}
      \right]
    \\ &=
        {\left(
        1 - \sum_{i \in \pi_\star^{-1}(b)} \exp\left({- \frac{\tau(i)^2}{4} n_b \cdot \alpha(i)}\right)
        \right)}_+
        \cdot
       \mathbb{E}\left[
        \ind{\mathcal{M}(A_b \setminus \{b \}) \neq \varnothing}
      \right]
    \\ &\underset{\step{1}}{\geq}
        \prod_{a \in A_b \setminus \IB}
        {\left(
        1 - \sum_{i \in \pi_\star^{-1}(a)} \exp\left({- \frac{\tau(i)^2}{4} n_a \cdot \alpha(i)}\right)
        \right)}_+
  \end{aligned} \]
  where $\step{1}$ is the
  reverse of inequality \Eq{alpha-propagation-induction}
  for any $a \in \VB$ with $a \prec b$ by minimality of $b$.
  This contradicts the inequality \Eq{alpha-propagation-induction}
  defining $b$, thus concludes the case $b \in \VB \setminus \IB$
  and the induction,
  which concludes the proof.
\end{proof}

\clearpage{}

\subsection{Ballwise resistance of coverings with low out-degree}

Recall that we write $n^0 : \IB \to \mathbb{N}$ the function $n^0 : b \mapsto \#p_\CC^{-1}(b)$.

\begin{proposition}%
  \label{prop:new-ballwise-cover-resistance}
  \hfill{}\break{}
  Let $\lambda : \EB \to \mathbb{R}_+^*$.
  For $e \in \EB$, let $\mathcal{N}_e$ be a distribution on $\W_e$ with full support.

  Let $\liftmod{G}^\star = (G^\star, \pi_\star, c_\star) \in \LiftPMod{\basemod, \CC}$ and $w^\star \in \Param{\liftmod{G}^\star}$,
  over graph $G^\star = (\VM, \EM)$.
  \newline%
  Let $\eta \in \mathbb{R}_+$.
  Define $\alpha_\eta : \VM \to \interval[open left]{0}{1}$
  the $\alpha$-parameter associated to $(\liftmod{G}^\star, w^\star)$ and $\eta$ by \Def{alpha-parameter}.

  Let $n : \VB \to \mathbb{N}$ such that if $b \in \IB$ then $n_b = n^0_b$.
  Let $(G, w^0)$ be a random sparse lift of $\basemod$ along $(n, \lambda)$ with distribution $\mathcal{N}$.

  Assume that
  for all $(a,b) \in \EB$,
  if $a \in \IB$ then $\lambda_{(a,b)} \leq \min \{ n^0_a / 2, (n^0_a / 3)^{1/2} \}$,
  and that
  \[
    \forall b \in \VB \setminus \IB,
    \>
    n_b \geq 8 \, \frac{\left(R +\eta\right)^2}{\eta^2} \frac{\left(1 + \Lambda\right)^{1 + \# \VB}}{\inf_{v \in \VM} \alpha_{\eta/2}(v)}
  \]
  where
  $\Lambda = \sum_{(a,b) \in \EB} \left( 7\,\lambda_{(a,b)} + 1 + \log(2) + \log(\#\VM + \#\EB) - \log \delta \right) $

  Then,
  \[
    \mathbb{P}\Bigl(
    \forall w \in B(w^0, R), \>
    \mathcal{M}^{\frac{1}{2} \alpha_{\eta/2}}_\eta\left[ (\liftmod{G},w), (\liftmod{G}^\star, w^\star) \right]
    \neq \varnothing
    \Bigr)
    \geq
    1 - \delta
  \]
\end{proposition}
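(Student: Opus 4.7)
The plan is to combine the single-point covering guarantee of \Proposition{large-random-sparse-lifts-are-covering}, applied at tolerance $\eta/2$, with a uniform-in-$w$ trimming argument that removes a controlled number of vertices whenever weights drift. Using tolerance $\eta/2$ at initialization leaves a buffer of $\eta/2$ for perturbations by triangle inequality, and using $\alpha_{\eta/2}$ in place of $\alpha_\eta$ leaves a factor-of-$2$ slack for preimage losses.

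First I would apply \Proposition{large-random-sparse-lifts-are-covering} with tolerance $\eta/2$ to obtain, with probability at least $1 - \delta/2$, a partial morphism $(S_0, \varphi_0) \in \mathcal{M}^{\alpha_{\eta/2}}_{\eta/2}[(\liftmod{G}, w^0), (\liftmod{G}^\star, w^\star)]$. Next I would control the structure of $V(S_0)$: using Chernoff-type bounds on the $\mathrm{Bin}$-distributed degrees appearing in the random lift, together with a union bound over the witness edges, I would establish that with probability at least $1 - \delta/2$, from any single source vertex in $V(S_0)$, at most $(1+\Lambda)^{\#\VB}$ many $V(S_0)$-descendants map under $\varphi_0$ to any fixed target $v^\star \in \VM$. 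The exponent $\#\VB$ reflects the depth of the base graph, and each factor $(1+\Lambda)$ arises from a high-probability bound on a Poisson-like degree at one level, the logarithmic term $\log(\#\VM + \#\EB) - \log\delta$ in $\Lambda$ being exactly the one produced by the union bound.

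The rest is deterministic in $w \in B(w^0, R)$: because the Euclidean norm gives $\sum_e \lVert w_e - w^0_e \rVert^2 \leq R^2$, at most $4R^2/\eta^2$ edges can satisfy $\lVert w_e - w^0_e\rVert > \eta/2$; call these the bad edges. I construct $(S, \varphi)$ from $(S_0, \varphi_0)$ by removing the head of every bad edge together with all of its $V(S_0)$-descendants. The removal preserves both the fibration of $\iota \colon S \hookrightarrow G$ (no survivor is left without a parent in $V(S)$) and the fibration $\varphi_0|_S \colon S \to G^\star$, and the triangle inequality ensures every surviving edge is matched within $\eta$ to the witness.

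The main obstacle is verifying that the covering factor $\alpha_{\eta/2}/2$ actually survives the excision. For each target $v^\star \in \VM$, the number of lost preimages is bounded by (number of bad edges) times (the step-two bound on preimages of $v^\star$ among the $V(S_0)$-descendants of a single vertex), which yields at most $4R^2/\eta^2 \cdot (1+\Lambda)^{\#\VB}$ losses. The hypothesis $n_b \geq 8(R+\eta)^2/\eta^2 \cdot (1+\Lambda)^{1+\#\VB}/\inf_v \alpha_{\eta/2}(v)$ is then precisely what is needed to conclude that this loss is at most half of $\alpha_{\eta/2}(v^\star) n_{\pi_\star(v^\star)}$, so that $(S, \varphi) \in \mathcal{M}^{\alpha_{\eta/2}/2}_\eta[(\liftmod{G}, w), (\liftmod{G}^\star, w^\star)]$. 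A final union bound across the two high-probability events of the first two steps produces the claimed $1 - \delta$ guarantee.
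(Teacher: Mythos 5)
Your high-level architecture matches the paper's: establish a high-probability initial cover at tolerance $\eta/2$ via \Proposition{large-random-sparse-lifts-are-covering}, establish a high-probability degree control, then run a deterministic excision on $B(w^0,R)$ by removing at most $4R^2/\eta^2$ bad edges (those with $\lVert w_e - w^0_e \rVert > \eta/2$) together with all downstream dependents, relying on the triangle inequality to keep the survivors within $\eta$ of the witness. The gap is in the combinatorial bound you put at the heart of the second step.

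You claim that with high probability any single vertex in $V(S_0)$ has at most $(1+\Lambda)^{\#\VB}$ descendants in $V(S_0)$ mapping under $\varphi_0$ to a fixed witness vertex $v^\star$, and then bound the excision loss per witness vertex by $4R^2/\eta^2 \cdot (1+\Lambda)^{\#\VB}$. This absolute, $n$-independent bound on descendants is false in general. In a random sparse lift the out-degree from a vertex of $\pi^{-1}(a)$ towards $\pi^{-1}(b)$ is binomial with mean $\lambda_{(a,b)}\, n_b / n_a$, and the admissible-set constraint $n_b \geq n_a \log n_a$ forces $n_b/n_a \geq \log n_a$ to diverge as the lift grows. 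A single bad vertex can therefore spawn unboundedly many $S_0$-descendants one layer down, and roughly an $\alpha$-fraction of those (not a fixed constant number) will land in $\varphi_0^{-1}(v^\star)$. Since no $n$-independent constant bound exists, the final comparison against $\frac{1}{2}\alpha_{\eta/2}(v^\star)\, n_{\pi_\star(v^\star)}$ does not follow from your estimate.

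The paper avoids this trap by switching to normalized per-layer bookkeeping. Rather than counting descendants of a single bad vertex, \Lemma{resistance-extraction} tracks the \emph{rejection rate} $r_b = (\#C_b + \#W_b)/n_b$ at each layer $b$, where $W_b$ are heads of bad edges and $C_b$ are chain rejections inherited from already-rejected parents. The probabilistic ingredient, \Lemma{low-out-degree}, does not bound the out-degrees $d_{(a,b)}$ themselves (they grow) but the \emph{normalized} out-degree $\sum_{a \in B(-,b)} d_{(a,b)}\, n_a / n_b \leq \Lambda$; the $n_a/n_b$ factor exactly cancels the $n_b/n_a$ growth of $d_{(a,b)}$, making $\Lambda$ genuinely $n$-independent. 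This turns the accounting into an arithmetico-geometric recursion $r_b \leq s + \Lambda \max_{a \prec b} r_a$ with $s = \frac{1}{2}\,(\inf_{v} \alpha_v) / (1+\Lambda)^{\#\VB}$, and integrating it (Appendix-Lemma~\ref{appendix-prop:arithmetico-geometric-control}) yields $\max_b r_b \leq \frac{1}{2}\inf_v \alpha_v$ uniformly over $\VB$, which is exactly the half-volume conclusion you wanted. Your choice of tolerances ($\eta/2$ at initialization, $\eta$ after perturbation) and the use of $\alpha_{\eta/2}$ are correct and match the paper; the fix is to replace your per-source absolute descendant count by this per-layer rate recursion.
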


The proof is deferred to \Appendix{proof-of-ballwise-cover-resistance}.
Together with \Proposition{witness-gives-tangent-approximation},
it should be straightforward to see how this proposition will be used
to prove the tangent approximation property on an $R$-ball.

\subsubsection{Decomposition of ballwise cover via low-degree properties}

\begin{lemma}[Resistance of cover in low out-degree graphs]%
  \label{lem:resistance-extraction}
  Let $\eta \in \mathbb{R}_+^*$ and $R \in \mathbb{R}_+$.
  \hfill{}\break{}
  Let $\liftmod{G}^\star = (G^\star, \pi_\star, c_\star) \in \LiftPMod{\basemod, \CC}$ and $w^\star \in \Param{\liftmod{G}^\star}$
  over graph $G^\star = (\VM, \EM)$.
  \newline%
  Let $\alpha : \VM \to \interval[open left]{0}{1}$
  and let $\liftmod{G} = (G, \pi, c) \in \LiftPMod{\basemod, \CC}$
  and $w^0 \in \Param{\liftmod{G}}$.

  Write $n : \VB \to \mathbb{N}$
  and $d : \EB \to \mathbb{N}$ (vertex count and maximal out-degree of $\liftmod{G}$)
  defined as
  \[
    n : b \in \VB \mapsto \# \pi^{-1}(b)
    \quad\text{and}\quad
    d : (a,b) \in \EB \mapsto
    \sup_{v \in \pi^{-1}(a)} \# \left( G(v,-) \cap \pi^{-1}(b) \right)
  \]
  Assume that
  $\forall (a,b) \in \EB, n_b \geq n_a \log n_a$,
  and that there is a constant $\Lambda \in \mathbb{R}_+$
  such that
  \[
    \Biggl[
    \sup_{b \in \VB} \sum_{a \in B(-,b)} d_{(a,b)} \frac{n_a}{n_b}
    \Biggr]
    \leq \Lambda
    \>\quad\text{and}\quad\>
    \Biggl[
    \inf_{b \in \VB \setminus \IB}
    n_b
    \,
    \Biggr]
    \geq \frac{2 R^2}{\eta^2} \frac{\left(1 + \Lambda\right)^{\#\VB}}{\inf_{v \in \VM} \alpha_v}
  \]
  Then
  for every
  $w \in \Param{\liftmod{G}}$
  such that $\left\lVert w - w^0 \right\rVert \leq R$
  it holds
  \[
    \mathcal{M}_\eta^\alpha \left[ (\liftmod{G}, w^0), (\liftmod{G}^\star, w^\star) \right] \neq \varnothing
    \quad\Rightarrow\quad
    \mathcal{M}_{2 \, \eta}^{\alpha / 2}\left[ (\liftmod{G}, w), (\liftmod{G}^\star, w^\star) \right] \neq \varnothing
  \]
\end{lemma}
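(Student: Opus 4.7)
I would take a direct constructive approach: start with a covering partial morphism $(S,\varphi) \in \mathcal{M}_\eta^\alpha[(\liftmod{G},w^0),(\liftmod{G}^\star,w^\star)]$ and, for any $w \in B(w^0,R)$, surgically remove from $S$ the vertices whose weights (or whose ancestors' weights) have drifted too far. The low out-degree hypothesis is what makes this surgery affordable: each deleted vertex contaminates only a bounded fraction of each subsequent layer, so the cover degrades gracefully.

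\textbf{Step 1 (bad edges).} Given $w$ with $\lVert w - w^0\rVert \leq R$, call an edge $e \in E(S)$ \emph{bad} if $\lVert w_e - w^\star_{\varphi(e)}\rVert > 2\eta$. Since $\lVert w^0_e - w^\star_{\varphi(e)}\rVert \leq \eta$ by hypothesis on $(S,\varphi)$, every bad edge satisfies $\lVert w_e - w^0_e\rVert > \eta$, so a Pythagorean count forces $\#\{\text{bad edges}\} < R^2/\eta^2$, and in particular the number of bad edges terminating in any layer $\pi^{-1}(b)$ is $\leq R^2/\eta^2$.

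\textbf{Step 2 (vertex contagion).} Declare a vertex $v \in V(S)$ \emph{bad} if either some incoming edge is bad or some parent in $G$ is bad; denote $B_b = \{\text{bad vertices in } \pi^{-1}(b)\}$. Bad vertices among inputs are empty since inputs have no incoming edges. For $b \notin \IB$, counting directly bad vertices and then propagated ones yields
\[
    \#B_b \leq \frac{R^2}{\eta^2} + \sum_{a \in B(-,b)} d_{(a,b)} \cdot \#B_a.
\]
Dividing by $n_b$ and using the bound $\sum_{a \in B(-,b)} d_{(a,b)} n_a/n_b \leq \Lambda$, set $\rho_b = \#B_b/n_b$ and obtain the separable recursion $\rho_b \leq R^2/(\eta^2 n_b) + \Lambda \cdot \max_{a \prec b}\rho_a$. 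An induction on a topological ordering of $\VB$ then gives
\[
    \rho_b \leq \frac{R^2}{\eta^2} \frac{(1+\Lambda)^{\text{depth}(b)}}{\inf_{b' \notin \IB} n_{b'}} \leq \frac{R^2}{\eta^2}\frac{(1+\Lambda)^{\#\VB}}{\inf_{b'\notin\IB} n_{b'}},
\]
and the size hypothesis collapses this to $\rho_b \leq \inf_{v \in \VM}\alpha_v / 2$.

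\textbf{Step 3 (pruning).} Let $V(S') = V(S) \setminus \bigcup_b B_b$ and $E(S')$ the edges of $S$ between surviving vertices, with $\varphi' = \varphi\rvert_{V(S')}$. The definition of badness was chosen exactly so that $v \in V(S')$ implies no parent of $v$ in $G$ is bad, so $G(-,v) \subseteq V(S')$ and the inclusion $S' \hookrightarrow G$ stays a fibration; inheritance from $\varphi$ makes $\varphi' : S' \to G^\star$ a fibration too, giving a partial $\LiftPMod{\basemod,\CC}$-morphism. No surviving edge is bad (or its head would be bad), so the weight bound $2\eta$ holds on $E(S')$. Finally, $\#(\varphi')^{-1}(v) \geq \#\varphi^{-1}(v) - \#B_{\pi_\star(v)} \geq \alpha_v n_{\pi_\star(v)} - (\alpha_v/2) n_{\pi_\star(v)} = (\alpha_v/2) n_{\pi_\star(v)}$, proving $(S',\varphi') \in \mathcal{M}_{2\eta}^{\alpha/2}[(\liftmod{G},w),(\liftmod{G}^\star,w^\star)]$.

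\textbf{Main obstacle.} The delicate part is the cascade analysis in Step 2: getting the right recursion and making sure the induction produces a bound with $n_b$ (not some weighted average) in the denominator and at most $\#\VB$ in the exponent. Everything else (the pruning in Step 3) is formal once contagion is defined to respect the fibration condition. The hypothesis $n_b \geq n_a \log n_a$ is not invoked here; it presumably serves the preceding results that feed into this lemma.
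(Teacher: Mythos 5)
Your proof is correct and follows essentially the same strategy as the paper's: identify edges whose weights drifted too far (Pythagorean count at most $R^2/\eta^2$), propagate rejection along the computation graph using the out-degree bound to get an arithmetico-geometric recursion $\rho_b \le p + \Lambda\max_{a\prec b}\rho_a$, and conclude that the rejection rate stays below $\frac{1}{2}\inf\alpha$ so the pruned cover survives with half the volume. The only difference is cosmetic: the paper tags an edge bad when $\lVert w_e - w^0_e\rVert > \eta$ and derives the $2\eta$ margin to $w^\star$ by triangle inequality afterward, while you tag bad edges directly by $\lVert w_e - w^\star_{\varphi(e)}\rVert > 2\eta$ and argue the reverse triangle inequality to get the same $R^2/\eta^2$ count; your observation that $n_b \geq n_a\log n_a$ is not invoked here matches the paper, which uses that hypothesis only in the surrounding results.
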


\begin{proof}[Proof of \Lemma{resistance-extraction}]
  Under the assumption that this set is non-empty,
  choose a partial morphism
  $(S, \varphi) \in \mathcal{M}_\eta^{\alpha}\left[ (G, w^0), (G^\star, w^\star) \right]$
  and let us construct an an element of
  $\mathcal{M}_{2\eta}^{\alpha/2}\left[ (G, w), (G^\star, w^\star) \right]$.

  Write $S = (\VS, \ES)$.
  We will show that we can extract a subgraph $U = (V_U, E_U)$ of $S$
  such that the inclusion $\iota_U : U \to S$ is a fibration,
  and the restriction of $\varphi$ to $U$ preserves weights (up to $2 \eta$).

  We proceed by induction on $b \in \VB$, by defining $U_b \subseteq \VS \cap \pi^{-1}(b)$ as follows:
  \[
    U_b = \left\{ v \in \pi^{-1}(b) \cap \VS
    \,\middle\vert\, G(-,v) \subseteq \bigcup_{\substack{a \in \VB \\ a \prec b}} U_a,
    \sup_{u \in G(-,v)} \left\lVert w^0_u - w_u \right\rVert \leq \eta
    \right\}
  \]
  In words, $U_b$ is the set of vertices of $S$ selected as follows:
  a vertex is selected if all its parents have been selected already,
  and weights on links to its parents have moved no more than $\eta$
  from $w^0$ to $w$.

  The extracted graph is then $U = (V_U, E_U)$,
  where $V_U = \bigcup_{b \in \VB} U_b$, and $E_U = \ES \cap \left( V_U \times V_U \right)$.
  First, let us check that the inclusion $\iota_U : U \to S$ is a fibration,
  and that the weight-approximation condition is satisfied.
  It is immediately verified that it is an isomorphism of graphs.
  Moreover, by definition of $U$, if $v \in V_U$ then $U(-,v) = G(-,v) = S(-,v)$,
  therefore $\iota_U$ is an isomorphism of in-neighborhoods, thus a fibration.
  To check the weight-approximation condition, let $e \in E_U$, and observe that it holds by triangle inequality in $\W_{\pi(e)}$ that
  \[ \lVert {(\iota^* w)}_e - {(\varphi^* w^\star)}_e \rVert
  = \lVert w_e - w^\star_{\varphi(e)} \rVert
  \leq \lVert w_e - w^0_e \rVert + \lVert w^0_e - w^\star_{\varphi(e)} \rVert
  \leq \eta + \eta = 2 \eta \]

  Thus, it only remains to show that the restriction $\varphi\rvert_U : V_U \to \VM$
  has sufficient volume.
  This is a little more tedious.
  We will switch points of view, to count the vertices from $\VS$ that we reject
  instead of those that we keep.
  Define (again inductively on $b \in \VB$):
  \[\begin{aligned}
    C_b &= \left\{ v \in \pi^{-1}(b) \cap \VS \,\middle\vert\, \exists u \in G(-,v), u \notin \bigcup_{\substack{a \in \VB \\ a \prec b}} U_a \right\}
    \\ W_b &= \left\{ v \in \pi^{-1}(b) \cap \VS \,\middle\vert\, \exists u \in G(-,v), \lVert w^0_{(u,v)} - w_{(u,v)} \rVert > \eta \right\}
  \end{aligned}\]

  In words, $W_b$ is the set of vertices in $\VS$ of ``type $b$'' that have been rejected
  because one the corresponding weights had moved to much.
  Similarly, $C_b$ is the set of vertices in $\VS$ that have been rejected because
  one of their parents had been rejected ($C$ stands for ``chain'' rejection).
  Observe that
  \[ U_b = \left( \pi^{-1}(b) \cap \VS \right) \setminus \left( C_b \cup W_b \right) \]

  Therefore $\# U_b \geq \#\left(\pi^{-1}(b) \cup \VS\right) - \# C_b - \# W_b$.
  Let us bound each term separately.

  First, note that if $v \in I(G) \subseteq \VG$, and $v \in \VS$, then $v \in U_{\pi(v)}$ because $G(-,v) = \varnothing$.

  Regarding $\#W_b$, since we know by assumption $\sum_{e \in \ES} \lVert w^0_e - w_e \rVert^2 \leq \sum_{e \in \EG} \lVert w^0_e - w_e \rVert^2 \leq R^2$,
  but also by definition of $W_b$ that $\#W_b \cdot \eta^2 \leq \sum_{e \in \ES} \lVert w^0_e - w_e \rVert^2$,
  we can deduce that $\#W_b \leq R^2 / \eta^2$.

  Regarding $\#C_b$, we will rely on the following forward inclusion:
  \[ C_b \subseteq \bigcup_{a \in B(-,b)} \bigcup_{v \in C_a \cup W_a} G(v,-) \]
  therefore we can leverage outgoing degree bounds to obtain
  \[ \# C_b \leq \sum_{a \in B(-,b)} \left( \# C_a + \# W_a \right) \cdot d_{(a,b)} \]

  At this point, it becomes easier to stop thinking in integers and cardinals,
  and to switch to rates. Define $r : \VB \to \interval{0}{1}$ the
  ``rejection rate'' $r : a \mapsto \left( \#C_a + \# W_a \right) / n_a$.
  We have shown so far that if $b \in \IB$, then $r_b = 0$.
  Then, dividing the previous inequality by $n_b$ on both sides, and rewriting
  \[
    r_b - \frac{\# W_b }{n_b}
    = \frac{\#C_b}{n_b}
    \leq \sum_{a \in B(-,b)} \frac{\#C_a + \#W_a }{n_a} \cdot \frac{n_a}{n_b} \cdot d_{(a,b)}
    = \sum_{a \in B(-,b)} r_a \cdot d_{(a,b)} \frac{n_a}{n_b}
  \]
  Write $s = \frac{1}{2} \inf \alpha / (1 + \Lambda)^{\#\VB}$.
  Using the second assumption, that $n_b \geq \frac{R^2}{\eta^2} \cdot \frac{1}{s}$,
  and the first assumption $\sum_{(a,b) \in \EB} d_{(a,b)} \frac{n_a}{n_b} \leq \Lambda$,
  we can deduce the following bound for $r$
  \[
    r_b \leq \frac{\# W_b}{n_b}
      + \sum_{a \in B(-,b)} r_a \cdot d_{(a,b)} \frac{n_a}{n_b}
    \leq \frac{R^2}{\eta^2} \frac{1}{n_b} + \left( \sum_{a \in B(-,b)} d_{(a,b)} \frac{n_a}{n_b} \right) \cdot \left( \max_{a \in B(-,b)} r_a \right)
  \]
  \[ r_b \leq s + \Lambda \cdot \left( \max_{a \in B(-,b)} r_a \right) \]
  Hence by Appendix-Lemma~\ref{appendix-prop:arithmetico-geometric-control},
  we get $\max_{b \in \VB} r_b \leq s \cdot {\left(1 + \Lambda\right)}^{\#\VB} \leq \frac{1}{2} \inf\alpha$.

  We are now ready to check volume conditions on $\varphi\vert_U : U \to G^\star$.
  Let $v \in \VM$, and $b = \pi_\star(v) \in \VB$.
  \[
    {\left( \varphi\vert_U \right)}^{-1}(v)
    = \varphi^{-1}(v) \cap U_b
    = \left(\varphi^{-1}(v) \right) \setminus \left( C_b \cup W_b \right)
  \]
  Thus, by using the $\alpha$-covering property of $\varphi : S \to G^\star$
  and the previous bound $r_b \leq \alpha(v) / 2$,
  \[\begin{aligned}
    \#\left( { \left( \varphi\vert_U \right)}^{-1}(v) \right)
    & \geq \# \left( \varphi^{-1}(v) \right) - \left( \# W_b + \# C_b \right)
    \\ &\geq \alpha(v) \cdot n_b - r_b \cdot n_b
    \\ &\geq \frac{1}{2} \alpha(v) \cdot n_b
  \end{aligned}\]
  Thus $(U, \varphi\rvert_U) \in \mathcal{M}_{2\eta}^{\alpha/2}[(\liftmod{G}, w), (\liftmod{G}^\star, w^\star)]$,
  which concludes the proof.
\end{proof}

\begin{lemma}[Low out-degree of random sparse lifts]%
  \label{lem:low-out-degree}
  \hfill{}\break{}
  Let $n : \VB \to \mathbb{N}$ such that if $b \in \IB$ then $n_b = \# p_\CC^{-1}(b)$.
  Let $(G, w)$ be a random sparse lift of $\basemod$ along $(n, \lambda)$ with distribution $\mathcal{N}$.

  For any $\delta \in \interval[open left]{0}{1}$,
  and for $(a,b) \in \# \EB$,
  let $D_{(a,b)} = 7 \frac{n_b}{n_a} \lambda_{(a,b)} + \log n_a + \log \# \EB - \log \delta$.

  It holds
  \[
    \mathbb{P}\left( \forall (a,b) \in \EB, \sup_{v \in \pi^{-1}(a)} \# \left( G(v,-) \cap \pi^{-1}(b) \right) \leq D_{(a,b)}  \right)
    \geq 1 - \delta
  \]
\end{lemma}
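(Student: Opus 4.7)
The plan is to identify the relevant out-degree counts as binomial random variables, apply a Chernoff/Bernstein tail bound to each, and then combine via a union bound over vertices and edges of $B$. Concretely, for a fixed edge type $(a,b) \in \EB$ and a fixed vertex $v \in \pi^{-1}(a)$, the out-neighbours of $v$ in $\pi^{-1}(b)$ arise from the fully-connected lift $C$ (every pair $(v, u)$ with $u \in \pi^{-1}(b)$ is an edge in $\EC$) thinned by independent Bernoulli masks of parameter $\lambda_{(a,b)}/n_a$, as per \Definition{random-sparse-lift}. Hence
\[
  X_v := \#\bigl(G(v,-) \cap \pi^{-1}(b)\bigr) = \sum_{u \in \pi^{-1}(b)} m_{(v,u)} \sim \operatorname{Bin}\bigl(n_b,\, \lambda_{(a,b)}/n_a\bigr),
\]
so $\mathbb{E}[X_v] = \mu_{(a,b)} := n_b \lambda_{(a,b)} / n_a$, which is exactly the coefficient in front of the $7$ in the definition of $D_{(a,b)}$.

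The next step is a tail bound. Write $L = \log n_a + \log \# \EB - \log \delta$ so that $D_{(a,b)} = 7 \mu_{(a,b)} + L$. The aim is to show $\mathbb{P}(X_v \geq D_{(a,b)}) \leq e^{-L} = \delta / (n_a \cdot \# \EB)$. This is where I expect the only real calculation: Bernstein's inequality gives
\[
\mathbb{P}(X_v \geq \mu_{(a,b)} + t) \leq \exp\bigl(- t^2 / (2 \mu_{(a,b)} + 2t/3)\bigr),
\]
and plugging $t = 6 \mu_{(a,b)} + L$ one checks the elementary inequality $(6\mu + L)^2 \geq 2 L (3\mu + L/3)$, which holds because $(6\mu + L)^2 - 2L(3\mu + L/3) = 36 \mu^2 + 6 L\mu + L^2/3 \geq 0$. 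This yields the desired $e^{-L}$ bound for each $v$ separately.

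Finally, a double union bound closes the argument: for each fixed $(a,b) \in \EB$, over the $n_a$ vertices of $\pi^{-1}(a)$,
\[
\mathbb{P}\Bigl(\sup_{v \in \pi^{-1}(a)} X_v \geq D_{(a,b)}\Bigr) \leq n_a \cdot e^{-L} = \delta / \# \EB,
\]
and then over the $\# \EB$ edge types, the total failure probability is at most $\delta$. Taking complements gives the claim. The only mildly delicate step is the Bernstein computation with the constant $7$; everything else is structural (identifying the binomial, then union-bounding).
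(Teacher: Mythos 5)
Your proof is correct and shares the same skeleton as the paper's: identify the type-$b$ out-degree of a vertex of type $a$ as a $\operatorname{Bin}(n_b, \lambda_{(a,b)}/n_a)$ random variable, obtain a per-vertex tail bound at level $\delta/(n_a \cdot \#\EB)$, then a double union bound. The one genuine difference is the tail inequality. The paper first invokes the KL-form Chernoff bound $\mathbb{P}(X \geq M) \leq \exp(-n_b \, \BernD{M/n_b}{p})$ (Appendix-Lemma~\ref{appendix-prop:binomial-chernoff}), then needs the auxiliary lower bound $\BernD{a}{p} \geq a(\log(a/p) + p/a - 1)$ (Appendix-Lemma~\ref{appendix-prop:binomial-kl}), and the constant $7$ enters because $\log 7 + 1/7 - 1 \geq 1$. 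You instead apply Bernstein's inequality directly, with $\sigma^2 \leq \mu_{(a,b)}$ for the Bernoulli variance, and verify the tail by the elementary positivity $(6\mu + L)^2 - 2L(3\mu + L/3) = 36\mu^2 + 6L\mu + L^2/3 \geq 0$. Your route is shorter in that it avoids the second auxiliary lemma entirely and replaces a numerical check of $\log 7 + 1/7 - 1$ by a sum-of-squares-like inequality; in fact your computation shows that any multiplier $c \geq 3/2$ in place of $6$ already suffices under Bernstein (the discriminant of $c^2\mu^2 + (4c/3 - 2)L\mu + L^2/3$ is nonpositive there), so the constant $7$ is comfortably loose. The paper's KL-Chernoff route is tighter in principle (it is the optimal exponential tail), but here both yield the same stated constant, so the choice is a matter of taste and of which appendix lemmas one wants to maintain.
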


Write $d : \EB \to \mathbb{N}$
the random variable $d : (a,b) \in \EB
\mapsto \sup_{v \in \pi^{-1}(a)} \# ( G(v,-) \cap \pi^{-1}(b))$.

\begin{proof}[Proof of \Lemma{low-out-degree}]
  Write for shortness $c = - \log(\delta / \#\EB) \in \mathbb{R}_+$.
  Let $(a,b) \in \EB$, and ${p = \lambda_{(a,b)} / n_a}$.
  Define for all $(i,j) \in [n_a] \times [n_b]$,
  independent random Bernoulli variables
  $m_{(i,j)} \sim \operatorname{Bern}\left( p \right)$.
  For $i \in [n_a]$, let $X_i = \sum_{j \in [n_b]} m_{(i,j)}$.
  Note that these variables have a Binomial distribution
  $X_i \sim \mathcal{B}\left(n_b, p\right)$.
  Thus from Appendix-Lemma~\ref{appendix-prop:binomial-chernoff},
  \[ \mathbb{P} \left( X_u \geq D_{(a,b)} \right) \leq \exp \left( - n_b \, \BernD{\frac{D_{(a,b)}}{n_b}}{\frac{\lambda_{(a,b)}}{n_a}} \right) \]
  where $\BernD{a}{p} = a \log \frac{a}{p} + (1 - a) \log \frac{1-a}{1-p}$.
  Observe that ${D_{(a,b)}}/{n_b} \geq {\lambda_{(a,b)}}/{n_a}$ because
  \[ \rho = \frac{D_{(a,b)}}{n_b} \frac{n_a}{\lambda_{(a,b)}}
  = \frac{7 \, \lambda_{(a,b)}}{\lambda_{(a,b)}} + \frac{n_a \log n_a}{n_b} + \frac{c}{\lambda_{(a,b)}} \frac{n_a}{n_b}
  \geq \frac{7 \, \lambda_{(a,b)}}{\lambda_{(a,b)}} = 7 > 1 \]

  Therefore, we get $\step{1}$
  by Appendix-Lemma~\ref{appendix-prop:binomial-kl} and
  $\step{2}$ by the previous lower-bound on $\rho$, in
  \[\begin{aligned}
    \BernD{\frac{D_{(a,b)}}{n_b}}{\frac{\lambda_{(a,b)}}{n_a}}
    \underset{\step{1}}{\geq} \frac{D_{(a,b)}}{n_b} \left( \log\left(\rho\right) + \frac{1}{\rho} - 1\right)
    \underset{\step{2}}{\geq} \frac{D_{(a,b)}}{n_b}
  \end{aligned}\]
  because $\log(7) + \frac{1}{7} - 1 \approx 1.09 \pm 0.01 \geq 1$.

  Hence so far, we have thus shown that $n_b \, \BernD{\frac{D_{(a,b)}}{n_b}}{\frac{\lambda_{(a,b)}}{n_a}} \geq M \geq \log(n_a) + c$. Thus,
  \[\begin{aligned}
    \mathbb{P}\left( X_u \geq D_{(a,b)} \right)
    &\leq \exp\left( - n_b \, \BernD{\frac{M}{n_b}}{\frac{\lambda_{(a,b)}}{n_a}} \right)
    \leq \exp\left( - \log n_a - c \right)
    = \frac{1}{n_a} e^{-c}
  \end{aligned}\]

  Then let $X = \sup_{u \in [n_a]} X_u$,
  and observe that by definition of a random sparse lift (\Def{random-sparse-lift}),
  The random variables $X$ and $d_{(a,b)}$ have the same distribution.
  By union bound,
  \[ \mathbb{P}\left( d_{(a,b)} \geq D_{(a,b)} \right)
  = \mathbb{P}\left( X \geq D_{(a,b)} \right)
  \leq \sum_{u \in [n_a]} \mathbb{P}\left( X_u \geq D_{(a,b)} \right)
  \leq \sum_{u \in [n_a]} \frac{1}{n_a} e^{-c}
  = e^{-c} \]
  Then by union bound again, and by definition of the shorthand $c = - \log\left(\frac{\delta}{\#\EB}\right)$
  \[
    \mathbb{P}\left(\exists (a,b) \in \EB, \> d_{(a,b)} \geq D_{(a,b)} \right)
    \leq \sum_{(a,b) \in \EB} \mathbb{P}\left( d_{(a,b)} \geq D_{(a,b)} \right)
    \leq \#\EB \cdot e^{-c}
    = \delta
  \]
  By negation, it follows $\mathbb{P}\left( \forall (a,b) \in \EB, d_{(a,b)} \leq D_{(a,b)} \right) \geq 1 - \delta$.
\end{proof}

\subsubsection{Proof of ballwise cover resistance}\label{appendix:proof-of-ballwise-cover-resistance}

\begin{proof}[Proof of \Proposition{new-ballwise-cover-resistance}]
  \hfill{}\break{}
  Write $d : \EB \to \mathbb{N}$
  the function $d : (a,b) \in \EB
  \mapsto d_{(a,b)} = \sup_{v \in \pi^{-1}(a)} \# ( G(v,-) \cap \pi^{-1}(b))$.

  Let $A^{\operatorname{cov}}$ be the event
  $\{\mathcal{M}_{\eta/2}^{\alpha_{\eta/2}}[(\liftmod{G}, w^0), (\liftmod{G}^\star, w^\star)] \neq \varnothing \}$
  (a covering exists at initialization).
  Then, let $A^{\operatorname{deg}}$
  be the event
  $\{ \forall b \in \VB, \sum_{a \in B(-,b)} d_{(a,b)} \frac{n_a}{n_b} \leq \Lambda \}$
  (the lift has relatively low out-degree).

  \paragraph{Degree control.}
  Let us start by showing that $\mathbb{P}(A^{\operatorname{\deg}}) \geq 1- \delta/2$.
  In order to use \Lemma{low-out-degree},
  let us show first that for any $(a,b) \in \EB$,
  it holds $n_a \geq 2 \max \{ \lambda_{(a,b)}, \lambda_{(a,b)}^2 / \log(2) \}$.
  By case disjunction on $a$, first if $a \in \IB$,
  then on one hand $\lambda_{(a,b)} \leq n^0_a / 2 = n_a / 2$,
  and on the other hand $\lambda_{(a,b)} \leq (n^0_a / 3)^{1/2} = (n_a/3)^{1/2}$
  thus $2 \lambda_{(a,b)}^2 / \log(2) \leq (2 / \log(2)) (n_a / 3) \leq n_a$
  because $2 / \log(2) \approx 2.88 \pm 0.01 \leq 3$.
  For the other case, if $a \notin \IB$, then by dropping
  extraneous factors in the assumption, $n_a \geq 2 \,(1 + \Lambda)^2$ since $\#\VB \geq 1$,
  which concludes because $\Lambda \geq \lambda_{(a,b)}$.

  For every edge $(a,b) \in \EB$, define
  ${D_{(a,b)} = 7 \lambda_{(a,b)} n_b / n_a + \log n_a + \log \#\EB - \log (\delta/2)}$.
  By virtue of \Lemma{low-out-degree},
  we have
  \[ \mathbb{P}\left( \forall (a,b) \in \EB,\, d_{(a,b)} \leq D_{(a,b)} \right) \geq 1 - \delta/2 \]
  Moreover,
  for any $b \in \VB$,
  \[\begin{aligned}
    \sum_{a \in B(-,b)} \frac{n_a}{n_b} D_{(a,b)}
    &\underset{\step{1}}{=}
    \sum_{a \in B(-,b)} 7 \lambda_{(a,b)} + \frac{n_a \log n_a}{n_b} + \frac{n_a}{n_b} \left(\log \#\EB - \log (\delta/2) \right)
    \\ &\underset{\step{2}}{\leq}
    \sum_{a \in B(-,b)} 7 \lambda_{(a,b)} + 1 + \log \#\EB - \log (\delta/2)
    \\ &\leq
    \sum_{e \in \EB} 7 \lambda_{e} + 1 + \log(\#\VM + \#\EB) - \log (\delta/2)
    \underset{\step{3}}{=} \Lambda
  \end{aligned}\]
  where $\step{1}$ is the definition of $D_{(a,b)}$,
  $\step{2}$ uses twice the assumption that $n_b \geq n_a \log n_a$ for every $a \in B(-,b)$,
  and $\step{3}$ is the definition of $\Lambda$.
  Thus
  \[
    \mathbb{P}\left( A^{\operatorname{deg}} \right)
    \geq \mathbb{P}\left( \forall (a,b) \in \EB,\, d_{(a,b)} \leq D_{(a,b)} \right)
    \geq 1 - \delta/2
  \]

  \paragraph{Initial cover.}
  Let us show now that $\mathbb{P}(A^{\operatorname{cov}}) \geq 1 - \delta/2$.
  By leveraging \Proposition{large-random-sparse-lifts-are-covering}
  as $\step{1}$,
  and writing for shortness, when $\pi_\star(v) = b$, as in the proposition, $\tau(v) = \left( 1 - \frac{1}{n_b \,\alpha_{\eta/2}(v)} \right)_+$, we have
  \[\begin{aligned}
    \mathbb{P}\left( \mathcal{M}_{\eta/2}^{\alpha_{\eta/2}}[ (\liftmod{G}, w^0), (\liftmod{G}^\star, w^\star) ] \neq \varnothing \right)
    &\underset{\step{1}}{\geq} \prod_{b \in \VB \setminus \IB} \left( 1 - \sum_{v \in \pi_\star^{-1}(b)} \exp\left({- \frac{\tau(v)^2}{4} n_b \cdot \alpha_{\eta/2}(v)}\right) \right)_+
    \\ &\underset{\step{2}}{\geq}
    1 - \sum_{b \in \VB \setminus \IB} \sum_{v \in \pi_\star^{-1}(b)} \exp\left({- \frac{\tau(v)^2}{4} n_b \cdot \alpha_{\eta/2}(v)}\right)
    \\ &\underset{\step{3}}{\geq}
    1 - \sum_{b \in \VB \setminus \IB} \sum_{v \in \pi_\star^{-1}(b)} \exp\left({- \frac{1}{8} n_b \cdot \alpha_{\eta/2}(v)}\right)
    \\ &\underset{\step{4}}{\geq}
    1 - \sum_{v \in \VM} \exp\left( + \log\left(\frac{\delta}{2 \#\VM}\right)\right)
    \\ &\geq 1 - \delta/2
  \end{aligned}\]
  where $\step{2}$ is a union bound,
  then $\step{3}$ uses $n_b \cdot \alpha_{\eta/2}(v) \geq 4$
  thus $\tau(v)^2 \geq (3 / 4)^2 = 9 / 16 \geq 1/2$,
  and $\step{4}$
  uses
  $n_b \cdot\alpha_{\eta/2}(v) \geq 8 \frac{(R + \eta)^2}{\eta^2} (1 + \Lambda)^{1 + \#\VB}
  \geq 8 \, \Lambda \geq - 8\, \log{\frac{\delta}{2 \#\VM}}$.

  \paragraph{Conclusion.}
  Finally, by using \Lemma{resistance-extraction} for $\step{1}$,
  and a union bound for $\step{2}$,
  we get the bound
  \[
    \mathbb{P}(
    \forall w \in B(w^0, R), \>
    \mathcal{M}^{\frac{1}{2} \alpha_{\eta/2}}_\eta\left[ (\liftmod{G},w), (\liftmod{G}^\star, w^\star) \right]
    \neq \varnothing
    )
    \> \underset{\step{1}}{\geq} \>
    \mathbb{P}(A^{\operatorname{deg}} \cap A^{\operatorname{cov}})
    \> \underset{\step{2}}{\geq} \>
    1 - \delta
  \]
\end{proof}

\clearpage{}

\section{Quantitative PAC convergence of random sparse lifts}%
\label{appendix:quantitative-pac-convergence}

The statement of this theorem references the definitions of \Sec{convergence-of-sparse-perceptrons}.

\begin{theorem}%
  \label{thm:theorema-convergenciae}
  Let $(\varepsilon, \delta) \in \mathbb{R}_+^* \times \interval[open left]{0}{1}$.
  \hfill{}\break{}
  Assume that there exists
  $\liftmod{G}^\star = ((\VM, \EM), \pi_\star, c_\star) \in \LiftPMod{\basemod, \CC}$
  a lifted perceptron module,
  and parameters
  $(w^\star, a^\star) \in \Param{\liftmod{G}^\star} \times \LinReadout{\liftmod{G}^\star, k}$
  such that $\mathcal{L}[\liftmod{G}^\star](w^\star, a^\star) < \varepsilon$.

  Let $\mathcal{L}[\liftmod{G}^\star](w^\star, a^\star) = \varepsilon_0$,
  and define the constants
  \[
    C^\star = \sum_{i \in [k]} \sum_{v \in T(\liftmod{G}^\star)} \frac{ \lVert a_{i,v}^\star \rVert }{ \sqrt{\# \pi_\star^{-1}(\pi_\star(v))} }
  \]
  \[
    \eta = \sup\, \left\{ \eta \in \mathbb{R}_+^* \middle\vert
    L[\liftmod{G}^\star, w^\star](\eta, \mathcal{X}) < \frac{1}{C^\star} \left( \sqrt{\frac{\varepsilon + \varepsilon_0}{2}} - \sqrt{\varepsilon_0} \right)
    \right\}
  \]
  \[
    \Lambda = \sum_{(a,b) \in \EB} \left( 7\,\lambda_{(a,b)} + 1 + \log(2) + \log(\#\VM + \#\EB) - \log \delta \right)
  \]

  Let $\alpha_{\eta/2} : \VM \to \interval[open left]{0}{1}$
  be the $\alpha$-parameter associated to $(G^\star, w^\star)$
  and $\frac{\eta}{2}$ by \Def{alpha-parameter}.
  Define
  \[
    c = \, \sqrt{ \sum_{i \in [k]} \sum_{v \in T(\liftmod{G}^\star)}
    \frac{
      2\, \lVert a_{i,v}^\star \rVert^2
    }{
      \alpha_{\eta/2}(v) \cdot \# \pi_\star^{-1}(\pi_\star(v))
    }
  }
  \]
  Let $\kappa = 3 \, c^{-2} / \,\lVert f^\star \rVert_\mathcal{D}^{4}$.
  Let $N_1 = \VB \to \mathbb{N}$
  be defined for $b \in \IB$ as $N_1(b) = \#p_\CC^{-1}(b)$ and
  \[
    \forall b \in \VB \setminus \IB, \quad N_1(b) =
    8 \left(1 + \frac{4\,c}{\eta} \frac{\lVert f^\star \rVert_\mathcal{D}^2
    }{\varepsilon - \varepsilon_0} \right)^2 \frac{(1 + \Lambda)^{1 + \#\VB}}{\inf_{v \in \VM} \alpha_{\eta/2}(v)}
  \]

  It holds, for all $n \in \mathcal{S}_0$ such that $n \succeq N_1$,
  that if $(\liftmod{G}, w)$
  is a random sparse lift of $\basemod$ along $(n, \lambda)$ with distribution
  $\mathcal{N}$,
  and $a = 0 \in \LinReadout{\liftmod{G},k}$,
  then
  the pair
  $(\mathcal{L}[\liftmod{G}], (w, a))$
  satisfies Convergence Criterion~\ref{def:cvcriterion}
  with limit error $\varepsilon$ and constant $\kappa$,
  with probability at least $(1 - \delta)$.
\end{theorem}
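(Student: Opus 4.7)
The plan is to chain three already-established results: (i) \Proposition{new-ballwise-cover-resistance} gives, with probability at least $1-\delta$, a covering partial morphism of the random lift onto the witness \emph{at every point} of a Euclidean ball around the random initial weights; (ii) \Proposition{witness-gives-tangent-approximation} turns any such covering into a quantitative tangent approximation statement; (iii) \Theorem{quantitative-pao} integrates tangent approximation along the gradient flow into the desired convergence bound. The entire failure probability $\delta$ is absorbed by step (i); steps (ii) and (iii) are deterministic.

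First I would fix an intermediate error $\varepsilon_1 = (\varepsilon + \varepsilon_0)/2 \in (\varepsilon_0, \varepsilon)$. By the definition of $\eta$ in the statement,
\[ \sqrt{\varepsilon_0} + C^\star \cdot L[\liftmod{G}^\star, w^\star](\eta, \mathcal{X}) \;<\; \sqrt{\varepsilon_1}. \]
Whenever a pair $(\liftmod{G}, w)$ has an $(\tfrac12 \alpha_{\eta/2},\, \eta)$-cover of $(\liftmod{G}^\star, w^\star)$, \Proposition{witness-gives-tangent-approximation} (applied with volume parameter $\alpha \equiv \tfrac12 \alpha_{\eta/2}$, which introduces the factor $2$ appearing in the numerator inside $c$) produces a displacement $u$ with $\lVert u \rVert \leq \lVert a \rVert + c$ for \emph{any} readout $a$, and whose linearization bound squares to at most $\varepsilon_1$. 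In other words, $(w, a) \in \mathcal{A}(\lVert a \rVert + c,\, \varepsilon_1)$.

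Next I would apply \Proposition{new-ballwise-cover-resistance} to the random sparse lift, with the same $\eta$ and with ball radius $R = 2c(\lVert f^\star\rVert_\mathcal{D}^2 - \varepsilon_1)/(\varepsilon - \varepsilon_0)$. Its conclusion is that, with probability at least $1 - \delta$, every $w \in B(w^0, R)$ admits a partial morphism in $\mathcal{M}_\eta^{\alpha_{\eta/2}/2}[(\liftmod{G}, w), (\liftmod{G}^\star, w^\star)]$. Conditioning on this event and writing $\theta_0 = (w^0, 0)$, for any $r \leq R$ and any $\theta = (w, a) \in B(\theta_0, r)$ one has $\lVert w - w^0 \rVert \leq r$ and $\lVert a \rVert \leq r$, so by the previous paragraph $\theta \in \mathcal{A}(r + c,\, \varepsilon_1)$. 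This is exactly the ball-tangent-approximation hypothesis of \Assumption{tangent-approx} at intercept $c$, radius $R$, error $\varepsilon_1$, probability $1 - \delta$. Noting that $\mathcal{L}[\liftmod{G}](w, 0) = \lVert f^\star \rVert_\mathcal{D}^2$ is deterministic, one takes the initial-loss bound of \Theorem{quantitative-pao} to be the constant $C(\cdot) = \lVert f^\star \rVert_\mathcal{D}^2$ with $\delta_0 = 0$; the formulas for $R$ and $\kappa$ in that theorem then reproduce, respectively, the radius above and the value $\kappa = 3 c^{-2} \lVert f^\star \rVert_\mathcal{D}^{-4}$ of the statement. This concludes.

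\textbf{The main obstacle} is purely bookkeeping: one must verify that the concrete threshold $N_1$ defined in the statement implies the numerical hypothesis $n_b \geq 8 (R + \eta)^2 / \eta^2 \cdot (1 + \Lambda)^{1+\#\VB} / \inf_v \alpha_{\eta/2}(v)$ required by \Proposition{new-ballwise-cover-resistance}. Using $R \leq 2c\lVert f^\star\rVert_\mathcal{D}^2/(\varepsilon - \varepsilon_0)$, the inequality $(R + \eta)/\eta \leq 1 + 4c\lVert f^\star\rVert_\mathcal{D}^2 / (\eta(\varepsilon - \varepsilon_0))$ is immediate, so the stated $N_1$ suffices. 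The arithmetic side conditions ($n_a \log n_a \leq n_b$, and the bound on $\lambda_{(a,b)}$ when $a \in \IB$) are precisely those packaged into $\mathcal{S}_0$ and into the paragraph preceding the theorem, so nothing beyond a routine check of the chosen constants is required.
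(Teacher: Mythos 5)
Your overall strategy matches the paper's proof: chain \Proposition{new-ballwise-cover-resistance} (to get an $(\tfrac{1}{2}\alpha_{\eta/2},\eta)$-cover on a radius-$R$ ball with probability $\geq 1-\delta$), then \Proposition{witness-gives-tangent-approximation} (to turn the cover into tangent approximation), then \Theorem{quantitative-pao} (to integrate along the flow), and your constant-tracking for $c$, $\kappa$, and the threshold $N_1$ is essentially the paper's. Where you part ways is a small but genuine gap in the hand-off to \Assumption{tangent-approx}.

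You assert ``by the definition of $\eta$ in the statement,
$\sqrt{\varepsilon_0} + C^\star \cdot L[\liftmod{G}^\star,w^\star](\eta,\mathcal{X}) < \sqrt{\varepsilon_1}$,''
and then claim $(w,a)\in\mathcal{A}(\lVert a\rVert+c,\varepsilon_1)$. But $\eta$ is defined as a \emph{supremum} of the set $\{\eta' : L(\eta',\mathcal{X}) < \tfrac{1}{C^\star}(\sqrt{\varepsilon_1}-\sqrt{\varepsilon_0})\}$, and since $L(\cdot,\mathcal{X})$ is non-decreasing, the supremum itself typically achieves equality: one only gets $L(\eta,\mathcal{X}) \leq \tfrac{1}{C^\star}(\sqrt{\varepsilon_1}-\sqrt{\varepsilon_0})$, hence the squared linearization error is only $\leq \varepsilon_1$, not strictly less. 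The membership $\theta \in \mathcal{A}_{s,g}(\kappa,\varepsilon)$ is defined with a strict $<\varepsilon$, so you cannot conclude $\theta\in\mathcal{A}(\cdot,\varepsilon_1)$ from a non-strict $\leq\varepsilon_1$. You also cannot back off to a smaller $\eta'<\eta$ where the strict inequality holds, because the constants $c$ and $N_1$ in the statement are tied to $\alpha_{\eta/2}$, and $\alpha$ shrinks as $\eta$ shrinks, so the numerical guarantees would no longer match the announced thresholds. The paper resolves this explicitly: it introduces $\varepsilon_2 = (\varepsilon+\varepsilon_1)/2 > \varepsilon_1$, invokes \Assumption{tangent-approx} at error $\varepsilon_2$ (for which $\leq\varepsilon_1<\varepsilon_2$ gives the required strict inequality), and then observes $\varepsilon-\varepsilon_2 = (\varepsilon-\varepsilon_0)/4$, which is exactly where the factor $4$ in the $N_1$ formula comes from. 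Your tighter choice $R = 2c(\lVert f^\star\rVert_\mathcal{D}^2-\varepsilon_1)/(\varepsilon-\varepsilon_0)$ is correct in spirit, but once the extra slack $\varepsilon_1\to\varepsilon_2$ is added you land on the paper's $R_0 = 4c\lVert f^\star\rVert_\mathcal{D}^2/(\varepsilon-\varepsilon_0)$; inserting the $\varepsilon_2$ step would close the gap and make your argument line up precisely with the announced $N_1$.

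A second, smaller slip: you say you take ``$C(\cdot)=\lVert f^\star\rVert_\mathcal{D}^2$ with $\delta_0=0$.'' In the notation of \Theorem{quantitative-pao}, $\delta_0$ is the failure probability of the tangent-approximation event, which here is $\delta$ (from \Proposition{new-ballwise-cover-resistance}), while the initial-loss event is deterministic, so its failure probability $\delta-\delta_0$ is what should be zero. This swap does not affect the arithmetic because the initial loss is almost surely $\lVert f^\star\rVert_\mathcal{D}^2$, but it is worth stating the roles correctly when invoking the quantitative theorem.
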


\Theorem{convergence} is a direct consequence of \Theorem{theorema-convergenciae},
it is thus sufficient to prove the latter.

\begin{proof}[Proof of \Theorem{theorema-convergenciae}]
  Define $\varepsilon_0 = \mathcal{L}[\liftmod{G}^\star](w^\star, a^\star) \in \mathbb{R}_+^*$
  the error achieved by the witness.
  Then, define $\varepsilon_1 = (\varepsilon_0 + \varepsilon) / 2 \in \interval[open]{\varepsilon_0}{\varepsilon}$
  the midpoint between the witness error and the target limit error.

  \textbf{Outline.}
  The idea for the proof is as follows.
  First, we use the witness network to get tangent approximation to error $\varepsilon_1 > \varepsilon_0$
  on a ball around initialization (by \Proposition{witness-gives-tangent-approximation}), provided
  the network is large enough to have the witness as subnetwork.
  This is possible because we take $\eta$ small enough to ensure that the gap between the witness error
  and tangent approximation error is no more than $(\varepsilon_1 - \varepsilon_0)$.
  Then, for a yet unspecified constant $R$, we leverage \Proposition{new-ballwise-cover-resistance}
  to set $N_1$ large enough such that the witness is a subnetwork on an $R$-ball with probability at
  least $(1-\delta)$, thus we have tangent approximation with error $\varepsilon_1 < \varepsilon$
  with high probability.
  Finally, we set $R$ large enough with respect to
  the gap $(\varepsilon - \varepsilon_1)$
  and the tangent approximation intercept
  (as per \Theorem{quantitative-pao} and more precisely \Lemma{KL-integration})
  to ensure the limit error of gradient flows is at most $\varepsilon$,
  and propagate that choice of radius to the definition of $N_1$, which will be sufficient to conclude.

  \textbf{Proof of \Assumption{tangent-approx}.}
  Let us reconstruct the setting of \Sec{tangent-approximation} with
  the definitions of perceptron modules.
  Let $(\mathcal{S}, \preceq)$ be the set
  $\mathcal{S} = \{ n : \VB \to \mathbb{N} \mid \forall b \in \IB, n_b = \#p_\CC^{-1}(b) \}$,
  with the order $n^0 \leq n^1$ if and only if $\forall b \in \VB, n^0_b \leq n^1_b$.
  For $n \in \mathcal{S}$, define
  $C = (\VC, \EC)$ the fully-connected lift of $B$ along $n$,
  and the set $G_n$ of lifted modules%
\footnote{Note that there are distinct elements of $G_n$ which may be isomorphic as graphs.}
  $((\VGG, \EGG), \pi, c) \in \LiftPMod{\basemod, \CC}$ with
  vertices $\VGG = \VC$.
  Let $\Theta_n = \prod_{e \in \EC} (\pi_C^* \W)_e$.
  Define $F_{(n,\liftmod{G})} : \Theta_n \to (\mathcal{X} \to \mathbb{R}^k)$
  the forward function $F_{(n,\liftmod{G})}(w,a) = a \cdot \FF{\liftmod{G}}(w, -)$.
  Let us try to show that \Assumption{tangent-approx} is satisfied with parameters ($\varepsilon_1, \delta$)
  and with intercept $c$.
  Let $R \in \mathbb{R}_+$, and let
  $s_1(R) \in \mathcal{S}$ be defined as follows
  \[
    \forall b \in \VB \setminus \IB, \quad \left[ s_1(R) \right]_b = 8 \, \frac{(R + \eta)^2}{\eta^2} \frac{(1 + \Lambda)^{1 + \#\VB}}{\inf_{v \in \VM} \alpha_{\eta/2}(v)}
  \]
  Let $s \succeq s_1$, let $(\liftmod{G}, w^0)$ be a random sparse lift
  along $(s, \lambda)$ and
  and $a^0 = 0 \in \LinReadout{\liftmod{G}, k}$.
  By \Proposition{new-ballwise-cover-resistance} and by definition of $s_1(R)$,
  with probability at least $(1-\delta)$ over $(G, w^0)$,
  for every $w \in \Param{\liftmod{G}}$ such that
  $\lVert w - w^0 \rVert \leq R$,
  it holds that $(\liftmod{G}, w)$ has $(\frac{1}{2} \alpha_{\eta/2}, \eta)$-cover
  of $(\liftmod{G}^\star, w^\star)$.
  Furthermore, by \Proposition{witness-gives-tangent-approximation},
  this implies that for every $r < R$ and for every
  $a \in \LinReadout{\liftmod{G},k}$
  such that $\lVert (w,a) - (w^0, a^0) \rVert \leq r$,
  writing $\theta = (w,a) \in \Theta_s$,
  the proposition implies that there exists $u \in \Theta_s$
  with $\lVert u \rVert \leq c + \lVert a \rVert \leq c + r$
  and such that
  \[\begin{aligned}
    \lVert F_{(s,\liftmod{G})}(\theta) + \d F_{(s,\liftmod{G})}(\theta) \cdot u - f^\star \rVert_D^2
    \leq (\sqrt{\varepsilon_0} + C^\star \cdot L[\liftmod{G}^\star,w^\star](\eta, \mathcal{X}))^2
    \leq \varepsilon_1
  \end{aligned}\]
  This is nearly the $(\varepsilon_1, \delta)$ tangent approximation
  stated in \Assumption{tangent-approx},
  but the assumption uses a strict inequality.
  We could proceed by extending the proof of \Theorem{quantitative-pao},
  but since our bound on the threshold $N_1$ is not tight anyway, we will instead
  choose to match the assumption exactly.
  Therefore, define $\varepsilon_2 = (\varepsilon + \varepsilon_1) /2$.
  Note that $\varepsilon_2 > \varepsilon_1$, therefore
  we have shown that \Assumption{tangent-approx} is satisfied with
  parameters $(\varepsilon_2, \delta)$.
  Moreover, by an immediate calculation, $\varepsilon - \varepsilon_2 = (\varepsilon - \varepsilon_0) / 4$.

  \textbf{Proof of PAC-convergence.}
  Note that it holds
  $\mathcal{L}[\liftmod{G}](w^0, a^0) \leq \lVert f^\star \rVert_\mathcal{D}^2$
  almost surely because $a^0 = 0$.
  Thus,
  as per \Theorem{quantitative-pao},
  set $R_0 = c \lVert f^\star \rVert_\mathcal{D}^2 / (\varepsilon - \varepsilon_2)$
  and inject that constant in $N_1 = s_1(R_0)$.
  Indeed, this matches the definition of $N_1$ because
  $
    R_0 = {c \lVert f^\star \rVert_\mathcal{D}^2} / (\varepsilon - \varepsilon_2) = 4\,c \lVert f^\star \rVert_\mathcal{D}^2 / (\varepsilon - \varepsilon_0)
  $
  This concludes the proof.
\end{proof}


\section{Proofs ommitted from the main text}\label{appendix:complete-proofs}

Here we provide several straightforward checks that the definitions
introduced in this document are properly defined.
All ideas are relatively simple, but the very general notation
can sometimes obscure this simplicity.
Lifting in particular requires going back to the definition
and carefully checking that every type coincides with what we expect.
The MLP examples of \Fig{rosenblatt} and \Fig{sparse-lift} can help.

\subsection{Lifting by homomorphism is well defined}\label{appendix:lift-well-defined}

\begin{proposition}[\Def{lift} is well-defined]
  Let $B = (\VB, \EB)$ be a finite directed acyclic graph.
  Let $\basemod = ((\IB, \TB), (\Y, \Z, \W), (M, \sigma)) \in \PMod{B}$
  be a perceptron module.
  \newline
  Let $G = (\VG, \EG)$ be a graph,
  and $\pi : G \to B$ a homomorphism of graphs.

  Then $\liftmod{G} = ((\pi^{-1}(\IB), \pi^{-1}(\TB)), (\pi^*\Y, \pi^*\Z, \pi^*\W), (\bar{M}, \bar{\sigma}))$
  is a perceptron module.
\end{proposition}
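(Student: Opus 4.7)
The plan is to verify in turn each of the conditions listed in \Def{PMod} for the proposed tuple $\liftmod{G}$, essentially a type-checking exercise reading off \Def{pullback-of-bundles} and the formula for $\bar{\sigma}$. There is no deep content, but the notation for pullbacks and the indexing by fibers of $\pi$ can obscure which vector spaces are being equated, so the work will consist in carefully unfolding them.

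First I would settle the acyclicity of $G$, which is required (implicitly) for $\liftmod{G}$ to belong to $\PMod{G}$. Any cycle $v_1 \to \cdots \to v_k \to v_1$ in $G$ would be mapped by $\pi$ to a closed walk in $B$, contradicting the assumption that $B$ is a DAG (such a walk forces a self-loop on some vertex, impossible in a DAG). Next, for the initial condition on $\pi^{-1}(\IB)$: if $v \in \pi^{-1}(\IB)$ and $u \in G(-,v)$, then $(\pi(u), \pi(v)) \in \EB$ by the homomorphism property, so $\pi(u) \in B(-,\pi(v))$; but $\pi(v) \in \IB$ forces $B(-,\pi(v)) = \varnothing$, a contradiction, hence $G(-,v) = \varnothing$.

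For the bundle data, the pullbacks $\pi^*\Y$, $\pi^*\Z$, $\pi^*\W$ are Euclidean bundles by \Def{pullback-of-bundles}. For the map family $(\bar{M}_e)_{e \in \EGG}$, writing $\bar{M}_{(u,v)} = M_{\pi(u,v)} : \W_{\pi(u,v)} \times \Y_{\pi(u)} \to \Z_{\pi(u,v)}$ and reading off the pullbacks yields exactly the required signature $(\pi^*\W)_{(u,v)} \times (\pi^*\Y)_u \to (\pi^*\Z)_{(u,v)}$.

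The only step requiring a short argument is the $\bar{\sigma}$ family. For $v \in \VGG \setminus \pi^{-1}(\IB)$, the homomorphism property gives $\pi(G(-,v)) \subseteq B(-,\pi(v))$, so the family ${(\pi^{-1}(a) \cap G(-,v))}_{a \in B(-,\pi(v))}$ is a partition of $G(-,v)$; for each fixed $a$ every term $z_{(u,v)}$ with $u \in \pi^{-1}(a) \cap G(-,v)$ lives in $\Z_{\pi(u,v)} = \Z_{(a,\pi(v))}$, so the inner sum is a well-defined element of $\Z_{(a,\pi(v))}$. Assembling them over $a \in B(-,\pi(v))$ produces an element of $\prod_{a \in B(-,\pi(v))} \Z_{(a,\pi(v))}$, exactly the domain of $\sigma_{\pi(v)}$, whose output lies in $\Y_{\pi(v)} = (\pi^*\Y)_v$. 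This matches the required signature $\bar{\sigma}_v : \prod_{u \in G(-,v)} (\pi^*\Z)_{(u,v)} \to (\pi^*\Y)_v$ and completes the verification; the main (mild) obstacle is just keeping track of the fiber partition so that each sum is a sum of vectors in the same Euclidean space.
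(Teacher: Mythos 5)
Your proof is correct and matches the paper's proof step for step: acyclicity of $G$, the initial-vertex condition via the homomorphism property, Euclidean bundle pullbacks, and the type-check of $\bar{M}$ and $\bar{\sigma}$ using the partition of $G(-,v)$ into fibers $\pi^{-1}(a) \cap G(-,v)$ over $a \in B(-,\pi(v))$. One small slip in the acyclicity parenthetical: a closed walk in $B$ does not in general force a self-loop (e.g.\ a $2$-cycle $a \to b \to a$ with $a \neq b$); the correct and simpler argument is that any closed walk $b_1 \to \cdots \to b_k \to b_1$ would give $b_1 \prec b_2 \prec \cdots \prec b_1$ in a topological order of $B$, a contradiction. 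This is a cosmetic fix and does not affect the soundness of the overall proof.
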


\begin{proof}
  Let us check the conditions of \Def{PMod} one by one.
  First, $G$ is a directed acyclic graph,
  because $\pi : G \to B$
  is a homomorphism and $B$ is a directed acyclic graph.
  Then, $\IG = \pi^{-1}(\IB) \subseteq \VG$, and if $v \in \IG$,
  then $\pi(v) \in \IB$ therefore $B(-, \pi(v)) = \varnothing$,
  thus $G(-,v) = \varnothing$ because $\pi$ is a homomorphism.
  Then $\TG = \pi^{-1}(\TB) \subseteq \VG$ is immediate.
  The bundles $\pi^*\Y, \pi^*\Z, \pi^*\W$ over respectively $\VG, \EG, \EG$
  are well-defined by pullback (\Def{pullback-of-bundles})
  through $\pi : \VG \to \VB$ viewed as a function of sets
  (respectively $\pi : \EG \to \EB$ as a function of sets).
  It remains to check the domains and co-domains of $(\bar{M}, \bar{\sigma})$.
  First let $(u,v) \in \EG$, and observe that
  \[
    \bar{M}_{(u,v)} = M_{\pi(u,v)}
    \in  \left( \W_{\pi(u,v)} \times \Y_{\pi(u)} \to \Z_{\pi(u,v)} \right)
    = \left( {(\pi^*\W)}_{(u,v)} \times {(\pi^*\Y)}_{u} \to {(\pi^*\Z)}_{(u,v)} \right)
  \]
  Then, let $v \in \VG \setminus \IG$.
  We have $\pi(v) \in \VB \setminus \IB$ because $\IG = \pi^{-1}(\IB)$,
  thus $\sigma_{\pi(v)}$ has type signature $\sigma_{\pi(v)} : \prod_{b \in B(-,\pi(v))} \Z_{\pi(u,v)} \to \Y_{\pi(v)}$.
  Let $z \in \prod_{u \in G(-,v)} {(\pi^*\Z)}_{(u,v)}$,
  and let us show that $\bar{\sigma}_v(z) \in {(\pi^*\Y)}_{v}$.
  For any $a \in B(-, \pi(v))$, we can define
  $Z_a = \sum_{u \in G_a(-,v)} z_u$, where $G_a(-,v) = \pi^{-1}(a) \cap G(-,v)$.
  This sum is well-defined because $z_u \in {(\pi^*\Z)}_{(u,v)} = \Z_{(\pi(u), \pi(v))} = \Z_{(a, \pi(v))}$
  and $\Z_{(a, \pi(v))}$ is a vector space.
  Since $G_a(-,v) \subseteq G(-,v)$, the quantity $Z_a$ is a function of $z$.
  Then the check concludes by $\bar{\sigma}_v(z) = \sigma_{\pi(v)}\left({\left(Z_a(z)\right)}_{a \in B(-,\pi(v))}\right) \in \Y_{\pi(v)} = {(\pi^* \Y)}_{v}$
  by definition.
\end{proof}

\subsection{The parameter $\alpha$ of a perceptron module is well defined}\label{appendix:alpha-well-defined}

Let us check that the parameter $\alpha$ defined in \Definition{alpha-parameter}
is well-defined.
First, the expression is well-formed because the finite directed
graph $G^\star$ is acyclic since it admits a homomorphism $\pi_\star : G^\star \to B$
to an acyclic graph.
Then, for all $v \in \VM$, as a product or non-negative factors lower than $1$,
$\alpha_\eta(v) \in \interval{0}{1}$.
Finally, by induction on $v$, it is immediate
that $\alpha_\eta(v) > 0$ provided all factors
$p_{\pi_\star(u,v)}(w^\star_{(u,v)}, \eta)$ are strictly positive,
which is granted by the full-support assumption on $\mathcal{N}_{\pi_\star(u,v)}$.

\pagebreak{}

\subsection{Subadditivity of the $\mathcal{D}$-seminorm}

\begin{proposition}
  \label{prop:subadditivity-of-D-seminorm}
  Let $\mathcal{D}$ be a distribution on $\mathcal{X}$ with compact support,
  and define for $f : \mathcal{X} \to \mathbb{R}^k$
  \[ \lVert \cdot \rVert_\mathcal{D} : f \mapsto \sqrt{\mathbb{E}_{x \sim \mathcal{D}}\left[ \lVert f(x) \rVert_2^2 \right]} \]
  Let $a : \mathcal{X} \to \mathbb{R}^k$ and $b : \mathcal{X} \to \mathbb{R}^k$.
  Then it holds
  $ \lVert a + b \rVert_\mathcal{D} \leq \lVert a \rVert_\mathcal{D} + \lVert b \rVert_\mathcal{D} $
\end{proposition}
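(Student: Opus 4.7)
The plan is to recognize this as a version of the Minkowski inequality for the (vector-valued) $L^2(\mathcal{D})$-seminorm, and reduce it to the Cauchy–Schwarz inequality. Since only the triangle inequality is asserted (not that $\lVert \cdot \rVert_\mathcal{D}$ is a genuine norm — it is only a seminorm because $\mathcal{D}$ need not have full support on $\mathcal{X}$), there is no non-degeneracy to worry about, and I can work entirely at the level of the squared seminorm.

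First, I would square both sides and expand pointwise: for every $x \in \mathcal{X}$, the standard parallelogram/polarization identity in $\mathbb{R}^k$ gives
\[ \lVert a(x) + b(x) \rVert_2^2 = \lVert a(x) \rVert_2^2 + 2 \langle a(x), b(x) \rangle + \lVert b(x) \rVert_2^2, \]
and pointwise Cauchy–Schwarz in $\mathbb{R}^k$ bounds $\langle a(x), b(x) \rangle \leq \lVert a(x) \rVert_2 \cdot \lVert b(x) \rVert_2$. Taking expectations under $x \sim \mathcal{D}$ yields
\[ \lVert a + b \rVert_\mathcal{D}^2 \leq \lVert a \rVert_\mathcal{D}^2 + 2\, \mathbb{E}_{x \sim \mathcal{D}}\!\left[ \lVert a(x) \rVert_2 \cdot \lVert b(x) \rVert_2 \right] + \lVert b \rVert_\mathcal{D}^2. \]

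Next, I would apply Cauchy–Schwarz a second time, this time in $L^2(\mathcal{D})$, to the scalar non-negative functions $\lVert a(\cdot) \rVert_2$ and $\lVert b(\cdot) \rVert_2$:
\[ \mathbb{E}_{x \sim \mathcal{D}}\!\left[ \lVert a(x) \rVert_2 \cdot \lVert b(x) \rVert_2 \right] \leq \sqrt{\mathbb{E}\lVert a(x) \rVert_2^2} \cdot \sqrt{\mathbb{E}\lVert b(x) \rVert_2^2} = \lVert a \rVert_\mathcal{D} \cdot \lVert b \rVert_\mathcal{D}. \]
Substituting back gives $\lVert a + b \rVert_\mathcal{D}^2 \leq \left(\lVert a \rVert_\mathcal{D} + \lVert b \rVert_\mathcal{D}\right)^2$, and taking (real, non-negative) square roots concludes.

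There is no real obstacle here — the only mild care is ensuring the expectations are well-defined and finite. Compactness of $\operatorname{supp}(\mathcal{D}) \subseteq \mathcal{X}$ is not strictly needed for the inequality itself (it holds with value $+\infty$ allowed), but if one wants all quantities finite it suffices that $a, b$ be bounded on the support of $\mathcal{D}$, which will be the case in every invocation of this proposition in the paper (the functions $F_{(s,g)}(\theta) - f^\star$ applied to inputs in the compact $\mathcal{X}$). The two applications of Cauchy–Schwarz — first pointwise in $\mathbb{R}^k$, then in $L^2(\mathcal{D})$ — are the only non-trivial step, and both are standard.
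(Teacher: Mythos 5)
Your proof is correct and follows exactly the same route as the paper's: expand $\lVert a+b\rVert_\mathcal{D}^2$, apply Cauchy--Schwarz pointwise in $\mathbb{R}^k$ to bound the cross term, apply Cauchy--Schwarz in $L^2(\mathcal{D})$ to the expectation of the product of norms, and recognize the resulting bound as $(\lVert a\rVert_\mathcal{D}+\lVert b\rVert_\mathcal{D})^2$. The paper's proof is the same two-step Cauchy--Schwarz argument, just written out as a single chain of (in)equalities.
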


\begin{proof}
  Since both sides of the inequality are non-negative,
  we work directly with squares.
  By a straightforward computation, using Cauchy-Schwarz inequality in $\mathbb{R}^k$,
  then $\mathbb{E}[AB]^2 \leq \mathbb{E}[A^2] \,\mathbb{E}[B^2]$,
  \[\begin{aligned}
    \mathbb{E}\left[\lVert a_x + b_x \rVert_2^2\right]
    &= \mathbb{E}\left[ \lVert a_x \rVert_2^2 + 2 \langle a_x, b_x \rangle_{\mathbb{R}^k} + \lVert b_x \rVert_2^2 \right]
    \\ &\leq \mathbb{E}\left[ \lVert a_x \rVert_2^2 + 2 \lVert a_x \rVert_2 \lVert b_x \rVert_2 + \lVert b_x \rVert_2^2 \right]
    \\ &= \lVert a \rVert_\mathcal{D}^2 + 2\, \mathbb{E} \left[ \lVert a_x \rVert_2 \lVert b_x \rVert_2 \right] + \lVert b \rVert_\mathcal{D}^2
    \\ &\leq \lVert a \rVert_\mathcal{D}^2 + 2 \sqrt{\mathbb{E} \left[ \lVert a_x \rVert_2^2 \right]} \sqrt{\mathbb{E} \left[ \lVert b_x \rVert_2^2 \right]} + \lVert b \rVert_\mathcal{D}^2
    \\ &= \left( \lVert a \rVert_\mathcal{D} + \lVert b \rVert_\mathcal{D} \right)^2
  \end{aligned}\]
\end{proof}


\section{Technical lemmas}\label{appendix:technical-lemmas}

\begin{proposition}\label{prop:quantitative-poisson-limit}
  Let $\lambda \in \mathbb{R}_+^*$, $n \in \mathbb{N}$, $k \in [n]$. If $n \geq \max(2 \lambda, 2 \lambda^2 / \log(2))$, then
  \[ {\left(1 - \frac{\lambda}{n}\right)}^{n - k} \geq \frac{1}{2} e^{-\lambda} \]
\end{proposition}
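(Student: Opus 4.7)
The plan is to reduce the claim to the case of maximal exponent and then apply a Taylor-type logarithm bound, with the two size conditions on $n$ each controlling one step of the reduction.

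First I would observe that since $n \geq 2\lambda$, we have $\lambda/n \leq 1/2$, so $1 - \lambda/n \in [1/2, 1) \subset (0,1)$. Because the base lies in $(0,1)$, the map $m \mapsto (1-\lambda/n)^m$ is decreasing, and since $k \geq 0$ gives $n - k \leq n$, we have
\[
    (1 - \lambda/n)^{n-k} \;\geq\; (1 - \lambda/n)^n.
\]
So it suffices to prove $(1-\lambda/n)^n \geq \tfrac12 e^{-\lambda}$, i.e.\ after taking logs,
\[
    n \log(1 - \lambda/n) \;\geq\; -\lambda - \log 2.
\]

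Second, I would invoke the elementary bound $\log(1 - x) \geq -x - x^2$ valid for $x \in [0, 1/2]$. (Sketch of why this holds: the Taylor series gives $\log(1-x) + x + x^2 = x^2(1/2 - x/3 - x^2/4 - \cdots)$, and the bracket is bounded below by $1/2 - x/(3(1-x))$, which is non-negative on $[0,1/2]$.) Applied at $x = \lambda/n \leq 1/2$, this yields
\[
    n \log(1 - \lambda/n) \;\geq\; -\lambda - \lambda^2/n.
\]

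Third, to close the gap $\lambda^2/n \leq \log 2$, I would use the second hypothesis $n \geq 2\lambda^2/\log 2$, which in particular gives $\lambda^2/n \leq \log(2)/2 \leq \log 2$. Combining the two inequalities gives the desired lower bound and hence the claim.

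The proof is essentially mechanical: the only subtlety is recognising that the worst case in $k$ is $k = 0$ (handled by monotonicity of the base in $(0,1)$), and picking a Taylor bound for $\log(1-x)$ that is tight enough to make the constants work out with the two hypotheses on $n$. No step should present a real obstacle.
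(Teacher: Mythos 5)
Your proof is correct and follows essentially the same route as the paper's: reduce to $k=0$ by monotonicity of the base in $(0,1)$, take logarithms, and apply a quadratic lower bound on $\log(1-x)$ valid on $[0,1/2]$ (the first hypothesis on $n$ ensuring $\lambda/n \leq 1/2$, the second closing the gap from $\lambda^2/n$). The only cosmetic difference is that you use the slightly sharper bound $\log(1-x) \geq -x - x^2$ where the paper uses $\log(1-x) \geq -x - 2x^2$ (obtained via Taylor--Laplace with $g''(x) \geq -4$ on $[0,1/2]$), so your version of the final step has extra slack and the hypothesis $n \geq 2\lambda^2/\log 2$ is not used tightly.
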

\vspace{-.2in}

\begin{proof}
  Let $g : [0, 1/2] \to \mathbb{R}, x \mapsto \log(1 - x) + x$.
  We will start by showing that $g(x) \geq - 2 x^2$.
  Indeed by computing the derivatives, $g(0) = 0$ and $g'(x) = 1 - 1 / (1 - x)$
  so $g'(0) = 0$ and $g''(x) = -1 / {(1 - x)}^2$.
  Observing that $g''(x) \geq - 4$ for $x \in [0, 1/2]$,
  and integrating twice following Taylor-Laplace's theorem with integral remainder, we get
  \[\begin{aligned}
    g(x) &= g(0) + g'(0) (x- 0) + \int_0^x g''(s) {(x - s )} \d s
    \\ &\geq \int_0^x - 4 (x - s) \d s = {\left[ - 4 \left(x s - \frac{s^2}{2}\right) \right]}_0^x
    \\ &= - 2 x^2
  \end{aligned}\]
  Let us now apply this result to $x = \lambda / n \leq 1/2$.
  \[ n \left( \log\left( 1- \frac{\lambda}{n}\right) + \frac{\lambda}{n} \right) = n g\left( \frac{\lambda}{n} \right) \geq - 2 \frac{\lambda^2}{n}  \]
  Therefore, moving the $\lambda$ term from the left to the right hand side, and taking exponentials,
  \begin{equation}\label{eq:poisson-limit-core}
    {\left(1 - \frac{\lambda}{n} \right)}^{n-k}
    \underset{\step{1}}{\geq} {\left(1 - \frac{\lambda}{n} \right)}^n
    \underset{\step{2}}{\geq} e^{- 2 \lambda^2 / n} \cdot e^{-\lambda}
    \underset{\step{3}}{\geq} \frac{1}{2} e^{- \lambda}
  \end{equation}
  where $\step{1}$ uses $0 < 1 - \lambda/n \leq 1$,
  $\step{2}$ is the exponential of the previous inequality,
  and $\step{3}$ is the assumption
  $n \geq 2 \lambda^2 / \log(2)$
  which implies
  $-2 \lambda^2 / n \geq - \log(2)$.
\end{proof}

\begin{lemma}\label{appendix-prop:binomial-chernoff}
  Let $X \sim \operatorname{Bern}(n,p)$ be a binomial random variable, and $M \in \mathbb{R}_+$.
  \[ \mathbb{P}\left( X \geq M \right) \leq \exp\left( - n \, \BernD{\frac{M}{n}}{p} \right) \]
  where $\BernD{a}{p} = a \log \frac{a}{p} + (1-a) \log \frac{1-a}{1-p}$.
\end{lemma}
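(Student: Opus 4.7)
The plan is to apply the standard Chernoff argument: exponentiate, invoke Markov's inequality, compute the moment generating function, and optimize over the exponential parameter.

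First I would write $\mathbb{P}(X \geq M) = \mathbb{P}(e^{tX} \geq e^{tM})$ for an arbitrary $t \geq 0$, and apply Markov's inequality to obtain $\mathbb{P}(X \geq M) \leq e^{-tM}\,\mathbb{E}[e^{tX}]$. Since $X$ is a sum of $n$ i.i.d.\ $\operatorname{Bern}(p)$ variables, the moment generating function factorizes as $\mathbb{E}[e^{tX}] = (1-p+p\,e^t)^n$. Taking logarithms, this yields $\log \mathbb{P}(X \geq M) \leq \phi(t) := -tM + n\log(1 - p + p\,e^t)$ for every $t \geq 0$.

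Next I would optimize $\phi$ over $t \geq 0$. When $M \leq np$, the minimum of $\phi$ on $\mathbb{R}_+$ is at $t = 0$, giving the trivial bound (one checks separately that $\BernD{M/n}{p}$ in that regime yields a right-hand side $\geq 1$ under the convex decomposition, so the claim is vacuous or trivially true). In the nontrivial regime $np \leq M \leq n$, differentiation gives $\phi'(t) = -M + n\,\frac{p\,e^t}{1-p+p\,e^t}$, which vanishes at $e^{t^\star} = \frac{M(1-p)}{(n-M)p} \geq 1$, so $t^\star \geq 0$ is admissible. A direct algebraic substitution using $1 - p + p\,e^{t^\star} = \frac{n(1-p)}{n-M}$ gives
\[ \phi(t^\star) = -M \log \frac{M}{np} + (n - M) \log \frac{n(1-p)}{n-M} = -n\,\BernD{M/n}{p}, \]
which is precisely the claimed bound after exponentiation.

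The argument has no real obstacle; the only care needed is the edge cases $M \in \{0, n\}$ (handled by continuity conventions $0 \log 0 = 0$) and the regime $M < np$, where one must note the bound degenerates to the trivial $\mathbb{P}(X \geq M) \leq 1$ rather than try to push a negative $t^\star$. The substitution at $t^\star$ is the only computation with any content, and it is entirely mechanical.
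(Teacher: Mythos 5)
The paper does not prove this lemma; it cites it to \citet[Theorem 2.1]{mulzer2019proofs} and uses it as a black box. Your proof is the standard Chernoff argument (Markov on $e^{tX}$, compute the MGF, optimize over $t$), and the computation at $t^\star$ is correct: $e^{t^\star} = M(1-p)/((n-M)p)$, $1-p+pe^{t^\star} = n(1-p)/(n-M)$, and $\phi(t^\star) = -n\,\BernD{M/n}{p}$ after expansion. This is one of the proofs Mulzer surveys, so it is a perfectly good substitute for the citation.

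However, your parenthetical about the regime $M < np$ is wrong, and worth flagging because the lemma as stated in the paper is silent on this hypothesis. You claim that there "the right-hand side is $\geq 1$," making the bound vacuous. But $\BernD{a}{p}$ is a relative entropy, hence always $\geq 0$, so $\exp(-n\,\BernD{M/n}{p}) \leq 1$ for \emph{all} $M$, with equality only at $M=np$. In fact the inequality $\mathbb{P}(X \geq M) \leq \exp(-n\,\BernD{M/n}{p})$ is simply \emph{false} for $M < np$: take $n=10$, $p=1/2$, $M=1$, where $\mathbb{P}(X\geq 1) \approx 0.999$ while the right-hand side is $\exp(-10\,\BernD{0.1}{0.5}) \approx 0.025$. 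The correct statement (as in Mulzer, and as implicitly enforced where the paper applies this lemma in \Lemma{low-out-degree}, where $\rho \geq 7$ guarantees $M/n > p$) carries the hypothesis $M \geq np$. Your main argument already notes that $t^\star \geq 0$ exactly when $M \geq np$; you should have concluded that this hypothesis is necessary rather than asserting vacuity below threshold.
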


This lemma is \citet[Theorem 2.1]{mulzer2019proofs},
which presents and discusses several proofs.

\begin{lemma}\label{appendix-prop:binomial-kl}
  For all $(a,p) \in \interval[open]{0}{1}^2$,
  let $\BernD{a}{p} = a \log \frac{a}{p} + (1-a) \log \frac{1-a}{1-p}$.
  It holds
  \[ a \geq p \> \Rightarrow \> \BernD{a}{p} \geq a \left( \log \frac{a}{p} + \frac{p}{a} - 1 \right) \]
\end{lemma}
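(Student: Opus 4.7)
The plan is to reduce the claimed inequality to a one-variable elementary bound on the logarithm. Writing out the definition of $\BernD{a}{p}$ and subtracting the target right-hand side, the term $a \log \frac{a}{p}$ cancels on both sides, so the lemma is equivalent to
\[ (1 - a) \log \frac{1 - a}{1 - p} \geq p - a . \]
It is convenient to change variables to $u = 1 - a \in (0,1)$ and $v = 1 - p \in (0,1)$, so that the inequality becomes $u \log(u/v) \geq u - v$. Since $u > 0$, dividing by $u$ yields the equivalent bound $\log(u/v) \geq 1 - v/u$, i.e.\ $\log(x) \geq 1 - 1/x$ evaluated at $x = u/v > 0$.

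Thus the whole statement reduces to the standard inequality $\log(x) \geq 1 - 1/x$ for all $x > 0$. I would prove this in one line by setting $\varphi(x) = \log x - 1 + 1/x$, observing $\varphi(1) = 0$ and $\varphi'(x) = (x-1)/x^2$, which shows that $x = 1$ is a global minimum on $(0, +\infty)$, so $\varphi \geq 0$ everywhere. Substituting $x = u/v$ and multiplying back by $u$ then gives $u \log(u/v) \geq u - v$, and unwinding the substitution $(u, v) = (1-a, 1-p)$ recovers the desired inequality.

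Note that the hypothesis $a \geq p$ is not actually required for the bound itself, since $\log x \geq 1 - 1/x$ holds for every $x > 0$ (it is used in the text only to guarantee $\log(a/p) \geq 0$, which makes the right-hand side of the lemma a meaningful lower bound). There is no real obstacle here, the argument is two substitutions plus the convexity-of-log estimate; the only care needed is to keep $1 - a$ and $1 - p$ strictly positive, which is ensured by $(a, p) \in (0,1)^2$.
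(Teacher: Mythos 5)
Your proof is correct, and it arrives at the same key intermediate inequality as the paper, namely $\log\frac{1-a}{1-p} \geq \frac{p-a}{1-a}$, just justified slightly differently: the paper bounds $f'(u) = -1/(1-u)$ on the interval $[p,a]$ and integrates, whereas you change variables to $x = (1-a)/(1-p)$ and invoke the tangent-line bound $\log x \geq 1 - 1/x$, proved by a one-line first-derivative argument. These are two phrasings of the same elementary calculus fact, so I'd call the approaches essentially identical. Your observation that the hypothesis $a \geq p$ is not actually needed for the inequality (only to make the lower bound non-vacuous, and to make the paper's interval $[p,a]$ well-formed as written) is a correct and worthwhile remark that the paper does not make.
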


\begin{proof}
  Note that it is sufficient to show that $\log\frac{1-a}{1-p} \geq p-a$.
  Let $f : \interval{p}{a} \to \mathbb{R}, \, u \mapsto \log(1-u)$.
  This function has derivative
  $f'(u) = -1 / (1 -u)$.
  For $u \leq a$, it holds $f'(u) \geq -1/(1-a)$.
  Thus,
  \[
    \log\frac{1-a}{1-p} = {\Big[ f(u) \Big]}_p^a
    = \int_p^a f'(u) \d u \geq \int_p^a \frac{-1}{1-a} \d u
    = \frac{p - a}{1 - a}
  \]
  Multiplying both members by $(1-a) \geq 0$ concludes the proof.
\end{proof}

\begin{lemma}\label{appendix-prop:chernoff-amplification}
  Let ${(X_i)}_{i \in [n]}$ be independent identically distributed random variables with values in
  $\{ 0, 1 \}$ such that $\mathbb{E}[X_i] \geq \mu \in [0,1]$.
  Let $X = \sum_i X_i$. For all $\delta \in \interval[open]{0}{1}$,
  \[ \mathbb{P}\left( X \leq ( 1- \delta) n \mu \right) \leq \exp\left( - \frac{1}{2} \delta^2 n \mu \right) \]
\end{lemma}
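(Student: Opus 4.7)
This is a textbook multiplicative Chernoff lower-tail bound, so the plan is to follow the standard exponential-moment method and then dispose of a single elementary scalar inequality at the end. First I would observe that since $\mathbb{E}[X_i] = p_i \geq \mu$ for every $i$, and since each $X_i \in \{0,1\}$, the moment generating function of $-X_i$ satisfies, for any $t > 0$,
\[
\mathbb{E}[e^{-t X_i}] \;=\; 1 + p_i(e^{-t}-1) \;\leq\; \exp\bigl(p_i(e^{-t}-1)\bigr) \;\leq\; \exp\bigl(\mu(e^{-t}-1)\bigr),
\]
where the last step uses $e^{-t} - 1 < 0$ combined with $p_i \geq \mu$. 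By the i.i.d.\ assumption (independence would suffice) the product form gives $\mathbb{E}[e^{-tX}] \leq \exp\bigl(n\mu(e^{-t}-1)\bigr)$.

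Next I would apply Markov's inequality to $e^{-tX}$: for any $t > 0$,
\[
\mathbb{P}\bigl(X \leq (1-\delta)n\mu\bigr) \;=\; \mathbb{P}\bigl(e^{-tX} \geq e^{-t(1-\delta)n\mu}\bigr) \;\leq\; \exp\bigl( n\mu(e^{-t}-1) + t(1-\delta)n\mu \bigr).
\]
Optimizing the right-hand side over $t > 0$ yields the critical point $e^{-t} = 1-\delta$, i.e.\ $t = -\log(1-\delta) > 0$, which substituted back gives
\[
\mathbb{P}\bigl(X \leq (1-\delta)n\mu\bigr) \;\leq\; \exp\bigl(-n\mu \, \varphi(\delta)\bigr), \qquad \varphi(\delta) := \delta + (1-\delta)\log(1-\delta).
\]

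The only remaining step, and the one where one could slip, is the scalar inequality $\varphi(\delta) \geq \tfrac{1}{2}\delta^2$ on $\delta \in (0,1)$. I would handle it by Taylor-type reasoning: $\varphi(0) = 0$, $\varphi'(\delta) = -\log(1-\delta) - \delta$ vanishes at $0$, and $\varphi''(\delta) = \delta/(1-\delta) \geq \delta$ for $\delta \in [0,1)$. Integrating $\varphi''(\delta) \geq \delta$ twice from $0$ then gives $\varphi(\delta) \geq \tfrac{1}{2}\delta^2$, which combined with the previous display yields exactly the claimed bound $\exp(-\tfrac{1}{2}\delta^2 n\mu)$. There is no real obstacle; the only thing to watch is the sign bookkeeping when swapping $p_i \mapsto \mu$ in step one, since the monotonicity goes the ``right'' direction only because $e^{-t}-1$ is negative.
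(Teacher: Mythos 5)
The paper does not actually prove this lemma; it cites \citet[Corollary 4.3]{mulzer2019proofs} and moves on. So there is no internal proof to compare to: what you have written is a self-contained derivation via the exponential-moment method, which is indeed the standard route to the cited corollary, and it is the correct plan.

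However, the step you yourself flagged as the one ``where one could slip'' is exactly where you slipped. With $\varphi(\delta) = \delta + (1-\delta)\log(1-\delta)$, the correct derivatives are
$\varphi'(\delta) = -\log(1-\delta)$ and $\varphi''(\delta) = 1/(1-\delta)$,
not $-\log(1-\delta) - \delta$ and $\delta/(1-\delta)$ as you wrote. The formulas you gave are actually the derivatives of $g(\delta) := \varphi(\delta) - \tfrac{1}{2}\delta^2$, not of $\varphi$ itself. Moreover, the sentence ``integrating $\varphi''(\delta) \geq \delta$ twice from $0$ gives $\varphi(\delta) \geq \tfrac{1}{2}\delta^2$'' is a non sequitur even if one accepts the stated $\varphi''$: integrating $\geq \delta$ twice from zero gives $\geq \delta^3/6$, not $\geq \delta^2/2$. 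The clean fix is either to use the true second derivative $\varphi''(\delta) = 1/(1-\delta) \geq 1$ on $[0,1)$ and conclude by Taylor's theorem with integral remainder, $\varphi(\delta) = \int_0^\delta (\delta - s)\,\varphi''(s)\,\mathrm{d}s \geq \int_0^\delta (\delta - s)\,\mathrm{d}s = \tfrac{1}{2}\delta^2$; or to observe $g(0) = g'(0) = 0$ and $g''(\delta) = \delta/(1-\delta) \geq 0$, hence $g \geq 0$ on $[0,1)$, which is the same conclusion. Everything before that point (MGF bound, monotone swap $p_i \mapsto \mu$, Markov on $e^{-tX}$, optimization at $e^{-t} = 1-\delta$) is correct.
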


This lemma is \citet[Corollary 4.3]{mulzer2019proofs}.

\begin{proposition}\label{appendix-prop:arithmetico-geometric-control}
  Let $S$ be a finite set with a strict total order $\prec$.
  Let $r : S \to \mathbb{R}_+$ be a function
  such that $r(\min S) = 0$ and such that there exist
  two constants $(p, q) \in \mathbb{R}_+^2$ such that
  \[ \forall s \in S, \quad r(s) \leq p + q \cdot \left( \max_{a \prec s} r(a) \right) \]

  Then, it holds
  \[ \max_{s \in S} r(s) \leq p \cdot {\left( 1 + q \right)}^{\#S} \]
\end{proposition}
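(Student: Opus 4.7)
The proof is a direct induction along the total order $\prec$, and I do not expect any real obstacle. Here is the plan.

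First, I would enumerate $S$ as $s_0 \prec s_1 \prec \cdots \prec s_{n-1}$, where $n = \#S$ and $s_0 = \min S$. My induction hypothesis will be the pointwise bound
\[
  r(s_i) \leq p\,(1+q)^i \qquad \text{for every } 0 \leq i \leq n-1,
\]
which is slightly stronger than the statement and more convenient to propagate.

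The base case is immediate, since $r(s_0) = 0$ by assumption and $p \geq 0 = 0 \cdot (1+q)^0$, so $r(s_0) \leq p(1+q)^0 = p$. For the inductive step, fix $i \geq 1$ and assume the bound holds for all $j < i$. Applying the hypothesis of the proposition at $s = s_i$ and using $\max_{a \prec s_i} r(a) = \max_{j < i} r(s_j)$ gives
\[
  r(s_i) \leq p + q \cdot \max_{j < i} r(s_j) \leq p + q \cdot p\,(1+q)^{i-1}.
\]
It then suffices to check the elementary inequality $p + qp(1+q)^{i-1} \leq p(1+q)^i$. Expanding the right-hand side as $p(1+q)^{i-1} + qp(1+q)^{i-1}$, this reduces to $p \leq p(1+q)^{i-1}$, which holds because $p \geq 0$ and $(1+q) \geq 1$.

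Taking the maximum over $i \in \{0, \ldots, n-1\}$ yields $\max_{s \in S} r(s) \leq p(1+q)^{n-1} \leq p(1+q)^n$, which is exactly the claimed bound. The only minor subtlety is the edge case $\#S = 0$, in which the maximum is vacuous and there is nothing to prove; and $\#S = 1$, handled by the base case alone. I do not anticipate any genuine difficulty in the argument.
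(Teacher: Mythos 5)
Your proof is correct and takes essentially the same approach as the paper's: induction along the total order, establishing a pointwise geometric bound at each step. The paper routes through an auxiliary arithmetico-geometric sequence $x_k = p\sum_{i<k} q^i$ and then bounds it with binomial coefficients, whereas you prove the bound $r(s_i) \leq p(1+q)^i$ directly — a modest streamlining of the same idea.
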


\begin{proof}
  Let $x : \mathbb{N} \to \mathbb{R}_+$ be the sequence
  defined by $x_0 = 0$ and $x_{k+1} = p + q \cdot x_k$.
  By a quick induction,
  $x_k = p \sum_{i < k} q^i$.
  The total ordering of $S$ induces $\psi : S \to [\#S]$ an increasing map,
  called a numbering of $S$. By induction on $k \in [\#S] \subseteq \mathbb{N}$,
  we will show that $r({\psi^{-1}(k)}) \leq \max_{i \leq k} x_i$.
  The case $k=0$ is immediate because both are null,
  and for $s = \psi^{-1}(k+1)$ we have
  \[
    r(s)
    \leq p + q \cdot \max_{a \prec s} r(a)
    = p + q \cdot \max_{i \leq k} r({\psi^{-1}(i)})
    \leq p + q \cdot \max_{i \leq k} x_i
    \leq \max_{i \leq k} p + q \cdot x_i \leq \max_{i \leq k} x_{i+1}
  \]
  Now observe that for all $k \in \mathbb{N}$, it holds
  $x_k \leq p \sum_{i \leq k} q^i \leq p \sum_{i \leq k} {k \choose i} q^i = p \cdot {(1 + q)}^k$.
  Thus
  \[ \max_{s \in S} r(s) \leq \max_{i \leq \#S} x_i \leq \max_{i \leq \#S} p \cdot {(1+q)}^i \leq p \cdot {(1 + q)}^{\#S} \]
\end{proof}

\begin{proposition}\label{prop:choice-lemma}
  Let $P \in \mathbb{N}$ and let $\{ E_i \mid i \in [P] \}$
  be a collection of finite sets.
  There exists a disjoint collection $\{ V_i \mid i \in [P] \}$
  such that for all $i \in [P]$, it holds both $V_i \subseteq E_i$
  and $\# V_i \geq \lfloor \# E_i / P \rfloor$.
\end{proposition}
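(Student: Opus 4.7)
I would reduce the claim to a bipartite matching problem and apply Hall's marriage theorem. Write $n_i = \#E_i$ and $n_i^\star = \lfloor n_i / P \rfloor$, and build a bipartite graph with left vertex set $L = \bigsqcup_{i \in [P]} (\{i\} \times [n_i^\star])$ (one ``slot'' for each element we need to place in $V_i$) and right vertex set $R = \bigcup_{i \in [P]} E_i$, with an edge between $(i,j) \in L$ and $e \in R$ whenever $e \in E_i$. A matching saturating $L$ in this graph immediately yields the desired collection: define $V_i$ to be the set of elements of $R$ matched to some slot $(i, j)$; then $V_i \subseteq E_i$ by construction of the edges, $\#V_i = n_i^\star \geq \lfloor \#E_i / P \rfloor$ because the matching saturates $L$, and the $V_i$ are pairwise disjoint because no vertex of $R$ is matched twice.

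\textbf{Hall's condition.} The only work is to verify Hall's condition $\#N(S) \geq \#S$ for every $S \subseteq L$. Given such an $S$, let $T = \{ i \in [P] \mid S \cap (\{i\} \times [n_i^\star]) \neq \varnothing \}$; then $N(S) = \bigcup_{i \in T} E_i$ and
\[ \#S \leq \sum_{i \in T} n_i^\star \leq \sum_{i \in T} \frac{n_i}{P}. \]
On the other hand, every element $e \in \bigcup_{i \in T} E_i$ belongs to at most $\#T$ of the sets $(E_i)_{i \in T}$ (trivially, since $T$ has $\#T$ elements), and $\#T \leq P$. Counting incidences,
\[ \sum_{i \in T} n_i = \sum_{e \in \bigcup_{i \in T} E_i} \> \sum_{i \in T} \mathds{1}[e \in E_i] \leq P \cdot \#\bigcup_{i \in T} E_i = P \cdot \#N(S), \]
so $\#N(S) \geq \sum_{i \in T} n_i / P \geq \#S$, which is Hall's condition. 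Hall's marriage theorem then produces the required matching, and unpacking this matching back into the sets $V_i$ concludes the proof.

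\textbf{Main obstacle.} There is no substantial obstacle: the proof hinges entirely on the observation that the collection contains only $P$ sets, so each element sits in at most $P$ of them, which is exactly what makes the $\lfloor \cdot / P \rfloor$ bound tight enough for Hall's inequality. An alternative would be a direct greedy or round-robin assignment of elements to sets they belong to, but this forces an induction or a careful choice of processing order, whereas the matching reduction above delivers the conclusion in one step with the appropriate constant.
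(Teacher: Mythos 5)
Your proof is correct and takes essentially the same approach as the paper: reduce to a bipartite matching problem on slots versus elements and apply Hall's marriage theorem. The only difference is in verifying Hall's condition, where the paper bounds $\sum_{i} c_i \leq \max_i \#E_i \leq \#N(W)$ while you bound $\sum_i n_i \leq P \cdot \#N(S)$ by double-counting incidences — both are valid one-line finishes.
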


\begin{proof}
  The proof is an application of Hall's marriage theorem.
  Define the bipartite graph $G$ as follows.
  Let $c : [P] \to \mathbb{N}$ be $c : i \mapsto \lfloor\# E_i / P\rfloor$.
  Define the left part of the graph
  as $X = \coprod_{i \in [P]} [c_i]$,
  and the right part as $Y = \bigcup_{i \in [P]} E_i$.
  For any $(i, u) \in X$ (i.e.\ $i \in [P]$ and $u \in [c_i]$)
  and $e \in Y$, there is an edge $((i,u), e)$ in $G$ if and
  only if $e \in E_i$.
  An $X$-perfect matching in $G$ is an injective function $M : X \to Y$
  such that $(x, M(x))$ is an edge of $G$ for every $x \in X$.
  Such a matching yields a choice
  of subsets $(V_i = M(\{i\} \times [c_i]))_i$ satisfying the conditions,
  therefore, let us show that such a matching exists by
  proving Hall's condition: for every $W \subseteq X$, it
  holds $\#W \leq \# N_G(W)$ where $N_G(W) \subseteq Y$ are the
  neighbours of $W$ in $G$.
  Let $W \subseteq X$. Let $I = \{ i \in [P] \mid \exists u, (i,u) \in W \}$
  Observe that $\#W \leq \sum_{i \in I} c_i \leq \FinDiff{\max}_{i \in I} \#E_i$.
  However by definition of $G$, $N_G(W) = \bigcup_{i \in I} E_i$
  thus we get
  $\# W \leq \FinDiff{\max}_{i \in I} \# E_i \leq \# (\bigcup_{i \in I}  E_i) = \# N_G(W)$,
  which concludes the proof.
\end{proof}

\pagebreak[4]
\section{Convolutional example}

We give in \Figure{convolution-diagram} an example of fully-convolutional
neural network to compute functions $\mathbb{R}^{16 \times 16 \times 3} \to \mathbb{R}$.
The $\max$ operation has signature $\max : \mathbb{R}^{8 \times 8} \to \mathbb{R}$
and computes a maximum across the feature map, wich is typical of fully-convolutional
networks. The lift dimension of the input is $3$, which corresponds to the three
color channels of RGB images, and the lift dimension of $6$ for the hidden layer
corresponds to $6$ ``channels'' of activations in the hidden layer.

\begin{figure}[h]
  \centering
  \resizebox{.8\linewidth}{!}{
    \begin{tikzpicture}[shorten >=1pt, draw=black, font=\normalsize]
      \tikzstyle{square} = [regular polygon,regular polygon sides=4]
      \tikzstyle{arrow} = [->]
      \tikzstyle{diagram-shape} = [draw,minimum size=16pt,node distance=0pt]
      \tikzstyle{edge} = [square,diagram-shape]
      \tikzstyle{vertex} = [circle,diagram-shape]
      \tikzstyle{term-vertex} = [vertex, double]
      \tikzstyle{init-vertex} = [vertex, densely dashed]
      \tikzstyle{weight-spawn} = [circle, draw, minimum size=3pt,inner sep=0pt, text width=1pt]

      \node[vertex, label={[label distance=0cm]above:3} ,label={[align=center, label distance=0cm,rotate=0]below:{$$}}] (V-0) at (0.0,0.0) {\small };
      \node[init-vertex, minimum size=14pt] at (0.0,0.0) {};
      \node[vertex, label={[label distance=0cm]above:6} ,label={[align=center, label distance=0cm,rotate=0]below:{ReLU}}] (V-1) at (4.0,0.0) {\small };
      \node[term-vertex, label={[label distance=0cm]above:1} ,label={[align=center, label distance=0cm,rotate=0]below:{$\max \circ$ ReLU}}] (V-2) at (8.0,0.0) {\small };

      \node[weight-spawn] (W-0) at (2.0,1.4) {};
      \node[edge, label={[align=center, label distance=0.1cm, rotate=0, inner sep=0pt, minimum size=5pt]below:{convolution\\ no-pad}}] (E-0) at (2.0,0.0) {};

      \draw [rounded corners, arrow] (V-0) -- node[midway, above] {$\mathbb{R}^{16 \times 16}$}   (E-0);
      \draw [rounded corners, arrow] (E-0) -- node[midway, above] {$\mathbb{R}^{12 \times 12}$}  (V-1);
      \path[arrow] (W-0) edge node[midway, right] {$\mathbb{R}^{5 \times 5}$} (E-0);

      \node[weight-spawn] (W-1) at (6.0,1.4) {};
      \node[edge, label={[align=center, label distance=0.1cm, rotate=0, inner sep=0pt, minimum size=5pt]below:{convolution\\ no-pad}}] (E-1) at (6.0,0.0) {};

      \draw [rounded corners, arrow] (V-1) -- node[midway, above] {$\mathbb{R}^{12 \times 12}$}   (E-1);
      \draw [rounded corners, arrow] (E-1) -- node[midway, above] {$\mathbb{R}^{8 \times 8}$}  (V-2);
      \path[arrow] (W-1) edge node[midway, right] {$\mathbb{R}^{5 \times 5}$} (E-1);

      \path[arrow] (V-2) edge node[midway, above] {$\mathbb{R}$} (10.0, 0.0);

    \end{tikzpicture}
  }%
  \caption{Convolutional perceptron module. Lift dimensions correspond to channels.}%
  \label{fig:convolution-diagram}
\end{figure}
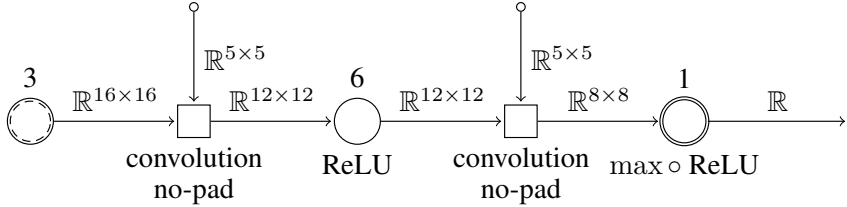

\section{Empirical observations related to the present theory}

We include in this section several preliminary experiments
to evaluate empirical predictions based on this theory.
None of these experiments are sufficient in scale or diversity to claim
anything consistent across architectures or tasks.
However, we hope that these experiments suffice to show that the theory
presented here is not entirely vacuous, and that conducting larger and broader
experiments to verify and challenge its predictions is a direction worth pursuing.

The present theory is limited to continuous-time gradient flow for very large
lifts, but we hope that future extensions will tackle time-discretization,
smaller lifts, and different optimizers. For these reasons, we perform these experiments
using an empirical setting closer to the common practices of deep learning,
despite the discrepancies that this induces with the theory, to show
that it still produces valuable insights even outside its limited application domain
at this early stage.

\subsection{Quantiles of the final training loss}

In the context of one-dimensional regression,
we train random sparse lifts, with two distinct base modules,
and plot quantiles of the final training loss reached as a function of the lift dimension.

\subsubsection{Details of the experiment setup}

We consider the one-dimensional regression task of learning
the target function $f^\star : \mathbb{R} \to \mathbb{R}$,
defined as $x \mapsto a \, \sin(\omega \, x + \varphi)$, with
target parameters $a = 2$, $\omega = 0.5$ and $\varphi = 0.42$.

The training set consists of $n = 10^4$ input points independently sampled
uniformly at random from the interval $[0,100]$, together
with the corresponding value for $f^\star$.
We use the quadratic loss on $\mathbb{R}$ and the models described
in the following section.
We train each model for $10^5$ iterations with training samples grouped
by batches of $10$, taken uniformly at random in the training set with replacement,
with the Adam optimizer for a step size of $10^{-2}$.
We consider base perceptron modules describing two-layer networks with biases,
but with two different activations.
For each lift dimension, we plot --- over twenty training runs for each lift dimension ---
the median of the final training loss in
orange, the first and third quartile with a box, and the first and last
decile with its whiskers.

\subsubsection{Base perceptron modules studied}

We use as models the random sparse lifts defined by \Figure{onedim-mlp-diagram}
for $\sigma = (\cdot)_+ : x \mapsto \max(0,x)$ and $\sigma = \sin : \mathbb{R} \to [-1,1]$.
Note that for two layers, since the input dimension is one and the last layer
is a fully-connected linear readout, neither of those models is
effectively ``sparse'', despite being defined as random sparse lifts.
This coincides with the usual definition of a multi-layer perceptron for
two layers and activation $\sigma : \mathbb{R} \to \mathbb{R}$,
with the additional node above being used to create a bias term in the language of perceptron modules.
Lift annotations of vertices are depicted above the vertex.

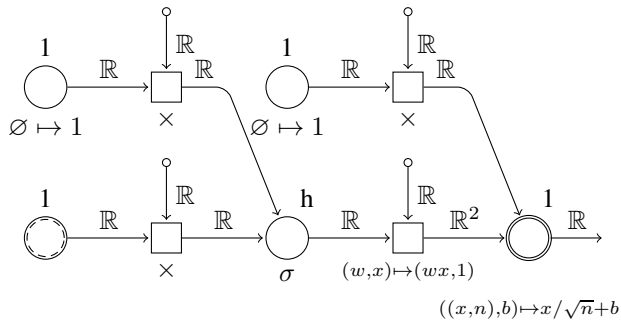
\begin{figure}[h]
  \centering
  \resizebox{.6\linewidth}{!}{
    \begin{tikzpicture}[shorten >=1pt, draw=black, font=\normalsize]
      \tikzstyle{square} = [regular polygon,regular polygon sides=4]
      \tikzstyle{arrow} = [->]
      \tikzstyle{diagram-shape} = [draw,minimum size=16pt,node distance=0pt]
      \tikzstyle{edge} = [square,diagram-shape]
      \tikzstyle{vertex} = [circle,diagram-shape]
      \tikzstyle{term-vertex} = [vertex, double]
      \tikzstyle{init-vertex} = [vertex, densely dashed]
      \tikzstyle{weight-spawn} = [circle, draw, minimum size=3pt,inner sep=0pt, text width=1pt]

      \node[vertex, label={[label distance=0cm]above:1} ,label={[align=center, label distance=0cm,rotate=0]below:{$$}}] (V-0) at (0.0,0.0) {\small };
      \node[init-vertex, minimum size=14pt] at (0.0,0.0) {};
      \node[vertex, label={[label distance=0cm]+80:h} ,label={[align=center, label distance=0cm,rotate=0]below:{$\sigma$}}] (V-1) at (3.2,0.0) {\small };
      \node[vertex, label={[label distance=0cm]above:1} ,label={[align=center, label distance=0cm,rotate=0]below:{$\varnothing \mapsto 1$}}] (V-2) at (0.0,2.0) {\small };
      \node[term-vertex, label={[label distance=0cm]+80:1} ,label={[align=center, label distance=0.4cm,rotate=0]below:{${\scriptstyle{((x,n), b) \mapsto x / \sqrt{n} + b}}$}}] (V-3) at (6.4,0.0) {\small };
      \node[vertex, label={[label distance=0cm]above:1} ,label={[align=center, label distance=0cm,rotate=0]below:{$\varnothing \mapsto 1$}}] (V-4) at (3.2,2.0) {\small };

      \node[weight-spawn] (W-0) at (1.6,1.0) {};
      \node[edge, label={[align=center, label distance=0.1cm, rotate=0, inner sep=0pt, minimum size=5pt]below:{$\times$}}] (E-0) at (1.6,0.0) {};

      \draw [rounded corners, arrow] (V-0) -- node[midway, above] {$\mathbb{R}$}   (E-0);
      \draw [rounded corners, arrow] (E-0) -- node[midway, above] {$\mathbb{R}$}  (V-1);
      \path[arrow] (W-0) edge node[midway, right] {$\mathbb{R}$} (E-0);

      \node[weight-spawn] (W-1) at (1.6,3.0) {};
      \node[edge, label={[align=center, label distance=0.1cm, rotate=0, inner sep=0pt, minimum size=5pt]below:{$\times$}}] (E-1) at (1.6,2.0) {};

      \draw [rounded corners, arrow] (V-2) -- node[midway, above] {$\mathbb{R}$}   (E-1);
      \draw [rounded corners, arrow] (E-1) -- node[midway, above] {$\mathbb{R}$} (2.4000000000000004,2.0) -- (V-1);
      \path[arrow] (W-1) edge node[midway, right] {$\mathbb{R}$} (E-1);

      \node[weight-spawn] (W-2) at (4.800000000000001,1.0) {};
      \node[edge, label={[align=center, label distance=0.1cm, rotate=0, inner sep=0pt, minimum size=5pt]below:{$\scriptstyle (w,x) \mapsto (wx, 1)$}}] (E-2) at (4.800000000000001,0.0) {};

      \draw [rounded corners, arrow] (V-1) -- node[midway, above] {$\mathbb{R}$}   (E-2);
      \draw [rounded corners, arrow] (E-2) -- node[midway, above] {$\mathbb{R}^2$}  (V-3);
      \path[arrow] (W-2) edge node[midway, right] {$\mathbb{R}$} (E-2);

      \node[weight-spawn] (W-3) at (4.800000000000001,3.0) {};
      \node[edge, label={[align=center, label distance=0.1cm, rotate=0, inner sep=0pt, minimum size=5pt]below:{$\times$}}] (E-3) at (4.800000000000001,2.0) {};

      \draw [rounded corners, arrow] (V-4) -- node[midway, above] {$\mathbb{R}$}   (E-3);
      \draw [rounded corners, arrow] (E-3) -- node[midway, above] {$\mathbb{R}$} (5.6000000000000005,2.0) -- (V-3);
      \path[arrow] (W-3) edge node[midway, right] {$\mathbb{R}$} (E-3);

      \path[arrow] (V-3) edge node[midway, above] {$\mathbb{R}$} (7.4, 0.0);

    \end{tikzpicture}
  }%
  \caption{One-dimensional two-layer perceptron blueprint with activation $\sigma$}%
  \label{fig:onedim-mlp-diagram}
\end{figure}

\subsubsection{Experimental results and interpretation}

We plot in \Figure{quantiles-plot} the results of this experiment.
We observe that, as predicted by the theory, when the lift dimension increases
then all quantiles of the final loss decrease (it is unclear whether they
will tend to zero as predicted by the theory, see the discussion with the next figure).
We do not observe an ``incompressible failure probability'', of for instance
$10\%$ of networks not moving below 1.90 train error regardless of size.
We do not observe a significant chance of small networks reaching near-zero loss,
despite universality results with low lift dimension.
We do not observe across blueprints the same speed of convergence of the error to zero with respect to the
lift dimension (note the differences in scales in \Figure{quantiles-plot}),
on the contrary the two blueprints define two very different families of networks
in terms of effective learnability at tractable sizes.
We do not observe a ``sweet spot'' in lift dimension, past which large networks
would consistently fail to learn or have diverging training loss.
All of these observations, despite the notable differences between the experimental
setting and the theoretical model (discrete vs continuous time, change of optimizer, etc.),
are relatively consistent with the theoretical predictions.
This suggests that this type of experiment could be conducted at a larger
scale to challenge the predictions of this theory.

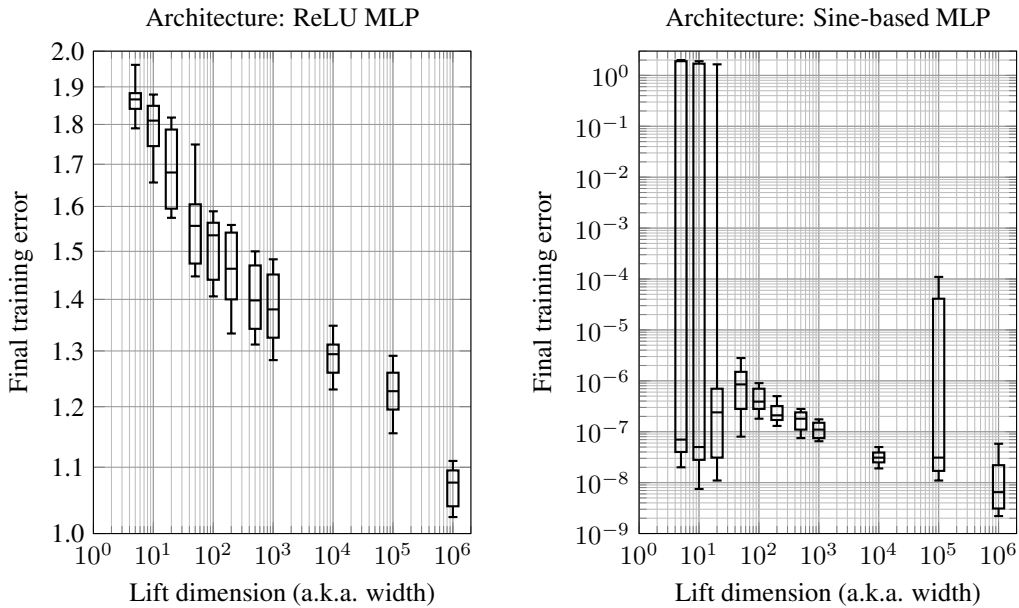
\begin{figure}[h]
  \centering
    \begin{subfigure}[b]{0.49\textwidth}
      \centering
      \begin{tikzpicture}
        \begin{axis}[
          boxplot/draw direction=y,
          title={Architecture: ReLU MLP},
          xlabel={Lift dimension (a.k.a. width)},
          ylabel={Final training error},
          ymin=1.0, ymax=2.0,
          ytick={1.0,1.1,1.2,1.3,1.4,1.5,1.6,1.7,1.8,1.9,2.0},
          yticklabels={1.0,1.1,1.2,1.3,1.4,1.5,1.6,1.7,1.8,1.9,2.0},
          xmin=1.0, xmax=2e6,
          xtick={1,1e1,1e2,1e3,1e4,1e5,1e6,1e7},
          minor x tick num=9,
          xmode=log, ymode=log,
          width=6.6cm,
          height=8cm,
          grid=both,
          minor grid style={gray!50, opacity=0.4},
          major grid style={gray!80},
          cycle list={{black}}, 
          boxplot/every median/.style={black, thick},
          boxplot/every box/.style={black, thick},
          boxplot/every whisker/.style={black, thick},
        ]

        \addplot+ [boxplot prepared={
          lower whisker=1.79,
          lower quartile=1.841,
          median=1.866,
          upper quartile=1.883,
          upper whisker=1.961,
          draw position=5.0,
          box extend=2.173913043478261,
          draw relative anchor=0.44554455445544555,
        },
        median/.style={black,thick},
        box/.style={black,thick},
        whisker/.style={black,thick},
        draw=black,
        fill opacity=0.2,
        ] coordinates { };

        \addplot+ [boxplot prepared={
          lower whisker=1.656,
          lower quartile=1.745,
          median=1.81,
          upper quartile=1.849,
          upper whisker=1.879,
          draw position=10.0,
          box extend=4.347826086956522,
          draw relative anchor=0.44554455445544555,
        },
        median/.style={black,thick},
        box/.style={black,thick},
        whisker/.style={black,thick},
        draw=black,
        fill opacity=0.2,
        ] coordinates { };

        \addplot+ [boxplot prepared={
          lower whisker=1.574,
          lower quartile=1.595,
          median=1.68,
          upper quartile=1.787,
          upper whisker=1.818,
          draw position=20.0,
          box extend=8.695652173913045,
          draw relative anchor=0.44554455445544555,
        },
        median/.style={black,thick},
        box/.style={black,thick},
        whisker/.style={black,thick},
        draw=black,
        fill opacity=0.2,
        ] coordinates { };

        \addplot+ [boxplot prepared={
          lower whisker=1.447,
          lower quartile=1.474,
          median=1.556,
          upper quartile=1.605,
          upper whisker=1.749,
          draw position=50.0,
          box extend=21.73913043478261,
          draw relative anchor=0.44554455445544555,
        },
        median/.style={black,thick},
        box/.style={black,thick},
        whisker/.style={black,thick},
        draw=black,
        fill opacity=0.2,
        ] coordinates { };

        \addplot+ [boxplot prepared={
          lower whisker=1.406,
          lower quartile=1.44,
          median=1.535,
          upper quartile=1.563,
          upper whisker=1.589,
          draw position=100.0,
          box extend=43.47826086956522,
          draw relative anchor=0.44554455445544555,
        },
        median/.style={black,thick},
        box/.style={black,thick},
        whisker/.style={black,thick},
        draw=black,
        fill opacity=0.2,
        ] coordinates { };

        \addplot+ [boxplot prepared={
          lower whisker=1.333,
          lower quartile=1.4,
          median=1.463,
          upper quartile=1.541,
          upper whisker=1.558,
          draw position=200.0,
          box extend=86.95652173913044,
          draw relative anchor=0.44554455445544555,
        },
        median/.style={black,thick},
        box/.style={black,thick},
        whisker/.style={black,thick},
        draw=black,
        fill opacity=0.2,
        ] coordinates { };

        \addplot+ [boxplot prepared={
          lower whisker=1.312,
          lower quartile=1.342,
          median=1.398,
          upper quartile=1.47,
          upper whisker=1.5,
          draw position=500.0,
          box extend=217.39130434782612,
          draw relative anchor=0.44554455445544555,
        },
        median/.style={black,thick},
        box/.style={black,thick},
        whisker/.style={black,thick},
        draw=black,
        fill opacity=0.2,
        ] coordinates { };

        \addplot+ [boxplot prepared={
          lower whisker=1.283,
          lower quartile=1.325,
          median=1.38,
          upper quartile=1.451,
          upper whisker=1.483,
          draw position=1000.0,
          box extend=434.78260869565224,
          draw relative anchor=0.44554455445544555,
        },
        median/.style={black,thick},
        box/.style={black,thick},
        whisker/.style={black,thick},
        draw=black,
        fill opacity=0.2,
        ] coordinates { };

        \addplot+ [boxplot prepared={
          lower whisker=1.23,
          lower quartile=1.26,
          median=1.294,
          upper quartile=1.312,
          upper whisker=1.348,
          draw position=10000.0,
          box extend=4347.826086956522,
          draw relative anchor=0.44554455445544555,
        },
        median/.style={black,thick},
        box/.style={black,thick},
        whisker/.style={black,thick},
        draw=black,
        fill opacity=0.2,
        ] coordinates { };

        \addplot+ [boxplot prepared={
          lower whisker=1.155,
          lower quartile=1.195,
          median=1.227,
          upper quartile=1.26,
          upper whisker=1.291,
          draw position=100000.0,
          box extend=43478.26086956522,
          draw relative anchor=0.44554455445544555,
        },
        median/.style={black,thick},
        box/.style={black,thick},
        whisker/.style={black,thick},
        draw=black,
        fill opacity=0.2,
        ] coordinates { };

        \addplot+ [boxplot prepared={
          lower whisker=1.024,
          lower quartile=1.04,
          median=1.076,
          upper quartile=1.095,
          upper whisker=1.11,
          draw position=1000000.0,
          box extend=434782.6086956522,
          draw relative anchor=0.44554455445544555,
        },
        median/.style={black,thick},
        box/.style={black,thick},
        whisker/.style={black,thick},
        draw=black,
        fill opacity=0.2,
        ] coordinates { };

        \end{axis}
      \end{tikzpicture}
    \end{subfigure}
    \hfill{}
    \begin{subfigure}[b]{0.49\textwidth}
      \begin{tikzpicture}
        \begin{axis}[
          boxplot/draw direction=y,
          title={Architecture: Sine-based MLP},
          xlabel={Lift dimension (a.k.a. width)},
          ylabel={Final training error},
          ymin=1e-9, ymax=3,
          ytick={1e-9,1e-8,1e-7,1e-6,1e-5,1e-4,1e-3,1e-2,1e-1,1,10},
          minor y tick num=9,
          xmin=1.0, xmax=2e6,
          xtick={1,1e1,1e2,1e3,1e4,1e5,1e6,1e7},
          xmode=log, ymode=log,
          minor x tick num=9,
          width=6.6cm,
          height=8cm,
          grid=both,
          minor grid style={gray!50, opacity=0.4},
          major grid style={gray!80},
          cycle list={{black}}, 
          boxplot/every median/.style={black, thick},
          boxplot/every box/.style={black, thick},
          boxplot/every whisker/.style={black, thick},
        ]

        \addplot+ [boxplot prepared={
          lower whisker=2e-08,
          lower quartile=4e-08,
          median=7e-08,
          upper quartile=1.9,
          upper whisker=2.0,
          draw position=5.0,
          box extend=2.173913043478261,
          draw relative anchor=0.44554455445544555,
        },
        median/.style={black,thick},
        box/.style={black,thick},
        whisker/.style={black,thick},
        draw=black,
        fill opacity=0.2,
        ] coordinates { };

        \addplot+ [boxplot prepared={
          lower whisker=7.5e-09,
          lower quartile=2.8e-08,
          median=5e-08,
          upper quartile=1.7,
          upper whisker=1.9,
          draw position=10.0,
          box extend=4.347826086956522,
          draw relative anchor=0.44554455445544555,
        },
        median/.style={black,thick},
        box/.style={black,thick},
        whisker/.style={black,thick},
        draw=black,
        fill opacity=0.2,
        ] coordinates { };

        \addplot+ [boxplot prepared={
          lower whisker=1.1e-08,
          lower quartile=3.1e-08,
          median=2.4e-07,
          upper quartile=7e-07,
          upper whisker=1.65,
          draw position=20.0,
          box extend=8.695652173913045,
          draw relative anchor=0.44554455445544555,
        },
        median/.style={black,thick},
        box/.style={black,thick},
        whisker/.style={black,thick},
        draw=black,
        fill opacity=0.2,
        ] coordinates { };

        \addplot+ [boxplot prepared={
          lower whisker=8e-08,
          lower quartile=2.8e-07,
          median=8.5e-07,
          upper quartile=1.5e-06,
          upper whisker=2.8e-06,
          draw position=50.0,
          box extend=21.73913043478261,
          draw relative anchor=0.44554455445544555,
        },
        median/.style={black,thick},
        box/.style={black,thick},
        whisker/.style={black,thick},
        draw=black,
        fill opacity=0.2,
        ] coordinates { };

        \addplot+ [boxplot prepared={
          lower whisker=1.8e-07,
          lower quartile=2.8e-07,
          median=3.9e-07,
          upper quartile=7e-07,
          upper whisker=9e-07,
          draw position=100.0,
          box extend=43.47826086956522,
          draw relative anchor=0.44554455445544555,
        },
        median/.style={black,thick},
        box/.style={black,thick},
        whisker/.style={black,thick},
        draw=black,
        fill opacity=0.2,
        ] coordinates { };

        \addplot+ [boxplot prepared={
          lower whisker=1.3e-07,
          lower quartile=1.7e-07,
          median=2.1e-07,
          upper quartile=3.2e-07,
          upper whisker=5e-07,
          draw position=200.0,
          box extend=86.95652173913044,
          draw relative anchor=0.44554455445544555,
        },
        median/.style={black,thick},
        box/.style={black,thick},
        whisker/.style={black,thick},
        draw=black,
        fill opacity=0.2,
        ] coordinates { };

        \addplot+ [boxplot prepared={
          lower whisker=7.5e-08,
          lower quartile=1.1e-07,
          median=1.8e-07,
          upper quartile=2.4e-07,
          upper whisker=2.8e-07,
          draw position=500.0,
          box extend=217.39130434782612,
          draw relative anchor=0.44554455445544555,
        },
        median/.style={black,thick},
        box/.style={black,thick},
        whisker/.style={black,thick},
        draw=black,
        fill opacity=0.2,
        ] coordinates { };

        \addplot+ [boxplot prepared={
          lower whisker=6.5e-08,
          lower quartile=7.5e-08,
          median=1.1e-07,
          upper quartile=1.5e-07,
          upper whisker=1.75e-07,
          draw position=1000.0,
          box extend=434.78260869565224,
          draw relative anchor=0.44554455445544555,
        },
        median/.style={black,thick},
        box/.style={black,thick},
        whisker/.style={black,thick},
        draw=black,
        fill opacity=0.2,
        ] coordinates { };

        \addplot+ [boxplot prepared={
          lower whisker=1.9e-08,
          lower quartile=2.5e-08,
          median=3.1e-08,
          upper quartile=3.9e-08,
          upper whisker=5e-08,
          draw position=10000.0,
          box extend=4347.826086956522,
          draw relative anchor=0.44554455445544555,
        },
        median/.style={black,thick},
        box/.style={black,thick},
        whisker/.style={black,thick},
        draw=black,
        fill opacity=0.2,
        ] coordinates { };

        \addplot+ [boxplot prepared={
          lower whisker=1.1e-08,
          lower quartile=1.7e-08,
          median=3.1e-08,
          upper quartile=4.1e-05,
          upper whisker=0.00011,
          draw position=100000.0,
          box extend=43478.26086956522,
          draw relative anchor=0.44554455445544555,
        },
        median/.style={black,thick},
        box/.style={black,thick},
        whisker/.style={black,thick},
        draw=black,
        fill opacity=0.2,
        ] coordinates { };

        \addplot+ [boxplot prepared={
          lower whisker=2.2e-09,
          lower quartile=3.1e-09,
          median=6.5e-09,
          upper quartile=2.2e-08,
          upper whisker=5.8e-08,
          draw position=1000000.0,
          box extend=434782.6086956522,
          draw relative anchor=0.44554455445544555,
        },
        median/.style={black,thick},
        box/.style={black,thick},
        whisker/.style={black,thick},
        draw=black,
        fill opacity=0.2,
        ] coordinates { };

        \end{axis}
      \end{tikzpicture}
    \end{subfigure}

  \captionsetup{justification=centering}
  \caption{Median, inter-quartile range, first and last deciles of final training loss ($10^6$ iterations).}%
  \label{fig:quantiles-plot}
\end{figure}

Since the measurements in \Figure{quantiles-plot}
are used as estimates of the limit value reached by the loss, we
assess the quality of convergence in \Figure{sub-quantiles-plot}. We plot the same quantiles of the training loss at 80\%
and 100\% of the total allocated training interations: on one hand, for networks that have converged to
a critical point, we expect these quantiles to be identical since the parameters should not move when
gradients are null; on the other hand, if they differ significantly, then the experiment would need to be
repeated with an increased training budget to rule these considerations out.

\begin{figure}[h]
  \centering
    \begin{subfigure}[b]{0.49\textwidth}
      \centering
      \begin{tikzpicture}
        \begin{axis}[
          boxplot/draw direction=y,
          title={Architecture: ReLU MLP},
          xlabel={Lift dimension (a.k.a. width)},
          ylabel={Final training error},
          ymin=1.0, ymax=2.0,
          ytick={1.0,1.1,1.2,1.3,1.4,1.5,1.6,1.7,1.8,1.9,2.0},
          yticklabels={1.0,1.1,1.2,1.3,1.4,1.5,1.6,1.7,1.8,1.9,2.0},
          xmin=2.0, xmax=2e6,
          xtick={1,1e1,1e2,1e3,1e4,1e5,1e6,1e7},
          minor x tick num=9,
          xmode=log, ymode=log,
          width=6.6cm,
          height=6cm,
          grid=both,
          minor grid style={gray!50, opacity=0.4},
          major grid style={gray!80},
          cycle list={{black}}, 
          boxplot/every median/.style={thick},
          boxplot/every box/.style={thick},
          boxplot/every whisker/.style={thick},
        ]

        \addplot+ [boxplot prepared={
          lower whisker=1.79,
          lower quartile=1.841,
          median=1.866,
          upper quartile=1.883,
          upper whisker=1.961,
          draw position=5.0,
          box extend=1.2195121951219514,
          draw relative anchor=0.46534653465346537,
        },
        median/.style={black,thick},
        box/.style={black,thick},
        whisker/.style={black,thick},
        draw=black,
        fill opacity=0.2,
        boxplot/whisker extend=0,
        ] coordinates { };

        \addplot+ [boxplot prepared={
          lower whisker=1.656,
          lower quartile=1.745,
          median=1.81,
          upper quartile=1.849,
          upper whisker=1.879,
          draw position=10.0,
          box extend=2.439024390243903,
          draw relative anchor=0.46534653465346537,
        },
        median/.style={black,thick},
        box/.style={black,thick},
        whisker/.style={black,thick},
        draw=black,
        fill opacity=0.2,
        boxplot/whisker extend=0,
        ] coordinates { };

        \addplot+ [boxplot prepared={
          lower whisker=1.574,
          lower quartile=1.595,
          median=1.68,
          upper quartile=1.787,
          upper whisker=1.818,
          draw position=20.0,
          box extend=4.878048780487806,
          draw relative anchor=0.46534653465346537,
        },
        median/.style={black,thick},
        box/.style={black,thick},
        whisker/.style={black,thick},
        draw=black,
        fill opacity=0.2,
        boxplot/whisker extend=0,
        ] coordinates { };

        \addplot+ [boxplot prepared={
          lower whisker=1.447,
          lower quartile=1.474,
          median=1.556,
          upper quartile=1.605,
          upper whisker=1.749,
          draw position=50.0,
          box extend=12.195121951219514,
          draw relative anchor=0.46534653465346537,
        },
        median/.style={black,thick},
        box/.style={black,thick},
        whisker/.style={black,thick},
        draw=black,
        fill opacity=0.2,
        boxplot/whisker extend=0,
        ] coordinates { };

        \addplot+ [boxplot prepared={
          lower whisker=1.406,
          lower quartile=1.44,
          median=1.535,
          upper quartile=1.563,
          upper whisker=1.589,
          draw position=100.0,
          box extend=24.39024390243903,
          draw relative anchor=0.46534653465346537,
        },
        median/.style={black,thick},
        box/.style={black,thick},
        whisker/.style={black,thick},
        draw=black,
        fill opacity=0.2,
        boxplot/whisker extend=0,
        ] coordinates { };

        \addplot+ [boxplot prepared={
          lower whisker=1.333,
          lower quartile=1.4,
          median=1.463,
          upper quartile=1.541,
          upper whisker=1.558,
          draw position=200.0,
          box extend=48.78048780487806,
          draw relative anchor=0.46534653465346537,
        },
        median/.style={black,thick},
        box/.style={black,thick},
        whisker/.style={black,thick},
        draw=black,
        fill opacity=0.2,
        boxplot/whisker extend=0,
        ] coordinates { };

        \addplot+ [boxplot prepared={
          lower whisker=1.312,
          lower quartile=1.342,
          median=1.398,
          upper quartile=1.47,
          upper whisker=1.5,
          draw position=500.0,
          box extend=121.95121951219514,
          draw relative anchor=0.46534653465346537,
        },
        median/.style={black,thick},
        box/.style={black,thick},
        whisker/.style={black,thick},
        draw=black,
        fill opacity=0.2,
        boxplot/whisker extend=0,
        ] coordinates { };

        \addplot+ [boxplot prepared={
          lower whisker=1.283,
          lower quartile=1.325,
          median=1.38,
          upper quartile=1.451,
          upper whisker=1.483,
          draw position=1000.0,
          box extend=243.90243902439028,
          draw relative anchor=0.46534653465346537,
        },
        median/.style={black,thick},
        box/.style={black,thick},
        whisker/.style={black,thick},
        draw=black,
        fill opacity=0.2,
        boxplot/whisker extend=0,
        ] coordinates { };

        \addplot+ [boxplot prepared={
          lower whisker=1.23,
          lower quartile=1.26,
          median=1.294,
          upper quartile=1.312,
          upper whisker=1.348,
          draw position=10000.0,
          box extend=2439.024390243903,
          draw relative anchor=0.46534653465346537,
        },
        median/.style={black,thick},
        box/.style={black,thick},
        whisker/.style={black,thick},
        draw=black,
        fill opacity=0.2,
        boxplot/whisker extend=0,
        ] coordinates { };

        \addplot+ [boxplot prepared={
          lower whisker=1.155,
          lower quartile=1.195,
          median=1.227,
          upper quartile=1.26,
          upper whisker=1.291,
          draw position=100000.0,
          box extend=24390.243902439026,
          draw relative anchor=0.46534653465346537,
        },
        median/.style={black,thick},
        box/.style={black,thick},
        whisker/.style={black,thick},
        draw=black,
        fill opacity=0.2,
        boxplot/whisker extend=0,
        ] coordinates { };

        \addplot+ [boxplot prepared={
          lower whisker=1.024,
          lower quartile=1.04,
          median=1.076,
          upper quartile=1.095,
          upper whisker=1.11,
          draw position=1000000.0,
          box extend=243902.43902439027,
          draw relative anchor=0.46534653465346537,
        },
        median/.style={black,thick},
        box/.style={black,thick},
        whisker/.style={black,thick},
        draw=black,
        fill opacity=0.2,
        boxplot/whisker extend=0,
        ] coordinates { };

        \addplot+ [boxplot prepared={
          lower whisker=1.82,
          lower quartile=1.85,
          median=1.87,
          upper quartile=1.883,
          upper whisker=1.961,
          draw position=3.846153846153846,
          box extend=0.9380863039399625,
          draw relative anchor=0.46534653465346537,
        },
        median/.style={red!50!black,thick},
        box/.style={red!50!black,thick},
        whisker/.style={red!50!black,thick},
        draw=red!50!black,
        fill opacity=0.2,
        boxplot/whisker extend=0,
        ] coordinates { };

        \addplot+ [boxplot prepared={
          lower whisker=1.656,
          lower quartile=1.75,
          median=1.815,
          upper quartile=1.851,
          upper whisker=1.876,
          draw position=7.692307692307692,
          box extend=1.876172607879925,
          draw relative anchor=0.46534653465346537,
        },
        median/.style={red!50!black,thick},
        box/.style={red!50!black,thick},
        whisker/.style={red!50!black,thick},
        draw=red!50!black,
        fill opacity=0.2,
        boxplot/whisker extend=0,
        ] coordinates { };

        \addplot+ [boxplot prepared={
          lower whisker=1.579,
          lower quartile=1.605,
          median=1.69,
          upper quartile=1.792,
          upper whisker=1.819,
          draw position=15.384615384615383,
          box extend=3.75234521575985,
          draw relative anchor=0.46534653465346537,
        },
        median/.style={red!50!black,thick},
        box/.style={red!50!black,thick},
        whisker/.style={red!50!black,thick},
        draw=red!50!black,
        fill opacity=0.2,
        boxplot/whisker extend=0,
        ] coordinates { };

        \addplot+ [boxplot prepared={
          lower whisker=1.465,
          lower quartile=1.492,
          median=1.57,
          upper quartile=1.609,
          upper whisker=1.73,
          draw position=38.46153846153846,
          box extend=9.380863039399625,
          draw relative anchor=0.46534653465346537,
        },
        median/.style={red!50!black,thick},
        box/.style={red!50!black,thick},
        whisker/.style={red!50!black,thick},
        draw=red!50!black,
        fill opacity=0.2,
        boxplot/whisker extend=0,
        ] coordinates { };

        \addplot+ [boxplot prepared={
          lower whisker=1.412,
          lower quartile=1.48,
          median=1.555,
          upper quartile=1.58,
          upper whisker=1.594,
          draw position=76.92307692307692,
          box extend=18.76172607879925,
          draw relative anchor=0.46534653465346537,
        },
        median/.style={red!50!black,thick},
        box/.style={red!50!black,thick},
        whisker/.style={red!50!black,thick},
        draw=red!50!black,
        fill opacity=0.2,
        boxplot/whisker extend=0,
        ] coordinates { };

        \addplot+ [boxplot prepared={
          lower whisker=1.39,
          lower quartile=1.45,
          median=1.48,
          upper quartile=1.555,
          upper whisker=1.575,
          draw position=153.84615384615384,
          box extend=37.5234521575985,
          draw relative anchor=0.46534653465346537,
        },
        median/.style={red!50!black,thick},
        box/.style={red!50!black,thick},
        whisker/.style={red!50!black,thick},
        draw=red!50!black,
        fill opacity=0.2,
        boxplot/whisker extend=0,
        ] coordinates { };

        \addplot+ [boxplot prepared={
          lower whisker=1.342,
          lower quartile=1.372,
          median=1.45,
          upper quartile=1.475,
          upper whisker=1.52,
          draw position=384.6153846153846,
          box extend=93.80863039399625,
          draw relative anchor=0.46534653465346537,
        },
        median/.style={red!50!black,thick},
        box/.style={red!50!black,thick},
        whisker/.style={red!50!black,thick},
        draw=red!50!black,
        fill opacity=0.2,
        boxplot/whisker extend=0,
        ] coordinates { };

        \addplot+ [boxplot prepared={
          lower whisker=1.326,
          lower quartile=1.378,
          median=1.42,
          upper quartile=1.461,
          upper whisker=1.49,
          draw position=769.2307692307692,
          box extend=187.6172607879925,
          draw relative anchor=0.46534653465346537,
        },
        median/.style={red!50!black,thick},
        box/.style={red!50!black,thick},
        whisker/.style={red!50!black,thick},
        draw=red!50!black,
        fill opacity=0.2,
        boxplot/whisker extend=0,
        ] coordinates { };

        \addplot+ [boxplot prepared={
          lower whisker=1.27,
          lower quartile=1.29,
          median=1.318,
          upper quartile=1.34,
          upper whisker=1.355,
          draw position=7692.3076923076915,
          box extend=1876.172607879925,
          draw relative anchor=0.46534653465346537,
        },
        median/.style={red!50!black,thick},
        box/.style={red!50!black,thick},
        whisker/.style={red!50!black,thick},
        draw=red!50!black,
        fill opacity=0.2,
        boxplot/whisker extend=0,
        ] coordinates { };

        \addplot+ [boxplot prepared={
          lower whisker=1.205,
          lower quartile=1.23,
          median=1.265,
          upper quartile=1.295,
          upper whisker=1.309,
          draw position=76923.07692307692,
          box extend=18761.726078799253,
          draw relative anchor=0.46534653465346537,
        },
        median/.style={red!50!black,thick},
        box/.style={red!50!black,thick},
        whisker/.style={red!50!black,thick},
        draw=red!50!black,
        fill opacity=0.2,
        boxplot/whisker extend=0,
        ] coordinates { };

        \addplot+ [boxplot prepared={
          lower whisker=1.08,
          lower quartile=1.1,
          median=1.125,
          upper quartile=1.145,
          upper whisker=1.152,
          draw position=769230.7692307691,
          box extend=187617.26078799248,
          draw relative anchor=0.46534653465346537,
        },
        median/.style={red!50!black,thick},
        box/.style={red!50!black,thick},
        whisker/.style={red!50!black,thick},
        draw=red!50!black,
        fill opacity=0.2,
        boxplot/whisker extend=0,
        ] coordinates { };

        \end{axis}
      \end{tikzpicture}
    \end{subfigure}
    \hfill{}
    \begin{subfigure}[b]{0.49\textwidth}
      \begin{tikzpicture}
        \begin{axis}[
          boxplot/draw direction=y,
          title={Architecture: Sine-based MLP},
          xlabel={Lift dimension (a.k.a. width)},
          ylabel={Final training error},
          ymin=1e-9, ymax=3,
          ytick={1e-9,1e-8,1e-7,1e-6,1e-5,1e-4,1e-3,1e-2,1e-1,1,10},
          minor y tick num=9,
          xmin=2.0, xmax=2e6,
          xtick={1,1e1,1e2,1e3,1e4,1e5,1e6,1e7},
          xmode=log, ymode=log,
          minor x tick num=9,
          width=6.6cm,
          height=6cm,
          grid=both,
          minor grid style={gray!50, opacity=0.4},
          major grid style={gray!80},
          cycle list={{black}}, 
          boxplot/every median/.style={thick},
          boxplot/every box/.style={thick},
          boxplot/every whisker/.style={thick},
        ]

        \addplot+ [boxplot prepared={
          lower whisker=2e-08,
          lower quartile=4e-08,
          median=7e-08,
          upper quartile=1.9,
          upper whisker=2.0,
          draw position=5.0,
          box extend=1.2195121951219514,
          draw relative anchor=0.46534653465346537,
        },
        median/.style={black,thick},
        box/.style={black,thick},
        whisker/.style={black,thick},
        draw=black,
        fill opacity=0.2,
        boxplot/whisker extend=0,
        ] coordinates { };

        \addplot+ [boxplot prepared={
          lower whisker=7.5e-09,
          lower quartile=2.8e-08,
          median=5e-08,
          upper quartile=1.7,
          upper whisker=1.9,
          draw position=10.0,
          box extend=2.439024390243903,
          draw relative anchor=0.46534653465346537,
        },
        median/.style={black,thick},
        box/.style={black,thick},
        whisker/.style={black,thick},
        draw=black,
        fill opacity=0.2,
        boxplot/whisker extend=0,
        ] coordinates { };

        \addplot+ [boxplot prepared={
          lower whisker=1.1e-08,
          lower quartile=3.1e-08,
          median=2.4e-07,
          upper quartile=7e-07,
          upper whisker=1.65,
          draw position=20.0,
          box extend=4.878048780487806,
          draw relative anchor=0.46534653465346537,
        },
        median/.style={black,thick},
        box/.style={black,thick},
        whisker/.style={black,thick},
        draw=black,
        fill opacity=0.2,
        boxplot/whisker extend=0,
        ] coordinates { };

        \addplot+ [boxplot prepared={
          lower whisker=8e-08,
          lower quartile=2.8e-07,
          median=8.5e-07,
          upper quartile=1.5e-06,
          upper whisker=2.8e-06,
          draw position=50.0,
          box extend=12.195121951219514,
          draw relative anchor=0.46534653465346537,
        },
        median/.style={black,thick},
        box/.style={black,thick},
        whisker/.style={black,thick},
        draw=black,
        fill opacity=0.2,
        boxplot/whisker extend=0,
        ] coordinates { };

        \addplot+ [boxplot prepared={
          lower whisker=1.8e-07,
          lower quartile=2.8e-07,
          median=3.9e-07,
          upper quartile=7e-07,
          upper whisker=9e-07,
          draw position=100.0,
          box extend=24.39024390243903,
          draw relative anchor=0.46534653465346537,
        },
        median/.style={black,thick},
        box/.style={black,thick},
        whisker/.style={black,thick},
        draw=black,
        fill opacity=0.2,
        boxplot/whisker extend=0,
        ] coordinates { };

        \addplot+ [boxplot prepared={
          lower whisker=1.3e-07,
          lower quartile=1.7e-07,
          median=2.1e-07,
          upper quartile=3.2e-07,
          upper whisker=5e-07,
          draw position=200.0,
          box extend=48.78048780487806,
          draw relative anchor=0.46534653465346537,
        },
        median/.style={black,thick},
        box/.style={black,thick},
        whisker/.style={black,thick},
        draw=black,
        fill opacity=0.2,
        boxplot/whisker extend=0,
        ] coordinates { };

        \addplot+ [boxplot prepared={
          lower whisker=7.5e-08,
          lower quartile=1.1e-07,
          median=1.8e-07,
          upper quartile=2.4e-07,
          upper whisker=2.8e-07,
          draw position=500.0,
          box extend=121.95121951219514,
          draw relative anchor=0.46534653465346537,
        },
        median/.style={black,thick},
        box/.style={black,thick},
        whisker/.style={black,thick},
        draw=black,
        fill opacity=0.2,
        boxplot/whisker extend=0,
        ] coordinates { };

        \addplot+ [boxplot prepared={
          lower whisker=6.5e-08,
          lower quartile=7.5e-08,
          median=1.1e-07,
          upper quartile=1.5e-07,
          upper whisker=1.75e-07,
          draw position=1000.0,
          box extend=243.90243902439028,
          draw relative anchor=0.46534653465346537,
        },
        median/.style={black,thick},
        box/.style={black,thick},
        whisker/.style={black,thick},
        draw=black,
        fill opacity=0.2,
        boxplot/whisker extend=0,
        ] coordinates { };

        \addplot+ [boxplot prepared={
          lower whisker=1.9e-08,
          lower quartile=2.5e-08,
          median=3.1e-08,
          upper quartile=3.9e-08,
          upper whisker=5e-08,
          draw position=10000.0,
          box extend=2439.024390243903,
          draw relative anchor=0.46534653465346537,
        },
        median/.style={black,thick},
        box/.style={black,thick},
        whisker/.style={black,thick},
        draw=black,
        fill opacity=0.2,
        boxplot/whisker extend=0,
        ] coordinates { };

        \addplot+ [boxplot prepared={
          lower whisker=1.1e-08,
          lower quartile=1.7e-08,
          median=3.1e-08,
          upper quartile=4.1e-05,
          upper whisker=0.00011,
          draw position=100000.0,
          box extend=24390.243902439026,
          draw relative anchor=0.46534653465346537,
        },
        median/.style={black,thick},
        box/.style={black,thick},
        whisker/.style={black,thick},
        draw=black,
        fill opacity=0.2,
        boxplot/whisker extend=0,
        ] coordinates { };

        \addplot+ [boxplot prepared={
          lower whisker=2.2e-09,
          lower quartile=3.1e-09,
          median=6.5e-09,
          upper quartile=2.2e-08,
          upper whisker=5.8e-08,
          draw position=1000000.0,
          box extend=243902.43902439027,
          draw relative anchor=0.46534653465346537,
        },
        median/.style={black,thick},
        box/.style={black,thick},
        whisker/.style={black,thick},
        draw=black,
        fill opacity=0.2,
        boxplot/whisker extend=0,
        ] coordinates { };

        \addplot+ [boxplot prepared={
          lower whisker=2.5e-08,
          lower quartile=4e-08,
          median=1e-07,
          upper quartile=1.9,
          upper whisker=2.0,
          draw position=3.846153846153846,
          box extend=0.9380863039399625,
          draw relative anchor=0.46534653465346537,
        },
        median/.style={red!50!black,thick},
        box/.style={red!50!black,thick},
        whisker/.style={red!50!black,thick},
        draw=red!50!black,
        fill opacity=0.2,
        boxplot/whisker extend=0,
        ] coordinates { };

        \addplot+ [boxplot prepared={
          lower whisker=7.9e-09,
          lower quartile=2.9e-08,
          median=6.2e-08,
          upper quartile=1.8,
          upper whisker=1.9,
          draw position=7.692307692307692,
          box extend=1.876172607879925,
          draw relative anchor=0.46534653465346537,
        },
        median/.style={red!50!black,thick},
        box/.style={red!50!black,thick},
        whisker/.style={red!50!black,thick},
        draw=red!50!black,
        fill opacity=0.2,
        boxplot/whisker extend=0,
        ] coordinates { };

        \addplot+ [boxplot prepared={
          lower whisker=1.6e-08,
          lower quartile=5e-08,
          median=2.5e-07,
          upper quartile=9e-07,
          upper whisker=1.65,
          draw position=15.384615384615383,
          box extend=3.75234521575985,
          draw relative anchor=0.46534653465346537,
        },
        median/.style={red!50!black,thick},
        box/.style={red!50!black,thick},
        whisker/.style={red!50!black,thick},
        draw=red!50!black,
        fill opacity=0.2,
        boxplot/whisker extend=0,
        ] coordinates { };

        \addplot+ [boxplot prepared={
          lower whisker=1.1e-07,
          lower quartile=2.9e-07,
          median=9.5e-07,
          upper quartile=1.8e-06,
          upper whisker=3e-06,
          draw position=38.46153846153846,
          box extend=9.380863039399625,
          draw relative anchor=0.46534653465346537,
        },
        median/.style={red!50!black,thick},
        box/.style={red!50!black,thick},
        whisker/.style={red!50!black,thick},
        draw=red!50!black,
        fill opacity=0.2,
        boxplot/whisker extend=0,
        ] coordinates { };

        \addplot+ [boxplot prepared={
          lower whisker=2e-07,
          lower quartile=3.1e-07,
          median=4.5e-07,
          upper quartile=8e-07,
          upper whisker=1.1e-06,
          draw position=76.92307692307692,
          box extend=18.76172607879925,
          draw relative anchor=0.46534653465346537,
        },
        median/.style={red!50!black,thick},
        box/.style={red!50!black,thick},
        whisker/.style={red!50!black,thick},
        draw=red!50!black,
        fill opacity=0.2,
        boxplot/whisker extend=0,
        ] coordinates { };

        \addplot+ [boxplot prepared={
          lower whisker=1.4e-07,
          lower quartile=1.9e-07,
          median=2.5e-07,
          upper quartile=3.9e-07,
          upper whisker=5.5e-07,
          draw position=153.84615384615384,
          box extend=37.5234521575985,
          draw relative anchor=0.46534653465346537,
        },
        median/.style={red!50!black,thick},
        box/.style={red!50!black,thick},
        whisker/.style={red!50!black,thick},
        draw=red!50!black,
        fill opacity=0.2,
        boxplot/whisker extend=0,
        ] coordinates { };

        \addplot+ [boxplot prepared={
          lower whisker=9e-08,
          lower quartile=1.5e-07,
          median=1.9e-07,
          upper quartile=2.9e-07,
          upper whisker=3.1e-07,
          draw position=384.6153846153846,
          box extend=93.80863039399625,
          draw relative anchor=0.46534653465346537,
        },
        median/.style={red!50!black,thick},
        box/.style={red!50!black,thick},
        whisker/.style={red!50!black,thick},
        draw=red!50!black,
        fill opacity=0.2,
        boxplot/whisker extend=0,
        ] coordinates { };

        \addplot+ [boxplot prepared={
          lower whisker=7.9e-08,
          lower quartile=8e-08,
          median=1.2e-07,
          upper quartile=1.7e-07,
          upper whisker=1.9e-07,
          draw position=769.2307692307692,
          box extend=187.6172607879925,
          draw relative anchor=0.46534653465346537,
        },
        median/.style={red!50!black,thick},
        box/.style={red!50!black,thick},
        whisker/.style={red!50!black,thick},
        draw=red!50!black,
        fill opacity=0.2,
        boxplot/whisker extend=0,
        ] coordinates { };

        \addplot+ [boxplot prepared={
          lower whisker=2.2e-08,
          lower quartile=2.8e-08,
          median=3.5e-08,
          upper quartile=5e-08,
          upper whisker=5.5e-08,
          draw position=7692.3076923076915,
          box extend=1876.172607879925,
          draw relative anchor=0.46534653465346537,
        },
        median/.style={red!50!black,thick},
        box/.style={red!50!black,thick},
        whisker/.style={red!50!black,thick},
        draw=red!50!black,
        fill opacity=0.2,
        boxplot/whisker extend=0,
        ] coordinates { };

        \addplot+ [boxplot prepared={
          lower whisker=1.8e-08,
          lower quartile=2.1e-08,
          median=5e-08,
          upper quartile=2.5e-05,
          upper whisker=8e-05,
          draw position=76923.07692307692,
          box extend=18761.726078799253,
          draw relative anchor=0.46534653465346537,
        },
        median/.style={red!50!black,thick},
        box/.style={red!50!black,thick},
        whisker/.style={red!50!black,thick},
        draw=red!50!black,
        fill opacity=0.2,
        boxplot/whisker extend=0,
        ] coordinates { };

        \addplot+ [boxplot prepared={
          lower whisker=2e-09,
          lower quartile=3.1e-09,
          median=7e-09,
          upper quartile=2.1e-08,
          upper whisker=3e-08,
          draw position=769230.7692307691,
          box extend=187617.26078799248,
          draw relative anchor=0.46534653465346537,
        },
        median/.style={red!50!black,thick},
        box/.style={red!50!black,thick},
        whisker/.style={red!50!black,thick},
        draw=red!50!black,
        fill opacity=0.2,
        boxplot/whisker extend=0,
        ] coordinates { };

        \end{axis}
      \end{tikzpicture}
    \end{subfigure}

  \caption{Superposition of quantiles at 100\% and 80\% (in red, shifted left)
  of training.}
  \label{fig:sub-quantiles-plot}
\end{figure}
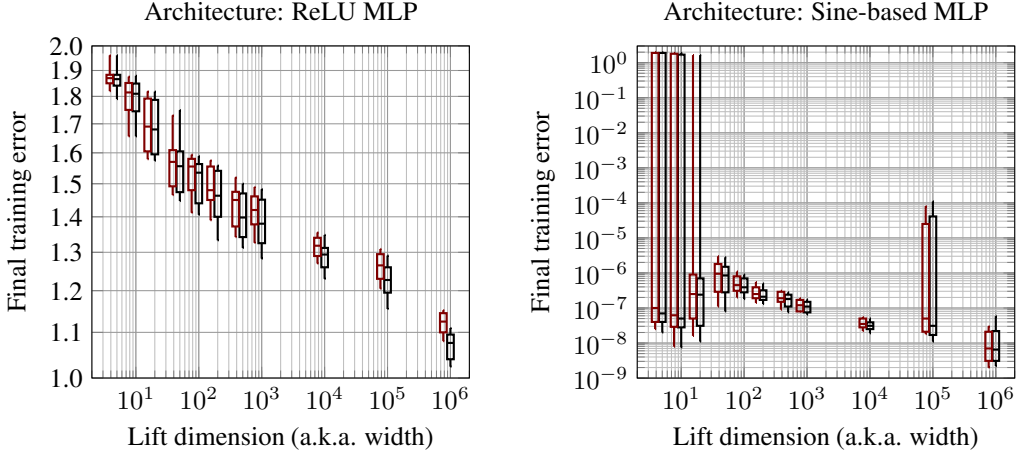

For lift dimensions lower than $10^2$, the results are similar enough that the measured loss can be
understood as the final limit. For higher lift dimensions, this conclusion is not as clear, and other
replication experiments would be needed to confidently conclude that the observations on these plots
are representative of a general behavior of neural networks, rather than only artefacts of this very
particular experimental setting. Given the very simple setting, and comparatively very large lift
dimensions and training times explored here, these observations remain promisingly similar to the
theory’s predictions.

\subsection{Sparse versus dense lifts in classification}

In the context of classification, for the MNIST
digit-recognition dataset, we perform experiments with
dense and sparse multi-layer perceptrons to check whether there
exists a fundamental difference which would prevent the extension
of this theory to the dense setting.

\subsubsection{Details of the experiment setup}

We train several models on the MNIST digit recognition dataset,
using the cross-entropy loss and the Adam optimizer with a step-size
of $10^{-3}$ by batches of 100 samples taken with replacement, and measure the test
accuracy of the resulting models after $10^5$ training iterations.

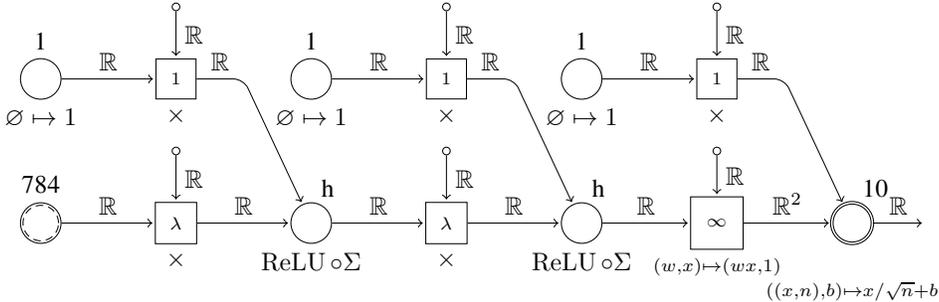
\begin{figure}[h]
  \centering
  \resizebox{.9\linewidth}{!}{
    \begin{tikzpicture}[shorten >=1pt, draw=black, font=\normalsize]
      \tikzstyle{square} = [regular polygon,regular polygon sides=4]
      \tikzstyle{arrow} = [->]
      \tikzstyle{diagram-shape} = [draw,minimum size=16pt,node distance=0pt]
      \tikzstyle{edge} = [square,diagram-shape]
      \tikzstyle{vertex} = [circle,diagram-shape]
      \tikzstyle{term-vertex} = [vertex, double]
      \tikzstyle{init-vertex} = [vertex, densely dashed]
      \tikzstyle{weight-spawn} = [circle, draw, minimum size=3pt,inner sep=0pt, text width=1pt]

      \node[vertex, label={[label distance=0cm]above:784} ,label={[align=center, label distance=0cm,rotate=0]below:{$$}}] (V-0) at (0.0,0.0) {\small };
      \node[init-vertex, minimum size=14pt] at (0.0,0.0) {};
      \node[vertex, label={[label distance=-0.05cm]+85:h} ,label={[align=center, label distance=0cm,rotate=0]below:{$\operatorname{ReLU} \circ \Sigma$}}] (V-1) at (3.75,0.0) {\small };
      \node[vertex, label={[label distance=0cm]above:1} ,label={[align=center, label distance=0cm,rotate=0]below:{$\varnothing \mapsto 1$}}] (V-2) at (0.0,2.0) {\small };
      \node[vertex, label={[label distance=-0.05cm]+85:h} ,label={[align=center, label distance=0cm,rotate=0]below:{$\operatorname{ReLU} \circ \Sigma$}}] (V-3) at (7.5,0.0) {\small };
      \node[vertex, label={[label distance=0cm]above:1} ,label={[align=center, label distance=0cm,rotate=0]below:{$\varnothing \mapsto 1$}}] (V-4) at (3.75,2.0) {\small };
      \node[term-vertex, label={[label distance=-0.05cm]+85:10} ,label={[align=center, label distance=0.4cm,rotate=0]below:{${\scriptstyle{((x,n), b) \mapsto x / \sqrt{n} + b}}$}}] (V-5) at (11.25,0.0) {\small };
      \node[vertex, label={[label distance=0cm]above:1} ,label={[align=center, label distance=0cm,rotate=0]below:{$\varnothing \mapsto 1$}}] (V-6) at (7.5,2.0) {\small };

      \node[weight-spawn] (W-0) at (1.875,1.0) {};
      \node[edge, label={[align=center, label distance=0.1cm, rotate=0, inner sep=0pt, minimum size=5pt]below:{$\times$}}] (E-0) at (1.875,0.0) {$\scriptstyle \lambda$};

      \draw [rounded corners, arrow] (V-0) -- node[midway, above] {$\mathbb{R}$}   (E-0);
      \draw [rounded corners, arrow] (E-0) -- node[midway, above] {$\mathbb{R}$}  (V-1);
      \path[arrow] (W-0) edge node[midway, right] {$\mathbb{R}$} (E-0);

      \node[weight-spawn] (W-1) at (1.875,3.0) {};
      \node[edge, label={[align=center, label distance=0.1cm, rotate=0, inner sep=0pt, minimum size=5pt]below:{$\times$}}] (E-1) at (1.875,2.0) {$\scriptstyle 1$};

      \draw [rounded corners, arrow] (V-2) -- node[midway, above] {$\mathbb{R}$}   (E-1);
      \draw [rounded corners, arrow] (E-1) -- node[midway, above] {$\mathbb{R}$} (2.8125,2.0) -- (V-1);
      \path[arrow] (W-1) edge node[midway, right] {$\mathbb{R}$} (E-1);

      \node[weight-spawn] (W-2) at (5.625,1.0) {};
      \node[edge, label={[align=center, label distance=0.1cm, rotate=0, inner sep=0pt, minimum size=5pt]below:{$\times$}}] (E-2) at (5.625,0.0) {$\scriptstyle \lambda$};

      \draw [rounded corners, arrow] (V-1) -- node[midway, above] {$\mathbb{R}$}   (E-2);
      \draw [rounded corners, arrow] (E-2) -- node[midway, above] {$\mathbb{R}$}  (V-3);
      \path[arrow] (W-2) edge node[midway, right] {$\mathbb{R}$} (E-2);

      \node[weight-spawn] (W-3) at (5.625,3.0) {};
      \node[edge, label={[align=center, label distance=0.1cm, rotate=0, inner sep=0pt, minimum size=5pt]below:{$\times$}}] (E-3) at (5.625,2.0) {$\scriptstyle 1$};

      \draw [rounded corners, arrow] (V-4) -- node[midway, above] {$\mathbb{R}$}   (E-3);
      \draw [rounded corners, arrow] (E-3) -- node[midway, above] {$\mathbb{R}$} (6.5625,2.0) -- (V-3);
      \path[arrow] (W-3) edge node[midway, right] {$\mathbb{R}$} (E-3);

      \node[weight-spawn] (W-4) at (9.375,1.0) {};
      \node[edge, label={[align=center, label distance=0.1cm, rotate=0, inner sep=0pt, minimum size=5pt]below:{$\scriptstyle (w,x) \mapsto (wx, 1)$}}] (E-4) at (9.375,0.0) {$\scriptstyle \infty$};

      \draw [rounded corners, arrow] (V-3) -- node[midway, above] {$\mathbb{R}$}   (E-4);
      \draw [rounded corners, arrow] (E-4) -- node[midway, above] {$\mathbb{R}^2$}  (V-5);
      \path[arrow] (W-4) edge node[midway, right] {$\mathbb{R}$} (E-4);

      \node[weight-spawn] (W-5) at (9.375,3.0) {};
      \node[edge, label={[align=center, label distance=0.1cm, rotate=0, inner sep=0pt, minimum size=5pt]below:{$\times$}}] (E-5) at (9.375,2.0) {$\scriptstyle 1$};

      \draw [rounded corners, arrow] (V-6) -- node[midway, above] {$\mathbb{R}$}   (E-5);
      \draw [rounded corners, arrow] (E-5) -- node[midway, above] {$\mathbb{R}$} (10.3125,2.0) -- (V-5);
      \path[arrow] (W-5) edge node[midway, right] {$\mathbb{R}$} (E-5);

      \path[arrow] (V-5) edge node[midway, above] {$\mathbb{R}$} (12.25, 0.0);

    \end{tikzpicture}
    }%
  \captionsetup{justification=centering}
  \caption{Blueprint of the base perceptron module used for the MNIST experiment.
  Lift annotations are depicted above the vertices, and inside the boxes for edges ($\infty$ indicates dense connection)}%
  \label{fig:mnist-mlp-diagram}
\end{figure}

For the dense networks, we use three-layer fully connected perceptrons
with hidden widths $h \in \mathbb{N}$ and ReLU activation.
We initialize each weight matrix $W \in \mathbb{R}^{n \times m}$
with independent identicaly distributed entries, with a gaussian distribution of
mean zero and variance $1 / n$, also known as ``Kaiming He's normal initialization with fan-in mode'',
and biases with normal distributions.
Each ``linear'' layer computes the function $(W, b, x) \mapsto W \cdot x + b$ (i.e.\ without a renormalizing factor).

For sparse networks, we use random sparse lifts derived from the blueprint
of \Figure{mnist-mlp-diagram}, to have nearly-identical architectures.
We use a lift dimension $h \in \mathbb{N}$ identical for all layers, and
a degree parameter $\lambda \in \mathbb{R}_+$.
We initialize all weights with a normal distribution.
Consistently with the definition of linear readouts, the last ``linear'' operation
computes the function $(W, b, x) \mapsto (W \cdot x) / \sqrt{h} + b$
(i.e.\ with a renormalization factor of $\sqrt{h}$ for the matrix $W \in \mathbb{R}^{h \times 10}$).
We vary
$\lambda \in \mathbb{R}_+$ the expected incoming degree
of hidden nodes,
and for both dense and sparse models, we use various powers of two for the lift
dimension $h \in \mathbb{N}$.

\subsubsection{Experiment results}

The results of this experiment are depicted in \Figure{mnist-perf}.
In the absence of any regularization or data augmentation, or even large
hyperparameter scans, we do not expect particularly impressive
performance for either family of models, even at this small scale,
only difference between the two families.
As such, differences in accuracy lower than 1\% are to be interpreted with care.

We observe in \Figure{mnist-perf} that for similar lift dimensions, sparse networks
perform slightly worse and their performance degrades when the sparsity is more extreme.
However this drop remains relatively small, for instance at $h = 128$ there is
only a 3\% drop from the dense model to the sparse model with $\lambda =10$.
For scale, in the first hidden layer of the dense model,
there are $784 \times h \approx 10^5$ parameters,
while in the corresponding sparse model there are $\lambda \times h \approx  10^3$.
Overall the sparse models appear very similar to their dense counterparts,
we do not observe a plateau at a very low (e.g.\ 70\%) accuracy, and we do not observe
failures to learn (e.g.\ order of 10\% accuracy) even at large lift dimensions.
\linebreak[4]
We also plot in \Figure{mnist-perf} the accuracy as a function
of the total number of floating-point parameters,
for which the performance is even closer
Sparse models appear to outperform dense models with the same number
of floating-point parameters, however this gap is too small for conclusions
at this scale. In particular, the number of floating-point
parameters does not account for the total storage space of the network, since
the connectivity of the sparse network also has to be stored (e.g.\ using one bit
per corresponding dense-network parameter, with a naive encoding),
which should be accounted for in future larger-scale experiments.
We did  not include large lift dimensions in sparse networks
in this experiment, due to our inefficient implementation of sparse operations
with few non-zero entries.

\begin{filecontents*}{sparse_10.csv}
width,parameters,accuracy
16,462,0.80
32,814,0.85
64,1518,0.937
128,2926,0.955
256,5742,0.958
512,11374,0.968
1024,22638,0.975
2048,45166,0.980
4096,90222,0.9812
\end{filecontents*}

\begin{filecontents*}{sparse_50.csv}
width,parameters,accuracy
16,1258,0.917
32,3018,0.949
64,7038,0.953
128,13566,0.964
256,26622,0.972
512,52734,0.976
1024,104958,0.980
2048,209406,0.982
4096,418302,0.9812
\end{filecontents*}

\begin{filecontents*}{sparse_100.csv}
width,parameters,accuracy
16,2058,0.934
32,4618,0.947
64,11274,0.956
128,26866,0.967
256,52722,0.973
512,104434,0.977
1024,207858,0.979
2048,414706,0.982
4096,828402,0.9818
\end{filecontents*}

\begin{filecontents*}{dense.csv}
width,parameters,accuracy
16,13002,0.9405
32,26506,0.9645
64,55050,0.9746
128,118282,0.9808
256,269322,0.982
512,669706,0.9845
1024,1863690,0.985
2048,5824522,0.9851
4096,20037642,0.9850
\end{filecontents*}

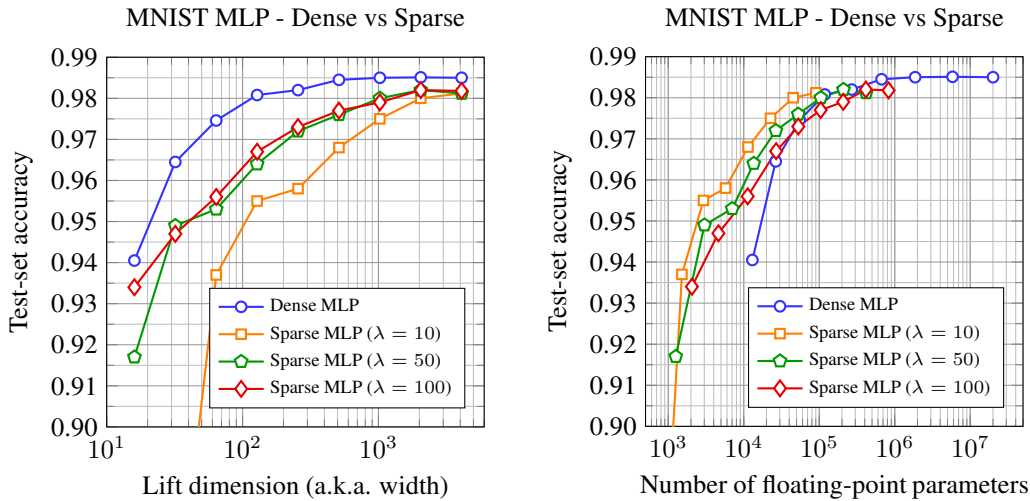
\begin{figure}[h]
  \centering
  \begin{subfigure}[c]{0.49\textwidth}
  \centering
  \begin{tikzpicture}
    \begin{axis}[
      title={MNIST MLP - Dense vs Sparse},
      xlabel={Lift dimension (a.k.a. width)},
      ylabel={Test-set accuracy},
      ymin=0.9, ymax=0.99,
      ytick={0.90,0.91,0.92,0.93,0.94,0.95,0.96,0.97,0.98,0.99},
      yticklabels={0.90,0.91,0.92,0.93,0.94,0.95,0.96,0.97,0.98,0.99},
      minor y tick num=1,
      xmin=10, xmax=6e3,
      xmode=log,
      width=6.6cm,
      height=6.5cm,
      grid=both,
      minor grid style={gray!50, opacity=0.4},
      major grid style={gray!80},
      cycle list={{black}}, 
      legend style={at={(0.95,0.05)}, anchor=south east, font=\scriptsize},
      legend cell align={left},
    ]

    \addplot+[mark=*, draw=blue!80!white, thick, mark options={fill=white}]
      table[x=width, y=accuracy, col sep=comma]
      {dense.csv};
    \addplot+[mark=square*, mark size=1.8pt, draw=orange, thick, mark options={fill=white}]
      table[x=width, y=accuracy, col sep=comma]
      {sparse_10.csv};
    \addplot+[mark=pentagon*, mark size=2.5pt, draw=green!60!black, thick, mark options={fill=white}]
      table[x=width, y=accuracy, col sep=comma]
      {sparse_50.csv};
    \addplot+[mark=diamond*, mark size=3pt, draw=red!90!black, thick, mark options={fill=white}]
      table[x=width, y=accuracy, col sep=comma]
      {sparse_100.csv};

    \legend{Dense MLP,
      Sparse MLP ($\lambda=10$),
      Sparse MLP ($\lambda=50$),
      Sparse MLP ($\lambda=100$)}

    \end{axis}
  \end{tikzpicture}
  \end{subfigure}
  \hfill{}
  \begin{subfigure}[c]{0.49\textwidth}
  \centering
  \begin{tikzpicture}
    \begin{axis}[
      title={MNIST MLP - Dense vs Sparse},
      xlabel={Number of floating-point parameters},
      ylabel={Test-set accuracy},
      ymin=0.9, ymax=0.99,
      ytick={0.90,0.91,0.92,0.93,0.94,0.95,0.96,0.97,0.98,0.99},
      yticklabels={0.90,0.91,0.92,0.93,0.94,0.95,0.96,0.97,0.98,0.99},
      minor y tick num=1,
      xmin=5e2, xmax=5e7,
      xmode=log,
      width=6.6cm,
      height=6.5cm,
      grid=both,
      minor grid style={gray!50, opacity=0.4},
      major grid style={gray!80},
      cycle list={{black}}, 
      legend style={at={(0.95,0.05)}, anchor=south east, font=\scriptsize},
      legend cell align={left},
    ]

    \addplot+[mark=*, draw=blue!80!white, thick, mark options={fill=white}]
      table[x=parameters, y=accuracy, col sep=comma]
      {dense.csv};
    \addplot+[mark=square*, mark size=1.8pt, draw=orange, thick, mark options={fill=white}]
      table[x=parameters, y=accuracy, col sep=comma]
      {sparse_10.csv};
    \addplot+[mark=pentagon*, mark size=2.5pt, draw=green!60!black, thick, mark options={fill=white}]
      table[x=parameters, y=accuracy, col sep=comma]
      {sparse_50.csv};
    \addplot+[mark=diamond*, mark size=3pt, draw=red!90!black, thick, mark options={fill=white}]
      table[x=parameters, y=accuracy, col sep=comma]
      {sparse_100.csv};

    \legend{Dense MLP,
      Sparse MLP ($\lambda=10$),
      Sparse MLP ($\lambda=50$),
      Sparse MLP ($\lambda=100$)}

    \end{axis}
  \end{tikzpicture}
  \end{subfigure}
  \captionsetup{justification=centering}
  \caption{MNIST accuracy of dense and sparse Multi-Layer Perceptrons with ReLU activations}%
  \label{fig:mnist-perf}
\end{figure}

This experiment is overall inconclusive in uncovering a fundamental difference
betweent the behaviors of sparse and dense lifts. This leaves hope that the current proof
of convergence, tailored to the sparse case at the moment, will admit extensions
to explain the performance of similar dense networks.

\end{document}